\newcounter{proofcount}
\newtheorem{theorem}{Theorem}
\newtheorem{lemma}[theorem]{Lemma}
\newtheorem{observation}[theorem]{Observation}
\newtheorem{proposition}[theorem]{Proposition}
\newtheorem{prob}[theorem]{Problem}
\newtheorem{conj}[theorem]{Conjecture}
\newtheorem{corollary}[theorem]{Corollary}
\newtheorem{cor}[theorem]{Corollary}
\theoremstyle{definition}
\newtheorem{definition}[theorem]{Definition}
\newtheorem{problem}[theorem]{Problem}
\newtheorem{caseEE}{Case}
\author{Stephan Dominique Andres\affiliationmark{1} \and Edwin Lock\affiliationmark{2}\thanks{Corresponding author}}
\title[Characterising and recognising game-perfect graphs]{Characterising and recognising game-perfect graphs}
\affiliation{
  Faculty of Mathematics and Computer Science, FernUniversität in Hagen, Germany\\
  Department of Computer Science, University of Oxford, United Kingdom}
\keywords{graph colouring game, 
game chromatic number, 
game-perfect graph, 
perfect graph,
dominating edge decomposition,
clique module decomposition,
forbidden induced subgraph characterisation}
\begin{document}
\publicationdetails{21}{2019}{1}{6}{4935}

\maketitle
\begin{abstract}
Consider a vertex colouring game played on a simple graph with $k$ permissible colours. Two players, a \emph{maker} and a \emph{breaker}, take turns to colour an uncoloured vertex such that adjacent vertices receive different colours. The game ends once the graph is fully coloured, in which case the maker wins, or the graph can no longer be fully coloured, in which case the breaker wins. In the game $g_B$, the breaker makes the first move. Our main focus is on the class of \emph{$g_B$-perfect} graphs: graphs such that for every induced subgraph $H$, the game $g_B$ played on $H$ admits a winning strategy for the maker with only $\omega(H)$ colours, where $\omega(H)$ denotes the clique number of~$H$. Complementing analogous results for other variations of the game, we characterise $g_B$-perfect graphs in two ways, by forbidden induced subgraphs and by explicit structural descriptions. We also present a clique module decomposition, which may be of independent interest, that allows us to efficiently recognise $g_B$-perfect graphs.
\end{abstract}


\section{Introduction}
\subsection{The vertex colouring games}\label{section:thegames}
In a vertex colouring game first mentioned by \citet{gardner} and formally introduced by \citet{bodlaender}, two players take turns to colour an uncoloured vertex of a simple (undirected) graph $G$ with one of $k$ permissible colours such 
that adjacent vertices receive different colours. One player, Alice (the maker), aims to achieve a complete graph colouring, while the other player, Bob (the breaker), attempts to prevent this from happening by ensuring that some uncoloured 
vertex has neighbours coloured in all $k$ colours. If Alice succeeds in finding a strategy that forces Bob to cooperate in colouring the whole graph, she wins, otherwise Bob wins.

We denote the games in which Alice and Bob start by $g_A$ and $g_B$, respectively. It also turns out to be useful to consider games where either Alice or Bob is permitted to miss their turn, leading to four new games $g_{A,A}$, $g_{A,B}$, $g_{B,A}$ and $g_{B,B}$. Here the first entry of the index denotes the starting player and the second entry indicates the player who may miss any number of turns; in particular, they may also miss their first turn.
For any game~$g$ of the six games defined above, the \emph{$g$-chromatic number} $\chi_g(G)$ of a graph~$G$ denotes the minimum number of colours required for Alice to win the game on~$G$. In this paper, all graphs are simple and undirected.


\subsection{Motivation} 
Graph colouring games have received a great deal of attention over the last three decades~\citep{dunn2017,tuzazhu}. One area of interest has been to identify good upper bounds for the \mbox{$g_A$-chro}\-ma\-tic number 
(also known as \emph{game-chromatic number}) of certain classes of 
graphs. \citet{faigle1} showed that $\chi_{g_A}(F) \leq 4$ for any forest $F$, and more recently \citet{zhurefined} proved that \mbox{$\chi_{g_A}(G) \leq 17$} if $G$ is planar. Other graph classes with known constant upper bounds include 
cactuses~\citep{cactuses}, partial $k$-trees~\citep{zhupseudopartial} and outerplanar graphs~\citep{guanzhu}. These bounds are known to be tight only for forests and cactuses.

For other graph classes, upper bounds for $\chi_{g_A}$ are known only as a function of the clique number~$\omega(\cdot)$. The first such result was obtained by \citet{faigle1}, who proved that $\chi_{g_A}(I)\le3\omega(I)-2$ for any interval 
graph~$I$. Subsequently, upper bounds in terms of the clique number were also found for line graphs of various $k$-degenerate graph classes \citep{caizhu, erdoesetal}, Husimi trees \citep{sidorowiczhusimi}, and 
various incidence graphs~\citep{charpentiersopena}.

Many of the upper bounds above are the result of studying the \emph{colouring number}, a game invariant associated with the `colourblind' \emph{marking game} introduced by \citet{zhuplanar}, and exploiting the fact that the colouring number is 
an upper bound for the game-chromatic number for any graph. In order to tighten specific bounds for the game-chromatic number it may be necessary to design winning strategies for Alice that are not `colourblind'. A further `first-fit' variant of the graph colouring game is the Grundy colouring game introduced by \citet{havetzhu}. The game-chromatic number has also been studied in the context of random graphs \citep{bohman}.

%
While much of the literature on the vertex colouring game has focussed on the game $g_A$, there can be large discrepancies between the $g$-chromatic numbers of the different game variants $g$. Indeed, some effects of allowing a player to skip 
moves have been analysed by~\citet{zhucartesian} for the marking game.

In his original paper on vertex colouring games, \citet{bodlaender} asked about the complexity of deciding whether Alice can win the game $g_A$ on a graph $G$ with $k$ colours. This problem is in P for $k \leq 2$ (cf.~\citet[Theorems 3, 15, 17, 18]{andres4}). While it is easy to see that this decision problem is in PSPACE by constructing an alternating algorithm that simulates the game, the question of PSPACE-hardness remains open for all $k \geq 3$ and all the game variants mentioned above. In light of the lack of progress on the complexity of the colouring games in the general case, one might seek to restrict oneself to graph classes in which the games can be decided efficiently. This approach mirrors results achieved in the classic, non-competitive graph colouring setting.

It is well-known that deciding whether a given graph admits a proper colouring with $k\ge3$ colours is NP-hard \citep{karp}. For this reason, restricted classes of graphs that can be coloured efficiently are of major interest \citep{golumbic}. Perhaps the most well-known such class, the perfect graphs, has been the subject of several seminal results. A graph $G$ is considered to be \emph{perfect} if $\omega(H) = \chi(H)$ for all induced subgraphs $H$ of $G$, where $\chi(\cdot)$ denotes the chromatic number and $\omega(\cdot)$ denotes the clique number. \citet{gls} proved that colouring perfect graphs is in P. More recently, \citet{chudnovsky2} showed that recognising perfect graphs can be achieved in polynomial time. One year later, the famous Strong Perfect Graph Theorem by \citet{strongperfectgraphtheorem} characterised perfect graphs by means of forbidden induced subgraphs.


\citet{andres4} introduced the notion of game-perfect graphs with respect to any of the six games defined in Section~\ref{section:thegames}. For any such game $g$, a graph $G$ is \emph{game-perfect} with regard to $g$ (or simply \emph{$g$-perfect}) if 
\[
\chi_g(H) = \omega(H)
\]
for all induced subgraphs $H$ of $G$. It is easy to see that $\omega(G) \leq \chi(G) \leq \chi_g(G)$ for any graph $G$ and game~$g$, which implies that the game-perfect graphs are a subset of the perfect graphs.

In analogy to the Strong Perfect Graph Theorem, \citet{andres1} obtained the following characterisations for $g_A$, $g_{A,B}$ and $g_{B,B}$-perfect graphs. $P_4$ and $C_4$ denote the path and the cycle on 4 vertices, respectively. The other forbidden graphs in question are depicted in Figures~\ref{fig:gB-forbidden-graphs} and~\ref{fig:gA-forbidden-graphs}, while the graph class $E_1$ is defined in Section~\ref{section:graph-classes}.

\begin{theorem}[\citet{andres1}]\label{thm:gBB-perfect-characterisation}
	For any graph $G$, the following are equivalent.
	\begin{enumerate}[(i)]
		\item $G$ is $g_{B,B}$-perfect.
		\item $G$ contains no induced $P_4, C_4$, split 3-star or double fan (see Figure~\ref{fig:gB-forbidden-graphs}).
		\item Every connected component $C$ of $G$ is an instance of the graph class $E_1$ (see Figure~\ref{fig:explicit-structures}).
	\end{enumerate}
\end{theorem}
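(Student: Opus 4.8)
The plan is to prove the three statements equivalent by establishing the cycle of implications (i) $\Rightarrow$ (ii) $\Rightarrow$ (iii) $\Rightarrow$ (i), using throughout that both the property of being $g_{B,B}$-perfect and the property that every component is an instance of $E_1$ are inherited by induced subgraphs.

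For (i) $\Rightarrow$ (ii) I would argue by contraposition, and here the game enters only through a handful of finite checks. Since $g_{B,B}$-perfection is a statement about \emph{all} induced subgraphs, it suffices to show that each of the four obstructions $F \in \{P_4, C_4, \text{split 3-star}, \text{double fan}\}$ is itself not $g_{B,B}$-perfect, i.e.\ that $\chi_{g_{B,B}}(F) > \omega(F)$; then any $G$ containing an induced copy of $F$ fails (i). For each $F$ I would exhibit an explicit winning strategy for Bob with only $\omega(F)$ colours, which for such small graphs reduces to verifying a short game tree by hand. The decisive ingredient is the breaker's licence to skip: on $P_4$ and $C_4$, for instance, Bob cannot win with $\omega=2$ colours when forced to move first, but by skipping his opening turn he can wait for Alice to commit a vertex and then colour a suitably opposite vertex, forcing an uncoloured vertex to see both colours. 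This is exactly the phenomenon that separates $g_{B,B}$ from $g_B$ and that the theorem is designed to capture.

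The implication (ii) $\Rightarrow$ (iii) is the purely structural core of the argument, and I expect it to be the main obstacle. Let $G$ be connected with no induced $P_4$, $C_4$, split 3-star or double fan. Dropping the last two obstructions, $\{P_4,C_4\}$-free graphs are exactly the trivially perfect graphs, which carry the well-known recursive decomposition \citep{golumbic}: a connected such graph arises from a (possibly disconnected) trivially perfect graph by adding a single universal vertex. I would induct along the resulting decomposition tree and show that additionally forbidding the split 3-star and the double fan collapses the branching and nesting of this tree into exactly the configurations realised by $E_1$ (see Figure~\ref{fig:explicit-structures}). The delicate part is translating the \emph{absence} of the two larger obstructions into the precise numerical and adjacency constraints defining $E_1$, together with a correct treatment of the base cases; this is where the bulk of the case analysis lives.

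Finally, for (iii) $\Rightarrow$ (i) I would first reduce to a single component. As the class is closed under induced subgraphs, it suffices to prove $\chi_{g_{B,B}}(G) = \omega(G)$ for every $G$ whose components lie in $E_1$, and a disjoint-union lemma — whose validity hinges on the breaker's licence to skip, which renders any surplus moves by Alice harmless — reduces matters to the connected case. The bound $\chi_{g_{B,B}}(G) \ge \omega(G)$ is automatic, so the real content is a winning strategy for Alice with $\omega(G)$ colours on a connected instance of $E_1$. I would construct this strategy recursively from the structure of $E_1$, maintaining the invariant that no uncoloured vertex is ever deprived of all its permissible colours. The robustness of this strategy against skipped breaker moves is precisely what makes the $g_{B,B}$ variant, rather than $g_B$, amenable to such a clean recursive treatment.
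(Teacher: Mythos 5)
The paper itself does not prove this theorem; it is quoted from \citet{andres1}, so there is no in-paper proof to compare against. Judged on its own terms, your architecture is sound. The implication cycle is the right skeleton; your observation that Bob's licence to skip is decisive only on $P_4$ and $C_4$ is correct (your skip-then-mirror strategies there check out, and on the split 3-star and double fan Bob wins even the plain game $g_B$ --- they are $F_5$ and $F_9$ in Figure~\ref{fig:gB-forbidden-graphs} --- so a fortiori he wins when allowed to skip). Your disjoint-union reduction in (iii) $\Rightarrow$ (i) is also correctly diagnosed: Alice answers in the component Bob played in, and an unprompted Alice move (after a Bob skip, or after Bob exhausts a component) is legal from the local component's viewpoint precisely because her per-component strategy is itself skip-robust. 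One subtlety you gloss over there: in a component $C_i$ with $\omega(C_i)<\omega(G)$ the game is effectively played with more than $\omega(C_i)$ colours, and winning with $k$ colours is not known in general to imply winning with $k+1$; you need the component strategy to be palette-monotone, which reactive pairing strategies such as the one in Proposition~\ref{proposition:E1} are, but this should be said.

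The genuine gap is that (ii) $\Rightarrow$ (iii) --- the entire structural content of the theorem --- is deferred (``this is where the bulk of the case analysis lives'') without the one idea that makes it go through. That idea appears in this very paper as Lemma~\ref{lemma:noncomplete}: a connected, non-complete graph with no induced $P_4$, $C_4$ or 3-star is an ear graph $K_r \vee (K_p \cup K_q)$. With it, your deferred analysis collapses to a few lines instead of an induction along the whole trivially perfect decomposition tree: by Wolk's theorem each connected component $C$ of a $\{P_4,C_4\}$-free graph has a dominating vertex $x$; if two components of $C-x$ were non-complete, each would contain an induced $P_3$, and these together with $x$ would induce a double fan; a 3-star inside a component of $C-x$ would, together with $x$, induce a split 3-star, so by Lemma~\ref{lemma:noncomplete} the (at most one) non-complete component of $C-x$ is an ear graph and all other components are complete --- which is exactly the definition of $E_1$, with $H_0$ the ear component (possibly degenerate) and $H_1,\dots,H_k$ the complete ones, all joined to $x$. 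Until you isolate and prove such an ear-graph lemma, or genuinely carry out the induction you sketch with a complete treatment of how the universal vertex interacts with the two larger obstructions, your outline asserts rather than proves the theorem's core.
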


\begin{theorem}[\citet{andres1}]\label{thm:gA-gAB-perfect-characterisation}
	For any graph $G$, the following are equivalent.
	\begin{enumerate}[(i)]
		\item $G$ is $g_A$-perfect.
		\item $G$ is $g_{A,B}$-perfect.
		\item $G$ contains none of the following as induced subgraphs: a $P_4, C_4$, triangle star, $\Xi$-graph, the union of two double fans, the union of two split 3-stars or the union of a double fan with a split 3-star 
(see Figure
~\ref{fig:gA-forbidden-graphs}).
		\item If $C_1, \ldots, C_k$ are the connected components of $G$ and $k\ge1$, then without loss of generality $C_1$ contains a dominating vertex~$v$ such that
$G-v$ is
$g_{B,B}$-perfect.
	\end{enumerate}
\end{theorem}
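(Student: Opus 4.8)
The plan is to close the cycle of implications (ii)$\Rightarrow$(i), (i)$\Rightarrow$(iii), (iii)$\Leftrightarrow$(iv), (iv)$\Rightarrow$(ii), using Theorem~\ref{thm:gBB-perfect-characterisation} to handle the $g_{B,B}$-perfect building blocks. The implication (ii)$\Rightarrow$(i) is essentially free: a $g_{A,B}$-game in which Bob never exercises his option to skip is literally a $g_A$-game, so an Alice strategy winning $g_{A,B}$ with $k$ colours in particular beats the ``never-skip'' Bob and hence wins $g_A$ with $k$ colours. This yields $\chi_{g_A}(H)\le\chi_{g_{A,B}}(H)$ for every induced subgraph $H$, and since $\omega(H)\le\chi_{g_A}(H)$ always holds, $g_{A,B}$-perfection implies $g_A$-perfection.

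For (i)$\Rightarrow$(iii) I would exploit that $g_A$-perfection is hereditary, so it suffices to exhibit, for each of the seven forbidden graphs $F$, a winning strategy for Bob in $g_A$ on $F$ with only $\omega(F)$ colours, certifying $\chi_{g_A}(F)>\omega(F)$. For the triangle star and the $\Xi$-graph this is a direct small-case strategy argument. For the three ``union'' graphs the point is that Alice's single first move can influence only one of the two disjoint troublesome pieces; afterwards Bob attacks the untouched piece with the strategy that already defeats a lone double fan or split 3-star in the move-skipping setting, so Alice's first-move advantage is neutralised.

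The structural equivalence (iii)$\Leftrightarrow$(iv) is the heart of the matter, and the direction (iii)$\Rightarrow$(iv) is the main obstacle. Since the forbidden list contains $P_4$ and $C_4$, any graph satisfying (iii) is $\{P_4,C_4\}$-free, i.e.\ trivially perfect, so every connected component has a dominating (universal) vertex. Forbidding the three unions forces at most one component, say $C_1$, to contain a double fan or split 3-star at all; every other component is then $\{P_4,C_4,\text{split 3-star},\text{double fan}\}$-free and hence $g_{B,B}$-perfect by Theorem~\ref{thm:gBB-perfect-characterisation}. The remaining, hardest, task is to locate inside $C_1$ a single dominating vertex $v$ whose deletion simultaneously destroys every split 3-star and every double fan, so that each component of $C_1-v$ is again $g_{B,B}$-perfect. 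This is exactly where the prohibition of the triangle star and the $\Xi$-graph is used: these are the minimal obstructions to two bad substructures sharing a common dominating vertex, so forbidding them forces all obstructions to pass through one $v$. The converse (iv)$\Rightarrow$(iii) is the routine check that re-attaching a single universal vertex to a disjoint union of $E_1$-instances cannot create a $P_4$, $C_4$, triangle star, $\Xi$-graph, or forbidden union.

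Finally, (iv)$\Rightarrow$(ii) is proved by giving Alice's strategy directly. As (iv)$\Rightarrow$(iii) and (iii) is hereditary, every induced subgraph of $G$ again satisfies (iv), so it suffices to show Alice wins $g_{A,B}$ on $G$ with $\omega(G)$ colours. Alice opens by colouring the dominating vertex $v$ of $C_1$; since $v$ is universal in $C_1$ we have $\omega(C_1)=\omega(C_1-v)+1$, so on $C_1-v$ exactly one colour is blocked and the remaining $\omega(G)-1\ge\omega(C_1-v)$ colours suffice, while all $\omega(G)\ge\omega(C_j)$ colours remain available on the other components. After this move it is Bob's turn in a move-skipping game on the disjoint graph $G-v$, i.e.\ a $g_{B,B}$-game; as $G-v$ is $g_{B,B}$-perfect, Alice wins each component and combines these strategies by always answering Bob's move in the component he just played in (a skip by Bob being a skip in every local game), with monotonicity in the number of colours covering the components where more than $\omega(\cdot)$ colours are present. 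The crux of the whole argument remains the localization step of (iii)$\Rightarrow$(iv): converting the two exotic forbidden graphs into the statement that a single universal vertex carries all obstructions to $g_{B,B}$-perfection.
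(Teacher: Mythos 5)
First, a point of order: the paper never proves this theorem --- it is imported verbatim from \citet{andres1} as background --- so there is no in-paper proof to compare against, only the original argument, whose overall architecture your proposal does reproduce: a cycle of implications, Bob strategies on the seven forbidden graphs for (i)$\Rightarrow$(iii), and the reduction of (iv)$\Rightarrow$(ii) to $g_{B,B}$-strategies by having Alice colour the dominating vertex first. The easy parts are handled correctly: (ii)$\Rightarrow$(i) via the never-skipping Bob, the observation that the three forbidden unions confine all split 3-stars and double fans to a single component $C_1$ while Theorem~\ref{thm:gBB-perfect-characterisation} disposes of the other components, and the routine verification of (iv)$\Rightarrow$(iii).

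However, at the step you yourself declare to be the crux, you assert rather than prove, and the mechanism you describe is incorrect. It is not about ``two bad substructures sharing a common dominating vertex'', and no localisation of obstructions ``through one $v$'' is required, because the triangle star is precisely the join $K_1 \vee (\text{split 3-star})$ and the $\Xi$-graph is precisely the join $K_1 \vee (\text{double fan})$ (see Figure~\ref{fig:gA-forbidden-graphs}; indeed the equivalence (iii)$\Leftrightarrow$(iv) forces these identifications). Consequently, for an \emph{arbitrary} dominating vertex $v$ of the connected trivially perfect component $C_1$, any induced split 3-star or double fan surviving in $C_1-v$ would, together with $v$ (which is adjacent to all of it), induce a triangle star or a $\Xi$-graph in $G$ --- a contradiction. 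So every dominating vertex works and your ``hardest task'' is a two-line argument; as written, your proposal leaves it as an unproven claim with a wrong justification. A second genuine soft spot is the appeal to ``monotonicity in the number of colours'' in (iv)$\Rightarrow$(ii): for graph colouring games, Alice winning with $k$ colours does \emph{not} in general imply she wins with $k+1$, so this cannot be invoked as a black box. You must instead check that the explicit $E_1^\cup$-strategy (cf.\ Proposition~\ref{proposition:E1}) remains valid when each component sees a palette at least as large as its clique number and when all of $C_1-v$ has the single colour of $v$ uniformly blocked; this does hold, since the pairing invariants there are palette-independent and a larger palette only makes vertices easier to keep safe, but it is a verification, not a general principle. (A similar, milder, gloss occurs in (i)$\Rightarrow$(iii), where Bob's simulated skips --- wasted moves in the untouched component --- and Alice's moves there, which hand Bob consecutive moves in the attacked piece, are not literally covered by his $g_{B,B}$-strategy and need the short ad hoc argument given in \citet{andres1}.)
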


Furthermore, the following holds for \emph{disconnected} graphs.
\begin{theorem}[\citet{andres1}]\label{thm:disconnected-gB-perfect-characterisation}
	Disconnected $g_B$-perfect graphs are $g_{B,B}$-perfect.
\end{theorem}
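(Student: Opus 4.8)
Since allowing Bob to miss turns is merely an extra option for the breaker, one has $\chi_{g_B}(H) \le \chi_{g_{B,B}}(H)$ for every graph $H$, so that every $g_{B,B}$-perfect graph is already $g_B$-perfect; the theorem asserts the converse under the disconnectedness hypothesis. The plan is to prove the contrapositive and reduce to a finite check using the forbidden-subgraph form of Theorem~\ref{thm:gBB-perfect-characterisation}. Suppose $G$ is disconnected and $g_B$-perfect. By Theorem~\ref{thm:gBB-perfect-characterisation} it suffices to show that $G$ contains no induced $P_4$, $C_4$, split 3-star or double fan. Each of these four graphs $F$ is connected with $\omega(F) \ge 2$, so if $G$ contained an induced copy of one of them, then, $G$ being disconnected, there would also be a vertex $v$ outside the component of that copy; the set $V(F) \cup \{v\}$ would then induce $F \sqcup K_1$ with $\omega(F \sqcup K_1) = \omega(F)$. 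Hence the whole theorem reduces to the claim that
\[
\chi_{g_B}(F \sqcup K_1) > \omega(F) \quad\text{for each forbidden graph } F,
\]
since this exhibits an induced subgraph of $G$ violating $g_B$-perfection. The guiding intuition is that the extra, isolated vertex supplies Bob with a \emph{surrogate skip}: he may waste a move colouring it without touching $F$, thereby mimicking in $g_B$ the turn-missing that makes $F$ bad for Alice in $g_{B,B}$.

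For $F = P_4$ and $F = C_4$ the claim is immediate and does not even require the full skipping power. Bob colours the isolated vertex on his opening move; since $v$ is isolated, the resulting position is exactly the game $g_A$ on $F$, with Alice to move. In both graphs Bob then wins $g_A$ with two colours: whatever vertex Alice colours first, Bob answers at a vertex at distance two from it with the opposite colour, so that their common neighbour sees both colours and can no longer be coloured. Thus $\chi_{g_B}(F \sqcup K_1) \ge 3 > 2 = \omega(F)$.

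The split 3-star and the double fan are the substantial cases, and I expect them to be the main obstacle. By Theorem~\ref{thm:gA-gAB-perfect-characterisation} a single split 3-star or double fan is $g_A$-perfect, so the shortcut above fails outright: were Bob to spend his surrogate skip at once, he would hand Alice a game ($g_A$ on $F$) that she wins. Bob must instead \emph{retain} the isolated vertex and deploy it as a delayed skip, following a winning $g_{B,B}$-strategy on $F$ (one exists, as $F$ is not $g_{B,B}$-perfect) and colouring $v$ exactly when that strategy prescribes a missed turn. Turning this into a proof requires two things, both of which I would settle by a finite case analysis on the concrete adjacency structure of the two graphs: first, that Bob has a $g_{B,B}$-win on $F$ using only a \emph{single} missed turn, so that one isolated vertex suffices; and second, that Alice gains nothing by colouring $v$ herself, since doing so merely grants Bob a free move in $F$, from which the breaker can only benefit. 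This verification, rather than the reduction or the $P_4/C_4$ cases, is where the real work lies.
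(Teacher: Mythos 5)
Your reduction is the same one the paper uses, and it is sound as far as it goes: the four graphs of Theorem~\ref{thm:gBB-perfect-characterisation} are connected, so a disconnected $G$ containing an induced $P_4$ or $C_4$ also contains an induced $P_4\cup K_1$ or $C_4\cup K_1$ (these are exactly the paper's $F_3$ and $F_7$), and your strategy for Bob on these two graphs --- open on the isolated vertex, then win the resulting Alice-to-move position on $P_4$ or $C_4$ with two colours by answering at distance two with the opposite colour --- is correct. The genuine gap is in the split 3-star and double fan cases, which you explicitly defer to an unexecuted case analysis, and the plan you sketch for them rests on a false premise. From the fact that a single split 3-star or double fan is $g_A$-perfect you conclude that Bob needs skipping power, simulated by a delayed move on the isolated vertex, and hence that one must prove $\chi_{g_B}(F\cup K_1)>\omega(F)$. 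But $g_A$-perfection only says Alice wins when \emph{she} moves first; in the game $g_B$ Bob moves first, and Bob already wins the plain game $g_B$, with no skip at all, on the split 3-star and on the double fan with $\omega=3$ colours. These are precisely $F_5$ and $F_9$ in the paper's forbidden list, with Bob's winning strategies known from \citet{andres1} and invoked in Theorem~\ref{thm:forbidden-graphs}. (For the split 3-star $K_2\vee\overline{K_3}$ the check is two moves long: Bob opens on a vertex of the independent triple; whatever Alice replies, Bob can either surround one of the two adjacent dominating vertices outright or leave both of them with the same unique admissible colour.) So $g_B$-perfection of $G$ excludes these two graphs as induced subgraphs directly, with no auxiliary vertex and no use of disconnectedness; the extra component is needed only for $P_4$ and $C_4$.

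Even taken on its own terms, your plan contains a step that cannot be waved through: the assertion that if Alice colours $v$ herself, the resulting free move in $F$ ``can only benefit'' the breaker. Tempo is not monotone in these games --- the very reason the six variants $g_A,\ldots,g_{B,B}$ differ, and the reason you reached for a surrogate skip in the first place, is that the obligation to move can hurt either player --- so this would itself require proof, as would your claim that a single missed turn suffices for Bob's $g_{B,B}$-win (true here, but vacuously, since zero skips suffice). Replacing your two hard cases by the direct fact about $F_5$ and $F_9$ collapses your argument into exactly the paper's proof: a disconnected $g_B$-perfect $G$ contains no induced $F_3,F_5,F_7,F_9$, hence no induced $P_4$, $C_4$, split 3-star or double fan, and Theorem~\ref{thm:gBB-perfect-characterisation} then yields $g_{B,B}$-perfection.
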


We note that, by definition, the classes of $g$-perfect graphs are hereditary, whereas in general, graphs may have a smaller $g$-chromatic number than some of their induced subgraphs.
It is also worth highlighting that
the $g_A$, $g_{A,B}$ and 
$g_{B,B}$-perfect graphs are all {trivially perfect} (cf.~\citet{golumbic,golumbictrivially,wolk}), whereas the class of $g_B$-perfect graphs is not, suggesting a richer family of structures. 


\subsection{Our results}
In this paper, we provide two characterisations of $g_B$-perfect graphs, one in terms of forbidden induced subgraphs and one by means of explicit structural descriptions. This constitutes the main result of this paper, extending Theorem~\ref{thm:disconnected-gB-perfect-characterisation} and complementing Theorems~\ref{thm:gBB-perfect-characterisation} and~\ref{thm:gA-gAB-perfect-characterisation}, which provide a characterisation of $g_A$, $g_{A,B}$ and $g_{B,B}$-perfect graphs.

\begin{figure}[htb]
    \subfigure[$F_1$: chair\label{fig:chair}]{\includegraphics[scale=1.0]{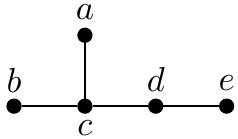}}\hfill
    \subfigure[$F_2$: path $P_5$\label{fig:5-path}]{\includegraphics{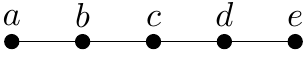}}\hfill
    \subfigure[$F_3$: $P_4 \cup K_1$\label{fig:4-path-K1}]{\includegraphics{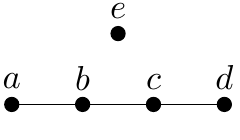}}\hfill
    \subfigure[$F_4:$ 4-fan\label{fig:4-fan}]{\includegraphics{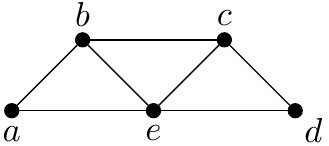}}
\par\medskip
    \subfigure[$F_{5}:$ split 3-star\label{fig:split-3-star}]{\includegraphics{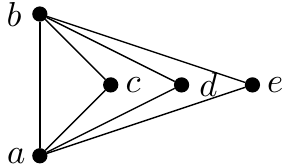}}\hfill
    \subfigure[$F_{6}:$ cycle $C_5$\label{fig:5-cycle}]{\includegraphics{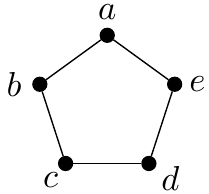}}\hfill
    \subfigure[$F_{7}:$ $C_4\cup K_1$\label{fig:4-cycle-K1}]{\includegraphics{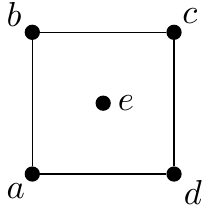}}\hfill
    \subfigure[$F_{8}:$ 4-wheel\label{fig:4-wheel}]{\includegraphics{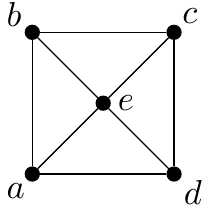}}
\par\medskip
    \subfigure[$F_{9}:$ double fan\label{fig:double-fan}]{\includegraphics{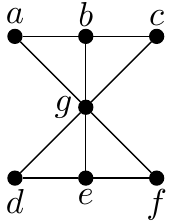}\hspace*{0.5cm}}\hfill
    \subfigure[$F_{10}$\label{fig:F10}]{\includegraphics{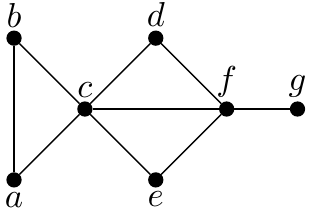}}\hfill
    \subfigure[$F_{11}$\label{fig:F11}]{\includegraphics{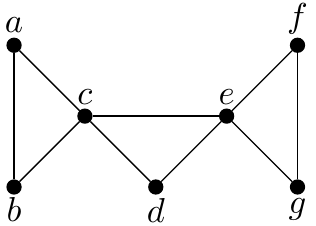}}\hfill
    \subfigure[$F_{12}$\label{fig:F12}]{\includegraphics{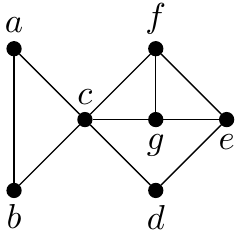}}\hfill
\par\medskip
    \subfigure[$F_{13}$\label{fig:F13}]{\includegraphics{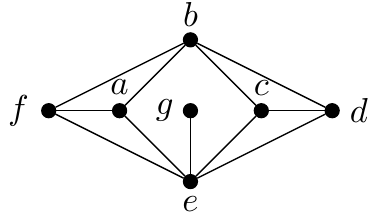}}\hfill
    \subfigure[$F_{14}$\label{fig:F14}]{\includegraphics{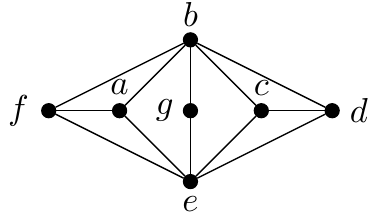}}\hfill
    \subfigure[$F_{15}$\label{fig:F15}]{\includegraphics{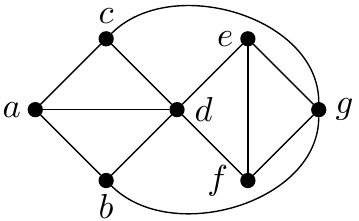}}
\caption{\label{fig:gB-forbidden-graphs}The fifteen forbidden induced subgraphs for $g_B$-perfect graphs.}
\end{figure}

\begin{theorem}[main result]\label{thm:gB-perfect-characterisation}
	For any graph $G$, the following are equivalent.
	\begin{enumerate}[(i)]
		\item $G$ is $g_B$-perfect.
		\item $G$ contains no induced $F_1, \ldots, F_{15}$ (see Figure~\ref{fig:gB-forbidden-graphs}).
		\item $G$ is an instance of one of the graph classes $E_1^{\cup}, E_2, \ldots, E_9$ (see Section~\ref{section:graph-classes} and Figure~\ref{fig:explicit-structures}).
	\end{enumerate}
\end{theorem}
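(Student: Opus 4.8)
The plan is to establish the cyclic chain of implications $(i) \Rightarrow (ii) \Rightarrow (iii) \Rightarrow (i)$. The first implication is the \emph{forbidden-subgraph} direction and follows by contraposition together with the heredity of $g_B$-perfection noted in the excerpt: it suffices to prove that none of the fifteen graphs $F_1, \ldots, F_{15}$ is itself $g_B$-perfect, since then any $G$ containing one of them as an induced subgraph fails to be $g_B$-perfect as well. Concretely, for each $F_i$ I would exhibit an explicit winning strategy for Bob in the game $g_B$ on $F_i$ played with exactly $\omega(F_i)$ colours, establishing $\chi_{g_B}(F_i) > \omega(F_i)$. As each $F_i$ has few vertices and small clique number, this reduces to a finite case check in which Bob, moving first, forces within a couple of moves an uncoloured vertex all of whose available colours already appear in its neighbourhood.

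The implication $(ii) \Rightarrow (iii)$ is the structural heart of the theorem and the step I expect to present the main obstacle. The disconnected case is handled by reduction: if $G$ is disconnected and $\{F_1, \ldots, F_{15}\}$-free, then, since $G$ has at least two components, no single component can contain an induced $P_4$, $C_4$, split 3-star or double fan (otherwise, together with a vertex from another component it would induce $F_3 = P_4 \cup K_1$, $F_7 = C_4 \cup K_1$, $F_5$ or $F_9$, respectively), so Theorem~\ref{thm:gBB-perfect-characterisation} shows every component is an instance of $E_1$ and hence $G$ is an instance of $E_1^{\cup}$. For connected $G$ I would invoke the clique module decomposition announced in the abstract, decomposing $G$ into clique modules and classifying the admissible ways in which these modules may attach to one another. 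The forbidden graphs $F_1, \ldots, F_{15}$ then act as the minimal obstructions ruling out the bad attachment patterns, and a systematic case analysis over the decomposition should force $G$ into exactly one of the explicit families $E_2, \ldots, E_9$. The difficulty lies in the bookkeeping: one must verify that these obstructions suffice to eliminate \emph{every} configuration outside the listed families.

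For the remaining implication $(iii) \Rightarrow (i)$ I would show directly that every graph belonging to $E_1^{\cup}, E_2, \ldots, E_9$ is $g_B$-perfect. Because the definition requires $\chi_{g_B}(H) = \omega(H)$ for \emph{every} induced subgraph $H$, I would first check that each family is closed under taking induced subgraphs (up to membership in one of the other families), reducing the work to designing, for a representative of each class, a winning strategy for Alice in $g_B$ with only $\omega$ colours. Alice's strategy will exploit the rigid module structure of the $E_i$: she answers each of Bob's moves locally within the affected module, maintaining the invariant that no uncoloured vertex ever sees all $\omega$ colours in its neighbourhood. The disconnected family $E_1^{\cup}$ can be treated via the relationship between the games $g_B$ and $g_{B,B}$ recorded in Theorem~\ref{thm:disconnected-gB-perfect-characterisation} together with Theorem~\ref{thm:gBB-perfect-characterisation}. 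Since $(iii) \Rightarrow (i)$ recovers the starting class, closing the cycle completes the proof of the equivalence.
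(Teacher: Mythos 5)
Your overall architecture matches the paper, and three of your four pieces are essentially the paper's own: for (i) $\Rightarrow$ (ii) the paper gives exactly the strategies you describe for Bob with $\omega(F_i)$ colours (Theorem~\ref{thm:forbidden-graphs}); your disconnected reduction via $F_3 = P_4 \cup K_1$, $F_7 = C_4 \cup K_1$, $F_5$ and $F_9$ together with Theorem~\ref{thm:gBB-perfect-characterisation} is verbatim the paper's argument; and for (iii) $\Rightarrow$ (i) the paper likewise first proves heredity of the union of the classes (Lemma~\ref{lemma:E-is-hereditary}) and then gives class-by-class winning strategies for Alice, handling $E_1^\cup$ via Theorem~\ref{thm:disconnected-gB-perfect-characterisation} just as you suggest. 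The genuine gap is in the connected case of (ii) $\Rightarrow$ (iii), which you rightly call the heart of the theorem. You propose to drive the classification by the clique module decomposition announced in the abstract, but in the paper that decomposition serves only the $O(n^2)$ \emph{recognition} algorithm of Section~\ref{subsec:clique-module-decomposition}; it cannot carry the structural proof by itself. Contracting maximal clique modules produces a base graph that is again $\{F_1, \ldots, F_{15}\}$-free (it is an induced subgraph of $G$, one vertex per module), so the decomposition merely normalises: you are left with classifying all reduced connected $\{F_i\}$-free graphs, which is the original problem. Saying that the forbidden graphs ``rule out the bad attachment patterns'' restates the goal without supplying a method, and your plan gives the case analysis no place to begin.

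The missing idea is the \emph{dominating edge}. Since $F_2 = P_5$ and $F_6 = C_5$ are excluded, and a $3$-spider with thin legs contains an induced $F_3$ (delete a vertex of degree $3$), the theorem of Cozzens and Kelleher (Lemma~\ref{lemma:cozzens-kelleher}, applied in Corollary~\ref{general:dominating_edge}) guarantees that every connected $\{F_i\}$-free graph of order at least $2$ has a dominating edge $x_1x_2$. The paper then partitions the remaining vertices into $G_1 = N(x_1) \setminus N[x_2]$, $G_2 = N(x_2) \setminus N[x_1]$ and $G_3 = N(x_1) \cap N(x_2)$ (Definition~\ref{def:G1-G2-G3}), proves a battery of structural lemmas pinning down each part (each is close to an ear graph $K_r \vee (K_p \cup K_q)$, via Lemma~\ref{lemma:noncomplete}) and the adjacencies between parts (e.g.\ the ``all but one pair of components completely connected'' Lemma~\ref{lemma:all-but-one}), and only then runs the exhaustive case analysis of Lemma~\ref{lemma:caseanalysis} landing $G$ in one of $E_1^\cup, E_2, \ldots, E_9$. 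It is this global anchor, extracted from three of the excluded graphs, that turns the classification into a finite and tractable case analysis; without it, or some comparable substitute, your proposed analysis over clique modules does not get off the ground.
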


\begin{figure}[htb] \subfigure[$\Xi$-graph\label{fig:xi-graph}]{\includegraphics{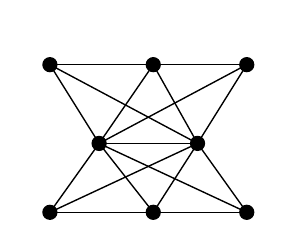}}\hfill
    \subfigure[triangle star\label{fig:triangle-star}]{\includegraphics{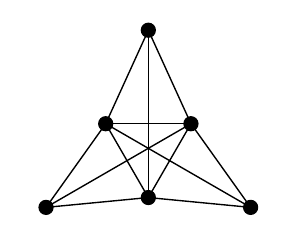}}\hfill
    \subfigure[two split 3-stars\label{fig:two-split-3stars}]{\parbox[b]{2.5cm}{\includegraphics{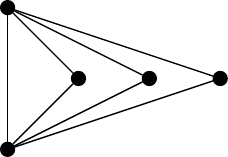}\\\includegraphics{figs/unlabelled-split-3-star.pdf}}}\hfill
    \subfigure[two double fans\label{fig:two-double-fans}]{\hspace*{0.3cm}\parbox[b]{1.7cm}{\includegraphics{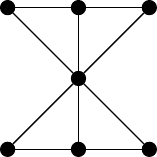}\par\medskip\includegraphics{figs/unlabelled-double-fan.pdf}}\hspace*{0.3cm}}\hfill
    \subfigure[split 3-star + double fan\label{fig:split-3-star-double-fan}]{\hspace*{0.7cm}\parbox[b]{2.3cm}{\includegraphics{figs/unlabelled-split-3-star.pdf}\par\medskip\includegraphics{figs/unlabelled-double-fan.pdf}}\hspace*{0.7cm}}
\caption{Some forbidden induced subgraphs for $g_A$-perfect graphs.}
\label{fig:gA-forbidden-graphs}
\end{figure}

We give a brief overview of the proof of Theorem~\ref{thm:gB-perfect-characterisation} and refer to the following sections for the remaining parts of the proof.

\begin{proof}[overview] We can assume that $G$ has two or more vertices. The implication (i) $\Rightarrow$ (ii) is proved in Theorem~\ref{thm:forbidden-graphs} of Section~\ref{section:forbidden}, where we give winning strategies for Bob on $F_1, \ldots, F_{15}$ with $\omega(F_i)$ colours.

For the implication (ii) $\Rightarrow$ (iii), let $G$ be a graph without induced $F_1, \ldots, F_{15}$. If $G$ is disconnected, it contains no induced $P_4$ or $C_4$,
hence, by Theorem~\ref{thm:gBB-perfect-characterisation}, each component is an
instance of $E_1$, implying $G \in E_1^\cup$.
Now assume that $G$ is connected. In Section~\ref{section:edge-decomposition}, we show that $G$ has a dominating edge and perform a dominating edge decomposition as well as a structural analysis of $G$. This leads to the case distinctions of 
Lemma~\ref{lemma:caseanalysis} in Section~\ref{section:case-distinctions}, classifying $G$ as an instance of $E_1^\cup,E_2, \ldots, E_9$.

Finally, to prove (iii) $\Rightarrow$ (i), let $G$ be an instance of~$E_1^\cup,E_2, \ldots, E_9$ and let $H$ be an induced subgraph of~$G$. By Lemma~\ref{lemma:E-is-hereditary} of Section~\ref{section:graph-classes}, $H$ is also an instance of~$E_1^\cup, E_2, \ldots, E_9$. In Section~\ref{section:strategies}, we present a strategy for each of these graph classes that allows Alice to win on any graph $H$ in that class with $\omega(H)$ colours, implying that $G$ is $g_B$-perfect.
\end{proof}

Recall that the $g_B$-chromatic number is lower-bounded by the clique number.
Our main result, Theorem~\ref{thm:gB-perfect-characterisation}, identifies a class of graphs for which the two invariants coincide, thus establishing a tight upper bound.
Whereas most results in the literature rely on the marking game to establish similar upper bounds,
Theorem~\ref{thm:gB-perfect-characterisation} is achieved by providing strategies for Alice that are not `colourblind' but rely on Alice's ability to recognise the specific colours that Bob has used.

Both characterisations obtained in Theorem \ref{thm:gB-perfect-characterisation} are instructive from an algorithmic perspective, as they each facilitate polynomial time checking of $g_B$-perfectness. The forbidden subgraph characterisation 
immediately yields the following $\Theta(n^7)$-time algorithm, where $n$ is the order of the graph: given a graph~$G$, check for any subgraph of $G$ with 5 or 7 vertices whether it is one of the fifteen forbidden graphs. Improving on this, we 
introduce a clique module decomposition technique in Section~\ref{section:complexity-results} which, together with the explicit structural characterisation of 
Theorems~\ref{thm:gBB-perfect-characterisation}-\ref{thm:gB-perfect-characterisation}, allows us to formulate an $O(n^2)$ time algorithm for checking whether a graph is $g_A$- or $g_B$-perfect. This yields the following complexity 
results, which are proved in Section~\ref{subsec:complexity-results}.

\begin{theorem}\label{thm:recognition}
	There is an $O(n^2)$ time algorithm deciding whether a graph $G$ with $n$ vertices is $g_B$-perfect (or $g_A$-perfect).
\end{theorem}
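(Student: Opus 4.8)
The plan is to bypass the forbidden-subgraph characterisation, which inherently costs $\Theta(n^7)$ because it enumerates $5$- and $7$-vertex subgraphs, and to work instead from the explicit structural descriptions of Theorems~\ref{thm:gBB-perfect-characterisation}--\ref{thm:gB-perfect-characterisation}. The key observation is that each class $E_1^\cup, E_2, \ldots, E_9$ arises from a simple host structure (a clique, a bounded fan- or star-like skeleton, or a disjoint union thereof) by substituting cliques of arbitrary size into designated vertices. A clique module decomposition of $G$ exposes precisely this substitution structure, so membership can be tested by recognising the host and then verifying the size and adjacency constraints that each class imposes on its modules.

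Concretely, I would first compute the connected components of $G$ in $O(n^2)$ time. If $G$ is disconnected, Theorem~\ref{thm:disconnected-gB-perfect-characterisation} reduces $g_B$-perfectness to $g_{B,B}$-perfectness, so by Theorem~\ref{thm:gBB-perfect-characterisation}(iii) it suffices to verify that every component is an instance of $E_1$, i.e.\ $G \in E_1^\cup$. If $G$ is connected, I would locate a dominating edge and carry out the dominating-edge decomposition underlying Theorem~\ref{thm:gB-perfect-characterisation}(iii), reading off the module-structured pieces that distinguish $E_2, \ldots, E_9$ and matching them against the finitely many templates. For $g_A$-perfectness I would instead apply Theorem~\ref{thm:gA-gAB-perfect-characterisation}(iv): all components but one must be instances of $E_1$, and the exceptional component must possess a dominating vertex $v$ with $G-v$ being $g_{B,B}$-perfect; since the universal vertices of a component are mutually twin, deleting a single representative and re-running the $E_1$-test once suffices, avoiding any factor of $n$.

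The workhorse is the clique module decomposition itself. I would compute it by partition refinement on closed neighbourhoods, grouping vertices that are true twins (those with identical closed neighbourhoods); this runs in $O(n^2)$ on the adjacency matrix. Once the modules are identified, each host structure is a simple graph whose recognition (for instance, testing whether the quotient is a star, a path, or a disjoint union of cliques of the prescribed form) is itself achievable in $O(n^2)$, and the only remaining data are the module cardinalities, which are validated against each class's constraints in $O(n)$ time.

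The main obstacle I anticipate is organisational rather than conceptual. There are many classes $E_i$, each with its own module types and side conditions (degrees relative to the dominating edge, sizes and parities of the substituted cliques, connectivity of the residual pieces), and one must argue that detecting the dominating edge, forming the decomposition, and matching against every template can be interleaved so that the total running time never exceeds $O(n^2)$. In particular, care is needed to ensure that no step silently reintroduces the cubic-or-worse cost that a naive \emph{try every vertex or edge and recurse} implementation would incur.
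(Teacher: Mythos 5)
Your overall architecture matches the paper's: the clique module decomposition via true-twin partition refinement in $O(n^2)$ is exactly the paper's Algorithm~\ref{algo:clique-module-decomposition} (the maximal clique modules are precisely the classes of vertices with identical closed neighbourhoods), the per-class membership tests run on the base graph, and your $g_A$ test via Theorem~\ref{thm:gA-gAB-perfect-characterisation}(iv) with a single deletion of a dominating vertex is the paper's subroutine. However, the central step of your connected-$g_B$ test has a genuine gap: you propose to \emph{locate a dominating edge and carry out the dominating-edge decomposition}, but you give no $O(n^2)$ method for finding a dominating edge. Verifying one candidate edge costs $\Theta(n)$ and a graph may have $\Theta(n^2)$ edges, so the naive search costs $\Theta(n^3)$; no faster procedure is stated, and none is obvious. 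Moreover, the dominating-edge decomposition of Sections~\ref{section:edge-decomposition}--\ref{section:case-distinctions} is a proof device: the case analysis of Lemma~\ref{lemma:caseanalysis} is exhaustive only under the hypothesis that $G$ contains no induced $F_1,\ldots,F_{15}$, so on an arbitrary input you would still have to convert each of its many structural side conditions into explicit checks. Your phrase \emph{matching against the finitely many templates} is also inaccurate for $E_5$, $E_7$, $E_8$ and $E_9$, whose base graphs are unbounded (a cocobi on $2k+2$ vertices, $K_{2,m}$-type and $K_{3,m}$-type skeletons, and arbitrary (almost) complete bipartite graphs, respectively), although your later remarks about validating module cardinalities partially address this.

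The paper sidesteps both problems by never computing a dominating edge at all: it tests membership in each class $E_1^\cup, E_2, \ldots, E_9$ directly on the base graph $B_G$ --- peeling a dominating vertex and isolated vertices and checking for a null graph or $P_3$ (for $E_1$), brute-force isomorphism against the constant-size base graphs of $E_2$, $E_3$, $E_4$ and $E_6$, and bipartiteness combined with regularity or edge-count conditions for $E_5$, $E_7$, $E_8$ and $E_9$ --- each subroutine running in $O(n^2)$. Your proposal would become correct if you dropped the dominating-edge step entirely and replaced the template matching with such direct per-class subroutines; as written, the bottleneck you yourself flag in your final paragraph is left unresolved, so the $O(n^2)$ bound is not established.
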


\begin{corollary}\label{corollary:alice-winning-time}
Alice can win on any $g_A$- or $g_B$-perfect graph $G$ with $\omega(G)$ colours using only $O(n^2)$ computational time.
\end{corollary}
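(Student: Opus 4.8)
The plan is to combine the recognition algorithm of Theorem~\ref{thm:recognition} with the explicit winning strategies underlying the implication (iii) $\Rightarrow$ (i) of Theorem~\ref{thm:gB-perfect-characterisation}, and then to argue that these strategies can be executed within the claimed time bound. First I would run the $O(n^2)$ recognition procedure on the input graph $G$. Besides certifying that $G$ is $g_B$-perfect (or $g_A$-perfect), this procedure---which relies on the clique module decomposition of Section~\ref{section:complexity-results}---simultaneously identifies which of the structure classes $E_1^\cup, E_2, \ldots, E_9$ the graph belongs to and produces the associated structural data (the clique modules, the dominating edge or vertex, and so on). All of this is computed once, before the game begins, in $O(n^2)$ time.

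The second ingredient is the family of strategies from Section~\ref{section:strategies}. For each class $E_i$, that direction of the proof exhibits a concrete winning strategy for Alice with $\omega(G)$ colours; the case of $g_A$-perfect graphs reduces, via Theorem~\ref{thm:gA-gAB-perfect-characterisation}(iv), to first colouring the dominating vertex $v$ of $C_1$ and then applying the $g_{B,B}$-strategy on the remainder. Throughout the play I would maintain a small collection of data structures: for each uncoloured vertex, the set (or count) of colours appearing in its neighbourhood, together with the partition of the vertices induced by the precomputed decomposition and a record of which parts are already coloured. After any move by either player, only the neighbours of the newly coloured vertex change state, so each such update costs $O(n)$ time; since the game consists of at most $n$ moves, all updates together cost $O(n^2)$.

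It then remains to show that each of Alice's at most $\lceil n/2 \rceil$ moves can be \emph{selected} in $O(n)$ time. Here I would go through the strategies class by class and verify that each decision---which vertex to colour next and which admissible colour to give it---is determined by quantities that the maintained data structures expose in $O(n)$ time: identifying a still-threatening uncoloured vertex, reading off the forbidden colours in its neighbourhood, and choosing a free colour. Summing $O(n)$ per move over $O(n)$ moves gives $O(n^2)$ for the play itself, and adding the $O(n^2)$ preprocessing yields the stated bound.

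The main obstacle is precisely this last step: the strategies for the nine structure classes (and their subcases) are described in the language of the decomposition rather than algorithmically, so one has to confirm, for every class, that the strategic choice reduces to a constant number of lookups and scans over the bookkeeping, each costing $O(n)$. Because all global structural information is fixed by the one-off decomposition and every move perturbs only a single closed neighbourhood, I expect each individual verification to be routine, but the aggregate over all cases to be the genuinely laborious part of the argument.
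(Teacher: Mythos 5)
Your proposal is correct and follows essentially the same route as the paper: run the $O(n^2)$ recognition of Theorem~\ref{thm:recognition} to identify the class $E_1^\cup, E_2, \ldots, E_9$ (or, for $g_A$, the dominating-vertex structure of Theorem~\ref{thm:gA-gAB-perfect-characterisation}), then execute the corresponding strategy from Section~\ref{section:strategies}. The only difference is that the paper dismisses the per-move cost with ``it is easy to check that following instructions from each strategy requires at most quadratic time,'' whereas you make this explicit via $O(n)$-time neighbourhood updates and move selection over $O(n)$ moves --- a legitimate filling-in of the same argument rather than a different approach.
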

\enlargethispage{0.5cm}


It is a standard exercise to show that \textsc{Hamilton Cycle}, the problem of deciding whether a graph has a Hamilton cycle, is NP-complete even for bipartite graphs \citep{krishnamoorthy}, 
which form a subset of the perfect graphs.
In Corollary~\ref{corollary:hamilton}, we see that this is no longer the case for game-perfect graphs. We would like to note that this complements results by \citet{hochstaettler1995} 
and \citet{babeletal}
concerning graphs with few $P_4$s.
Similarly, we expect other problems that are NP-complete for perfect graphs to be in P for game-perfect graphs.

\begin{corollary}\label{corollary:hamilton}
	\textsc{Hamilton Cycle} is in P for $g_A$- and $g_B$-perfect graphs.
\end{corollary}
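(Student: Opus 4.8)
The plan is to sidestep the game itself and argue purely from the explicit structural characterisations: by Theorems~\ref{thm:gA-gAB-perfect-characterisation} and~\ref{thm:gB-perfect-characterisation}, a $g_A$- or $g_B$-perfect graph belongs to one of finitely many explicitly described classes ($E_1^\cup, E_2, \ldots, E_9$, together with the ``dominating vertex plus $g_{B,B}$-perfect remainder'' description of the $g_A$-perfect graphs), and each such class is obtained from a \emph{bounded-size} core graph by substituting cliques and independent sets (that is, trivial modules) into its vertices. I would first make this precise: the $g_{B,B}$-perfect components (the instances of $E_1$) are trivially perfect and hence have clique-width at most~$3$; substituting cliques or independent sets for vertices raises clique-width only by a bounded amount; and adjoining the single dominating vertex of the $g_A$-characterisation, or accounting for the bounded interconnection pattern exposed by the dominating-edge decomposition of Section~\ref{section:edge-decomposition}, costs only a bounded number of extra labels. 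Consequently every $g_A$- and $g_B$-perfect graph has clique-width bounded by a universal constant, and a corresponding clique-width expression can be read off from the structural description in polynomial time; indeed the clique module decomposition underlying Theorem~\ref{thm:recognition} already exposes the required modules.

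Given a clique-width expression of bounded width, I would then invoke the known fact that \textsc{Hamilton Cycle} is solvable in polynomial time on graphs of bounded clique-width. Combined with the $O(n^2)$ recognition procedure of Theorem~\ref{thm:recognition}, which also outputs the structural class and its parameters, this yields the decision procedure: recognise the class, extract the bounded-width clique-width expression, and run the Hamiltonicity algorithm. Alternatively, and entirely within the paper's own framework, one can avoid clique-width altogether and argue by a finite case analysis over $E_1^\cup, E_2, \ldots, E_9$: in each class a Hamilton cycle must traverse the bounded core while threading through the substituted cliques and independent sets, so its existence reduces to a handful of inequalities between the sizes of these modules (of the type familiar from split and threshold graphs, where an independent module cannot exceed the surrounding structure), all checkable in polynomial time.

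The main obstacle is the second ingredient rather than the first. Because \textsc{Hamilton Cycle} is expressible in $\mathrm{MSO}_2$ but \emph{not} in $\mathrm{MSO}_1$, bounded clique-width does not confer tractability through the generic Courcelle-type metatheorem, so one must appeal to the specialised dynamic-programming algorithm for Hamiltonicity on clique-width expressions. If instead one prefers the self-contained structural route, the obstacle shifts to verifying, class by class, that the reduction to module-size inequalities is both correct and exhaustive; this is where the explicit descriptions of $E_2, \ldots, E_9$ must be handled with care. Either way, the argument complements the results of \citet{hochstaettler1995} and \citet{babeletal} for graphs with few $P_4$s, the point being that a $g_B$-perfect graph may contain arbitrarily many induced $P_4$s (as a long induced path already does) while still admitting the bounded structure that makes Hamiltonicity tractable.
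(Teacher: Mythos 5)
Your route is genuinely different from the paper's. The paper never mentions clique-width: for $g_A$-perfect graphs it observes they are $P_4$-free, hence $(4,0)$-graphs, and invokes the linear-time Hamiltonicity algorithm of \citet{babeletal}; for $g_B$-perfect graphs it carries out exactly the ``self-contained structural route'' you sketch as an alternative, deriving for each class $E_1^\cup, E_2, \ldots, E_9$ an explicit Hamiltonicity criterion from three elementary observations (a graph is Hamiltonian if its base graph is; a cut vertex forbids Hamiltonicity; $K_{m,n}$ and $K_{m,n}-e$ are Hamiltonian iff $m=n\geq 2$, resp.\ $m=n\geq 3$), all checkable after the $O(n^2)$ recognition of Theorem~\ref{thm:recognition}. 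Note that the paper's criteria are not only size inequalities: for $E_2$ the graph is never Hamiltonian (cut vertex), for $E_3$ and $E_4$ Hamiltonicity is equivalent to the presence of the optional vertex, and for $E_7$/$E_8$ it reduces to the base graph being $K_{2,2}$/$K_{3,3}$; so your ``handful of inequalities'' framing would need these presence/shape conditions as well. What your clique-width route buys is genericity (one metatheorem-style argument, correctly routed through the specialised dynamic programming of Espelage, Gurski and Wanke rather than $\mathrm{MSO}_1$-Courcelle, since Hamiltonicity is not $\mathrm{MSO}_1$-expressible); what the paper's route buys is an elementary, fully explicit $O(n^2)$ procedure with no black-box algorithm.

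Two concrete repairs are needed in your version. First, the claim that every class ``is obtained from a bounded-size core graph by substituting cliques and independent sets'' fails for $E_5$: there the base graph produced by the clique module decomposition is the cocobi itself --- two cliques of unbounded size joined by a perfect matching pattern --- which is prime under modular decomposition, so neither the bounded-core picture nor the clique module decomposition ``exposes'' what you need. The conclusion (bounded clique-width) is nevertheless true, but it must be established by a direct labelled construction, e.g.\ introducing each pair $(A_i, Z_i)$ with two fresh labels, joining each to its own side's accumulated clique label and to its partner, then relabelling; a constant number of further labels handles $x_1, x_2, A_R, Z_R$. Analogous (easier) direct constructions cover $E_7$--$E_9$, whose base graphs are also unbounded but complete-bipartite-like. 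Second, your closing parenthetical is false: a long induced path is \emph{not} $g_B$-perfect, since $P_5$ is the forbidden graph $F_2$ (Theorem~\ref{thm:gB-perfect-characterisation}). The correct witnesses that $g_B$-perfect graphs can contain many induced $P_4$s, defeating the $(q,q-4)$-approach, are the $E_5$ instances themselves (e.g.\ $z_i, z_j, a_j, a_l$ induces a $P_4$ in a cocobi), which is precisely why the paper abandons \citet{babeletal} beyond $E_1^\cup$, $E_2$ and $E_7$ and argues structurally for the remaining classes.
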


\subsection{Organisation}
The rest of this paper is structured as follows. In Section~\ref{section:graph-classes} we introduce the classes occurring in the structural characterisation of $g_B$-perfect graphs in Theorem~\ref{thm:gB-perfect-characterisation}~(iii). Sections~\ref{section:forbidden} -- \ref{section:strategies} are devoted to the proof of Theorem~\ref{thm:gB-perfect-characterisation}. In Section~\ref{section:complexity-results}, we state our complexity results (Theorem~\ref{thm:recognition} and Corollary~\ref{corollary:hamilton}). We conclude by discussing the implications of our work to open problems in Section~\ref{section:further-work}.

\section{Notation and the classes $E_1$ to $E_9$}\label{section:graph-classes}
$P_n$ and $C_n$ denote the path and cycle graph with $n$ vertices, $K_n$ is the complete graph (or {clique}) with $n$ vertices and $K_{m,n}$ is the complete bipartite graph with vertex partitions of size $m$ and $n$. 
A graph is \emph{null} if it has no vertices. For any graph~$G$, let $V(G)$~and $E(G)$ denote the set of its vertices and edges, respectively. Denote by $N_G(v)$ the set of neighbours of $v$ in $G$ and by $N_G[v] := N_G(v) \cup \{v\}$ the set of neighbours of $v$ together with $v$ itself. We omit the subscript when $G$ is clear from context.

\begin{figure}[htb]
    \subfigure[$E_1$: ear animal\label{fig:E1}]{\includegraphics{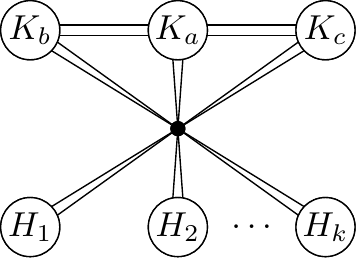}}\hfill
    \subfigure[$E_2$: ear bull\label{fig:E2}]{\includegraphics{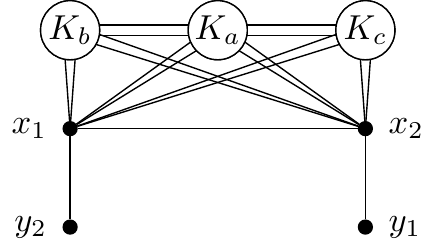}}\hfill
    \subfigure[$E_3$: expanded dragon / 4-wheel\label{fig:E3}]{\includegraphics{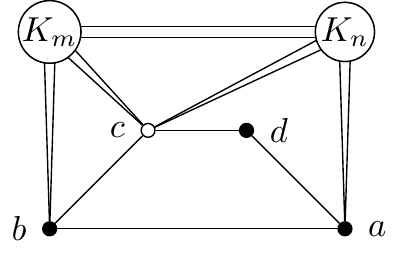}}\par\medskip
    \subfigure[$E_4$: expanded dragon / $K_{3,3}-e$\label{fig:E4}]{\hspace*{0.1cm}\includegraphics{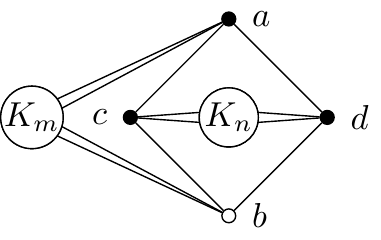}\hspace*{0.1cm}}\hfill
    \subfigure[$E_5$: expanded cocobi\label{fig:E5}]{\includegraphics{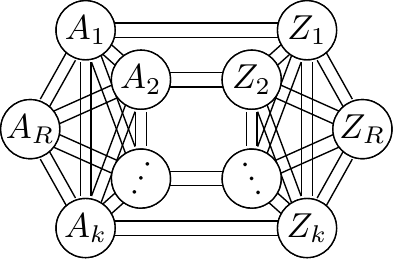}}\hfill
    \subfigure[$E_6$: expanded bull / house\label{fig:E6}]{\includegraphics{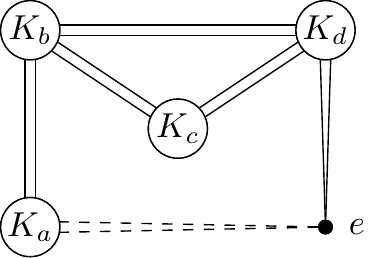}}\par\medskip
    \subfigure[$E_7$: exp. $K_{2,m}-e$ / $K_{2,m}$\label{fig:E7}]{\includegraphics{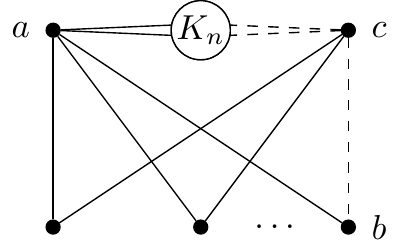}}\hfill
    \subfigure[$E_8$: expanded $K_{3,m}$\label{fig:E8}]{\includegraphics{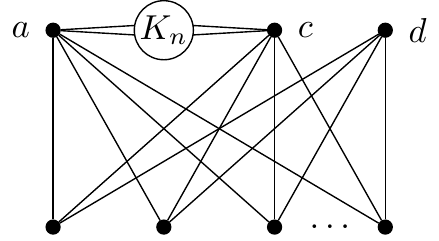}}\hfill
    \subfigure[$E_9$: (almost) complete bipartite\label{fig:E9}]{\hspace*{0.4cm}\includegraphics{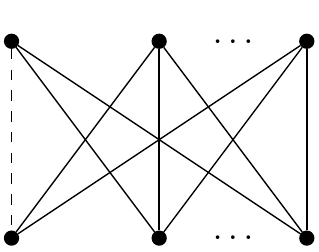}\hspace*{0.4cm}}
		\centering
		\includegraphics{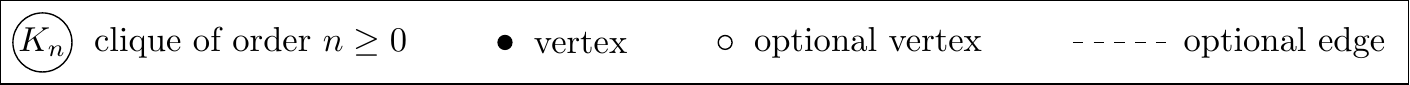}
	\caption{The nine structural possibilities for connected $g_B$-perfect graphs.}
	\label{fig:explicit-structures}
\end{figure}

\begin{table}[htb]
\begin{center}
\begin{tabular}{lll}
\begin{tabular}[t]{|l|l|}
\hline
\multicolumn{2}{|c|}{Class $E_1^\cup$}\\
\hline
$K$\phantom{cmcmc}&class of $G-K$\\
\hline
centre&$E_1^\cup$\\  
$K_a$&$E_1^\cup$\\  
$K_b$/$K_c$&$E_1^\cup$\\  
$H_i$&$E_1^\cup$\\
\hline
\end{tabular}\quad
&\begin{tabular}[t]{|l|l|}
\hline
\multicolumn{2}{|c|}{Class $E_{2}$}\\
\hline
$K$\phantom{cmcmc}&class of $G-K$\\
\hline
$x_1$/$x_2$&$E_1^\cup$\\
$y_1$/$y_2$&$E_1$\\  
$K_a$&$E_2$ by def.\\  
$K_b$/$K_c$&$E_6$\\  
\hline
\end{tabular}\quad
&\begin{tabular}[t]{|l|l|}
\hline
\multicolumn{2}{|c|}{Class $E_{3}$}\\
\hline
$K$\phantom{cmcmc}&class of $G-K$\\
\hline
$a$&$E_1^\cup$\\
$b$&$E_6$\\  
$c$&$E_3$ by def.\\
$d$&$E_5$\\
$K_m$&$E_7$ or $E_1$\\  
$K_n$&$E_6$ or $E_5$\\  
\hline
\end{tabular}\\[4ex]
\begin{tabular}[t]{|l|l|}
\hline
\multicolumn{2}{|c|}{Class $E_{4}$}\\
\hline
$K$\phantom{cmcmc}&class of $G-K$\\
\hline
$a$&$E_4$ or $E_1^\cup$\\
$b$&$E_4$ by def.\\  
$c$/$d$&$E_4$ or $E_5$\\
$K_m$&$E_7$\\  
$K_n$&$E_7$ or $E_1$\\  
\hline
\end{tabular}\quad
&\begin{tabular}[t]{|l|l|}
\hline
\multicolumn{2}{|c|}{Class $E_{5}$}\\
\hline
$K$\phantom{cmcmc}&class of $G-K$\\
\hline
$A_i$/$Z_i$&$E_5$ or $E_1^\cup$\\ 
$A_R$/$Z_R$&$E_5$ by def.\\ 
\hline
\end{tabular}\quad
&\begin{tabular}[t]{|l|l|}
\hline
\multicolumn{2}{|c|}{Class $E_{6}$}\\
\hline
$K$\phantom{cmcmc}&class of $G-K$\\
\hline
$e$/$K_a$&$E_1$\\
$K_b$/$K_d$&$E_5$ or $E_1^\cup$\\  
$K_c$&$E_5$\\  
\hline
\end{tabular}\\[4ex]
\begin{tabular}[t]{|l|l|}
\hline
\multicolumn{2}{|c|}{Class $E_{7}$}\\
\hline
$K$\phantom{cmcmc}&class of $G-K$\\
\hline
$a$&$E_1^\cup$\\
$b$&$E_7$ or $E_1^\cup$\\  
$c$&$E_1$\\
lower$\,\neq b$&$E_7$ by def.\\
$K_n$&$E_9$\\  
\hline
\end{tabular}\quad
&\begin{tabular}[t]{|l|l|}
\hline
\multicolumn{2}{|c|}{Class $E_{8}$}\\
\hline
$K$\phantom{cmcmc}&class of $G-K$\\
\hline
$a$/$c$&$E_7$\\
$d$&$E_7$\\
lower&$E_8$ by def.\\
$K_n$&$E_9$\\  
\hline
\end{tabular}\quad
&\begin{tabular}[t]{|l|l|}
\hline
\multicolumn{2}{|c|}{Class $E_{9}$}\\
\hline
$K$\phantom{cmcmc}&class of $G-K$\\
\hline
any&$E_9$ by def.\\
\hline
\end{tabular}
\end{tabular}
\end{center}
\caption{\label{table:hereditary} An illustration of the hereditary property of $E$. We list for each $G \in E_1^\cup,E_2, \ldots, E_9$ the class to which $G-K$ belongs, where $K$ is a maximal clique module of $G$. Two cases are 
given in some entries depending on the existence of the optional vertex or the optional edges.}
\end{table}

Given two graphs $G$ and $H$ with disjoint vertex sets, their \emph{union} $G \cup H$ is a graph on $V(G) \cup V(H)$ with edges $E(G) \cup E(H)$ and their \emph{join} $G \vee H$ is defined as the same graph with additional edges $vw$ for every pair $(v,w)\in V(G)\times V(H)$. If $G$ and $H$ are not disjoint, we implicitly make isomorphic vertex-disjoint copies and proceed as above. Two subsets $S, T \subseteq V(G)$ in a graph $G$ are \emph{completely connected} if each vertex in $S$ is adjacent to each vertex in $T$ and \emph{disconnected} if no edge exists between $S$ and $T$. (Hence $V(G)$ and $V(H)$ are completely connected in $G \vee H$ and disconnected in $G \cup H$.)
The subsets $S,T$ are \emph{partially connected} if they are neither completely connected nor disconnected.

We define the graph classes $E_1, \ldots, E_9$ by means of Figure~\ref{fig:explicit-structures} together with the following clarifications. Unless stated otherwise, large circles denote complete subgraphs of order at least 1 and 
small circles denote a single vertex. If the small circle is filled, then the vertex must be present in every graph of the graph class, otherwise (if it is hollow) it may be omitted. If two complete subgraphs or vertices are visually 
linked, they are completely connected, otherwise they are disconnected. Dashed lines indicate that the subgraphs or vertices may be either completely connected or disconnected. For $E_1$ we have $k \geq 0$ and for $E_5$ we 
have $k \geq 1$.

In the graph class $E_1$, we allow the large circles to be null graphs. Hence in terms of our notation introduced above, a graph $G$ is in $E_1$ if and only if it consists of a subgraph $H_0 := K_a \vee (K_b \cup K_c)$ with $a,b,c \geq 0$ and any number $k \geq 0$ of complete subgraphs $H_1, \ldots, H_k$ that are all completely connected to a dominating vertex. Additionally, we define $E_1^\cup$ as the graph class consisting of all graphs whose connected components are in $E_1$. 
In particular, the \emph{null graph} $K_0$ is a member of~$E_1^\cup$ (and of $E_9$). In $E_2$ we allow $K_a$ to be null, and in $E_5$ we allow $A_R$ and $Z_R$ to be null. Finally, in any graph $G \in E_7$, vertex~$c$ is completely connected to $K_n$ or vertex $b$ (or both).


Many of the $E_i$ can be considered as expanded forms of simple \emph{base graphs} obtained by replacing single vertices with complete graphs and respecting edge relations. In Subsection~\ref{subsec:clique-module-decomposition} we show that these base graphs can be recovered from an expanded graph by means of a clique module decomposition technique. The \emph{base graphs} of $E_5$, for instance, can be described as complements of complete bipartite graphs minus an almost maximal matching, or \emph{cocobi} for short. This gives rise to our names for classes $E_3$ to $E_9$. Finally, the names of $E_1$ and $E_2$ have historic reasons. In the following lemma, we show that the union of all our classes,
\[
E := E_1^\cup \cup \bigcup_{i=2}^9 E_i,
\]
is hereditary.

\begin{lemma}\label{lemma:E-is-hereditary}
	Any induced subgraph of a graph in $E$ is also in $E$.
\end{lemma}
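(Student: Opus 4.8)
The plan is to prove hereditarity by structural induction, exploiting the recursive self-similarity of the classes $E_1^\cup, E_2, \ldots, E_9$ that is already tabulated in Table~\ref{table:hereditary}. The key observation is that each class $E_i$ is built from a small number of \emph{clique modules} (the large circles in Figure~\ref{fig:explicit-structures}, together with the distinguished single vertices), and that deleting vertices can be decomposed into two elementary operations: deleting an entire maximal clique module $K$, and deleting a single vertex from within a clique module of size at least~$2$. The latter operation leaves the class membership unchanged, since shrinking a large circle (which already ranges over complete graphs of arbitrary order) keeps the graph in the same class. Hence it suffices to understand what happens under deletion of a full maximal clique module, which is exactly the content of Table~\ref{table:hereditary}.

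First I would formalise the reduction to clique-module deletion. Given $G \in E$ and an induced subgraph $H$, I would remove the vertices of $V(G) \setminus V(H)$ one clique module at a time, ordered so that at each step I either delete all remaining vertices of a maximal clique module or delete a proper subset of such a module. Deleting a proper nonempty subset of a clique module of size $\geq 2$ reduces the order of the corresponding large circle but preserves all edge relations, so the result stays in the same $E_i$; this handles the ``within-module'' case immediately by the definitions in Section~\ref{section:graph-classes}. The remaining task is to verify that deleting an entire maximal clique module $K$ from $G \in E_i$ yields a graph $G - K$ lying in $E$, and to track which class it lands in.

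Next I would verify, class by class, the entries of Table~\ref{table:hereditary}, which asserts for each $E_i$ and each type of maximal clique module~$K$ the class of $G - K$. This is the computational heart of the argument and, as the caption notes, some entries branch into two cases according to whether an optional (hollow) vertex or optional (dashed) edge is present. For each class one enumerates the (finitely many) isomorphism types of maximal clique module---these are the labelled circles and vertices in the corresponding subfigure of Figure~\ref{fig:explicit-structures}---and checks that excising that module, together with its incident edges, produces a graph whose remaining structure matches the diagram of some $E_j$ (possibly with some large circles now null, which is permitted). Since every target class in the table is again one of $E_1^\cup, E_2, \ldots, E_9$, closure of $E$ under maximal-clique-module deletion follows, and structural induction on the number of deleted modules completes the proof.

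The main obstacle I expect is the sheer bookkeeping: correctly identifying the maximal clique modules of each $E_i$ (a vertex that looks isolated as a single circle may merge with an adjacent clique into one larger module once optional edges are present, changing what counts as ``maximal''), and handling the optional vertices and dashed edges so that every branch is covered. In particular, for classes such as $E_7$, where vertex $c$ must be completely connected to $K_n$ or to $b$, and $E_2$, where $K_a$ may be null, one must take care that deleting a neighbouring module does not violate these side conditions or, if it does, that the degenerate graph still falls into $E_1^\cup$ or another class. The argument itself is routine case-checking; the difficulty lies entirely in ensuring the case analysis of Table~\ref{table:hereditary} is exhaustive and that each claimed target class is correct.
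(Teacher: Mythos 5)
Your proposal is correct and follows essentially the same route as the paper: the paper's proof likewise reduces everything to Table~\ref{table:hereditary}, observing that deleting a vertex inside a clique module of size at least~$2$ merely shrinks a circle (preserving the class), while deleting a whole maximal clique module lands in the class recorded in the table, and then peels off the vertices of $V(G)\setminus V(H)$ one at a time by induction. The only cosmetic difference is that you organise the induction module-by-module rather than vertex-by-vertex, which changes nothing substantive.
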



\begin{proof}
    With Table~\ref{table:hereditary}, the reader will easily verify that removing a single vertex~$v$ from a graph $G$ in $E$ indeed results in a graph $G-v$ that is in $E$. Fix an induced subgraph $H$ of $G \in E$. Then removing 
vertices $V(G)\setminus V(H)$ from~$G$ one by one yields $H$ and the graph remains in $E$ after each step.
\end{proof}

\section{The forbidden induced subgraphs $F_1$ to $F_{15}$}\label{section:forbidden}
Here we prove the implication (i) $\Rightarrow$ (ii) of Theorem~\ref{thm:gB-perfect-characterisation}, restated as Theorem~\ref{thm:forbidden-graphs} below.

\begin{theorem}\label{thm:forbidden-graphs}
    If $G$ is a $g_B$-perfect graph, then it contains no induced $F_1, \ldots, F_{15}$ from Figure~\ref{fig:gB-forbidden-graphs}.
\end{theorem}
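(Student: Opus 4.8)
The plan is to exploit the fact that $g_B$-perfectness is a hereditary property: a graph is $g_B$-perfect precisely when all of its induced subgraphs $H$ satisfy $\chi_{g_B}(H)=\omega(H)$, so every induced subgraph of a $g_B$-perfect graph is again $g_B$-perfect. Consequently, to prove that a $g_B$-perfect $G$ contains none of $F_1,\ldots,F_{15}$ as induced subgraphs, it suffices to show that none of the fifteen graphs in Figure~\ref{fig:gB-forbidden-graphs} is itself $g_B$-perfect. Since $\omega(H)\le\chi(H)\le\chi_{g_B}(H)$ holds for every graph, each $F_i$ fails to be $g_B$-perfect as soon as we exhibit one induced subgraph of $F_i$ on which Alice cannot win with the corresponding clique number of colours; the cleanest route is to take this subgraph to be $F_i$ itself and to display a winning strategy for Bob in the game $g_B$ on $F_i$ using exactly $\omega(F_i)$ colours, thereby certifying $\chi_{g_B}(F_i)>\omega(F_i)$.

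For each $F_i$ I would organise Bob's strategy around a fixed target vertex $v$ of degree at least $\omega(F_i)$: Bob wins if he can force $v$ to stay uncoloured while its neighbourhood accumulates all $\omega(F_i)$ colours. Recall that Bob moves first. His opening move colours a well-chosen neighbour of the target, and on each subsequent turn he either extends this threat by introducing a new colour into $N(v)$ or, once Alice has parried it, switches to a second, independently prepared target. The graphs split naturally by clique number. For the triangle-free members (the chair $F_1$, $F_2=P_5$, $F_3=P_4\cup K_1$, the split $3$-star $F_5$, $F_6=C_5$ and $F_7=C_4\cup K_1$) we have $\omega=2$, so Bob need only manufacture a degree-$\geq 2$ vertex with two differently coloured neighbours; for the remaining graphs, which contain triangles (the fans $F_4,F_9$, the wheel $F_8$, and $F_{10},\ldots,F_{15}$), $\omega\geq 3$ and Bob must build up three or more colours in a common neighbourhood, where the extra vertices of $F_{10},\ldots,F_{15}$ are exactly what furnish him with a second, unavoidable threat. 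Because each $F_i$ has at most seven vertices, the game tree is finite and the verification for each graph reduces to a short, explicit case check against Alice's possible replies.

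The main obstacle I anticipate is the no-pass constraint peculiar to the game $g_B$. Unlike in $g_{B,B}$, Bob is forced to colour a vertex on every one of his turns, and the standing risk is that he is driven into a zugzwang in which his only legal moves actively help Alice complete the colouring rather than maintain his threat. This is precisely the feature that makes $g_B$-perfectness a strictly weaker (and richer) condition than $g_{B,B}$-perfectness, and it explains why $P_4$ and $C_4$ are \emph{not} on the present list while the larger graphs $F_1,\ldots,F_{15}$ are. Thus the delicate part of each case is not merely setting up the initial threat but arguing that Bob always has a reply which preserves a winning threat and never compels him to abandon it; maintaining two simultaneous threats, so that Alice's single reply can neutralise at most one of them, is the recurring device that guarantees Bob such a threat-preserving move at every stage.
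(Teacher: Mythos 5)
Your high-level reduction is exactly the one the paper uses: $g_B$-perfectness is hereditary, so it suffices to exhibit, for each $i$, a winning strategy for Bob in $g_B$ on $F_i$ with $\omega(F_i)$ colours (the paper cites prior work for $F_1,\ldots,F_9$ and gives explicit strategies for $F_{10},\ldots,F_{15}$). However, your sketch has a concrete error and a substantive gap. The error: the split 3-star $F_5$ is \emph{not} triangle-free. It is $K_2 \vee \overline{K_3}$ (two adjacent centre vertices, each joined to three independent leaves), so $\omega(F_5)=3$; your plan to beat Alice there in a 2-colour game proves nothing, since the theorem requires Bob to win with \emph{three} colours on $F_5$. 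The gap: the entire mathematical content of this theorem lies in the fifteen case analyses, and you supply none of them. Appealing to the finiteness of the game tree on at most seven vertices shows the claim is \emph{checkable}, not that it is \emph{true}; a proof must either carry out the case checks or cite them, as the paper does for $F_1,\ldots,F_9$.

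More importantly, your uniform device --- fix one target $v$ of degree at least $\omega(F_i)$ and maintain two simultaneous surround threats --- does not describe how the wins on $F_{10},\ldots,F_{15}$ actually go, so it is doubtful the sketch could be completed as written. The paper's recurring endgame is of a different kind (Observation~\ref{obs:uncoloured-P4-C4}): Bob steers the game into a position where the remaining \emph{uncoloured} vertices induce a $P_4$ or $C_4$ admitting only two of the three colours, with Alice to move; since the first player loses $P_4$ and $C_4$ with two colours, Alice is then lost even though no vertex is yet surrounded. To reach such positions Bob repeatedly \emph{re-uses} a colour already on the board (on $F_{11}$ he answers $a$ red with $g$ red; on $F_{15}$ he plays $f$ red), a move that introduces no new colour into any target neighbourhood and therefore falls outside your taxonomy of Bob's moves (``introduce a new colour into $N(v)$ or switch to a second target''). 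The strategies also lean on forced identifications in any proper 3-colouring (in $F_{10}$, $d$ and $e$ must receive the same colour; in $F_{13}$ and $F_{14}$, $b$ and $e$ must; in $F_{15}$, the pairs $b,c$ and $d,g$ must), which is what constrains Alice's replies enough to make the case analysis short. You correctly identify the zugzwang danger created by Bob's no-pass constraint --- and correctly infer from it why $P_4$ and $C_4$ are absent from this list --- but your proposal acknowledges the danger without the tools that the paper uses to resolve it.
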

\begin{proof}
   It suffices to show for every $1 \leq i \leq 15$ how Bob can win the game $g_B$ on $F_i$ with $w(F_i)$ colours. As this is already known for $F_1, \ldots, F_9$ \citep{andres1}, we provide strategies for Bob on graphs $F_{10}, \ldots, F_{15}$ that allow him to win. All six graphs have clique number $\omega(F_i) = 3$. Without loss of generality, we refer to the vertex labels assigned in Figure~\ref{fig:gB-forbidden-graphs} and assume that the colours used appear in the order red, green and blue. A vertex is considered to be \emph{surrounded} if it has neighbours coloured in all $w(F_i)$ possible colours. Note that once any vertex is surrounded, Bob wins. The following observation gives a second condition that guarantees a win for Bob.
   
\begin{observation}\label{obs:uncoloured-P4-C4}
	Bob wins as soon as the remaining uncoloured vertices in the graph induce a $C_4$ or $P_4$ and admit colours green and blue but not red, and Alice is about to make the next move.
\end{observation}

\begin{description}
     \item[$F_{10}$:] Note that in any proper colouring with three colours, $d$ and~$e$ must be coloured the same. Hence Bob starts with~$e$ in red and Alice must respond with~$d$ in red. Bob then colours~$b$ in green. If Alice now colours~$a$ 
in red, then Bob can surround~$c$ by colouring~$f$ in blue. If Alice colours~$a$ in blue, then she herself surrounds~$c$. If she colours~$c$ in blue, then Bob wins by colouring~$g$ in green. If Alice colours $f$ or~$g$ in either possible 
colour, then Bob can colour~$a$ in blue and surround $c$.
    
     \item[$F_{11}$:] Bob begins with $d$ in red. Due to symmetry, there are three possibilities. If Alice colours $c$ green, then Bob responds with~$g$ in blue and surrounds~$e$. If Alice instead colours $a$ green, then Bob responds with~$e$ 
in blue and surrounds~$c$. If Alice colours $a$ red, then Bob colours~$g$ red and wins by Observation~\ref{obs:uncoloured-P4-C4} due to the $P_4$ induced by the remaining uncoloured vertices.

    \item[$F_{12}$:] Bob starts with $a$ in red. If Alice colours $c$ or $e$ green, then Bob colours the other vertex blue, surrounding $f$ and $g$. If Alice instead colours $b,d,f$ or $g$ green, then Bob colours another of these vertices 
blue, surrounding $c$. Finally, if she colours $f$ (or $g$) red, then Bob colours $d$ red and vice versa. By Observation~\ref{obs:uncoloured-P4-C4}, the $P_4$ induced by the remaining uncoloured vertices implies that Bob wins.
    
    \item[$F_{13}$ and $F_{14}$:] In any proper colouring with three colours, $b$ and~$e$ are coloured the same and~$g$ receives a different colour to~$b$ and~$e$. Bob begins with~$g$ in red. If Alice colours $e$ or $b$, then Bob colours 
the other vertex in a different colour and wins. Hence by symmetry we assume Alice colours $a$. If she colours $a$ green, then Bob colours $f$ blue and wins because $e$ is surrounded. If she colours $a$ red, then Bob responds with 
$c$ in red and wins by Observation~\ref{obs:uncoloured-P4-C4} due to the $C_4$ induced by the remaining uncoloured vertices.

	\item[$F_{15}$:] Observe that the pairs $b,c$ and $d,g$ must be uniformly coloured. Bob starts with $b$ in red. Alice colours~$c$ red to stop Bob from colouring~$c$ in a different colour on his next move. Bob then colours $f$ red and wins by Observation~\ref{obs:uncoloured-P4-C4} due to the $P_4$ induced by the remaining uncoloured vertices.
\end{description}
\end{proof}

\section{The dominating edge decomposition}\label{section:edge-decomposition}
In the following two sections we prove the implication (ii) $\Rightarrow$ (iii) of Theorem~\ref{thm:gB-perfect-characterisation}. We can assume that $G$ is connected and has order at least $2$. Indeed, for the graphs $K_0$ and $K_1$ the statement is trivially true. Furthermore, suppose $G$ is disconnected. Since $F_3$ and $F_7$ are forbidden and $G$ has at least two components, the graph~$G$ contains no induced $P_4$ or $C_4$. As $G$ also contains no split 3-star $(F_5)$ or double fan~$(F_9)$, Theorem~\ref{thm:gBB-perfect-characterisation} implies $G \in E_1^\cup$ and we are done. Note that this argument proves Theorem~\ref{thm:disconnected-gB-perfect-characterisation}.

Hence from now on, assume that $G$ is a connected graph of order at least $2$ that does not contain graphs $F_1$ to $F_{15}$ as induced subgraphs. In the remainder of this section we prove structural properties of $G$. These results form the technical basis of the proof of implication (ii) $\Rightarrow$ (iii) in Theorem~\ref{thm:gB-perfect-characterisation}, which is concluded by means of a series of structural case distinctions in Section~\ref{section:case-distinctions}.

First we prove that $G$ admits a decomposition into a dominating edge and three subgraphs $G_1, G_2$ and $G_3$, as shown in Figure~\ref{fig:edge-decomposition}. We then identify the structures of $G_1, G_2$ and $G_3$ (in 
Section~\ref{subsec:internal}) 
and the relationship 
between them (in Sections \ref{subsec:G1G2} and~\ref{subsec:G3}). This allows us to explicitly describe the structure of $G$ as belonging to one of the classes $E_1^\cup, E_2, \ldots, E_9$ in Section~\ref{section:case-distinctions}, 
Lemma~\ref{lemma:caseanalysis}. The following result by \citet{cozzens} provides the starting point for our decomposition.

\begin{figure}[htb]
	\centering
	\includegraphics{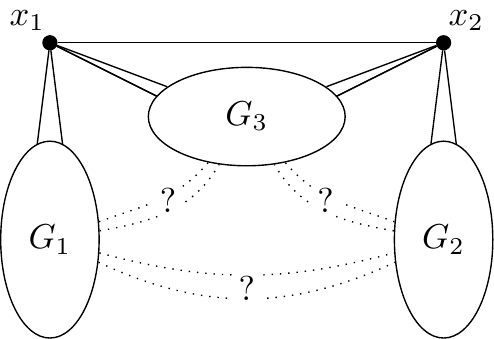}
	\caption{The dominating edge graph decomposition corresponding to Definition~\ref{def:G1-G2-G3}.}
	\label{fig:edge-decomposition}
\end{figure}

\begin{lemma}[\citet{cozzens}]\label{lemma:cozzens-kelleher}
    Connected graphs of order at least $2$ without an induced $P_5, C_5$ or $3$-spider with thin legs (see Figure \ref{fig:thin-spider}) have a dominating edge.
\end{lemma}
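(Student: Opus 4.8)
The plan is to prove the contrapositive: I would show that any connected graph $G$ of order at least $2$ that has \emph{no} dominating edge must contain one of the three forbidden structures---an induced $P_5$, $C_5$, or $3$-spider with thin legs. Recall that an edge $uv$ is dominating if every vertex of $G$ lies in $N[u] \cup N[v]$. So my starting assumption is that for every edge $uv$ there exists a vertex at distance at least $2$ from both $u$ and $v$; equivalently, there is no edge whose closed neighbourhood covers all of $V(G)$.

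First I would pick any edge $uv$ and, since it fails to dominate, pick a vertex $w$ with $\operatorname{dist}(w,u) \ge 2$ and $\operatorname{dist}(w,v)\ge 2$. Because $G$ is connected, there is a shortest $u$--$w$ path $P$, which has length at least $2$ and is an induced path. The key idea is to track a short induced path and argue that if it cannot be extended or closed off into one of the forbidden configurations, then some edge must actually dominate, contradicting our assumption. Concretely, I would take a \emph{longest} induced path $x_0 x_1 \dots x_\ell$ in $G$. If $\ell \ge 4$, the first five vertices already give an induced $P_5$ and we are done, so I would assume every induced path has at most four vertices, i.e.\ $G$ has no induced $P_5$. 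Under this hypothesis I want to force either a $C_5$, a $3$-spider, or a dominating edge.

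The central case analysis would then revolve around a longest induced path $P = x_0 x_1 x_2 x_3$ (a $P_4$) or $P_3$, and I would examine how the remaining vertices attach to its interior edge $x_1 x_2$. I would argue that since $x_1 x_2$ is not dominating, some vertex $z$ is non-adjacent to both $x_1$ and $x_2$. Using connectivity and the absence of an induced $P_5$, I would analyse the possible adjacencies of $z$ to $x_0$ and $x_3$ and to any common neighbour of the path's endpoints; the constraint that no induced path exceeds four vertices forces $z$ to connect back to the path in a tightly controlled way. Either $z$ together with the path closes up into an induced $C_5$, or $z$ and the endpoints $x_0,x_3$ form the three thin legs of a $3$-spider around a central triangle, or else every vertex turns out to be dominated by $x_1 x_2$ after all.

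The hardest part, I expect, is the bookkeeping in this final case analysis: ruling out all the ways that extra vertices can attach to the path without creating a longer induced path, while ensuring that the configurations that survive really are induced copies of $C_5$ or the $3$-spider (no chords, correct adjacencies on the legs) rather than near-misses. In particular, handling vertices adjacent to \emph{both} interior path vertices $x_1,x_2$ but not the endpoints---the potential "centre" of a spider---requires care to verify that three such legs can be assembled and that they remain pairwise non-adjacent. Since this is a known result of \citet{cozzens}, I would ultimately invoke their argument; my proposal is a self-contained route that exhibits the forbidden subgraph via a longest-induced-path argument whenever no dominating edge exists.
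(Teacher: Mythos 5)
First, a point of comparison: the paper does not prove this lemma at all. It is imported verbatim from \citet{cozzens} as Lemma~\ref{lemma:cozzens-kelleher}, and the only justification in the paper is the citation. Your final sentence ("I would ultimately invoke their argument") therefore coincides with the paper's actual treatment; had you simply cited the result, the proposal would match the paper exactly. The question is whether your self-contained sketch would stand on its own, and there it has a genuine gap: the central trichotomy --- that a vertex $z$ nonadjacent to both $x_1$ and $x_2$ either closes the path into an induced $C_5$, or supplies spider legs, or leaves $x_1x_2$ dominating after all --- is asserted, not derived. Nothing forces $z$ to be adjacent to $x_0$, $x_3$, or indeed to any vertex of the path: $z$ may attach only through vertices outside $N[x_1]\cup N[x_2]$ or through common neighbours of $x_1$ and $x_2$, and chasing a shortest $z$--$\{x_1,x_2\}$ path while excluding induced $P_5$s is precisely where all the work in the Cozzens--Kelleher (and Bacs\'o--Tuza style) arguments happens. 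Without an extremal choice beyond "longest induced path" --- typically one fixes an edge $uv$ maximising $|N[u]\cup N[v]|$ and derives a contradiction from any undominated vertex --- the analysis does not close.

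The second concrete failure point is the spider itself. The $3$-spider with thin legs is a triangle with three pendant, pairwise nonadjacent vertices, one per triangle vertex. Your path picture can produce a triangle (say $x_1$, $x_2$, and a common neighbour $z$ of both) with two legs $x_0$ and $x_3$, but the configuration needs a \emph{third} pendant vertex adjacent to $z$ alone, and your hypotheses never construct it: the assumption that $x_1x_2$ fails to dominate yields a vertex far from $x_1$ and $x_2$, not a private neighbour of $z$. You flag this case as requiring care, but care is not a proof step --- as written, the argument would terminate in this case with neither a forbidden subgraph nor a dominating edge in hand. So the proposal is a reasonable plan in the spirit of the known proofs, but it is not a proof; its only complete route to the statement is the concluding appeal to \citet{cozzens}, which is exactly what the paper itself does.
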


\begin{figure}[htb]
\hfill\subfigure[3-spider with thin legs\label{fig:thin-spider}]{\includegraphics{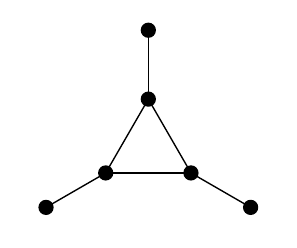}}
\hfill\subfigure[3-star\label{fig:3-star}]{\includegraphics{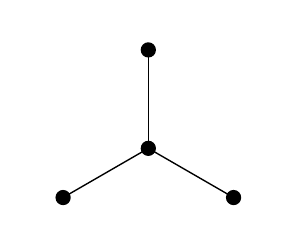}}\hfill
	\caption{(a) The 3-spider with thin legs is a forbidden graph for Lemma~\ref{lemma:cozzens-kelleher}.
(b) The 3-star is a forbidden graph for Lemma~\ref{lemma:noncomplete}.}
	\label{fig:cozzens-forbidden-graphs}
\end{figure}

\begin{corollary}\label{general:dominating_edge}
    Our graph $G$ has a dominating edge.
\end{corollary}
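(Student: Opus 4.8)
The plan is to invoke Lemma~\ref{lemma:cozzens-kelleher} directly. Since $G$ is connected and of order at least $2$, it suffices to verify that $G$ contains none of the three obstructions listed there as an induced subgraph, namely an induced $P_5$, an induced $C_5$, and an induced $3$-spider with thin legs (Figure~\ref{fig:thin-spider}). Once all three are excluded, the lemma produces the dominating edge with no further work.

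Two of the three are immediate. The path $P_5$ is precisely the forbidden graph $F_2$ and the cycle $C_5$ is precisely $F_6$; since $G$ contains no induced $F_1, \ldots, F_{15}$ by assumption, it is in particular $P_5$-free and $C_5$-free. The remaining case is the $3$-spider with thin legs, which is the \emph{net}: a triangle together with one pendant vertex attached to each of its three vertices. Here I would argue that this graph already contains an induced $P_4 \cup K_1$, i.e.\ the forbidden graph $F_3$. Indeed, taking two of the triangle vertices together with their two pendant neighbours induces a $P_4$, while the third pendant becomes isolated in the induced subgraph on these five vertices, as its unique neighbour (the third triangle vertex) has been removed. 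Because $G$ is $F_3$-free, it therefore contains no induced net.

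With all three forbidden configurations of Lemma~\ref{lemma:cozzens-kelleher} ruled out, the lemma yields the desired dominating edge for $G$. I do not expect any substantive obstacle: the only point needing care is the correct identification of the $3$-spider with thin legs and the exhibition of a forbidden $F_i$ inside it. This identification is robust, since any such spider contains an induced $P_4 \cup K_1$ (and a longer-legged variant would instead contain an induced $P_5$); in either reading the spider is forbidden, and the corollary then follows purely by matching obstructions against the forbidden list $F_1, \ldots, F_{15}$.
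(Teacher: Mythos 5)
Your proof is correct and follows essentially the same route as the paper: both exclude $P_5$ and $C_5$ via $F_2$ and $F_6$, and both rule out the $3$-spider with thin legs by exhibiting an induced $F_3$ ($P_4 \cup K_1$) inside it --- your five chosen vertices are exactly the spider minus a degree-$3$ vertex, which is how the paper phrases it. Nothing is missing; the argument then concludes by Lemma~\ref{lemma:cozzens-kelleher} just as in the paper.
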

\begin{proof}
By assumption, $G$ has no induced $P_5$ or $C_5$. It also has no induced $3$-spider with thin legs, as this would imply an induced $F_3$ obtained by removing a vertex of degree~$3$ from the $3$-spider. Hence we can apply Lemma~\ref{lemma:cozzens-kelleher}.
\end{proof}

From now on, fix a dominating edge $x_1x_2$ in $G$. We define subgraphs $G_1, G_2$ and $G_3$ relative to this edge as follows.

\begin{definition}\label{def:G1-G2-G3}
$G_1$ is the subgraph of $G$ induced by the vertex set $N_G(x_1) \setminus N_G[x_2]$,
the graph $G_2$ is induced by the vertex set $N_G(x_2) \setminus N_G[x_1]$,
and $G_3$ is induced by the vertex set $N_G(x_1) \cap N_G(x_2)$.
\end{definition}

\subsection{The internal structures of $G_1, G_2$ and $G_3$}\label{subsec:internal}
The structures of $G_1, G_2$ and $G_3$ and the relationships between the three subgraphs satisfy certain rules that we express as a series of lemmas in the following three subsections. Note that by renaming $x_1$ to $x_2$, $G_1$ becomes $G_2$, and vice versa. In particular, all statements and proofs below concerning $G_1$ also hold for $G_2$ by symmetry. First of all, we describe the internal structures of $G_1, G_2$ and $G_3$. The following structural lemma turns out to be useful.

\begin{definition}
A graph $H$ is a \emph{possibly degenerate ear graph} if it has the structure $H = K_r \vee (K_p \cup K_q)$ with $p,q,r \geq 0$.
We call $H$ an \emph{ear graph} if $p,q,r \geq 1$ (see Figure~\ref{fig:simple-strategies}). 
\end{definition}

\begin{lemma}\label{lemma:noncomplete}
    A connected, non-complete graph $H$ of order at least~3 without an induced $P_4, C_4$ or 3-star (see Figure \ref{fig:3-star}) is an ear graph.
\end{lemma}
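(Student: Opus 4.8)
The plan is to peel off the dominating vertices of $H$ as a clique $K_r$ and then show that the forbidden 3-star forces what remains to be a disjoint union of exactly two cliques. Throughout I read the 3-star of Figure~\ref{fig:3-star} as $K_{1,3}$ (one vertex joined to three pairwise non-adjacent vertices).

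First I would recall that a connected graph with no induced $P_4$ or $C_4$ is trivially perfect and therefore possesses at least one dominating vertex (cf.~\citet{wolk,golumbictrivially,golumbic}). Let $R$ denote the set of \emph{all} dominating vertices of $H$. Then $R$ is a nonempty clique, say $R \cong K_r$ with $r \ge 1$, and each vertex of $R$ is adjacent to every other vertex of $H$. Set $H' := H - R$. Since $H$ is non-complete it has a non-adjacent pair, whose endpoints are non-dominating, so $H'$ is nonempty; moreover $H = K_r \vee H'$, because $R$ is completely connected to $H'$. It thus remains to show that $H' = K_p \cup K_q$ with $p,q \ge 1$.

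Next I would pin down the number of connected components of $H'$ using the 3-star. Fix any $w \in R$. If $H'$ had three or more components, selecting one vertex from each of three distinct components would, together with $w$, induce a $K_{1,3}$ (the three chosen vertices are pairwise non-adjacent, lying in different components, and all adjacent to the dominating vertex $w$), a contradiction; hence $H'$ has at most two components. On the other hand $H'$ cannot be connected: otherwise $H'$ is itself a connected $\{P_4,C_4\}$-free graph, so either it is complete — forcing $H = K_r \vee H'$ to be complete, contradicting non-completeness — or it has a dominating vertex, which, being adjacent to all of $R$ as well, would be dominating in $H$ and hence lie in $R$, contradicting $H' = H - R$. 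Therefore $H'$ has exactly two components $C_1$ and $C_2$. Finally, each $C_i$ must be a clique, again by the 3-star: if some $C_i$ were not complete, choosing non-adjacent $u,u' \in C_i$ together with any vertex $v$ of the other component yields three pairwise non-adjacent vertices all joined to $w$, an induced $K_{1,3}$. Hence $C_1 \cong K_p$ and $C_2 \cong K_q$ with $p,q \ge 1$, giving $H = K_r \vee (K_p \cup K_q)$, an ear graph.

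The main difficulty is not a single hard step but getting the case bookkeeping exactly right: the whole argument hinges on defining $R$ as the \emph{full} set of dominating vertices (so that no vertex of $H'$ is dominating in $H$) and on extracting the correct induced 3-star in each of the two structural claims. The only external ingredient is the existence of a dominating vertex in connected trivially perfect graphs; should a self-contained account be preferred, this could be replaced by a short direct verification that a connected $\{P_4,C_4\}$-free graph on at least two vertices has a dominating vertex.
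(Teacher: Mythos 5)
Your proof is correct, but it takes a genuinely different route from the paper. The paper proceeds by induction on the order of $H$: it picks a vertex $x$ with $H-x$ connected, applies the induction hypothesis to write $H-x = K_r \vee (K_p \cup K_q)$, and then uses the forbidden $P_4$, $C_4$ and 3-star to pin down how $x$ attaches (completely connected to $K_r$, completely connected to one of $K_p,K_q$, and completely connected to or disconnected from the other). You instead give a direct, non-inductive decomposition: you invoke the theorem of Wolk that a connected $\{P_4,C_4\}$-free graph has a dominating vertex, take $R$ to be the \emph{full} set of dominating vertices, and let the 3-star do the rest, showing $H-R$ has at most two components, is disconnected (via maximality of $R$: a dominating vertex of a connected $H-R$ would dominate $H$), and has complete components. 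I checked the details and they hold, including the edge cases: $R$ is a clique since dominating vertices are pairwise adjacent; $H-R$ is nonempty since the endpoints of a non-edge are non-dominating; and a one-vertex or complete connected $H-R$ is correctly excluded (a complete $H'$ forces $H$ complete, a non-complete connected $H'$ yields a dominating vertex of $H$ outside $R$ by another application of Wolk). What each approach buys: the paper's induction is self-contained, using nothing beyond the existence of a non-cut vertex, whereas yours imports Wolk's theorem --- a fact the paper already cites in connection with trivially perfect graphs, and which, as you note, admits a short direct proof. In exchange, your argument is structurally more transparent: it exhibits $K_r$ canonically as the set of all dominating vertices rather than having it emerge from an inductive bookkeeping, and it cleanly separates the roles of the forbidden subgraphs ($P_4$ and $C_4$ supply the dominating vertex; the 3-star caps the number of components at two and forces them to be cliques). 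The maximality of $R$ in your argument also replaces the attachment analysis that forms the core of the paper's inductive step.
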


\begin{proof}[of Lemma~\ref{lemma:noncomplete}]
    We proceed by induction on the number of vertices. Let $H$ be a connected, non-complete graph. If $H$ has three vertices, then it is a $P_3$. Now suppose $H$ has order greater than three and pick a vertex $x$ in $H$ such that $H-x$ is 
connected. If $H-x$ is complete, then we are done.
Otherwise, we have $ H-x = K_r \vee (K_p \cup K_q)$ for some $p,q,r \geq 1$ by induction hypothesis. As $H$ is connected, $x$ is adjacent to some vertex in $K_p$ or $K_q$, or else $H$ contains an induced 3-star. Thus $x$ must be completely connected to $K_r$, or else $H$ contains a $P_4$ or $C_4$. Moreover, $x$ is either completely connected or disconnected from $K_p$ and $K_q$, or else $H$ contains a 3-star or $P_4$. Hence without loss of generality, 
$x$~is completely connected to $K_r$ and $K_p$ and either completely connected to or disconnected from $K_q$.
\end{proof}

Lemma~\ref{lemma:noncomplete} allows us to characterise the inner structures of $G_1, G_2$ and $G_3$ as follows.
\begin{lemma}\label{lemma:N-structure}
$G_1$ has at most one non-complete component $N$. Moreover, $N$ is an ear graph. 
\end{lemma}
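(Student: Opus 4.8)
The plan is to prove Lemma~\ref{lemma:N-structure} by combining the internal structure result of Lemma~\ref{lemma:noncomplete} with the forbidden-subgraph hypothesis, exploiting the presence of the dominating edge $x_1x_2$. Recall that $G_1$ is induced by $N_G(x_1)\setminus N_G[x_2]$, so every vertex of $G_1$ is adjacent to $x_1$ but not to $x_2$. The first claim, that $G_1$ has at most one non-complete component, and the second, that any such non-complete component is an ear graph, should be addressed somewhat separately, though both rely on the same forbidden graphs $F_1,\ldots,F_{15}$ together with $x_1$ and $x_2$ acting as external witnesses.

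First I would establish that each component of $G_1$ is either complete or an ear graph. Fix a component $N$ of $G_1$ and suppose it is non-complete. Since $G$ contains no induced $P_4$ $(F_2$ is $P_5$, but $F_3 = P_4\cup K_1$ is forbidden and more directly, any induced $P_4$ inside $G_1$ together with the vertex $x_2$, which is non-adjacent to all of $G_1$, yields an induced $F_3 = P_4\cup K_1)$, the graph $G_1$ contains no induced $P_4$. Likewise, an induced $C_4$ in $G_1$ together with $x_2$ produces an induced $F_7 = C_4\cup K_1$, so $G_1$ is $C_4$-free. Finally, I would rule out an induced $3$-star in $N$: a $3$-star has a centre of degree $3$ adjacent to three independent vertices, and attaching $x_1$ (adjacent to all of them) should produce one of the forbidden graphs containing the split $3$-star $F_5$ or a related configuration. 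Once $P_4$, $C_4$ and $3$-star are excluded, Lemma~\ref{lemma:noncomplete} applies to the connected non-complete graph $N$ (of order at least $3$, since any non-complete component has at least three vertices), giving that $N$ is an ear graph.

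Next I would prove uniqueness, namely that $G_1$ has at most one non-complete component. Suppose for contradiction that $G_1$ has two distinct non-complete components $N$ and $N'$. Each, being an ear graph $K_r\vee(K_p\cup K_q)$ with $p,q,r\geq 1$, contains an induced $P_3$ (take one vertex from $K_p$, one from $K_r$, one from $K_q$). Picking such an induced $P_3$ from $N$ and another from $N'$, these two paths lie in different components of $G_1$ and hence are mutually non-adjacent; together they already span a configuration on which the vertices $x_1$ (adjacent to all six) and $x_2$ (adjacent to none) can be attached. The hard part here is identifying precisely which of $F_1,\ldots,F_{15}$ arises: I expect that the two disjoint $P_3$s completely joined to the common dominating vertex $x_1$ form one of the larger forbidden graphs $F_{10},\ldots,F_{15}$, or that a smaller witness such as a split $3$-star or chair already appears using fewer vertices. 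This matching of the obstruction to a specific $F_i$ is the main obstacle and would require careful inspection of Figure~\ref{fig:gB-forbidden-graphs}; a cleaner route may be to take just one independent edge (or $P_3$) from each of $N$, $N'$ and combine it with $x_1$ to force a split $3$-star or a chair $(F_1)$ directly, reducing the case analysis.

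In carrying this out I would first handle the simpler observation that it suffices to find two non-adjacent independent vertices in each of $N,N'$, since each non-complete component contains a non-edge. The role of $x_1$ is crucial: because $x_1$ dominates all of $G_1$ while $x_2$ dominates none of it, the pair $x_1x_2$ reliably supplies the extra adjacent/non-adjacent vertices needed to upgrade a small subgraph of $G_1$ into a forbidden $F_i$. I would present the component-structure argument as the bulk of the proof and keep the uniqueness argument tight by selecting the minimal witness configuration. The symmetric statement for $G_2$ then follows immediately, as noted in Section~\ref{subsec:internal}, by swapping the roles of $x_1$ and $x_2$.
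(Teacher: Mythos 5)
Your first half coincides with the paper's own argument and is complete once the hedging is removed: a $P_4$ in $G_1$ together with $x_2$ induces $F_3$, a $C_4$ together with $x_2$ induces $F_7$, and a $3$-star together with $x_1$ induces \emph{exactly} the split $3$-star $F_5$ --- since $x_1$ is adjacent to the centre and to all three leaves, $x_1$ and the centre form the dominating edge of $F_5$, so no ``related configuration'' needs to be hunted for. Lemma~\ref{lemma:noncomplete} then applies to each connected non-complete component, as you say.

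The genuine gap is in the uniqueness step, precisely where you flag ``the main obstacle''. The configuration you construct --- two disjoint induced $P_3$s, one from each non-complete component, completely joined to $x_1$ --- is exactly the double fan $F_9 = K_1 \vee (P_3 \cup P_3)$, one of the four forbidden graphs already familiar from the $g_{B,B}$-characterisation (Theorem~\ref{thm:gBB-perfect-characterisation}); it is not among $F_{10},\ldots,F_{15}$ as you guess, and the paper's entire uniqueness argument is the one sentence identifying it. Your proposed ``cleaner route'' via smaller witnesses would fail: an edge from each component joined to $x_1$ gives $K_1 \vee (K_2 \cup K_2)$, an edge and a $P_3$ give $K_1 \vee (K_2 \cup P_3)$, and a $P_3$ plus a single vertex give $K_1 \vee (P_3 \cup K_1)$ --- all of these are instances of $E_1$ and hence $g_B$-perfect, so no forbidden graph arises. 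In particular no chair or split $3$-star can be extracted: any induced subgraph meeting both components must contain $x_1$, whose degree is then at least $4$ in any five-vertex candidate (too large for the chair), while the only possible adjacent partner for $x_1$ dominating vertices of both components would have to be $x_2$, which is non-adjacent to all of $G_1$, ruling out the split $3$-star. Two minor points: you do not need the ear-graph structure of $N$ and $N'$ to extract the $P_3$s --- any connected non-complete graph contains an induced $P_3$ (two vertices at distance two plus a common neighbour), so uniqueness can be argued before the structural part, as the paper does; with the witness named as $F_9$, your plan is otherwise identical to the paper's proof.
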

\begin{proof}
If $G_1$ had two non-complete components, then each component would contain an induced $P_3$ which, together with $x_1$, would induce a double fan ($F_9$). Now let $N$ be a non-complete component of~$G_1$. We apply Lemma \ref{lemma:noncomplete}, as $N$ contains no induced $P_4, C_4$ or 3-star: a $P_4$ would induce an $F_3$ together with $x_2$, a $C_4$ would induce an $F_7$ together with $x_2$ and a 3-star would induce a split 3-star ($F_5$) 
together with~$x_1$.
\end{proof}

\begin{lemma}\label{lemma:G3-shape}
    $G_3$ is a possibly degenerate ear graph.
\end{lemma}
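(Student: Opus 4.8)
The plan is to reduce the statement to Lemma~\ref{lemma:noncomplete}, exploiting the defining feature of $G_3 = N_G(x_1) \cap N_G(x_2)$: both $x_1$ and $x_2$ are completely connected to $G_3$, and $x_1x_2$ is an edge. Thus $\{x_1,x_2\}$ induces a $K_2$ that is completely joined to every induced subgraph of $G_3$. This is what lets me convert small configurations inside $G_3$ into the forbidden graphs $F_1,\ldots,F_{15}$ by adjoining $x_1$ and/or $x_2$, exactly in the spirit of Lemma~\ref{lemma:N-structure}.

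First I would rule out the obstructions required to invoke Lemma~\ref{lemma:noncomplete}. An induced $P_4$ in $G_3$ together with $x_1$ induces a 4-fan $F_4$, and an induced $C_4$ together with $x_1$ induces a 4-wheel $F_8$; hence $G_3$ is $P_4$- and $C_4$-free. For the remaining hypothesis I would argue the slightly stronger statement that $G_3$ contains no independent set of three vertices: three pairwise non-adjacent vertices of $G_3$, completely joined to the edge $x_1x_2$, induce precisely a split 3-star $F_5 = K_2 \vee \overline{K_3}$. Since the three leaves of a 3-star form an independent triple, this in particular shows that $G_3$ is 3-star-free, which is the condition actually needed by Lemma~\ref{lemma:noncomplete}.

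With these properties in hand I would split into cases. If $G_3$ is complete or has at most two vertices, it is a possibly degenerate ear graph (take $K_r = G_3$, or treat the order-$\leq 2$ cases directly, noting that two isolated vertices are $K_0 \vee (K_1 \cup K_1)$). If $G_3$ is connected, non-complete and of order at least three, then by the previous paragraph Lemma~\ref{lemma:noncomplete} applies and yields that $G_3$ is an ear graph $K_r \vee (K_p \cup K_q)$ with $p,q,r \geq 1$, which is in particular a possibly degenerate ear graph.

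The main obstacle is the disconnected case, since Lemma~\ref{lemma:noncomplete} assumes connectedness; here I would lean on the absence of an independent triple established above. If $G_3$ had three or more components, then selecting one vertex from each of three components would produce an independent set of size three, contradicting the split-3-star argument; hence $G_3$ has exactly two components. Likewise, if some component were non-complete, it would contain two non-adjacent vertices which, together with any vertex of the other component, again form an independent triple. Therefore both components are complete and $G_3 = K_p \cup K_q$ with $p,q \geq 1$, the degenerate ($r=0$) ear graph. Combining the cases establishes that $G_3$ is a possibly degenerate ear graph. (One could instead mirror the double-fan argument of Lemma~\ref{lemma:N-structure} to bound the number of non-complete components, but the independent-triple route via $F_5$ settles both the component count and their completeness uniformly.)
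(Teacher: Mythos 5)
Your proof is correct and follows essentially the same route as the paper's: the key step in both is that an independent triple in $G_3$ together with the edge $x_1x_2$ induces a split 3-star ($F_5$), which simultaneously handles the disconnected case (at most two components, both complete) and supplies the 3-star-freeness needed, alongside the $P_4$/4-fan and $C_4$/4-wheel arguments, to invoke Lemma~\ref{lemma:noncomplete} in the connected non-complete case. Your write-up merely makes explicit the case analysis that the paper compresses into a single sentence.
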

\begin{proof}
Note that no set of three vertices in $G_3$ is independent, or else it would induce a split 3-star together with $x_1$ and $x_2$. This implies that $G_3$ is either connected or has at most two components which are both complete. If $G_3$ is 
complete or has two components, then we are done. Hence let $G_3$ be connected and not complete. By assumption, $G_3$ has no induced 3-star, $P_4$ or~$C_4$, as they respectively induce a split 3-star ($F_5$),  4-fan ($F_4$) or 4-wheel ($F_8$) 
together with~$x_1$, and we can apply Lemma~\ref{lemma:noncomplete}.
\end{proof}

\subsection{The adjacency relations between $G_1$ and $G_2$}\label{subsec:G1G2}
In this subsection we study the possible adjacency relations between $G_1$ and $G_2$. Our first lemma establishes that $G_1$ and $G_2$ must be \emph{almost} completely connected in a specific, well-defined way. This result will be used frequently.

\begin{lemma}\label{lemma:all-but-one}
    Let $G_1$ and $G_2$ both be non-empty. With the exception of one pair of components $(X,Y)$, where~$X$ is a component of~$G_1$ and~$Y$ is a component of~$G_2$, every component of~$G_1$ is completely connected to every component of~$G_2$. $X$ and $Y$ may be completely connected, partially connected or disconnected.
\end{lemma}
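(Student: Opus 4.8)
The plan is to recast the statement combinatorially. Call a pair $(X,Y)$, with $X$ a component of $G_1$ and $Y$ a component of $G_2$, \emph{bad} if it is not completely connected, i.e.\ if some $a \in X$ and $b \in Y$ have $ab \notin E(G)$. Since every non-bad pair is completely connected, the lemma is equivalent to the assertion that \emph{at most one} bad pair exists: a single bad pair, if present, serves as the exceptional pair $(X,Y)$, and if no bad pair exists we may designate any (completely connected) pair as $(X,Y)$. So the whole task reduces to showing two bad pairs cannot coexist.

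The basic gadget is that a bad pair with witness $ab \notin E(G)$ gives an induced $P_4$ on $\{a,x_1,x_2,b\}$: the edges $ax_1,x_1x_2,x_2b$ are present, while $ax_2$ and $x_1b$ are absent by the definitions of $G_1$ and $G_2$, and $ab$ is absent by choice. I would assume for contradiction that two distinct bad pairs exist and extend such a configuration into one of the small forbidden graphs — the chair $F_1$, the path $F_2=P_5$, or the cycle $F_6=C_5$. Two structural facts are used freely throughout: distinct components of $G_1$ (resp.\ $G_2$) are mutually non-adjacent, and every vertex of $G_1$ is adjacent to $x_1$ but not $x_2$ (symmetrically for $G_2$).

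First I would split into cases by how the two bad pairs overlap. If they share a $G_1$-component, say $(X,Y)$ and $(X,Y')$ with $Y\neq Y'$ and witnesses $ab,cd\notin E(G)$ where $a,c\in X$, $b\in Y$, $d\in Y'$: if some witness vertex of $X$ misses the other $G_2$-witness (e.g.\ $ad\notin E(G)$) then $\{a,x_1,x_2,b,d\}$ induces a chair, with $x_2$ as the degree-three vertex; otherwise $ad,cb\in E(G)$ and $\{a,d,x_2,b,c\}$ induces a $P_5$ along $a\text{--}d\text{--}x_2\text{--}b\text{--}c$ when $ac\notin E(G)$, and a $C_5$ (closing with the chord $ac$) when $ac\in E(G)$. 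The shared-$G_2$-component case is identical after swapping $x_1\leftrightarrow x_2$. If the two bad pairs use four distinct components $(X,Y)$ and $(X',Y')$ with witnesses $ab,cd\notin E(G)$, then $ac\notin E(G)$ and $bd\notin E(G)$ automatically; if $cb\notin E(G)$ then $\{a,c,x_1,x_2,b\}$ induces a chair (now with $x_1$ as the degree-three vertex), symmetrically if $ad\notin E(G)$, and otherwise $cb,ad\in E(G)$ and $\{a,d,x_2,b,c\}$ induces a $P_5$, where this time $ac\notin E(G)$ holds for free. In each branch a forbidden induced subgraph appears, contradicting the hypothesis on $G$, so at most one bad pair remains.

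The hard part, and the reason for all the branching, is that completeness of the \emph{other} pairs is not assumed: the cross-adjacencies $ad$ and $cb$ between a witness of one bad pair and a witness of the other are genuinely unknown, and so is the internal adjacency $ac$ inside a shared $G_1$-component. The dichotomy above is tailored to these unknowns — a missing cross-edge supplies the pendant vertex of a chair, whereas both cross-edges present feeds the induced path $a\text{--}d\text{--}x_2\text{--}b\text{--}c$, with the status of $ac$ deciding between $P_5$ and $C_5$. The only remaining work is routine: in every branch one must verify that the listed five vertices are truly \emph{induced}, i.e.\ that each claimed non-edge really is a non-edge, which follows in each case from the two free structural facts together with the chosen witnesses.
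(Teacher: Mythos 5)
Your proposal is correct and follows essentially the same route as the paper: both argue by contradiction from two non-completely-connected witness pairs, using that $x_1x_2$ dominates and that $G_1$-vertices miss $x_2$ (and vice versa) to extend the witnesses to an induced chair ($F_1$), $P_5$ ($F_2$) or $C_5$ ($F_6$). The only difference is organisational — you bound the number of bad component pairs directly with a case split on shared components, whereas the paper shows at most one failing component on each side and combines the two statements by symmetry — but the gadgets and forbidden subgraphs are identical.
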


Note that if $G_1$ and $G_2$ consist of the single components $X$ and $Y$, respectively, then Lemma~\ref{lemma:all-but-one}~trivially states that $G_1$ and $G_2$ may be disconnected, partially connected or completely connected.

\begin{proof}[of Lemma~\ref{lemma:all-but-one}]
        If both $G_1$ and $G_2$ consist of a single component, the statement holds vacuously. Hence assume without loss of generality that $G_1$ has at least two components $X$ and $X'$. Suppose neither $X$ nor $X'$ is completely connected 
to~$G_2$. Then there exists a vertex $a \in X$ not adjacent to some $v \in G_2$ and a vertex $b \in X'$ not adjacent to some $w \in G_2$. If $v=w$, then the vertices $a,b,w,x_1$ and $x_2$ induce a chair ($F_1$). Now assume that $v \not = w$; 
we distinguish between two cases due to symmetry. If there is no edge between~$a$ and~$w$, then the same vertices again induce a chair ($F_1$). If the edges $aw$ and $bv$ both exist, then the vertices $a,b,v,w$ and $x_1$ induce a $P_5$ ($F_2$) 
or $C_5$ $(F_6)$, depending on whether $v$ and $w$ are adjacent in $G_2$. Hence all but one component of $G_1$ must be completely connected to $G_2$ and by symmetry, all but one component in $G_2$ must be completely connected to $G_1$.
\end{proof}

The next lemma shows what constraints the existence of a non-complete component of $G_1$ places on~$G_2$.

\begin{lemma}\label{lemma:N-in-G1}
    If $G_1$ has an ear graph component $N=K_r\vee(K_p\cup K_q)$ with $p,q,r\ge1$, then the following holds.
\begin{itemize}
\item[(i)] $G_2$ has at most one vertex $v$.
\item[(ii)] If $G_2$ is a singleton graph, then it is completely connected to $K_p$ and $K_q$ and disconnected from~$K_r$, and $G_1$ has only the one component $N$.
\item[(iii)] If $p,q \geq 2$, then $G_2$ is null. 
\end{itemize}
\end{lemma}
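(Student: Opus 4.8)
The plan is to pin down, for an arbitrary vertex of $G_2$, exactly how it attaches to the ear component $N=K_r\vee(K_p\cup K_q)$, and only afterwards to count. Fix representatives $p_1\in K_p$, $q_1\in K_q$, $r_1\in K_r$ (these exist as $p,q,r\ge1$), recall that $x_1$ is completely connected to all of $G_1\supseteq N$ while $x_2$ is completely connected to all of $G_2$, and note that $p_1-r_1-q_1$ is an induced $P_3$. I would first establish two pointwise claims. \emph{Claim A:} no $v\in G_2$ is adjacent to any vertex of $K_r$. To see this, suppose $v\sim r'$ for some $r'\in K_r$ and branch on the adjacency of $v$ to $p_1,q_1$: if $v$ misses both, then $\{x_2,v,r',p_1,q_1\}$ induces a chair $F_1$ (centre $r'$, long leg $r'-v-x_2$); if $v$ meets exactly one, say $p_1$, then $\{x_1,v,p_1,r',q_1\}$ induces a $4$-fan $F_4$ (apex $r'$ over the path $q_1-x_1-p_1-v$); and if $v$ meets both, the same five vertices give a $4$-wheel $F_8$. \emph{Claim B:} every $v\in G_2$ is completely connected to both $K_p$ and $K_q$. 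Using Claim A it suffices to show $v$ meets an arbitrary $s\in K_p\cup K_q$; assuming $s=q'\in K_q$ is missed, either $v$ meets some $p'\in K_p$, whence $q'-r_1-p'-v-x_2$ is an induced $P_5=F_2$, or $v$ misses all of $K_p$ and then $\{x_1,x_2,v,p_1,q'\}$ is a chair $F_1$.

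To prove (i) I would combine Claim A with Lemma~\ref{lemma:all-but-one}. By Claim A no component of $G_2$ is completely connected to $N$ (the edges to $K_r$ are always absent), and Lemma~\ref{lemma:all-but-one} permits at most one non-completely-connected pair of components; hence $G_2$ has at most one component and is therefore connected. Were $|G_2|\ge2$, the connected graph $G_2$ would contain an edge $vw$, and by Claim B both $v,w$ would be adjacent to $p_1$ and $q_1$. Since $x_2,p_1,q_1$ are pairwise non-adjacent, $\{v,w,x_2,p_1,q_1\}$ would then induce a split $3$-star $F_5$, a contradiction. Thus $|G_2|\le1$.

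For (ii) the first assertion is immediate: a singleton $G_2=\{v\}$ is completely connected to $K_p,K_q$ by Claim B and disconnected from $K_r$ by Claim A. For the claim that $N$ is the unique component of $G_1$, suppose another component $M$ exists; by Lemma~\ref{lemma:N-structure} it is complete, and since $(N,\{v\})$ is already the exceptional non-completely-connected pair, Lemma~\ref{lemma:all-but-one} forces $v$ to be completely connected to $M$. Choosing $m\in M$, the set $\{x_2,v,m,p_1,r_1\}$ then induces a chair $F_1$ (centre $v$, long leg $v-p_1-r_1$), a contradiction. Finally, for (iii), assume $p,q\ge2$ and $G_2\neq\emptyset$; by (i) and (ii) we have $G_2=\{v\}$ with $v$ completely connected to $K_p\cup K_q$ and disconnected from $K_r$. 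Picking $p_1,p_2\in K_p$ and $q_1,q_2\in K_q$, the seven vertices $\{x_1,x_2,v,p_1,p_2,q_1,q_2\}$ have clique number $3$ and force $x_1$ and $v$ to receive equal colours in every proper $3$-colouring (each triangle $\{x_1,p_1,p_2\}$, $\{v,p_1,p_2\}$ leaves only one colour for $x_1$ and for $v$); I expect this configuration to be precisely one of the graphs $F_{10},\dots,F_{15}$, yielding the required contradiction.

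I expect the bulk of the work to lie in the two pointwise claims: although each reduces to a short, finite case split on how $v$ sees the representatives $p_1,q_1,r_1$, the argument must be organised so that the exhibited $F_i$ is genuinely \emph{induced}, which means carefully recording the systematic non-adjacencies ($x_2$ sees nothing in $G_1$, vertices in distinct cliques of $N$ or in distinct components are non-adjacent, and $G_2$ misses $x_1$). The one place where the newly forbidden graphs are needed rather than $F_1$–$F_9$ is part~(iii), so the main remaining obstacle is to confirm that the seven-vertex graph above indeed coincides with one of $F_{10},\dots,F_{15}$.
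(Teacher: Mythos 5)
Your proof is correct and takes essentially the same route as the paper's: the same case analyses yielding $F_1$, $F_2$, $F_4$ and $F_8$ to pin down how a $G_2$-vertex attaches to $N$, the split $3$-star $F_5$ for (i), a chair for the second part of (ii), and for (iii) exactly the paper's seven-vertex witness, which the paper identifies as $F_{14}$ — so your hedged final step does go through. The only (harmless) organisational difference is that you first derive connectedness of $G_2$ from Lemma~\ref{lemma:all-but-one}, whereas the paper disposes of a non-adjacent pair $v,w$ directly via a chair on $b,v,w,x_1,x_2$.
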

\begin{proof}
       Let $a \in K_p, b \in K_r, c \in K_q$ be three vertices in $N$ that induce a $P_3$. We first show that any vertex~$v$ in $G_2$ must be completely connected to $K_p$ and $K_q$ and disconnected from $K_r$. If~$v$ is adjacent to none of the three vertices $a,b$ and $c$, then the vertices $a,c,v,x_1$ and $x_2$ induce a chair ($F_1$). If $v$ is adjacent to exactly one of the three, then the vertices $a,b,c,v$ and $x_2$ induce a chair $(F_1)$ or a $P_5$ ($F_2$). Finally, if~$v$ is adjacent to $a$ and $b$, to $b$ and $c$, or all three, then the vertices $a,b,c,v$ and $x_1$ induce a 4-fan ($F_4$) or 4-wheel ($F_8$). This proves the first part of (ii).
	
	Now suppose $G_2$ has two vertices $v$ and $w$. Then they are both adjacent to $a$ and $c$ and not adjacent to~$b$. If $v$ and $w$ are not adjacent, then the vertices $b,v,w,x_1$ and $x_2$ induce a chair~($F_1$), otherwise the vertices $a,c,v,w$ and $x_2$ induce a split 3-star~($F_5$). This proves (i).
	
	To prove the second part of (ii), let $v$ be the vertex of $G_2$, and suppose $d$ is a vertex in a second component of $G_1$. As $v$ and $N$ are not completely connected, Lemma~\ref{lemma:all-but-one} implies that $d$ is adjacent to $v$ and the vertices $a,b,d,v$ and $x_2$ induce a chair~($F_1$).

    Finally, to prove (iii), suppose that $p,q \geq 2$ and let $a'$ and $c'$ be additional vertices in $K_p$ and $K_q$, respectively. If $G_2$ contains a vertex $v$, then it is adjacent to $a,a',c,c'$ by (ii) and the vertices $a,a',c,c',v,x_1$ and $x_2$ induce an $F_{14}$.
\end{proof}

The next four lemmas are useful in situations where we know that neither $G_1$ nor $G_2$ is null.

\begin{lemma}\label{lemmaY}
If $G_1$ has two adjacent vertices $a$ and $b$, and $G_2$ has two non-adjacent vertices $u$ and $v$, then $u$ and $v$ cannot both be adjacent to $a$ and~$b$.
\end{lemma}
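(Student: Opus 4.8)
The plan is to argue by contradiction and produce a forbidden induced subgraph, namely a split 3-star ($F_5$, which is $K_2 \vee \overline{K_3}$). Suppose, contrary to the claim, that both $u$ and $v$ are adjacent to both $a$ and $b$. I would first record the adjacencies that come for free from Definition~\ref{def:G1-G2-G3}: since $a,b \in G_1 = N_G(x_1)\setminus N_G[x_2]$, the vertices $a$ and $b$ are adjacent to $x_1$; and since $u,v \in G_2 = N_G(x_2)\setminus N_G[x_1]$, the vertices $u$ and $v$ are non-adjacent to $x_1$ (and distinct from it). All five vertices $a,b,x_1,u,v$ are then pairwise distinct, as $G_1$, $G_2$ and $\{x_1\}$ are disjoint.

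Next I would examine the subgraph induced by $\{a,b,x_1,u,v\}$ and verify that it is exactly $K_2 \vee \overline{K_3}$, the split 3-star. Taking $\{a,b\}$ as the central edge and $\{x_1,u,v\}$ as the independent triple, the three defining conditions hold: $ab$ is an edge by hypothesis; the triple $\{x_1,u,v\}$ is independent, because $x_1$ is non-adjacent to $u$ and to $v$ (as just noted) and $u$ is non-adjacent to $v$ (by assumption); and the central pair is completely connected to the triple, since $a$ and $b$ are each adjacent to $x_1$ (membership in $G_1$) and to $u$ and $v$ (the contradiction hypothesis). This exhibits an induced $F_5$ in $G$, contradicting the standing assumption that $G$ contains no induced $F_1,\dots,F_{15}$, and completes the proof.

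The argument is short, and the only genuine decision is recognising that the right forbidden configuration is the split 3-star rather than some larger graph on six or seven vertices. The key point is that, by using $x_1$ (and \emph{not} $x_2$), the two $G_2$-vertices $u,v$ land on the independent side together with $x_1$, so the five chosen vertices close up precisely into $K_2 \vee \overline{K_3}$. Since $x_2$ plays no role, no separate symmetry argument is needed and the remaining work is just the routine verification of the six edges and three non-edges listed above.
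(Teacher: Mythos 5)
Your proposal is correct and matches the paper's proof exactly: the paper likewise assumes $u$ and $v$ are both adjacent to $a$ and $b$ and observes that $a,b,u,v,x_1$ induce a split 3-star ($F_5$), with $\{a,b\}$ as the central edge and $\{x_1,u,v\}$ as the independent triple. Your version merely spells out the edge/non-edge verification that the paper leaves implicit.
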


\begin{proof}
If the vertices $u$ and $v$ are both adjacent to $a$ and $b$,
then the vertices $a,b,u,v$ and $x_1$ induce a split $3$-star~($F_5$).
\end{proof}

\begin{lemma}\label{lemmaX}
    Let $G_1$ and $G_2$ respectively contain components $C$ and $W$ of order at least $2$. If $G_1$ has a vertex $d$ not in $C$, then $C$ and $W$ are either disconnected or completely connected.
\end{lemma}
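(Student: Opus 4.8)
The plan is to argue by contradiction, assuming that $C$ and $W$ are \emph{partially} connected, and to derive one of the forbidden induced subgraphs $F_1,\ldots,F_{15}$. Since $C$ and $W$ are partially connected, there exist a pair of vertices that are adjacent across the two components and a pair that are not; the goal is to leverage the fact that $C$ and $W$ each contain at least two vertices together with the extra vertex $d \in G_1 \setminus C$ to manufacture such a forbidden configuration. Throughout I would use the dominating edge $x_1 x_2$: recall that $x_1$ is completely connected to all of $G_1 \cup G_3$ and $x_2$ is completely connected to all of $G_2 \cup G_3$, and that $x_1 x_2$ is an edge, so these two vertices are always available as a ``backbone'' on which to build the forbidden subgraphs.

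The first step is to pin down how $d$ relates to $W$. By Lemma~\ref{lemma:all-but-one}, at most one pair of components $(X,Y)$ of $(G_1,G_2)$ fails to be completely connected. Since $C$ is assumed partially connected to $W$, the exceptional pair must be $(C,W)$, and hence every other component of $G_1$ -- in particular the component of $G_1$ containing $d$ -- is completely connected to $W$. Thus $d$ is adjacent to every vertex of $W$. This is the key structural handle: $d$ gives a vertex outside $C$ that sees all of $W$, while $C$ itself only partially sees $W$.

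The second step is to exploit partial connection between $C$ and $W$ to locate witnesses. Partial connection means there is an edge and a non-edge between $C$ and $W$; combined with the facts that $C$ and $W$ are each connected of order $\ge 2$ and $d$ is completely joined to $W$, I would select vertices $c_1, c_2 \in C$ and $w_1, w_2 \in W$ realising an adjacency $c_1 w_1$ and a non-adjacency (either within $C$'s view of $W$, or by moving along edges inside the connected components to reach a vertex of $C$ non-adjacent to some vertex of $W$). The resulting small vertex set, augmented by $d$, $x_1$ and $x_2$, should induce one of the forbidden graphs. I would expect chairs ($F_1$), paths $P_5$ ($F_2$), $P_4 \cup K_1$ ($F_3$), or one of the larger graphs $F_{10},\ldots,F_{15}$ to appear, depending on the precise incidences; the main case split is on whether the witnessing non-edge lies between $C$ and $W$ directly or is produced by a vertex of $C$ not adjacent to one endpoint of the crossing edge.

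The hardest part will be the bookkeeping of the case analysis: there are several configurations for how the crossing adjacencies and the internal edges of $C$ and $W$ can interact, and in each one must verify that the chosen five-to-seven vertices induce \emph{exactly} one of the forbidden subgraphs, with no spurious extra edges (in particular checking the adjacencies to $x_1$, $x_2$ and $d$). The payoff is that partial connection is ruled out in every case, leaving only the dichotomy that $C$ and $W$ are disconnected or completely connected, as claimed.
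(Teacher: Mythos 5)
Your opening moves coincide with the paper's: argue by contradiction, observe via Lemma~\ref{lemma:all-but-one} that $(C,W)$ must be the one exceptional pair of components, and conclude that $d$ is adjacent to every vertex of $W$. But the proposal stops exactly where the substantive work begins. The case analysis is only forecast (``should induce one of the forbidden graphs \ldots depending on the precise incidences''), not carried out, and you have not identified the device that makes it terminate quickly. The paper's key step is to choose the witnesses as \emph{edges}, one inside each component: adjacent vertices $a,b \in C$ and adjacent vertices $u,v \in W$ such that $\{a,b\}$ and $\{u,v\}$ are partially connected (such a pair of edges exists, since otherwise complete adjacency would propagate edge by edge through the connected components and $C$, $W$ would be completely connected). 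Then Lemma~\ref{lemmaY} is invoked with $d$ playing the role of the second vertex of a \emph{non-adjacent} pair in $G_1$: since $d$ is adjacent to both $u$ and $v$, and $a$ (resp.\ $b$) is non-adjacent to $d$, neither $a$ nor $b$ can be adjacent to both $u$ and $v$. This collapses the entire analysis to two cases up to symmetry: if $a$ is adjacent to $u$ and $b$ is not, then $a,b,d,u,x_2$ induce a chair ($F_1$); if both $a$ and $b$ are adjacent to $u$ (hence neither to $v$), then $a,b,d,u,v,x_1,x_2$ induce an $F_{15}$.

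Two symptoms confirm the gap. First, your predicted list of outcomes ($P_5$, $F_3$, $F_{10},\ldots,F_{14}$) is off: with the structured witness selection above only $F_1$ and $F_{15}$ arise, and with your unstructured witnesses (``an adjacency $c_1w_1$ and a non-adjacency'' somewhere) the incidence patterns do not obviously close out at all, since nothing constrains how $c_2$ and $w_2$ relate to the rest. Second, the induced-ness checks are never performed; they depend on facts you use only implicitly, namely that $x_2$ is non-adjacent to all of $G_1$ and $x_1$ to all of $G_2$ (by Definition~\ref{def:G1-G2-G3}), and that $d$ is non-adjacent to all of $C$. As written, the proposal is a correct plan whose decisive ingredients --- the edge-based witness selection, the application of Lemma~\ref{lemmaY} to the pair $\{a,d\}$, and the two explicit forbidden subgraphs --- are missing, so it does not yet constitute a proof.
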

\begin{proof}
Suppose $C$ and $D$ are partially connected. Then there exist adjacent vertices $a,b \in C$ and adjacent vertices $u,v \in W$ that are partially connected and by Lemma~\ref{lemma:all-but-one}, $d$ must be adjacent to $u$ and $v$. By Lemma~\ref{lemmaY}, neither vertex $a$ nor $b$ is adjacent to both $u$ and $v$, leading to two possibilities due to symmetry. If $a$ is adjacent to $u$ and $b$ is not, then the vertices $a,b,d,u$ and $x_2$ induce a chair~($F_1$). If both $a$ and $b$ are adjacent to $u$, then the vertices $a,b,d,u,v,x_1$ and $x_2$ induce an~$F_{15}$.
\end{proof}

\pagebreak[4]

\begin{lemma}\label{lemma:one-component-of-size-2}
    Unless $G_2$ is null, $G_1$ has at most one component of order at least $2$.
\end{lemma}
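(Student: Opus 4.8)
The plan is to argue by contradiction. Assume $G_2$ is non-null, fix a vertex $v \in G_2$ and let $W$ be the component of $G_2$ containing it, and suppose $G_1$ has two distinct components $C$ and $C'$, each of order at least $2$; fix edges $ab$ in $C$ and $a'b'$ in $C'$. The first step is to force one of these components to be completely connected to $v$. Since $C \neq C'$, the pairs $(C,W)$ and $(C',W)$ are distinct, so Lemma~\ref{lemma:all-but-one} allows at most one of them to fail to be completely connected. Hence, without loss of generality, $C$ is completely connected to $W$, and in particular $a$ and $b$ are both adjacent to $v$. It then remains to analyse how $v$ attaches to $C'$ and to produce a forbidden induced subgraph in every case.

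I would split into three cases according to the adjacencies between $v$ and the second component $C'$, keeping in mind that $x_1$ is adjacent to all of $G_1$, that $x_2$ is adjacent to $v$ and $x_1$ only among the seven vertices under consideration, that $x_1$ and $v$ are non-adjacent, and that $C$ and $C'$ lie in different components of $G_1$. If $v$ is adjacent to every vertex of $C'$, so that in particular $v$ is adjacent to both $a'$ and $b'$, then the seven vertices $x_1, x_2, a, b, a', b', v$ induce $F_{14}$, in which $x_1$ and $v$ are the two vertices of degree five, linked through $x_2$, and $\{a,b\}$ and $\{a',b'\}$ are two independent edges dominated by both of them. If $v$ is adjacent to some but not all of $C'$, then since $C'$ is connected I can pick the edge $a'b'$ so that $v$ is adjacent to $a'$ but not to $b'$, and then $\{a, v, a', b', x_2\}$ induces a chair $F_1$ with spine $x_2, v, a', b'$ and the leaf $a$ attached at $v$. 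The remaining possibility is that $v$ is adjacent to no vertex of $C'$.

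In this last case I would verify that $x_1, x_2, a, b, a', b', v$ induce the forbidden graph $F_{13}$, whose edges are precisely $x_1x_2$, the four edges joining $x_1$ to $a, b, a', b'$, the edge $x_2v$, the edges $va$ and $vb$, and the component edges $ab$ and $a'b'$. I expect this case to be the main obstacle. On the one hand, one must check the relevant non-adjacencies (that $\{a,b\}$ and $\{a',b'\}$ are mutually non-adjacent, that $x_2$ misses every vertex of $G_1$, and that $v$ misses $x_1$, $a'$ and $b'$) to be sure the induced subgraph really is $F_{13}$ and not a proper supergraph of it. On the other hand, none of the five-vertex subsets is forbidden here: discarding $x_2$ or one of $a,b$ leaves only graphs such as a triangle with a pendant path, or a four-cycle with a pendant vertex, which are not among $F_1,\dots,F_{15}$. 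Thus all seven vertices are genuinely needed, reflecting that $F_{13}$ is a minimal forbidden configuration. Since each of the three cases contradicts the assumption that $G$ contains no induced $F_1,\dots,F_{15}$, it follows that $G_1$ has at most one component of order at least $2$.
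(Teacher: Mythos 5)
Your proof is correct in substance and follows essentially the same route as the paper: Lemma~\ref{lemma:all-but-one} gives, without loss of generality, that $v$ is completely connected to one of the two components (the paper fixes adjacent pairs $a,b$ and $c,d$ and assumes $v$ adjacent to $a,b$), and the same three-way case split then yields $F_{14}$ when $v$ sees both ends of the other edge, a chair $F_1$ in the mixed case (your re-choice of $a'b'$ along a mixed edge of the connected component $C'$ is a harmless variant of the paper's fixed-pair analysis), and a seven-vertex forbidden graph in the disconnected case. The one error is the label in that last case: the graph you describe, with the ten edges $x_1x_2$; $x_1a$, $x_1b$, $x_1a'$, $x_1b'$; $x_2v$; $va$, $vb$; $ab$; $a'b'$, is $F_{12}$, not $F_{13}$. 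This is what the paper's own proof of Lemma~\ref{lemma:one-component-of-size-2} asserts, and it is consistent with the paper's use of these graphs in Lemma~\ref{lemma:if-KV-not-null}: there $F_{13}$ arises on seven vertices $a,b,c,v,x_1,x_2,y$ in which the second-component vertex $c$ is adjacent only to $x_1$, so $F_{13}$ has eleven edges and a vertex of degree~$1$, whereas your graph has ten edges and minimum degree~$2$; its degree sequence $(2,2,2,3,3,3,5)$ matches the $F_{12}$ instance of that lemma exactly, and an explicit isomorphism sends $x_1\mapsto x_1$, $c\mapsto x_2$, $x_2\mapsto a$, $y\mapsto b$, $v\mapsto v$, $a\mapsto a'$, $b\mapsto b'$. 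Had you carried out the verification you flag as ``the main obstacle,'' it would have failed against $F_{13}$ but succeeded against $F_{12}$. Since the graph you construct is forbidden either way, the contradiction---and hence the lemma---stands once the label is corrected; note also that your aside about minimality and five-vertex subsets is not load-bearing, as the proof only requires that the seven vertices induce some $F_i$, not that no proper subset does.
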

\begin{proof}
    Assume $G_1$ has two components of order at least $2$ and let~$v$ be a vertex of~$G_2$. Pick two adjacent vertices from each of the two components in $G_1$ and denote them by $a,b$ and $c,d$, respectively. Due to Lemma~\ref{lemma:all-but-one}, we assume without loss of generality that $v$ is adjacent to $a$ and $b$, which leads to three possibilities due to symmetry. If~$v$ is adjacent to neither $c$ nor~$d$, then the vertices $a,b,c,d,v,x_1$ and $x_2$ induce an~$F_{12}$. If $v$ is adjacent to $c$ and not to $d$, then the vertices $a,c,d,v$ and $x_2$ induce a chair ($F_1$). If $v$ is adjacent to both $c$ and $d$, then the vertices $a,b,c,d,v,x_1$ and $x_2$ induce an $F_{14}$.
\end{proof}

\begin{lemma}\label{lemma:at-most-two-components}
    If $G_1$ has a component $A$ of order at least $2$, then $G_2$ has at most two components.
\end{lemma}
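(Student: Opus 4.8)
The plan is to argue by contradiction, assuming that $G_2$ has at least three components, and to derive a forbidden configuration using the adjacency constraints already established, principally Lemma~\ref{lemma:all-but-one} and Lemma~\ref{lemmaY}. Since $A$ is a component of $G_1$ of order at least $2$, I would first fix two adjacent vertices $a,b\in A$. These will serve as the ``$G_1$-side'' of a split $3$-star.

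Next I would use Lemma~\ref{lemma:all-but-one} to control how $A$ attaches to $G_2$. That lemma guarantees that \emph{every} pairing of a component of $G_1$ with a component of $G_2$ is completely connected, with the sole exception of one pair $(X,Y)$. In particular, at most one of the (at least three) components of $G_2$ can fail to be completely connected to $A$, namely the one appearing in the exceptional pair $(X,Y)$ when $X=A$. Hence $A$ is completely connected to at least two distinct components $Y_i$ and $Y_j$ of $G_2$.

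I would then pick a vertex $u\in Y_i$ and a vertex $w\in Y_j$. Because $Y_i$ and $Y_j$ are distinct components of $G_2$, the vertices $u$ and $w$ are non-adjacent. On the other hand, complete connectedness of $A$ to both $Y_i$ and $Y_j$ forces $u$ and $w$ to each be adjacent to both $a$ and $b$. This is exactly the forbidden situation ruled out by Lemma~\ref{lemmaY} (where it corresponds to a split $3$-star $F_5$ on $a,b,u,w,x_1$). The contradiction shows $G_2$ cannot have three or more components, so $G_2$ has at most two components.

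I do not anticipate a genuine obstacle here; the statement follows almost immediately once the right two earlier lemmas are combined. The only point that requires a little care is the counting step: one must observe that, because Lemma~\ref{lemma:all-but-one} permits only a \emph{single} exceptional pair, three components of $G_2$ necessarily leave at least two of them completely connected to $A$, which is precisely what is needed to invoke Lemma~\ref{lemmaY}.
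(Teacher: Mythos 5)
Your proof is correct and is essentially the paper's own argument: both use Lemma~\ref{lemma:all-but-one} to conclude that $A$ is completely connected to at least two components of $G_2$, and then derive the split $3$-star $F_5$ on $a,b$, one vertex from each of those components, and $x_1$. The only cosmetic difference is that you invoke Lemma~\ref{lemmaY} as an intermediate step, whereas the paper exhibits the induced $F_5$ directly --- but the proof of Lemma~\ref{lemmaY} is precisely that split $3$-star, so the two routes coincide.
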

\begin{proof}
    Let $A$ be a component of $G_1$ of order at least $2$ and $a,b$ be two adjacent vertices in $A$. Suppose $G_2$ has three components $C,D$ and $E$. Then without loss of generality, $A$ is completely connected to $C$ and $D$ by Lemma \ref{lemma:all-but-one}, so $a$ and $b$ together with any $c \in C$, $d \in D$ and $x_1$ induce a split 3-star~($F_5$).
\end{proof}

The following important structural result holds if both $G_1$ and $G_2$ are complete subgraphs.
\begin{lemma}\label{lemma:Kn-to-Kn}
    If $G_1$ and $G_2$ are both non-null complete subgraphs, then $G_1$ and~$G_2$ can be partitioned into subgraphs $G_1 = A_R, A_1, \ldots, A_k$ and $G_2 = Z_R, Z_1, \ldots, Z_k$ for some $k \geq 0$ such that
	\begin{itemize}
		\item for all $i,j \in \{1, \ldots, k \}$ with $i \not = j$, $A_i$ is completely connected to $Z_i$ and disconnected from $Z_j$.
 		\item $A_R$ and $Z_R$ are disconnected from all $Z_i$ and $A_i$, respectively, and from each other.
	\end{itemize}
This is illustrated by the graph class $E_5$ in Figure~\ref{fig:explicit-structures}.
\end{lemma}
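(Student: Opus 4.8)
The plan is to recast the statement in terms of the bipartite graph $B$ whose two parts are $V(G_1)$ and $V(G_2)$ and whose edges are exactly the edges of $G$ running between $G_1$ and $G_2$. The partition we are after is precisely the assertion that $B$ is a disjoint union of complete bipartite graphs together with some isolated vertices: the nontrivial components of $B$ are the bicliques $A_i \vee Z_i$ with $A_i \subseteq V(G_1)$ and $Z_i \subseteq V(G_2)$ (so $A_i$ is completely connected to $Z_i$ and, lying in a different component, is disconnected from every $Z_j$ with $j \neq i$), while the $B$-isolated vertices on the $G_1$-side and the $G_2$-side make up $A_R$ and $Z_R$. Since it is a standard fact that a bipartite graph is a disjoint union of complete bipartite graphs if and only if it contains no induced $P_4$, the whole lemma reduces to the single claim that $B$ has no induced $P_4$.

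The core of the proof is therefore to establish this claim. An induced $P_4$ in $B$ uses two vertices from each part, so up to reversing the path and the symmetry between $G_1$ and $G_2$ it has the form $a - u - b - v$ with $a,b \in V(G_1)$ and $u,v \in V(G_2)$; being induced in $B$ means $a \sim u$, $b \sim u$, $b \sim v$ and $a \not\sim v$. I would then build a forbidden subgraph from these four vertices together with $x_2$. Because $G_1$ and $G_2$ are complete we have $a \sim b$ and $u \sim v$, and by Definition~\ref{def:G1-G2-G3} the vertex $x_2$ is adjacent to $u$ and $v$ but not to $a$ or $b$. Consequently $u$ is adjacent to all of $a,b,v,x_2$, while $a-b-v-x_2$ is an induced $P_4$ (the only edges among these four vertices are $ab$, $bv$ and $vx_2$). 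Hence $\{a,b,u,v,x_2\}$ induces $K_1 \vee P_4$, that is, a $4$-fan $F_4$, contradicting the hypothesis that $G$ is $F_4$-free. This proves that $B$ has no induced $P_4$.

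It then remains to translate the $P_4$-freeness of $B$ into the claimed partition. Here I would invoke the elementary fact that every connected component of a $P_4$-free bipartite graph is complete bipartite (otherwise a shortest path between a non-adjacent pair of vertices inside a component, which is necessarily induced and of length at least three, would contain an induced $P_4$). Labelling the nontrivial components of $B$ as $A_i \vee Z_i$ for $i = 1,\dots,k$ and collecting the $B$-isolated vertices of $G_1$ and $G_2$ into $A_R$ and $Z_R$ respectively yields exactly the two bullet points of the lemma; note that $A_R$ and $Z_R$ are permitted to be null, matching the description of class $E_5$.

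The only genuinely delicate point is the key step, and the useful observation is that the natural four-vertex obstruction $a,b,u,v$ does \emph{not} by itself form a forbidden induced subgraph; one must bring in the dominating-edge endpoint $x_2$ to close the configuration into a $4$-fan. Once $x_2$ is included the verification of the adjacencies is immediate, so I expect no further obstacles. In particular, no appeal to the earlier adjacency lemmas is needed here, since under the hypothesis $G_1$ and $G_2$ each consist of a single complete component and Lemma~\ref{lemma:all-but-one} is vacuous.
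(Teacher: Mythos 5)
Your proof is correct, and its combinatorial core coincides exactly with the paper's: the paper also rules out precisely the cross-configuration you describe, only packaged differently. In the paper's proof one fixes two vertices $a,b$ of $G_1$, lets $S_a$ be the neighbourhood of $a$ in $G_2$, and shows that $b$ is either completely connected to or disconnected from $S_a$, because a vertex $v \in S_a$ adjacent to both $a$ and $b$ together with a vertex $w \in S_a$ adjacent to $a$ only makes $a,b,v,w,x_1$ induce a $4$-fan ($F_4$). Observe that $(w,a,v,b)$ is precisely an induced $P_4$ in your bipartite cross graph $B$, and that the paper uses $x_1$ as the fan hub where you use $x_2$; by the symmetry of the decomposition either endpoint of the dominating edge works, and your adjacency check ($u$ adjacent to all of $a,b,v,x_2$, with $a$-$b$-$v$-$x_2$ an induced $P_4$, since $x_2$ is adjacent to $G_2$ but not $G_1$ by Definition~\ref{def:G1-G2-G3}) is accurate. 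Where you genuinely differ is in how the partition is extracted from this obstruction: the paper concludes by hand that the sets $S_a$ are pairwise equal or disjoint and reads off $Z_1,\ldots,Z_k,Z_R$ and then $A_1,\ldots,A_k,A_R$, whereas you delegate the bookkeeping to the standard fact that a bipartite graph is a disjoint union of complete bipartite graphs plus isolated vertices if and only if it has no induced $P_4$ (and your shortest-path sketch is a correct proof of that fact, since the relevant non-adjacent pair lies in opposite parts and so has odd distance at least $3$). Your route is slightly more modular, isolating a reusable structural lemma, at the cost of citing a fact the paper effectively reproves inline; both correctly deliver the two bullet points, including that each nontrivial component of $B$ meets both sides (so every $A_i$ and $Z_i$ is nonempty) while $A_R$ and $Z_R$ may be null, and your closing remark that Lemma~\ref{lemma:all-but-one} is not needed here matches the paper, whose proof likewise uses no earlier adjacency lemmas.
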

\begin{proof}
    If both $G_1$ and $G_2$ consist of a single vertex, then the lemma is trivially true. Hence assume that $G_1$ has at least two vertices $a$ and~$b$, and let $S_a$ be the set of vertices in $G_2$ adjacent to $a$. Note that $b$ is either 
completely connected to or disconnected from $S_a$. To see this, suppose $v \in S_a$ is adjacent to $a$ and $b$, and $w \in S_a$ is adjacent only to $a$. As $G_1$ and $G_2$ are complete, the vertices $a,b,v,w$ and $x_1$ induce a 4-fan~($F_4$). 
This 
fact implies that for every two vertices $a,b \in G_1$, $S_a$ and $S_b$ are either disjoint or equal, which partitions $G_2$ as follows. The $Z_1, \ldots, Z_k$ are all the possible distinct subgraphs of $G_2$ induced by $S_a$ for some $a \in G_1$, while $Z_R$ is the subgraph of $G_2$ induced by all vertices not adjacent to $G_1$.
	
	From the partition of $G_2$ we also obtain a partition $A_R, A_1, \ldots, A_k$. Clearly, every vertex $a$ in $G_1$ is adjacent to at most one subgraph $Z_i$, $1 \leq i \leq k$. Hence we define by $A_i$ the subgraph induced by the subset of all vertices in $G_1$ adjacent to $Z_i$, and by $A_R$ the subgraph of $G_1$ induced by all vertices not adjacent to any vertex in $G_2$.
\end{proof}

\subsection{The relationship between $G_1/G_2$ and $G_3$}\label{subsec:G3}
Now that we have seen how different configurations of $G_1$ can be connected to~$G_2$ and vice versa, regardless of the shape of~$G_3$, we investigate how the existence and configuration of a non-null~$G_3$ constrains the other two subgraphs.  The following lemmas refer to a series of case distinctions on whether $G_3$ or $G_1/G_2$ contain a non-complete component. If $G_3$ is not complete, the case is simple and completely described by Lemma~\ref{lemma:G3-not-clique}.

\begin{lemma}\label{lemma:G3-not-clique}
    If~$G_3$ is not complete, $G_3$ is disconnected from $G_1$ and $G_2$. Moreover, neither~$G_1$ nor~$G_2$ contain a non-complete component.
\end{lemma}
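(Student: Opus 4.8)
The plan is to prove both claims of Lemma~\ref{lemma:G3-not-clique} by exploiting the fact that $G_3$ is not complete together with the forbidden-subgraph hypotheses, particularly via the interaction with the dominating edge $x_1x_2$. First I would invoke Lemma~\ref{lemma:G3-shape}: since $G_3$ is not complete, it is a possibly degenerate ear graph, so either it has two complete components or it is a genuine ear graph $K_r \vee (K_p \cup K_q)$ with $p,q,r \geq 1$. In either case $G_3$ contains three vertices $a, b, c$ inducing a $P_3$ (with $b$ adjacent to both $a$ and $c$, and $a$ not adjacent to $c$); note that every vertex of $G_3$ is adjacent to both $x_1$ and $x_2$ by definition of $G_3 = N_G(x_1) \cap N_G(x_2)$.

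The core of the argument is to show that no vertex $w \in G_1 \cup G_2$ can be adjacent to any vertex of $G_3$. By symmetry it suffices to treat $w \in G_1$, so $w$ is adjacent to $x_1$ but not to $x_2$. I would do a case analysis on how $w$ attaches to the $P_3$ on $\{a,b,c\}$, analogous to the analysis in the proof of Lemma~\ref{lemma:N-in-G1}. If $w$ is adjacent to none of $a,b,c$, then $\{a, c, w, x_1, x_2\}$ should induce a chair $(F_1)$ (since $a,c$ hang off the edge $x_1x_2$ while $w$ attaches only to $x_1$); if $w$ is adjacent to exactly one of them, the five vertices $a,b,c,w,x_2$ give a chair $(F_1)$ or a $P_5$ $(F_2)$; and if $w$ is adjacent to two or three of $a,b,c$ (including $b$), the set $a,b,c,w,x_1$ induces a $4$-fan $(F_4)$ or $4$-wheel $(F_8)$. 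Since all of $F_1, F_2, F_4, F_8$ are forbidden, every case yields a contradiction, forcing $w$ to be non-adjacent to all of $G_3$. As $w$ was an arbitrary vertex of $G_1 \cup G_2$, this proves that $G_3$ is disconnected from both $G_1$ and $G_2$.

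For the second claim, suppose for contradiction that $G_1$ (say, by symmetry) has a non-complete component; by Lemma~\ref{lemma:N-structure} this component is an ear graph $N = K_r \vee (K_p \cup K_q)$ with $p,q,r \geq 1$, so $G_1$ itself contains an induced $P_3$, say on vertices $a', b', c'$. Pick any vertex $g \in G_3$ together with $x_1, x_2$. Since $g$ is adjacent to $x_1$ and $x_2$ but, by the first part, disconnected from all of $G_1$, and since $a', b', c'$ hang off $x_1$ but not $x_2$, I expect the vertices $a', b', c', g, x_1, x_2$ (or a suitable five of them) to induce a forbidden subgraph — most plausibly an $F_{4}$-type double-fan-like configuration or one of $F_{10},\dots,F_{15}$. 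The cleanest route is probably to observe that $\{a', b', c'\}$ together with $x_1$ form a $4$-fan, and adding the independent-from-$G_1$ vertex $g$ (adjacent to $x_1,x_2$ but not to $a',b',c'$) extends this to one of the larger forbidden graphs; identifying exactly which $F_i$ appears is the step requiring care.

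The main obstacle I anticipate is the bookkeeping in this final case distinction for the second claim: matching each attachment pattern of the extra vertex to the precise forbidden graph $F_i$, and ensuring the degenerate sub-cases (e.g. when $G_3$ is a single vertex, or when a complete component of $G_1$ coexists with the non-complete one) are all covered. The first claim's case analysis is essentially a routine replay of the technique already used in Lemma~\ref{lemma:N-in-G1}, so the genuine work lies in verifying that a non-complete component of $G_1/G_2$ cannot coexist with any vertex of $G_3$ once they have been shown to be disconnected.
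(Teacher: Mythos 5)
There is a genuine gap, in fact two. For the first claim, your case analysis is transplanted from Lemma~\ref{lemma:N-in-G1}, but there the $P_3$ lies inside $G_1$ (adjacent to $x_1$ but not $x_2$), whereas here your $a,b,c$ lie in $G_3$ and are therefore adjacent to \emph{both} $x_1$ and $x_2$. With the correct adjacencies, the subgraphs you name do not arise: in your ``adjacent to none'' case, $\{a,c,w,x_1,x_2\}$ induces $K_1 \vee (P_3 \cup K_1)$ (a dart, an instance of $E_1$), not a chair; similarly the ``exactly one'' and ``two or three'' cases produce diamond-plus-pendant and $K_2 \vee (K_2 \cup K_1)$ type graphs, none of which is forbidden. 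Worse, the logic is self-defeating: if the ``none'' case really yielded a forbidden graph, you would have proved that every $w \in G_1$ \emph{must} attach to $G_3$ --- the opposite of the lemma. Two further problems: a non-complete possibly degenerate ear graph need not contain any induced $P_3$ (take $r=0$, i.e.\ $G_3 = K_p \cup K_q$), so your starting configuration may not exist; and even where it does, ruling out adjacency to the three chosen vertices says nothing about the other vertices of $G_3$, in particular vertices dominating $G_3$. The paper's argument avoids all of this by fixing two non-adjacent vertices $y_1,y_2 \in G_3$ (which exist precisely because $G_3$ is not complete): if $a \in G_1$ is adjacent to $y_1$ and/or $y_2$, then $\{a,x_1,x_2,y_1,y_2\}$ induces a 4-fan ($F_4$) or 4-wheel ($F_8$); this settles all non-dominating vertices of $G_3$, and a dominating vertex $y_3$ adjacent to $a$ then gives a split 3-star ($F_5$) on $\{a,x_1,y_1,y_2,y_3\}$.

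For the second claim, your plan of adjoining a \emph{single} vertex $g \in G_3$ to the $P_3$ in $G_1$ cannot succeed: $\{a',b',c',g,x_1,x_2\}$ induces $K_1 \vee (P_3 \cup K_2)$ with hub $x_1$ and second part $\{x_2,g\}$, which is a proper induced subgraph of the double fan and again an instance of $E_1$, hence not forbidden --- your hoped-for $F_{10},\dots,F_{15}$ will not appear. This failure is structural, not bookkeeping: you never use the non-completeness of $G_3$ in this part, yet the claim is false for complete $G_3$ (a non-complete component of $G_1$ can coexist with a non-null complete $G_3$, as in Case~1 of the proof of Lemma~\ref{lemma:caseanalysis}~(a)). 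The paper's proof uses both non-adjacent vertices: $y_1, x_2, y_2$ form a second induced $P_3$ disjoint from (and, by the first part, disconnected from) the $P_3$ in $N$, and $x_1$ dominates all six vertices, so together they induce exactly a double fan ($F_9$) on seven vertices.
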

\begin{proof}
     Let $y_1$ and $y_2$ be two non-adjacent vertices in $G_3$, and let $a$ be a vertex in $G_1$ (or $G_2$). If $a$ is adjacent to $y_1$ and/or $y_2$, then the vertices $a,x_1,x_2,y_1$ and $y_2$ induce a 4-fan ($F_4$) or 4-wheel~($F_8$). This 
shows that vertices $y \in G_3$ not dominating $G_3$ are disconnected from $G_1$ and $G_2$. Now suppose $G_3$ has a vertex $y_3$ dominating $G_3$. If $a$ is adjacent to~$y_3$, then the vertices $a,x_1,y_1,y_2$ and $y_3$ induce a split 
3-star~($F_5$). Hence no vertex in $G_3$ can be adjacent to $G_1$ or $G_2$. Secondly, suppose $G_1$ has a non-complete component $N$. As this $N$ contains an induced $P_3$ and none of the vertices in this $P_3$ are adjacent to $y_1$ or~$y_2$, 
the $P_3$ and the vertices $x_1, x_2, y_1$ and $y_2$ induce a double fan~($F_9$).
\end{proof}

If $G_3$ is a non-null complete subgraph, the situation is a little more complex. 
The analysis of this case comprises the remainder of this subsection.
We first assume that $G_1$ has a non-complete component. 
\begin{lemma}\label{lemma:G3-if-N-then-independent}
    Let $G_3$ be non-null and complete. If $G_1$ contains a non-complete component $N$, then $G_1$ and $G_3$ are disconnected and $G_2$ is null.
\end{lemma}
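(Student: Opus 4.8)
The plan is to fix the ear graph $N = K_r \vee (K_p \cup K_q)$, which by Lemma~\ref{lemma:N-structure} is the unique non-complete component of $G_1$ and satisfies $p,q,r \geq 1$, and to anchor everything on the induced $P_3$ given by vertices $a \in K_p$, $b \in K_r$, $c \in K_q$ (so $ab$ and $bc$ are edges while $ac$ is a non-edge). I then take an arbitrary $y \in G_3$ and record the standing facts: $\{x_1,x_2,y\}$ is a triangle, since $x_1x_2$ is the dominating edge and $y$ is a common neighbour; $x_1$ is completely connected to $G_1$ and to $y$; and $x_2$ is adjacent to $y$ but to no vertex of $G_1$. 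I establish the disconnection of $G_1$ and $G_3$ first, and then use it to force $G_2$ to be null.

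For the disconnection I show that $y$ is adjacent to none of $a,b,c$ by a case distinction on $N(y)\cap\{a,b,c\}$; by the $K_p\leftrightarrow K_q$ symmetry it suffices to treat the representatives $\{b\}$, $\{a\}$, $\{a,c\}$, $\{a,b\}$ and $\{a,b,c\}$. If $y$ is adjacent only to $b$, then $\{a,b,c,y,x_2\}$ induces a chair ($F_1$); if only to $a$, then $x_2\text{-}y\text{-}a\text{-}b\text{-}c$ is an induced $P_5$ ($F_2$); if to $a$ and $c$ but not $b$, then $a\text{-}b\text{-}c\text{-}y$ is an induced $C_4$ whose four vertices all lie in $N(x_1)$, giving a $4$-wheel ($F_8$); if to $a$ and $b$ but not $c$, then $c\text{-}b\text{-}y\text{-}x_2$ is an induced $P_4$ dominated by $x_1$, giving a $4$-fan ($F_4$); and if to all three, then the edge $\{x_1,y\}$ is completely connected to the independent set $\{a,c,x_2\}$, giving a split $3$-star ($F_5$). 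Since $a,b,c$ were arbitrary representatives of $K_p,K_r,K_q$, the vertex $y$ is non-adjacent to all of $N$. It remains to exclude an edge from $y$ to a (necessarily complete, by Lemma~\ref{lemma:N-structure}) component $C\neq N$ of $G_1$: if $z\in C$ were adjacent to $y$, then $z\text{-}y\text{-}x_2$ would be an induced $P_3$ that is disjoint from and completely non-adjacent to $a\text{-}b\text{-}c$, while $x_1$ dominates all six vertices, yielding a double fan ($F_9$). Hence every vertex of $G_3$ is non-adjacent to all of $G_1$, so $G_1$ and $G_3$ are disconnected.

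To show $G_2$ is null, I argue by contradiction. By Lemma~\ref{lemma:N-in-G1}(i), $G_2$ has at most one vertex, so suppose it has a vertex $v$. Then Lemma~\ref{lemma:N-in-G1}(ii) gives $G_1 = N$ together with the adjacencies that $v$ is joined to $a$ and $c$ but not to $b$. I split on the edge $vy$. If $v$ is non-adjacent to $y$, then $b\text{-}c\text{-}v\text{-}x_2\text{-}y$ is an induced $P_5$ ($F_2$), using that $v$ avoids $K_r$, that $b,c$ avoid $x_2$, and that $y$ avoids $G_1$ by the disconnection established above. If instead $v$ is adjacent to $y$, then the seven vertices $a,b,c,v,x_1,x_2,y$ induce one of the forbidden graphs $F_{10},\ldots,F_{15}$. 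In either case we obtain an induced forbidden subgraph, a contradiction, so $G_2$ must be null.

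The main obstacle is precisely this last subcase, where $vy$ is an edge. Here the configuration contains no forbidden induced subgraph on five or six vertices: the natural small candidates one extracts are a house, a bull, a $K_{2,3}$ or a diamond, all of which are permitted (they occur inside the classes $E_2$, $E_6$ and $E_7$). Consequently the argument must identify the exact $7$-vertex forbidden graph, and doing so requires a careful matching of the full adjacency pattern of $\{a,b,c,v,x_1,x_2,y\}$ (note that $a$ and $c$ are twins, the clique number is $3$, and there is no induced $P_5$) against the graphs $F_{10},\ldots,F_{15}$ in Figure~\ref{fig:gB-forbidden-graphs}.
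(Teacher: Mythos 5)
Your proof follows essentially the same route as the paper's: you anchor on the induced $P_3$ given by $a\in K_p$, $b\in K_r$, $c\in K_q$, first show every $y\in G_3$ is non-adjacent to $N$ by cases on $N(y)\cap\{a,b,c\}$, exclude edges from $G_3$ to other components of $G_1$ via the double fan on $\{a,b,c,z,y,x_2\}$ joined to $x_1$ (this is identical to the paper's $F_9$ argument, with $z$ in place of the paper's $d$), and then use Lemma~\ref{lemma:N-in-G1} to reduce $G_2$ to a single vertex $v$ adjacent to $a,c$ and not $b$. Your certificates in the small cases differ harmlessly from the paper's: where the paper handles every pattern in which $y$ distinguishes the endpoints of an edge of $N$ with a single 4-fan on $a,b,x_1,x_2,y$ (plus a split 3-star when $y$ sees all three), you split into five representative patterns using a chair, a $P_5$, a 4-wheel, a 4-fan and a split 3-star; and where the paper uses the chair on $a,c,v,x_2,y$ in the case $vy\notin E$, you use the induced $P_5$ on $b,c,v,x_2,y$. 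I checked all of these; they are valid.

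The genuine gap is the final subcase, which you yourself flag as the main obstacle: when $vy\in E$, you assert that the seven vertices $a,b,c,v,x_1,x_2,y$ ``induce one of the forbidden graphs $F_{10},\ldots,F_{15}$'' and then state that the argument \emph{must} identify the exact 7-vertex forbidden graph by matching adjacencies against Figure~\ref{fig:gB-forbidden-graphs} --- but you never carry out that matching, so the decisive step is asserted rather than proved. The assertion is in fact true: the induced graph, with edge set $\{ab,\, bc,\, av,\, cv,\, ax_1,\, bx_1,\, cx_1,\, x_1x_2,\, x_1y,\, x_2y,\, vx_2,\, vy\}$ and no other edges, is precisely $F_{15}$, and this single identification is how the paper closes the proof. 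Your structural observations (the non-adjacent twins $a$ and $c$, clique number $3$, absence of an induced $P_5$) correctly rule out a 5- or 6-vertex forbidden certificate and narrow the field, but they do not by themselves distinguish $F_{15}$ from the other 7-vertex candidates, so without the explicit verification the proof is incomplete at exactly the point where the whole argument rests.
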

\begin{proof}
	Recall from Lemma~\ref{lemma:N-structure} that $N = K_r \vee (K_p \cup K_q)$ and let $a \in K_p, b \in K_r, c \in K_q$, as well as $y \in G_3$. Firstly, note that $N$ and $G_3$ are disconnected. To see this, suppose $y$ is adjacent to some non-empty subset of $\{a,b,c\}$. This yields two cases due to symmetry. If $y$ is adjacent to $a$ and not adjacent to~$b$ (or vice versa), then the vertices $a,b,x_1,x_2$ and $y$ induce a 4-fan ($F_4$). If $y$ is adjacent to $a,b$ and $c$, then the vertices $a,c,x_1,x_2$ and $y$ induce a split 3-star~($F_5$). Secondly, let $d \in G_1$ be a vertex not in $N$ and assume it is adjacent to $y \in G_3$. Then the vertices $a,b,c,d,x_1, x_2$ and $y$ induce a double fan~($F_9$). Lastly suppose that $G_2$ has a vertex $v$. By Lemma~\ref{lemma:N-in-G1}, $v$ is adjacent to $a,c$ and not adjacent to $b$. If $v$ is not adjacent to $y$, then the vertices $a,c,v,x_2$ and $y$ induce a chair~($F_1$) and if $v$ is adjacent to $y$, then the vertices $a,b,c,v,x_1,x_2$ and $y$ induce an $F_{15}$.
\end{proof}

From now on we assume that neither $G_1$ nor $G_2$ has a non-complete component. $G_3$ is still complete and non-null.

\begin{lemma}\label{lemma:G3-connected-to-one}
    Let $G_3$ be a non-null complete subgraph. 
\begin{itemize}
\item[(i)] At most one component of $G_1$ is connected to $G_3$. The same holds for $G_2$. 
\item[(ii)] If $A$ and $V$ are respective components of $G_1$ and $G_2$ that are both connected to $G_3$, then $A$ is connected to different vertices of $G_3$ than $V$.
\end{itemize}
\end{lemma}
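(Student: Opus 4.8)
The plan is to prove both parts by contradiction, in each case exhibiting one of the forbidden subgraphs among a handful of vertices taken from $\{x_1,x_2\}$, $G_3$ and the relevant components. Throughout I would use that $x_1$ is adjacent to every vertex of $G_1$ and of $G_3$, that $x_2$ is adjacent to every vertex of $G_3$ but to no vertex of $G_1$, and that $G_3$ is complete.

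For part~(i), suppose two distinct components $A$ and $A'$ of $G_1$ are both connected to $G_3$, and pick $a\in A$, $a'\in A'$ and $y,y'\in G_3$ with $a$ adjacent to $y$ and $a'$ adjacent to $y'$. Since $A\neq A'$, the vertices $a$ and $a'$ are non-adjacent. First I would treat the case $y=y'$: here $\{x_1,y\}$ is an edge completely connected to the independent triple $\{x_2,a,a'\}$, so these five vertices induce a split 3-star~($F_5$). If instead $y\neq y'$ (so $y$ and $y'$ are adjacent), I would note that as soon as $a$ is adjacent to $y'$ the same split 3-star appears on $\{x_1,y'\}$ and $\{x_2,a,a'\}$, and symmetrically if $a'$ is adjacent to $y$; this leaves only the case where $a$ is non-adjacent to $y'$ and $a'$ is non-adjacent to $y$. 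In that case $a,y,y',a'$ induce a $P_4$, which together with the dominating vertex $x_1$ yields a 4-fan~($F_4$). Each outcome contradicts $g_B$-perfectness, so at most one component of $G_1$ is connected to $G_3$; the claim for $G_2$ follows from the symmetry between $x_1$ and $x_2$.

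For part~(ii), suppose components $A\subseteq G_1$ and $V\subseteq G_2$ are both adjacent to a common vertex $y\in G_3$, witnessed by $a\in A$ and $v\in V$ adjacent to $y$. Then $x_1$ is adjacent to $a$ but not $v$, $x_2$ is adjacent to $v$ but not $a$, and $y$ is adjacent to all of $a,v,x_1,x_2$. I would split on the only undetermined adjacency, that between $a$ and $v$. If $a$ and $v$ are non-adjacent, then $a,x_1,x_2,v$ induce a $P_4$ dominated by $y$, a 4-fan~($F_4$); if they are adjacent, then $a,x_1,x_2,v$ induce a $C_4$ dominated by $y$, a 4-wheel~($F_8$). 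Both are forbidden, so no vertex of $G_3$ is adjacent to both $A$ and $V$, which is exactly the assertion.

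The case analysis is elementary; the only real care needed is to check that in each case the selected vertices induce \emph{exactly} the named forbidden graph, i.e.\ that the required non-edges are genuinely absent (the non-edge $aa'$ coming from distinct components in~(i), the non-edges $ay'$ and $a'y$ in its separated subcase, and the non-edges $x_1v$, $x_2a$ throughout~(ii)). The one step that is more than bookkeeping is the reduction in part~(i): recognising that any adjacency of a component representative to the \emph{other} $G_3$-vertex collapses back to the split 3-star, so that only a single genuinely new configuration remains to be handled.
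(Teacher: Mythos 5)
Your proof is correct and follows essentially the same route as the paper's: the same five-vertex witnesses (split 3-star~$F_5$ or 4-fan~$F_4$ for part~(i); 4-fan~$F_4$ or 4-wheel~$F_8$ for part~(ii) according to the edge $av$). The only difference is that you make explicit the reduction that the paper leaves implicit, namely that in the case $y\neq y'$ any adjacency of $a$ to $y'$ (or $a'$ to $y$) collapses back to the common-vertex case, so the non-edges needed for the induced 4-fan genuinely hold --- a welcome clarification, not a deviation.
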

\begin{proof}
    (i) Suppose $a$ and $b$ are vertices from two components of $G_1$ connected to $G_3$. If~$a$ and~$b$ are both adjacent to the same vertex~$y$ in~$G_3$, then the vertices $a,b,x_1, x_2$ and $y$ induce a split 3-star~($F_5$). If~$a$ is adjacent to $y_1 \in G_3$ and $b$ is adjacent to $y_2 \in G_3$, then the vertices $a,b,x_1, y_1$ and $y_2$ induce a 4-fan~($F_4$).
    
    (ii) Suppose $G_3$ is connected to a component in $G_1$ and a component in~$G_2$. Thus there exist two vertices $a \in G_1$ and $v \in G_2$ that are both adjacent to a vertex in $G_3$. If $a$ and $v$ are both adjacent to the same vertex $y$ in $G_3$, then the vertices $a,v,x_1,x_2$ and $y$ induce a 4-fan~($F_4$) or a 4-wheel~($F_8$), depending on the presence of edge $av$.
\end{proof}

\begin{lemma}\label{lemma:G3-clique}
    If $G_3$ is a non-null complete subgraph and $y$ is a vertex of $G_3$, then every complete component of $G_1$ is either completely connected to or disconnected from $y$.
\end{lemma}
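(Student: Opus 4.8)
The plan is to argue by contradiction, exhibiting a forbidden 4-fan ($F_4$) whenever a complete component of $G_1$ is partially connected to a vertex of $G_3$. Suppose, then, that some complete component $C$ of $G_1$ is neither completely connected to nor disconnected from $y$. Then $C$ has at least two vertices, and we may choose $a, b \in C$ with $a$ adjacent to $y$ and $b$ not adjacent to $y$; since $C$ is complete, $a$ and $b$ are adjacent.

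Next I would record the adjacencies within the set $\{a, b, y, x_1, x_2\}$. Because $a, b \in G_1 = N_G(x_1) \setminus N_G[x_2]$, both $a$ and $b$ are adjacent to $x_1$ and non-adjacent to $x_2$; because $y \in G_3 = N_G(x_1) \cap N_G(x_2)$, the vertex $y$ is adjacent to both $x_1$ and $x_2$; and $x_1 x_2$ is the fixed dominating edge. Together with $ab \in E(G)$, $ay \in E(G)$ and $by \notin E(G)$, this shows that $x_1$ is completely connected to the other four vertices, while the remaining four vertices $b, a, y, x_2$ carry exactly the edges $ba$, $ay$ and $y x_2$.

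Then I would observe that $\{b, a, y, x_2\}$ induces the path $b - a - y - x_2$, i.e.\ a $P_4$, and that $x_1$ dominates all of them. Hence $\{a, b, y, x_1, x_2\}$ induces $K_1 \vee P_4$, which is the 4-fan $F_4$ --- a contradiction. This forces $y$ to be completely connected to or disconnected from $C$, as claimed.

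I do not expect a genuine obstacle here, since the argument is a single forbidden-subgraph exhibit. The only point requiring care is to confirm that the four outer vertices form an induced $P_4$ rather than a $C_4$ (which would instead produce a 4-wheel $F_8$): this amounts to checking that neither $bx_2$ nor $by$ is an edge, and both non-edges hold by construction --- $bx_2 \notin E(G)$ because $b \in G_1$, and $by \notin E(G)$ by the choice of $b$.
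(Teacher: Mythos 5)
Your proposal is correct and is essentially identical to the paper's proof: the paper likewise takes $a,b$ in a partially connected complete component with $ay \in E(G)$ and $by \notin E(G)$ and observes that $a,b,x_1,x_2,y$ induce a 4-fan ($F_4$). Your additional verification that the outer four vertices form an induced $P_4$ (rather than a $C_4$) is a careful spelling-out of the same exhibit.
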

\begin{proof}
    Suppose that a complete component $A$ of $G_1$ is partially connected to a vertex $y$ in $G_3$. Then there exist a vertex $a \in A$ adjacent to $y$ and a vertex $b \in A$ not adjacent to $y$, and the vertices $a,b,x_1,x_2$ and $y$ induce a 4-fan~($F_4$).
\end{proof}

Let~$G_3$ be a complete subgraph. By Lemma~\ref{lemma:G3-connected-to-one}, at most one component from each~$G_1$ and~$G_2$ is connected to~$G_3$. 
In case such components exist, we denote the two respective components of~$G_1$ and~$G_2$ connected to~$G_3$ by~$A$ and~$V$. 
If no component in~$G_1$ (or~$G_2$) is connected to~$G_3$, then we assume that~$A$ or~$V$ is null. By Lemma~\ref{lemma:G3-clique} and Lemma~\ref{lemma:G3-connected-to-one}, a vertex of $G_3$ is either completely connected to $A$ or to $V$ or 
disconnected from $A$ or $V$. This yields the following partition of~$G_3$ into three complete subgraphs 
\[G_3 = G_3^A \vee G_3^V \vee G_3^R,\]
defined as follows.

\begin{definition}\label{definition:partitioning-G3}
By $G_3^A$ we denote the subgraph of~$G_3$ induced by the vertices in~$G_3$ adjacent to~$A$, by $G_3^V$ the subgraph of~$G_3$ induced by the vertices in~$G_3$ adjacent to $V$, and by $G_3^R$ the subgraph induced by the vertices in~$G_3$ 
only adjacent to~$x_1$ and~$x_2$.
\end{definition}
Note that $G_3^A$, $G_3^V$ and $G_3^R$ may be null graphs.

\begin{lemma}\label{lemmaZ}
Let $G_3$ be complete and $G_1$ contain a component $C$ of order at least $2$ as well as an additional vertex $d$ not in $C$. If $G_2$ and $G_3^A$ are non-null, then $G_3^A$ is completely connected to $C$ and disconnected from any other vertices in $G_1$.
\end{lemma}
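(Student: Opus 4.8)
The plan is to reduce the whole statement to the single claim that the component $A$ of $G_1$ meeting $G_3$ is in fact $C$, and then to read off both conclusions from the earlier lemmas. Indeed, suppose we have shown that every vertex $y \in G_3^A$ is completely connected to $C$. Then $C$ is a component of $G_1$ connected to $G_3$, so by the uniqueness part of Lemma~\ref{lemma:G3-connected-to-one}(i) we must have $A = C$. The first conclusion, that $G_3^A$ is completely connected to $C$, is then exactly our claim, while the second follows at once: if some $y \in G_3^A$ were adjacent to a vertex $z \in G_1$ outside $C$, then $z$'s component would be a second component of $G_1$ connected to $G_3$ through $y$, again contradicting Lemma~\ref{lemma:G3-connected-to-one}(i). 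So the whole proof rests on the claim that no $y \in G_3^A$ is disconnected from $C$.

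To prove the claim, I fix $y \in G_3^A$. By Lemma~\ref{lemma:G3-clique} (applicable since we are in the case where $G_1$ has no non-complete component, so $C$ is a complete component), $y$ is either completely connected to or disconnected from $C$, and I aim to rule out the latter. Assume for contradiction that $y$ is disconnected from $C$. Since $y$ is adjacent to the complete component $A$, I may fix $a \in A$ with $y \sim a$; note that $A \neq C$ and hence $a \not\sim c_1, c_2$ for any two adjacent $c_1, c_2 \in C$. The decisive preliminary observation is that $v \not\sim y$ for every $v \in G_2$: since $y \in G_3^A$, Lemma~\ref{lemma:G3-connected-to-one}(ii) gives $y \notin G_3^V$, so $y$ is not adjacent to the component $V$, and any $v$ lying outside $V$ is not adjacent to $G_3$ at all by the uniqueness in Lemma~\ref{lemma:G3-connected-to-one}(i); either way $v \not\sim y$.

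With this in hand I would split into cases according to Lemma~\ref{lemma:all-but-one}, which forces at least one of $A$ and $C$ to be completely connected to $G_2$. If $C$ is completely connected to $G_2$ but $A$ is not, I pick $v \in G_2$ with $v \not\sim a$; then $a, y, x_2, v, c_1$ induce a $P_5$ ($F_2$), using $a \sim y \sim x_2 \sim v \sim c_1$ together with $v \not\sim y$ and the fact that $c_1$ and $a$ see nothing else in the set. If instead $C$ is not completely connected to $G_2$, then $A$ is completely connected to $G_2$ by Lemma~\ref{lemma:all-but-one}; choosing $v \in G_2$ and $c_1 \in C$ with $v \not\sim c_1$, the vertices $a, v, x_2, y$ induce a $C_4$ and $c_1$ is isolated from them, giving $C_4 \cup K_1$ ($F_7$). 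The remaining case, in which both $A$ and $C$ are completely connected to $G_2$, is the hard one: here $v$ is adjacent to $a, c_1, c_2$ but not to $y$, and the seven vertices $x_1, x_2, a, y, c_1, c_2, v$ induce $F_{13}$ (the $2K_2$ formed by the edges $ay$ and $c_1 c_2$, with $x_1$ completely connected to all four, $v$ seeing the edge $c_1 c_2$ and only the endpoint $a$ of $ay$, and $x_2$ joined to $x_1, v$ and $y$). Each case contradicts the hypothesis that $G$ contains none of $F_1, \ldots, F_{15}$, proving the claim.

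I expect the main obstacle to be this last case: verifying that the seven-vertex configuration is precisely $F_{13}$ rather than some permitted graph, and checking that the three cases are genuinely exhaustive and that the required choice of $v$ is available in each. The repeatedly used ingredient that makes the induced paths and cycles work out is the non-adjacency $v \not\sim y$, so establishing that cleanly from Lemma~\ref{lemma:G3-connected-to-one} is the other point that needs care.
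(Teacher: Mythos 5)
Your proposal is correct and follows essentially the same route as the paper: assume the component of $G_1$ met by $G_3^A$ is not $C$, establish the key non-adjacency of $y$ to all of $G_2$ (which the paper uses implicitly), and split via Lemma~\ref{lemma:all-but-one} into the same outcomes the paper finds, namely a $P_5$ ($F_2$), a $C_4 \cup K_1$ ($F_7$), and a seven-vertex forbidden graph. The only slip is the label in your hard case: the graph you describe (degree sequence $(5,4,3,3,3,3,3)$) is $F_{15}$, not $F_{13}$ (which has a vertex of degree~1), so the configuration is still forbidden and the argument stands; additionally, in your first case you should re-choose $a$ inside $A$ as a vertex witnessing the non-complete connection to $G_2$, which is harmless because Lemma~\ref{lemma:G3-clique} makes $y$ completely connected to all of $A$, not just to your fixed neighbour.
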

\begin{proof}
Let $a,b$ be vertices in $C$, $v$ be a vertex in $G_2$ and $y$ be a vertex in $G_3^A$. By Definition~\ref{definition:partitioning-G3}, $G_3^A$~is completely connected to exactly one component in $G_1$ and disconnected from all others. We show 
that this component is $C$ by assuming, conversely, that $G_3^A$ is completely connected to $d$. By Lemma~\ref{lemma:all-but-one}, the vertex $v$ is completely connected to $d$ or $C$, or both. If $v$ is adjacent to $a,b$ and $d$, then the 
vertices $a,b,d,v,x_1,x_2$ and $y$ induce an $F_{15}$. Otherwise, if $v$ is not adjacent to $a$ or $d$, the vertices $a,d,v,x_2$ and~$y$ induce an $F_2, F_3$ or $F_7$. \end{proof}

\begin{lemma}\label{lemmaW}
    If $G_3$ is complete, $G_3^A$ is non-null and $G_1$ has at least two components, then any vertex in $G_2$ must be adjacent to all vertices in~$G_1$.
\end{lemma}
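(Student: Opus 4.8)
The plan is to argue by contradiction: assuming some $v \in G_2$ is non-adjacent to a vertex of $G_1$, I would exhibit one of the forbidden graphs $F_2, F_4, F_7$ or $F_8$ as an induced subgraph. (If $G_2$ is null the claim is vacuous, so I take $v \in G_2$.) First I would localise the failure. Let $A$ be the unique component of $G_1$ meeting $G_3$, unique by Lemma~\ref{lemma:G3-connected-to-one}(i), and fix $y \in G_3^A$. Since at this stage all components of $G_1$ and $G_2$ are complete, Lemma~\ref{lemma:G3-clique} together with Definition~\ref{definition:partitioning-G3} shows that $y$ is completely connected to $A$, while Lemma~\ref{lemma:G3-connected-to-one}(i) shows that $y$ is disconnected from every other component of $G_1$. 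By Lemma~\ref{lemma:all-but-one}, $v$'s component of $G_2$ can fail to be completely connected to at most one component $X$ of $G_1$; hence the witness $g$ of non-adjacency lies in $X$, and $v$ is completely connected to every component of $G_1$ other than $X$. As $G_1$ has at least two components, there is a component distinct from $X$ supplying an auxiliary vertex that $v$ sees. I would then split on whether $X = A$, and in each case on whether $vy$ is an edge; all four configurations yield a forbidden five-vertex set.

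If $X \neq A$, pick $a \in A$, so that $va$ is an edge (as $A \neq X$) while $y$ sees $a$ but not $g$. When $vy$ is a non-edge, I would check that $a, v, x_2, y$ induce a $C_4$ (with $x_1$ omitted) and that $g$ is non-adjacent to all four of them, so that $\{a, v, x_2, y, g\}$ induces an $F_7$ ($C_4 \cup K_1$). When $vy$ is an edge, the vertex $y$ becomes adjacent to each of $x_1, x_2, a, v$, which form a $C_4$ via $x_1\!-\!x_2\!-\!v\!-\!a\!-\!x_1$, so $\{x_1, x_2, a, v, y\}$ induces a 4-wheel $F_8$.

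If instead $X = A$, I would pick $b$ from a second component of $G_1$, so that $vb$ is an edge and $by$ is a non-edge. When $vy$ is a non-edge, the vertices $g, y, x_2, v, b$ form the induced path $g\!-\!y\!-\!x_2\!-\!v\!-\!b$, i.e.\ an $F_2$ ($P_5$). When $vy$ is an edge, $y$ is adjacent to each of $g, x_1, x_2, v$, which induce the path $g\!-\!x_1\!-\!x_2\!-\!v$, so $\{y, g, x_1, x_2, v\}$ induces a 4-fan $F_4$. In every case this contradicts the assumption that $G$ is $F_1, \ldots, F_{15}$-free, so no such $v$ exists and the lemma follows.

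The routine part is verifying the edge/non-edge patterns in each five-vertex set, which all read off immediately from the definitions of $G_1, G_2, G_3$ and the three recorded facts about $y$. The genuinely delicate point is the case $vy \in E(G)$: here the ``obvious'' small subgraphs one first writes down (the house, the bull, a diamond) are all \emph{permitted} graphs, so one must choose the vertex set so that $y$ plays the role of the hub of a 4-wheel (when $X \neq A$) or the apex of a 4-fan (when $X = A$). Identifying these two correct choices, together with the initial reduction via Lemma~\ref{lemma:all-but-one} that confines the failure to a single component $X$ so that one witness $g$ suffices, is where the real content of the argument lies.
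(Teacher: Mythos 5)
Your proof is correct and takes essentially the same route as the paper's: Lemma~\ref{lemma:all-but-one} localises the non-adjacency, and your two subcases with $vy \notin E(G)$ reproduce exactly the paper's two configurations — the induced $P_5$, and the induced $C_4 \cup K_1$, which you correctly identify as $F_7$ (the paper's proof labels this second configuration $F_3$, but it is in fact $F_7$, since $v$ is forced adjacent to $a$ and the four vertices $a,y,x_2,v$ form a cycle with $b$ isolated). The only deviation is that your subcases with $vy \in E(G)$ — which you single out as the delicate point — are vacuous: by Lemma~\ref{lemma:G3-connected-to-one}(ii) and Definition~\ref{definition:partitioning-G3}, a vertex $y \in G_3^A$ has no neighbours in $G_2$ at all, so recording this as a fourth fact about $y$ alongside your three would have eliminated those cases entirely (though your $F_8$ and $F_4$ verifications within them are themselves correct).
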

\begin{proof}
     Let $a,b$ be vertices from two components in $G_1$ and $v$ be a vertex in $G_2$. Without loss of generality, assume that $a$ is adjacent to $y \in G_3^A$. By Lemma~\ref{lemma:all-but-one}, $v$ is adjacent to $a$ or $b$. Hence if $v$ is 
not adjacent to~$a$, then the vertices $a,b,v,x_2$ and $y$ induce a $P_5$, and if $v$ is not adjacent to $b$, then the same vertices induce an $F_3$.
\end{proof}

If $G_3$ has a vertex not adjacent to any vertices in $G_1$ or $G_2$, (i.e.~$G_3^R$ is not null), we can say the following about $G_1$ and $G_2$.
\begin{lemma}\label{lemma:if-y-independent-one-comp-each}
    Let $G_3$ be complete and $G_3^R$ be non-null. Then $G_1$ and~$G_2$ are either null or complete.
\end{lemma}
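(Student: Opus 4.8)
The plan is to argue by contradiction and, exploiting the symmetry between $G_1$ and $G_2$ obtained by renaming $x_1$ and $x_2$, to establish only that $G_1$ is null or complete. So suppose $G_1$ is non-null and non-complete. By the standing assumption that no component of $G_1$ is non-complete, this forces $G_1$ to have at least two components, each a clique. I would fix a vertex $y \in G_3^R$, which by Definition~\ref{definition:partitioning-G3} is adjacent to $x_1$ and $x_2$ but to no vertex of $G_1$ or $G_2$, and pick vertices $a$ and $b$ in two distinct components of $G_1$, so that $a \not\sim b$ while $a, b \sim x_1$ and $a, b \not\sim x_2, y$.

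The obstacle to beat is that the five vertices $\{a, b, x_1, x_2, y\}$ on their own only induce a triangle $x_1 x_2 y$ with two pendant vertices $a, b$ attached at $x_1$, which is \emph{not} among $F_1, \ldots, F_{15}$; to reach a contradiction I must bring in a witness $v \in G_2$, and this is exactly where I expect to rely on $G_2$ being non-null. By Lemma~\ref{lemma:all-but-one} at most one component of $G_1$ fails to be completely connected to $G_2$, so I may assume the component of $a$ is completely connected to $G_2$, giving $a \sim v$ for every $v \in G_2$. Since $y \in G_3^R$ we have $y \not\sim v$, and since $v \in G_2$ we have $v \sim x_2$ and $v \not\sim x_1$.

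I would then examine the four-vertex path $a \sim v \sim x_2 \sim y$ together with $b$ and split on the single adjacency $bv$. If $b \sim v$, then $b$ is a pendant at the inner path vertex $v$, so $\{a, v, x_2, y, b\}$ induces a chair ($F_1$). If instead $b \not\sim v$ — which I can arrange by taking $v$ in the unique component of $G_2$ not completely connected to $b$'s component, in the case that $b$'s component is the exceptional one — then $b$ is isolated from $\{a, v, x_2, y\}$ and these five vertices induce $P_4 \cup K_1$ ($F_3$). Either outcome contradicts the assumption that $G$ omits $F_1, \ldots, F_{15}$, so $G_1$ has a single component and is complete (or null); the mirror argument interchanging $x_1$ and $x_2$ settles $G_2$.

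The delicate point, and the step I expect to be the crux, is the dependence on the auxiliary vertex $v \in G_2$: if $G_2$ is null the five-vertex gadget above is not forbidden (indeed $G_1$ could then be a non-complete independent set), so the statement is really being applied when both $G_1$ and $G_2$ are non-null, the degenerate case $G_2 = \emptyset$ producing a graph in which $x_1$ dominates a disjoint union of cliques, i.e.\ an instance of $E_1$ that is classified separately. Thus the main work is (i) invoking Lemma~\ref{lemma:all-but-one} to secure $a \sim v$ while keeping $bv$ free, and (ii) the clean case split yielding $F_1$ or $F_3$.
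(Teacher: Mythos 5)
Your proof is correct and takes essentially the same approach as the paper: both derive the contradiction from the same five-vertex gadgets on $\{a,b,v,x_2,y\}$, yielding a chair ($F_1$) when $b$ is adjacent to $v$ and a $P_4 \cup K_1$ ($F_3$) when it is not, the only cosmetic difference being that the paper handles the case where neither $a$ nor $b$ is adjacent to $v$ directly (a chair on $a,b,v,x_1,x_2$) instead of excluding it via Lemma~\ref{lemma:all-but-one}. Your observation about the implicit hypothesis is also accurate: the paper's proof likewise silently assumes $G_2$ is non-null by picking $v \in G_2$, which is legitimate only because the lemma is applied exclusively in the cases where neither $G_1$ nor $G_2$ is null.
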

\begin{proof}
    Let $y$ be a vertex in $G_3^R$. By definition, $y$ is not adjacent to any vertices in $G_1$ or $G_2$. Suppose $G_1$ has two non-adjacent vertices $a$ and $b$ and let $v \in G_2$. If neither~$a$ nor~$b$ is adjacent to~$v$, then the vertices $a,b,v,x_1$ and $x_2$ induce a chair~($F_1$). If only one vertex is adjacent to~$v$, then the vertices $a,b,v,x_2$ and $y$ induce an~$F_3$. Finally, if both~$a$ and~$b$ are adjacent to~$v$, then the same vertices induce a chair~($F_1$).
\end{proof}

\begin{lemma}\label{lemma:only-one-K2}
    Let $G_3$ be complete and $G_3^R$ be non-null. If both~$G_1$ and~$G_2$ contain more than one vertex, then $G_1$ and~$G_2$ are completely connected to each other.
\end{lemma}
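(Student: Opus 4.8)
The plan is to reduce the claim to two structural results already available and then rule out, case by case, every way in which complete connectedness could fail. Since $G_3^R$ is non-null, Lemma~\ref{lemma:if-y-independent-one-comp-each} shows that $G_1$ and $G_2$ are each null or complete; as both contain more than one vertex, both are non-null complete subgraphs. I would then invoke Lemma~\ref{lemma:Kn-to-Kn} to partition $G_1 = A_R \cup A_1 \cup \cdots \cup A_k$ and $G_2 = Z_R \cup Z_1 \cup \cdots \cup Z_k$, where $A_i$ is completely connected to $Z_i$ and disconnected from $Z_j$ for $j \neq i$, while $A_R$ and $Z_R$ are disconnected from $G_2$ and $G_1$ respectively. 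The crucial reformulation is that $G_1$ and $G_2$ are completely connected exactly when $k = 1$ and $A_R = Z_R = \emptyset$; hence it suffices to derive a contradiction in every other configuration. Throughout I fix a vertex $y \in G_3^R$, which by definition is adjacent to $x_1$ and $x_2$ but to no vertex of $G_1$ or $G_2$; this vertex is precisely what turns otherwise-permissible configurations into forbidden subgraphs.

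Assuming $G_1$ and $G_2$ are not completely connected, I would split into three cases according to $k$. If $k \geq 2$, choose $a_1 \in A_1$, $a_2 \in A_2$, $z_1 \in Z_1$, $z_2 \in Z_2$; since $G_1$ and $G_2$ are cliques and the pairs $a_1 z_2$, $a_2 z_1$ are non-edges by the matching structure, the vertices $a_1, a_2, z_2, z_1$ induce a $C_4$, so together with $y$ they induce $C_4 \cup K_1 = F_7$. If $k = 1$ but a residual part is non-empty, say $A_R \neq \emptyset$ (the case $Z_R \neq \emptyset$ being symmetric under exchanging $x_1$ and $x_2$), choose $a \in A_R$, $b \in A_1$ and $v \in Z_1$; then the vertices $a, b, v, x_2, y$ induce, in this order, a path $P_5 = F_2$. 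Finally, if $k = 0$, then $G_1$ and $G_2$ are completely disconnected, and here the hypothesis that both have at least two vertices becomes essential: choosing $a, b \in G_1$ and $v, w \in G_2$, the seven vertices $a, b, v, w, x_1, x_2, y$ carry exactly three triangles $\{a,b,x_1\}$, $\{x_1,x_2,y\}$ and $\{v,w,x_2\}$, glued in a path along the shared vertices $x_1$ and $x_2$. Comparing with the winning strategy recorded in the proof of Theorem~\ref{thm:forbidden-graphs}, this graph is $F_{11}$.

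In each case the forbidden induced subgraph contradicts the standing assumption that $G$ contains none of $F_1, \ldots, F_{15}$, so $G_1$ and $G_2$ must be completely connected. I expect the case $k = 0$ to be the main obstacle, for two reasons. First, it is the only case that genuinely needs seven vertices and the full force of $|G_1|, |G_2| \geq 2$; indeed one checks that the three-glued-triangles graph contains no induced $P_5$, $P_4 \cup K_1$ or $C_4 \cup K_1$, so no five-vertex forbidden subgraph is available and the seven-vertex graph $F_{11}$ must be identified directly. Second, care is needed to confirm exhaustiveness: the three cases correspond precisely to the disjunction ``$k \geq 2$'', ``$k = 1$ with $A_R \cup Z_R \neq \emptyset$'', or ``$k = 0$'', which is exactly the negation of ``$k = 1$ and $A_R = Z_R = \emptyset$'', and the $x_1 \leftrightarrow x_2$ symmetry disposes of the mirror subcases without extra work.
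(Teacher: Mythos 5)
Your proof is correct, but it is organised quite differently from the paper's. The paper also begins with Lemma~\ref{lemma:if-y-independent-one-comp-each} to make $G_1$ and $G_2$ complete, but then argues directly: it fixes $a,b\in G_1$ and $v,w\in G_2$ and checks just two cases ``due to symmetry'' --- if $\{a,b\}$ and $\{v,w\}$ are disconnected, the seven vertices $a,b,v,w,x_1,x_2,y$ induce the same three-glued-triangles graph $F_{11}$ you identify; if some $a$ is adjacent to $v$ but not $w$, then $a,v,w,x_1,y$ induce a $P_5$. Exhaustiveness there is left implicit (if no vertex of $G_1$ or $G_2$ has mixed adjacency and the graphs are not completely connected, one is driven to the fully disconnected quadruple). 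You instead route through Lemma~\ref{lemma:Kn-to-Kn}, translating failure of complete connectedness into the exact disjunction $k\ge2$, or $k=1$ with a non-null residue, or $k=0$, which makes the case enumeration mechanical and airtight --- that is the main thing your approach buys, along with reuse of the $E_5$ structure. The cost is invoking a heavier lemma and carrying an extra case: your $k\ge2$ case (yielding $F_7=C_4\cup K_1$ with $y$) is subsumed by the paper's mixed-adjacency case, since for $a_1\in A_1$, $z_1\in Z_1$, $z_2\in Z_2$ the vertices $y,x_1,a_1,z_1,z_2$ already induce a $P_5$. All three of your witness subgraphs check out ($y\in G_3^R$ is non-adjacent to all of $G_1\cup G_2$, so it genuinely completes $F_7$, the $P_5$ on $a,b,v,x_2,y$, and $F_{11}$), and your side remark is accurate: the $k=0$ configuration contains no five-vertex forbidden graph, which is consistent with $F_{11}$ being minimal.
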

\begin{proof}
    Let $y$ be a vertex in $G_3^R$. By Lemma~\ref{lemma:if-y-independent-one-comp-each}, $G_1$ and~$G_2$ are both complete. Let~$a,b$ be two vertices of~$G_1$ and~$v,w$ be two vertices of~$G_2$. We check two cases due to symmetry. If~$a,b$ and~$v,w$ are disconnected, then the vertices $a,b,v,w,x_1,x_2$ and $y$ induce an~$F_{11}$. If~$a$ is adjacent to~$v$ and not adjacent to~$w$, then the vertices $a,v,w,x_1$ and $y$ induce a~$P_5$.
\end{proof}

If $G_3^R$ is null but~$G_3$ is non-null and complete, we can state the following two lemmas.
\begin{lemma}\label{lemma:if-KV-not-null}
    Let $G_3$ be complete. If~$G_1$ contains a component $C$ of order at least $2$ and~$G_3^V$ is non-null, then $G_1$ and~$G_2$ each consist of a single component. The same holds if~$G_2$ contains a component of order at least $2$ and~$G_3^A$ 
is non-null.
\end{lemma}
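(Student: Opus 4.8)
The plan is to argue by contradiction, proving separately that $G_1$ and $G_2$ each consist of a single component; the second statement of the lemma then follows by the symmetry that interchanges $x_1$ with $x_2$, and hence $G_1$ with $G_2$ and $A$ with $V$. First I would record the easy reductions. Since $G_3^V$ is non-null, the component $V$, and hence $G_2$, is non-null; as $G_1$ already contains the component $C$ of order at least $2$, Lemma~\ref{lemma:one-component-of-size-2} shows that $C$ is the \emph{unique} component of $G_1$ of order at least $2$, and symmetrically $G_2$ has at most one such component. Thus every extra component I must rule out is a single vertex. Throughout I fix a vertex $y \in G_3^V$; then $y$ is adjacent to $V$ and, by Lemma~\ref{lemma:G3-connected-to-one}, disconnected from $A$, and in fact disconnected from every component of $G_1$ (at most one of them, namely $A$, meets $G_3$, and $y$ avoids it).

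To show that $G_2$ is a single component, suppose it has a second component $W$ besides $V$. Pick adjacent $a,b \in C$, a vertex $v \in V$ and a vertex $w \in W$, so that $v \not\sim w$. Because $G_3^V$ is non-null and $G_2$ has at least two components, the statement of Lemma~\ref{lemmaW} applies with the roles of $G_1$ and $G_2$ interchanged and forces every vertex of $G_1$ to be adjacent to all of $G_2$; in particular $a$ and $b$ are each adjacent to both $v$ and $w$. But then the adjacent pair $a,b$ and the non-adjacent pair $v,w$ contradict Lemma~\ref{lemmaY}, exhibiting an induced split $3$-star ($F_5$). Hence $G_2 = V$ is a single component.

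To show that $G_1$ is a single component, suppose it contains a vertex $d$ outside $C$, so $G_1$ has at least two components. The target is to manufacture an induced $F_{14}$ on the seven vertices $a,b \in C$, the extra vertex $d$, a vertex $v \in V$, the fixed $y \in G_3^V$, and $x_1,x_2$. Here $a \sim b$, while $d$ is non-adjacent to $a,b$ and $y$ is non-adjacent to $a,b,d$ by the paragraph above, and $x_1,x_2$ behave as in the earlier forbidden-subgraph arguments; together with $y \sim v$ this is precisely the structure $\overline{K_2} \vee (2K_2 \cup K_1)$ of $F_{14}$, with apexes $v$ and $x_1$. The only missing ingredient is $v \sim a,b,d$. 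To secure it I would distinguish whether $G_3^A$ is non-null: when it is, Lemma~\ref{lemmaZ} pins $G_3^A$ to $C$ (so that $C=A$) and Lemma~\ref{lemmaW} forces every vertex of $G_2$ to be adjacent to all of $G_1$, giving $v \sim a,b,d$ at once and completing the $F_{14}$; when $G_3^A$ is null, Lemma~\ref{lemmaW} is unavailable and one must instead exploit Lemma~\ref{lemma:all-but-one}, which makes all but one pair of components completely connected, to choose $v$ so that $v \sim a,b,d$ still holds. I expect this case distinction on the $G_3$–$G_1$ adjacency to be the main obstacle: the delicate situation is exactly the one in which $G_3$ is disconnected from $G_1$, where securing $v \sim a,b,d$ (or otherwise extracting a forbidden induced subgraph) requires combining the complete-connection guarantee of Lemma~\ref{lemma:all-but-one} with a careful choice of $v$, and this is where the argument must be handled with the greatest care.
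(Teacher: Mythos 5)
Your treatment of $G_2$ is correct and in fact slicker than the paper's: invoking the symmetric form of Lemma~\ref{lemmaW} (legitimate, since the whole decomposition is symmetric under swapping $x_1$ and $x_2$, hence $G_1$ with $G_2$ and $G_3^A$ with $G_3^V$) and then Lemma~\ref{lemmaY} replaces the paper's direct four-way case analysis (split $3$-star, $F_3$, $F_7$, $P_5$). Your $G_1$ argument is also sound in the sub-case where $G_3^A$ is non-null. The genuine gap is exactly where you flag it, and your proposed repair for the null-$G_3^A$ sub-case --- ``choose $v$ so that $v \sim a,b,d$ still holds'' --- cannot work. The vertex $v$ is forced to lie in $V$, because you need $v \sim y$ and $y \in G_3^V$ is adjacent to no other part of $G_2$; but Lemma~\ref{lemma:all-but-one} only guarantees complete connection for all but one pair of components, and the exceptional pair may be $(\{d\},V)$, in which case $d$ can be disconnected from \emph{all} of $V$ and no choice of $v$ gives $v \sim d$, or it may be $(C,V)$, in which case $v \sim a,b$ can fail. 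So aiming solely at $F_{14}$ does not suffice, and the sub-case you defer is in fact the generic one.

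What is missing is a case distinction on the adjacency between $C$ and the fixed vertex $v$ adjacent to $y$ --- notably, the paper's proof does not case on $G_3^A$ at all and handles everything this way. If $C$ is disconnected from $v$, then $(C,V)$ is the exceptional pair of Lemma~\ref{lemma:all-but-one}, so $d \sim v$, and $a,b,d,v,x_1,x_2,y$ induce an $F_{12}$. If $C$ is partially connected to $v$, say $a \sim v$ and $b \not\sim v$, then again $d \sim v$ and the five vertices $a,b,d,v,y$ induce a chair ($F_1$), with edges $ba$, $av$, $vd$, $vy$. If $C$ is completely connected to $v$ but $d \not\sim v$, the seven vertices induce an $F_{13}$; only when additionally $d \sim v$ do they induce your $F_{14}$. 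Thus three further forbidden subgraphs ($F_{12}$, $F_1$, $F_{13}$) are needed beyond the one you identified, and without them the argument that $G_1$ is a single component is not closed.
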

\begin{proof}
    Let $a,b$ be two vertices in $C$ and $y \in G_3^V$. Then, by assumption, there exists a vertex $v \in G_2$ that is adjacent to $y$. First we show that~$G_1$ consists of a single component and this is complete by Lemmas~\ref{lemma:G3-not-clique} and~\ref{lemma:G3-if-N-then-independent}. Assume that~$c$ is a vertex from a second component in~$G_1$. We distinguish between three cases. If~$C$ is not connected to~$v$, then $c$ must be adjacent to~$v$ by Lemma~\ref{lemma:all-but-one} and the vertices~$a,b,c,v,x_1,x_2$ and~$y$ induce an~$F_{12}$. Secondly, if~$C$ is partially connected to~$v$, we can assume that~$a$ is adjacent to~$v$ whereas~$b$ is not. As~$c$ and~$v$ must be adjacent, the vertices~$a,b,c,v$ and~$y$ induce a chair ($F_1$). Lastly, if~$C$ and~$v$ are completely connected and~$c$ is adjacent to~$v$, then the vertices~$a,b,c,v,x_1,x_2$ and~$y$ induce an~$F_{14}$ while if~$c$ is not adjacent to~$v$, then the same vertices induce an~$F_{13}$.
    
    Now we show that~$G_2$ also consists of a single component. Assume that~$w$ is a vertex from a second component in~$G_2$. As~$y$ is adjacent to~$v$, it cannot be adjacent to~$w$ by Lemma~\ref{lemma:G3-connected-to-one}~(i). We show that the existence of~$w$ leads to a forbidden induced subgraph. Vertices~$a,b$ cannot both be adjacent to~$v$ and~$w$, otherwise the vertices~$a,b,v,w$ and~$x_1$ induce a split 3-star~($F_5$). Without loss of generality assume that it is vertex~$a$ that is not adjacent to both~$v$ and~$w$. If~$a$ is neither adjacent to~$v$ nor to~$w$, then the vertices~$a,v,w,x_1$ and $y$ induce an~$F_3$. If~$a$ is adjacent to~$v$ only, then the same vertices induce an~$F_7$. If~$a$ is adjacent to~$w$ only, then the same vertices induce a~$P_5$.
\end{proof}

\begin{lemma}\label{lemma:if-KR-null}
    If~$G_1$ has a component of order at least $2$, $G_2$ and~$G_3$ are non-null and $G_3$ is complete, then~$G_1$ and~$G_2$ each have at most two components.
\end{lemma}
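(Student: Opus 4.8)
The plan is to proceed by contradiction, assuming $G_1$ has a component of order at least $2$, that $G_2$ and $G_3$ are non-null and complete, yet $G_1$ has three or more components. My first step would be to name the relevant vertices: let $A$ be the component of $G_1$ of order at least $2$ with adjacent vertices $a,b \in A$, and suppose towards a contradiction that $G_1$ has at least three components, so there exist vertices $c$ and $d$ lying in two further distinct components of $G_1$. Let $v$ be a vertex of $G_2$ and $y$ a vertex of $G_3$.

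Next I would use the structural lemmas already established to control the adjacencies. Since $G_1$ has a component $A$ of order at least $2$, Lemma~\ref{lemma:at-most-two-components} (applied with the roles of $G_1$ and $G_2$ unchanged) already restricts $G_2$ to at most two components, but here I need the analogous bound on $G_1$ itself, so the real work is to exploit the presence of a non-null complete $G_3$. By Lemma~\ref{lemma:G3-connected-to-one}~(i), at most one component of $G_1$ is connected to $G_3$, so at least two of the three components of $G_1$ are disconnected from $y$. The key tool will be Lemma~\ref{lemma:all-but-one}: all but one pair of components of $G_1,G_2$ are completely connected, which forces $v$ to be adjacent to vertices in most of the $G_1$-components, and Lemma~\ref{lemma:G3-connected-to-one} together with Definition~\ref{definition:partitioning-G3} pins down which $G_1$-component is joined to $y$.

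The heart of the argument is then a short case analysis on how $v$ connects to the three components containing $a,b$; $c$; and $d$, and on which component $y$ attaches to. In each case I would exhibit one of the forbidden graphs $F_1,\dots,F_{15}$ on a suitable $5$- or $7$-vertex subset including $x_1, x_2$ and $y$. Concretely, using that $a,b$ are adjacent and that by Lemma~\ref{lemma:all-but-one} the vertex $v$ must be completely connected to all but one of the components, I expect configurations like $\{a,b,c,d,v,x_1,x_2\}$ to induce an $F_{12}$ or $F_{14}$ when $v$'s adjacencies create a split-$3$-star-like pattern, while incorporating $y$ into sets such as $\{a,b,c,v,x_1,x_2,y\}$ or $\{a,c,d,x_1,x_2,y\}$ should yield an $F_{15}$, a double fan ($F_9$), or a split $3$-star ($F_5$) via Lemmas~\ref{lemmaZ} and~\ref{lemmaW}. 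Since all of these are forbidden, $G_1$ cannot have three components, and by the symmetry obtained by swapping $x_1$ and $x_2$ the same bound holds for $G_2$.

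The main obstacle will be the bookkeeping of cases: several sub-configurations arise depending on whether $v$ is adjacent to $a,b$, to $c$, to $d$, and on which single $G_1$-component $y$ is joined to, and I will need to ensure that every combination is covered and that each yields a genuine induced copy of a forbidden subgraph (in particular checking that the claimed non-edges really are non-edges, using disconnectedness of distinct components of $G_1$). I would streamline this by first fixing, via Lemma~\ref{lemma:G3-connected-to-one}~(i), that $y$ is disconnected from at least two of the three components, thereby reducing the number of essentially distinct cases before invoking Lemmas~\ref{lemma:all-but-one}, \ref{lemmaZ} and~\ref{lemmaW} to discharge each one.
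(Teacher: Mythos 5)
You have a genuine gap: your plan tacitly assumes that the vertex $y \in G_3$ attaches to a component of $G_1$, i.e.\ that $G_3^A$ is non-null. You write that Lemma~\ref{lemma:G3-connected-to-one} and Definition~\ref{definition:partitioning-G3} ``pin down which $G_1$-component is joined to $y$'', and both of your workhorses, Lemmas~\ref{lemmaZ} and~\ref{lemmaW}, carry ``$G_3^A$ non-null'' as an explicit hypothesis. But the lemma only assumes $G_3$ is non-null and complete: the vertex $y$ could lie in $G_3^V$ (adjacent only to a component of $G_2$) or in $G_3^R$ (adjacent to nothing in $G_1 \cup G_2$), and in those cases none of the lemmas you cite applies. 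The paper's proof disposes of exactly these cases first: under the contradiction hypothesis that $G_1$ has three components, one of order at least $2$, Lemma~\ref{lemma:if-KV-not-null} forces $G_3^V$ to be null and Lemma~\ref{lemma:if-y-independent-one-comp-each} forces $G_3^R$ to be null, so that $G_3 = G_3^A$ --- and only then does the adjacency case analysis begin. This reduction is not cosmetic bookkeeping: the subcase $G_3^V \neq \emptyset$ is precisely the multi-case content of Lemma~\ref{lemma:if-KV-not-null} (requiring $F_{12}$, $F_{13}$, $F_{14}$, a chair, etc.), so leaving it unaddressed is a real hole in the argument, not a detail to be streamlined away.

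Once $G_3 = G_3^A$ is secured, your route does work, and is in fact marginally cleaner than the paper's: Lemma~\ref{lemmaW} makes $v$ adjacent to \emph{all} of $G_1$, Lemma~\ref{lemmaZ} forces $G_3^A$ onto the big component, and then for vertices $c,d$ in the two small components the five vertices $c,d,v,x_2,y$ induce a chair ($F_1$) with no case split at all --- whereas the paper uses Lemma~\ref{lemma:all-but-one} and splits into a chair and an $F_3$. In particular your anticipated $7$-vertex configurations ($F_{12}$, $F_{14}$, $F_{15}$) are never needed at this stage; $5$-vertex subgraphs suffice. Two smaller slips: the lemma does not assume $G_2$ is complete (you state this as a hypothesis, though you never use it), and your closing appeal to ``symmetry by swapping $x_1$ and $x_2$'' for the bound on $G_2$ is invalid, since the hypothesis that the order-$\geq 2$ component lies in $G_1$ is asymmetric --- the bound on $G_2$ instead follows directly from Lemma~\ref{lemma:at-most-two-components}, as you correctly note earlier in the proposal, so the symmetry remark should simply be deleted.
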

\begin{proof}
    By Lemma~\ref{lemma:N-in-G1} and Lemma~\ref{lemma:at-most-two-components}, $G_2$ has at most two components. We now show the same for~$G_1$. Assume that~$G_1$ has three components and let~$a,b,c$ be vertices from these three components. Let~$v$ be a vertex in~$G_2$. By Lemmas~\ref{lemma:if-KV-not-null} and~\ref{lemma:if-y-independent-one-comp-each}, $G_3^V$ and $_3^R$ are null, i.e. $G_3 = G_3^A$. Let~$y_A$ be a vertex in~$G_3^A$ and without loss of generality let~$G_3^A$ be completely connected to~$a$. By Lemma~\ref{lemma:all-but-one}, no more than one vertex of~$a,b,c$ is non-adjacent to~$v$. If~$b$ and~$c$ are adjacent to~$v$, then the vertices $b,c,v,x_2$ and $y_A$ induce a chair~($F_1$). Otherwise we can assume that~$b$ is not adjacent to~$v$ and the vertices~$a,b,v,x_2$ and $y_A$ induce an~$F_3$.
\end{proof}

\section{Case distinctions}\label{section:case-distinctions}
In this section we establish that connected graphs without induced subgraphs $F_1$ to $F_{15}$ belong to one of the nine classes of graphs $E_1^\cup, E_2, \ldots, E_9$. Together with Theorem~\ref{thm:disconnected-gB-perfect-characterisation}, 
this concludes the proof of the implication 
(ii) $\Rightarrow$ (iii) of Theorem~\ref{thm:gB-perfect-characterisation}.

Again, let $G$ be a connected graph of order at least~2 without induced $F_1$ to $F_{15}$. Recall from Section~\ref{section:edge-decomposition} the dominating edge decomposition: $G$ has a dominating edge $x_1x_2$ and induced subgraphs $G_1, G_2$ and~$G_3$ as defined in 
Definition 13. Lemma~\ref{lemma:caseanalysis} consists of a series of case distinctions that collectively cover all possible structural configurations of these subgraphs. For each case, we prove that $G$ is an instance of one of the graph classes 
$E_1^\cup, E_2, \ldots, E_9$. The proof utilises the structural results developed in Section~\ref{section:edge-decomposition}.

\begin{lemma}\label{lemma:caseanalysis}
Let $G$ be a connected graph without induced $F_1$ to $F_{15}$. Assume we have a decomposition of $G$ into a dominating edge $x_1x_2$ and induced subgraphs $G_1, G_2$ and $G_3$ as above.
\begin{itemize}
\item[(a)] If $G_1$ or $G_2$ is null, then $G$ is an instance of the graph class $E_1$.
\end{itemize}

\noindent From now on, let neither $G_1$ nor $G_2$ be null. 

\begin{itemize}
\item[(b)] If $G_3$ is not complete, then $G$ is an instance of $E_2$.
\item[(c)] If $G_3$ is complete and $G_1$ or $G_2$ has a non-complete component $N$, then $G_3$ is null and $G$ is an instance of $E_3$.
\item[(d)] If $G_3$ is complete and $G_1$ and $G_2$ consist only of isolated vertices, then $G$ is an instance of $E_3$, $E_6$, $E_7$ or $E_9$.
\item[(e)] Let $G_3$ be complete. If one of $G_1$ and $G_2$ consists only of isolated vertices and the other consists of complete components only, at least one of them with cardinality at least $2$, then $G$ is an instance of $E_3$, $E_6$, $E_7$ or $E_8$.
\item[(f)] Let $G_3$ be complete. If $G_1$ and $G_2$ consist of complete components only and both contain at least one component of order at least $2$, then $G$ is an instance of $E_3$, $E_4$ or $E_5$. 
\end{itemize}
\end{lemma}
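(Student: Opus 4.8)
The plan is to prove Lemma~\ref{lemma:caseanalysis} as a case distinction that exactly mirrors the structure of its statement, invoking the structural lemmas of Section~\ref{section:edge-decomposition} to pin down each configuration and then matching it against the defining pictures of $E_1,\ldots,E_9$ in Figure~\ref{fig:explicit-structures}. The overarching strategy is: use Lemma~\ref{lemma:G3-shape} and Lemma~\ref{lemma:G3-not-clique} to branch on whether $G_3$ is complete; use Lemma~\ref{lemma:N-structure} to branch on whether $G_1$ or $G_2$ carries a non-complete (ear-graph) component; and within the remaining ``all components complete'' regime, branch on the cardinalities of the largest components in $G_1$ and $G_2$. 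At each leaf of this branching tree the relevant lemmas from Sections~\ref{subsec:internal}--\ref{subsec:G3} will already constrain the adjacencies tightly enough that the resulting graph can be read off directly as an instance of a particular $E_i$.

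I would carry out the cases in the order given. For (a), since (say) $G_2$ is null, $x_2$ is a dominating vertex and every vertex other than $x_1,x_2$ lies in $G_1\cup G_3$; by Lemma~\ref{lemma:N-structure} the subgraph $G_1$ has at most one ear-graph component, and by Lemma~\ref{lemma:G3-shape} $G_3$ is a possibly degenerate ear graph, which together assemble precisely into the ear-animal structure $E_1$. For (b), with $G_3$ non-complete, Lemma~\ref{lemma:G3-not-clique} forces $G_3$ to be disconnected from $G_1,G_2$ and forces $G_1,G_2$ to have only complete components, and combining this with the ``all-but-one'' connection pattern of Lemma~\ref{lemma:all-but-one} yields the ear-bull $E_2$. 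For (c), if some $G_i$ has a non-complete component $N$, then Lemma~\ref{lemma:G3-if-N-then-independent} makes $G_2$ null and $G_1,G_3$ disconnected — contradicting the hypothesis that $G_2$ is non-null unless $G_3$ itself is null, whence Lemma~\ref{lemma:N-in-G1} describes how the single vertex of $G_2$ attaches to $N$, producing the expanded dragon $E_3$.

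The remaining cases (d), (e), (f) are the substantive ones, all living in the ``$G_3$ complete, all components complete'' world. Here I would first split on whether $G_3^R$ is null, using Lemma~\ref{lemma:if-y-independent-one-comp-each} and Lemma~\ref{lemma:only-one-K2} when $G_3^R$ is non-null (forcing $G_1,G_2$ complete and completely connected, leading toward the $K_{2,m}$-type and complete-bipartite classes $E_7,E_8,E_9$), and otherwise feeding on the $G_3=G_3^A\vee G_3^V\vee G_3^R$ partition of Definition~\ref{definition:partitioning-G3} together with Lemmas~\ref{lemma:G3-connected-to-one}, \ref{lemmaZ}, \ref{lemmaW}, \ref{lemma:if-KV-not-null} and~\ref{lemma:if-KR-null}. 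In (f), where both $G_1$ and $G_2$ contain a component of order $\ge 2$, Lemma~\ref{lemma:one-component-of-size-2} forces each of $G_1,G_2$ to have a single large component, and Lemma~\ref{lemma:Kn-to-Kn} provides the matching $A_i\leftrightarrow Z_i$ decomposition that is literally the cocobi structure $E_5$; when $G_3$ intervenes one instead lands in $E_3$ or $E_4$ depending on how $G_3^A,G_3^V$ attach. I expect the main obstacle to be bookkeeping in cases (d)--(f): one must verify that the several sub-configurations distinguished by the presence or absence of optional vertices and optional (dashed) edges in Figure~\ref{fig:explicit-structures} are \emph{exhaustive and mutually consistent}, i.e.\ that every adjacency left undetermined by the lemmas genuinely corresponds to a dashed line in some $E_i$ and that no forbidden $F_j$ slips through. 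The cleanest way to manage this is to treat $G_3^A,G_3^V,G_3^R$ as independent switches and, for each combination, cite the single lemma that collapses the remaining freedom, so that the verification reduces to pattern-matching against the legend of Figure~\ref{fig:explicit-structures} rather than fresh forbidden-subgraph arguments.
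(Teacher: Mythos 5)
Your overall architecture is the same as the paper's: branch on $G_3$ complete or not, then on non-complete components of $G_1,G_2$, then on component sizes, invoking the Section~\ref{section:edge-decomposition} lemmas at each leaf. Cases (a) and (c) are sketched essentially as the paper does them (though in (c) you should note it is Lemma~\ref{lemma:N-in-G1}~(iii) that forces $\min\{p,q\}=1$, which is what makes the result an $E_3$ rather than a general ear plus pendant vertex). The genuine gap is your closing methodological claim: that every adjacency left undetermined by the lemmas corresponds to a dashed line in some $E_i$, so the verification ``reduces to pattern-matching \dots rather than fresh forbidden-subgraph arguments.'' This is false, and case (b) shows it concretely. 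Lemma~\ref{lemma:G3-not-clique} plus Lemma~\ref{lemma:all-but-one} is perfectly consistent with $G_1$ and $G_2$ being large cliques that are completely connected to each other --- which is nowhere near the ear bull $E_2$, whose $y_1,y_2$ are single pendant vertices attached only to $x_1,x_2$. The paper must run two \emph{new} eliminations here: a chair argument (using a vertex of $G_1$, one of $G_2$, $x_1$ and the non-adjacent pair $y_1,y_2\in G_3$) to show $G_1$ and $G_2$ are disconnected, and then a chair/$F_{10}$ argument to show each consists of a single vertex. Note that $F_{10}$ appears in none of the lemmas of Section~\ref{section:edge-decomposition}; since $F_{10}$ is a minimal forbidden graph for $g_B$-perfectness, any correct proof of (ii)~$\Rightarrow$~(iii) must invoke it somewhere, so a proof of Lemma~\ref{lemma:caseanalysis} that makes no fresh forbidden-subgraph arguments cannot exist --- your plan is unfixable as stated, not merely under-detailed.

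The same phenomenon recurs throughout (d) and (e) in the paper's proof: ruling out a non-null $G_3^V$ when $G_2$ has three or more singleton components, ruling out non-null $G_3^A$ and $G_3^V$ when both $G_1$ and $G_2$ have two or more components, and establishing the disconnected-or-completely-connected dichotomies in (e) (Cases 1.1, 1.2 and 2.2) all require ad hoc $F_1$, $F_3$, $F_7$ and $P_5$ eliminations beyond the stated lemmas. There is also a directional error in your branching heuristic: a non-null $G_3^R$ forces $G_1,G_2$ complete (Lemmas~\ref{lemma:if-y-independent-one-comp-each} and~\ref{lemma:only-one-K2}) and leads to $E_6$ in case (d) and $E_5$ in case (f) --- not ``toward $E_7,E_8,E_9$''; those bipartite-type classes arise precisely when $G_3^R$ (and for $E_7,E_8$ also $G_3^V$) is null and $G_1$ or $G_2$ splits into several components. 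Your treatment of (f) is the closest to complete, since there Lemmas~\ref{lemma:one-component-of-size-2}, \ref{lemmaX}, \ref{lemmaY} and~\ref{lemma:Kn-to-Kn} really do carry most of the load, but even there the paper needs Lemma~\ref{lemma:if-KV-not-null} to kill $G_3$ entirely in Cases 2 and 3 before $E_3$/$E_4$ can be read off.
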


\stepcounter{proofcount}

\begin{proof}[of Lemma~\ref{lemma:caseanalysis} (a)]
    Without loss of generality assume that $G_2$ is null. By Lemma~\ref{lemma:N-structure}, $G_1$ consists of at most one non-complete component $N=K_r \vee (K_p \cup K_q)$ with $p,q,r \geq 1$ and any number of complete components. Our aim is to show that $G$ is in $E_1$. We distinguish between the case that $G_1$ has a non-complete component $N$ and the case that $G_1$ has no such component.

\begin{enumerate}[{Case }1:]
\item
Assume $G_1$ has a non-complete component $N$. By Lemmas~\ref{lemma:G3-not-clique} and~\ref{lemma:G3-if-N-then-independent}, $G_3$ must be complete and disconnected from $G_1$. It follows that $G$ is in $E_1$ with 
$H_0 := 
N$, $H_1 := G_3 \vee x_2$ and $H_2, \ldots, H_k$ consisting of the complete components of $G_1$.

\item
Next assume that $G_1$ has no non-complete component. $G_3$ may be complete or not complete. 

If $G_3$ is not complete, then it is disconnected from all components in $G_1$ by Lemma~\ref{lemma:G3-not-clique} and $G_3$ is a possibly degenerate ear graph, by Lemma~\ref{lemma:G3-shape}. It follows that $G$ is in $E_1$ 
with $H_0 := G_3 \vee x_2$ and $H_1, \ldots, H_k$ consisting of the components of $G_1$.
        
Now assume that $G_3$ is complete. By Lemma~\ref{lemma:G3-connected-to-one}~(i), only one component $A$ of $G_1$ can be adjacent to vertices in $G_3$. If no such $A$ exists, then $G$ is in $E_1$ with~$H_0$ null, $H_1 := G_3 \vee x_2$ and $H_2, 
\ldots, H_k$ consisting of the components of $G_1$. If $A$ does exist, it is completely connected to a subgraph $G_3^A$ of $G_3$ and disconnected from $G_3 \setminus G_3^A$ by Lemma~\ref{lemma:G3-clique} and Definition~\ref{definition:partitioning-G3}. It follows that $G$ is in $E_1$ with $H_0 := G_3^A \vee ( A \cup (x_2 \vee G \setminus G_3^A) )$ and $H_1, \ldots, H_k$ consisting of the components of $G_1$ except~$A$.
\end{enumerate}\nopagebreak~\end{proof}

\pagebreak[3]

From now on, assume that neither~$G_1$ nor~$G_2$ is null.

\begin{proof}[of Lemma~\ref{lemma:caseanalysis} (b)] Let $a$ and $v$ be vertices in $G_1$ and $G_2$, respectively. By assumption, $G_3$ is not complete and contains at least two non-adjacent vertices~$y_1$ and~$y_2$. By 
Lemma~\ref{lemma:G3-not-clique}, $y_1$ and $y_2$ are disconnected from $G_1$ and $G_2$. Next note that $G_1$ and $G_2$ are disconnected, otherwise we can assume that $a$ and $v$ are adjacent and thus the vertices~$a,v,x_1,y_1$ and~$y_2$ induce a chair~($F_1$). Lastly, both $G_1$ and $G_2$ contain exactly one vertex. Indeed, assume on the contrary that $G_2$ contains an additional vertex $w$. By the above, vertex $a$ is not adjacent to $v$ or $w$. If~$v$ and~$w$ are not adjacent, then the vertices~$a,v,w,x_1$ and~$x_2$ induce a chair~($F_1$). If~$v$ and~$w$ are adjacent, then the vertices~$a,v,w,x_1,x_2,y_1$ and~$y_2$ induce an~$F_{10}$. By Lemma~\ref{lemma:G3-shape}, $G_3$ is a possibly degenerate ear graph and it follows that $G$ is an instance of~$E_2$.
\end{proof}

From now on, assume that $G_3$ is complete (null or non-null).

\begin{proof}[of Lemma~\ref{lemma:caseanalysis} (c)] Without loss of generality, assume that $G_1$ has a non-complete component $N$. By Lemma~\ref{lemma:G3-if-N-then-independent}, $G_3$ is null. As $G_2$ is non-null by assumption, Lemma~\ref{lemma:N-in-G1} implies that $G_1$ consists entirely of the non-complete component $N = K_r \vee (K_1 \cup K_b)$ and $G_2$ consists of a single vertex that is completely connected to the $K_1$ and $K_b$ of $N$ and disconnected from $K_r$. It follows that $G$ is an~$E_3$.
\end{proof}

From now on, assume that neither $G_1$ nor $G_2$ has a non-complete component. By assumption and Lemma~\ref{lemma:one-component-of-size-2}, $G_1$ and $G_2$ are both non-null, contain no more than one component of order at least $2$ and any number of independent vertices. The three case distinctions (d) to (f) in Lemma~\ref{lemma:caseanalysis} distinguish between the three possibilities that neither $G_1$ and $G_2$ have a component of order at least $2$, that only $G_1$ has such a component and that both $G_1$ and $G_2$ have such a component.

\begin{proof}[of Lemma~\ref{lemma:caseanalysis} (d)] Assume that $G_1$ and $G_2$ consist only of $K_1$ components, i.e. isolated vertices. In particular, we distinguish between four cases that comprehensively cover all configurations of $G_1$ and $G_2$. We investigate how each case affects the structural possibilities for the subgraph $G_3 = G_3^A \vee G_3^V \vee G_3^R$ (see Definition~\ref{definition:partitioning-G3}). In Cases~\ref{case:two-or-more-1} to~\ref{case:two-or-more-3}, $G_1$ or $G_2$ consists of two or more components and, by Lemma~\ref{lemma:if-y-independent-one-comp-each}, $G_3^R$ is null.

\begin{enumerate}[{Case }1:]
\item
Assume $G_1 = K_1$ and $G_2 = K_1$. Then $G$ is in $E_6$ with $K_a := G_1$, $K_b := x_1 \vee G_3^A$, \mbox{$K_c := G_3^R$,} $K_d := x_2 \vee G_3^V$ and $e := G_2$. Subgraphs $G_1$ and $G_2$ may or may not be adjacent.

\item\label{case:two-or-more-1} Assume $G_1 = K_1$ and $G_2 = K_1 \cup K_1$. Let $a$ be the vertex in $G_1$ and $v,w$ be the two vertices in $G_2$. We distinguish between the two possibilities that $G_3^V$ is null or non-null. If $G_3^V$ is 
not null, 
then, by Lemma~\ref{lemma:G3-connected-to-one}~(i), we can assume without loss of generality that $v$ is connected to $G_3^V$ and $w$ is not.
Furthermore, by Lemma~\ref{lemmaW}, the vertex $a$ is adjacent to both $v$ and $w$ and $G$ is in $E_3$ with $K_n := x_1 \vee G_3^A$ and $K_m := G_3^V$. If $G_3^V$ is null, then by Lemma~\ref{lemma:all-but-one}, $a$ is adjacent to at least one 
of $v$ or $w$ and $G$ is in $E_7$ with $K_n := x_1 \vee G_3^A$.

\item\label{case:two-or-more-2} Assume $G_1 = K_1$ and $G_2 = K_1 \cup K_1 \cup K_1 \cup \dots \cup K_1$, where $G_2$ has at least three $K_1$ components. Let $a$ denote the vertex in $G_1$ and $u,v,w$ be three isolated 
vertices in 
$G_2$. We see that $G_3^V$ is null: if $G_3^V$ is not null, then, 
by Lemma~\ref{lemma:G3-connected-to-one}~(i), we can assume without loss of generality that $u$ is connected to $G_3^V$ and $v$ and $w$ are not. Furthermore,
by Lemma~\ref{lemmaW}, $a$ is adjacent to $u,v$ and $w$ and thus the vertices $a,v,w,x_1$ together with any vertex $y \in G_3^V$ induce a chair~($F_1$). As $a$ is adjacent to all but one vertex in $G_2$ by Lemma~\ref{lemma:all-but-one}, it 
follows 
that $G$ is in $E_7$ with $K_n := x_1 \vee G_3^A$.

\item\label{case:two-or-more-3} Assume $G_1$ and $G_2$ both consist of two or more $K_1$ components. Let $a,b$ be two vertices in $G_1$ and $v,w$ be two vertices in $G_2$. We see that $G_3^A$ and $G_3^V$ are null by assuming 
the 
contrary. If $G_3^A$ is non-null with $y \in G_3^A$ and $a$ is the neighbour of $y$ in $G_1$, then $a$ and $b$ are both adjacent to $v$ and $w$ by Lemma~\ref{lemmaW} and the vertices $b,v,w,y$ and $x_1$ induce a chair~($F_1$). By symmetry we see that $G_3^V$ is also null and it follows immediately that $G_3$ is null. By Lemma~\ref{lemma:all-but-one}, no more than one pair of vertices $(a,v)$ in $G_1 \times G_2$ is non-adjacent, so $G$ is in $E_9$. 
\end{enumerate}
\end{proof}

\begin{proof}[of Lemma~\ref{lemma:caseanalysis} (e)] Assume that $G_1$ contains a component of order at least $2$ and $G_2$ only contains $K_1$ components. By Lemma~\ref{lemma:at-most-two-components}, $G_2$ has exactly one or two $K_1$ components. We distinguish between these two cases which in turn also divide into various subcases.

\begin{enumerate}[{Case }1:]
\item Assume that $G_2$ consists of a single $K_1$ component denoted by vertex~$v$. We treat the two possibilities $G_1 = K_n$ and $G_1 = K_n \cup K_1 \cup \dots \cup K_1$, where $n \geq 2$, separately.

\begin{enumerate}[{Case \theenumi.}1:]
\item Assume that $G_1$ is complete, $G_1 = K_n$. If $G_3^R$ is not null, then $G_1$~and $G_2$ are either disconnected or completely connected: if vertices $a$ and $b$ in $G_1$ existed such that $a$ is adjacent to $v$ and $b$ is not, vertices 
$a,b,v,x_2$ and any vertex in $G_3^R$ would induce a $P_5$. It follows that $G$ is in $E_6$ (with $K_a=G_1$, $K_b=G_3^A\cup x_1$, $K_c=G_3^R$, $K_d=G_3^V\cup x_2$, and $e=G_2$). If $G_3^R$ is null, then $G_1$ and $G_2$ can have any adjacency 
relation and by Lemma~\ref{lemma:Kn-to-Kn}, $G$ is in $E_5$ with $A_1 := x_1 \vee G_3^A$ and $Z_1 := x_2 \vee G_3^V$.

\item Secondly, assume that $G_1$ contains one or more $K_1$ components in addition to the $K_n$, so $G_1 = K_n \cup K_1 \cup \dots \cup K_1$. Let $a$ be a vertex in the $K_n$ and $c$ be a vertex in a $K_1$ of $G_1$. By  Lemmas~\ref{lemma:if-y-independent-one-comp-each} and~\ref{lemma:if-KV-not-null}, $G_3^R$ and $G_3^V$ are null. $G_3^A$~can be null or non-null and we examine both possibilities.

First let $G_3^A$ be non-null and $y \in G_3^A$. 
Then we have $G_1 = K_n \cup K_1$ by Lemma~\ref{lemma:if-KR-null}. Furthermore, by Lemma~\ref{lemmaZ}, $G_3^A$ is completely connected to $K_n$ and disconnected from~$c$. Vertex $v$ is completely connected to $G_1$, otherwise the vertices $a,c,v,x_2$ and~$y$ induce an $F_3, F_7$ or $P_5$. Hence $G$ is in $E_3$ (with $K_m=G_3^A$, $K_n=K_n$, $a=v$, $b=x_2$, $c=x_1$, $d=c$).

Next let $G_3^A$ be null and let $C$ denote the $K_n$ in $G_1$. Then $C$ and $G_2$ must be disconnected or completely connected. Indeed, assume to the contrary that the vertex $v$ of $G_2$ is adjacent to $a$ and not adjacent to $b$ in $C$. Then vertex $c$ is adjacent to $v$ by Lemma~\ref{lemma:all-but-one} and the vertices $a,b,c,v$ and $x_2$ induce a chair~($F_1$).

If $C$ and $G_2$ are disconnected, then all $K_1$ components in $G_1$ are connected to $G_2$ by Lemma \ref{lemma:all-but-one} and $G$ is in $E_7$. If $C$ and $G_2$ are completely connected, then at most one $K_1$ component in $G_1$ is 
disconnected from $G_2$ by Lemma~\ref{lemma:all-but-one} and $G$ is also in $E_7$.
\end{enumerate}

\item Now assume that $G_2 := K_1 \cup K_1$ has two $K_1$ components, denoted by vertices $u$ and $v$. By Lemmas~\ref{lemma:if-y-independent-one-comp-each} and~\ref{lemma:if-KV-not-null}, $G_3^R$ and $G_3^V$ are null and $G_3^A$ may be null 
or non-null. Without loss of generality assume by Lemma~\ref{lemma:all-but-one} that vertex $u$ is completely connected to $G_1$. As in Case 1, $G_1$~may have the structure $G_1 = K_n$ or $G_1 = K_n \cup K_1 \cup \dots \cup K_1$, where 
$n \geq 2$.
 
\begin{enumerate}[{Case \theenumi.}1:]
\item Assume $G_1 = K_n$ with $n \geq 2$. By Lemma~\ref{lemmaY}, vertex $v$ is adjacent to no more than one vertex $\tilde{a}$ in the $K_n$. It follows that $G$ is in $E_3$ (with 
$K_m=G_1\setminus\tilde{a}$, 
$K_n=G_3^A\cup x_1$, 
$a=x_2$, 
$b=u$, $c=\tilde{a}$, and $d=v$).

\item  Assume that $G_1$ has one or more $K_1$ components in addition to the $K_n$. Let $C$ denote the component of order at least $2$ in $G_1$ and $d$ denote a $K_1$ component in $G_1$. 
First we note that, by Lemma~\ref{lemmaY}, vertex $v$ is adjacent to at most one vertex in $C$. Hence by Lemma~\ref{lemma:all-but-one}, the vertex $v$ is completely connected to every $K_1$ component of $G_1$. It follows that $v$ is 
disconnected from $C$: otherwise, if $v$ is adjacent 
to $a \in C$ and not adjacent to $b \in C$, the vertices $a,b,d,v$ and $x_2$ would induce a chair~($F_1$). 
Next we see that 
$G_3^A$ must be null. Assume to the contrary that $y$ is a vertex in $G_3^A$ and let $b \in C$. By Lemma~\ref{lemma:all-but-one}, vertex $d$ is adjacent to $u$ and $v$ and by Lemma~\ref{lemmaZ}, $y \in G_3^A$ is adjacent to~$b$ and not 
to~$d$. It follows that the vertices $b,d,v,x_2$ and $y$ induce a $P_5$. In conclusion, we have $G \in E_8$.
\end{enumerate}
\end{enumerate}
\end{proof}

\begin{proof}[of Lemma~\ref{lemma:caseanalysis} (f)] Assume that both $G_1$ and $G_2$ contain a component of order at least $2$. By Lemma~\ref{lemma:at-most-two-components}, $G_1$ and $G_2$ have at most one additional $K_1$ component each. We distinguish between three cases due to symmetry.

\begin{enumerate}[{Case }1:]
\item Assume $G_1 = K_n$ and $G_2 = K_m$ with $m,n \geq 2$. If $G_3^R$ is not null, then $G_1$ and $G_2$ are completely connected, by Lemma~\ref{lemma:only-one-K2}, and $G$ is in $E_5$. If $G_3^R$ is null, we see with the help of Lemma~\ref{lemma:Kn-to-Kn} that $G$ is also in $E_5$.

\item Assume $G_1 = K_n$ and $G_2 = K_m \cup K_1$ with $m,n \geq 2$. Let $v$ denote the $K_1$ in $G_2$. By Lemma~\ref{lemma:if-y-independent-one-comp-each}, $G_3^R$ is null and by Lemma~\ref{lemma:if-KV-not-null}, all of $G_3$ is null. 
Lemma~\ref{lemmaX} shows that $G_1$ and the $K_m$ in $G_2$ are either disconnected or completely connected. If $G_1$ and the $K_m$ are disconnected, then vertex $v$ is completely connected to $G_1$ by Lemma~\ref{lemma:all-but-one} and $G$ is 
in $E_4$. If $G_1$ and the $K_m$ are completely connected, then vertex $v$ cannot be connected to more than one vertex in $G_1$, by Lemma~\ref{lemmaY}, and $G$ is in~$E_3$.

\item Assume $G_1 = K_n \cup K_1$ and $G_2 = K_m \cup K_1$ with $m,n \geq 2$. Let $a$ and $C$ denote the $K_1$ and $K_n$ in $G_1$, respectively, and let $v$ and $W$ denote the $K_1$ and $K_m$ in $G_2$. By 
Lemmas~\ref{lemma:if-y-independent-one-comp-each} and~\ref{lemma:if-KV-not-null}, $G_3$~is null. Lemma~\ref{lemmaX} indicates that $C$ and $W$ are either disconnected or completely connected. We see that $C$ and $W$ are disconnected. Indeed, if they are completely connected, Lemma~\ref{lemmaY} says that vertex $a$ cannot be completely connected to $W$ and vertex $v$ cannot be completely connected to $C$, contradicting Lemma~\ref{lemma:all-but-one}. Hence, as $C$ and $W$ are thus disconnected, Lemma~\ref{lemma:all-but-one} states that vertices $a$ and $v$ are completely connected to $G_2$ and $G_1$, respectively, and $G$ is in $E_4$.
\end{enumerate}
\end{proof}

\pagebreak[4]

\section{Strategies for Alice on $E$}\label{section:strategies}
In this section we show that Alice has a winning strategy on each $G \in E$ with $\omega(G)$ colours. As the class $E$ is hereditary by Lemma~\ref{lemma:E-is-hereditary} (that is, $H \in E$ for any induced subgraph $H$ of $G$), this proves the implication (iii) $\Rightarrow$ (i) of Theorem~\ref{thm:gB-perfect-characterisation}.

The strategies presented here are not `colourblind' but rely on Alice being able to react to specific colours that Bob uses. An effort has been made to generalise the strategies as much as possible. In particular, some of Alice's moves are derived from strategies for small base graph structures discussed in Section~\ref{subsec:strategies-preparations}. However, the main parts of the strategies (described in Sections~\ref{subsec:stratA}--\ref{subsec:stratB}) are specific to each graph class. To the best of our knowledge, it is not possible to unify these strategies further.

\subsection{Preparations}\label{subsec:strategies-preparations}
A subgraph is \emph{fully coloured} if all its vertices are coloured, \emph{uncoloured} if none of its vertices are coloured and \emph{partially coloured} otherwise. A vertex is \emph{critical} if it is uncoloured and the sum of 
the number of adjacent colours and the number of uncoloured neighbours is at least $\omega(G)$, and \emph{safe} otherwise. A subgraph is \emph{critical} if it contains a critical vertex, otherwise it is \emph{safe}. We make a number of observations.

\begin{observation}\label{observation:saving}
	A vertex is made safe either by colouring it or by reducing the number of uncoloured neighbours to less than the difference of $\omega(G)$ and the number of adjacent colours.
\end{observation}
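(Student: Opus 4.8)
The plan is to derive the statement directly from the definitions of \emph{critical} and \emph{safe} by elementary logical negation, since the observation is essentially just a reformulation of when a vertex fails to be critical. I would begin by introducing convenient shorthand: for a vertex $v$, write $a(v)$ for the number of distinct colours appearing on its neighbours (the ``adjacent colours'') and $u(v)$ for the number of its uncoloured neighbours. With this notation, the definition states that $v$ is critical precisely when $v$ is uncoloured and $a(v) + u(v) \geq \omega(G)$.

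Negating this conjunction, a vertex $v$ is \emph{safe} if and only if either $v$ is coloured, or $v$ is uncoloured and $a(v) + u(v) < \omega(G)$. The second disjunct rearranges to the inequality $u(v) < \omega(G) - a(v)$, that is, the number of uncoloured neighbours is strictly less than the difference of $\omega(G)$ and the number of adjacent colours. These two disjuncts are exactly the two mechanisms listed in the observation, so the characterisation of safety coincides with the claimed statement.

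To finish, I would interpret ``made safe'' as the transition of a (presently critical) vertex into the safe state. Since safety is characterised by the disjunction above, any action achieving safety must realise one of its two disjuncts: colour $v$ itself, or leave $v$ uncoloured while forcing $u(v) < \omega(G) - a(v)$. As there is no other way to satisfy the disjunction, the observation follows immediately.

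The only point requiring a little care — and the closest thing to an obstacle in an otherwise definitional argument — is recognising that colouring a neighbour of $v$ simultaneously decreases $u(v)$ and may increase $a(v)$, so the threshold $\omega(G) - a(v)$ appearing in the second disjunct must be read with respect to the updated values \emph{after} the action. The statement remains correct because it describes the resulting state of $v$ rather than asserting anything about a single fixed move.
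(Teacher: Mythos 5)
Your proposal is correct and matches the paper's treatment: the paper states this as an unproved observation, immediate from negating the definition of \emph{critical}, and your argument is exactly that definitional unpacking (with a sensible extra remark that colouring a neighbour may also increase the number of adjacent colours, so the inequality is read in the updated state). Nothing further is needed.
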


\begin{observation}\label{observation:terminal}
	Once a vertex is safe, it remains so.
\end{observation}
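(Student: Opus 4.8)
The plan is to track the single quantity that governs safety. For an uncoloured vertex~$v$, write $s(v)$ for the sum of the number of distinct colours appearing on neighbours of~$v$ and the number of uncoloured neighbours of~$v$; thus $v$ is safe precisely when it is coloured or when $s(v) < \omega(G)$. Since the game proceeds by assigning a colour to a single previously uncoloured vertex, it suffices to fix an arbitrary vertex~$v$ that is currently safe and verify that one such move cannot make it critical.

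First I would dispose of the trivial cases. If $v$ is safe because it is already coloured, then it remains coloured after any move, since colours are never removed, and hence stays safe. If the move colours~$v$ itself, then~$v$ becomes coloured and is again safe by definition. This reduces the problem to the case where~$v$ is uncoloured and safe, so $s(v) < \omega(G)$, and the move colours some other vertex $u \neq v$.

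Next I would split on whether $u$ is adjacent to~$v$. If $u \notin N(v)$, then colouring~$u$ alters neither the set of colours on $v$'s neighbours nor the number of $v$'s uncoloured neighbours, so $s(v)$ is unchanged and~$v$ stays safe. If $u \in N(v)$, the key point is a trade-off: colouring~$u$ reduces the number of uncoloured neighbours of~$v$ by exactly one, as~$u$ passes from uncoloured to coloured, while it increases the number of distinct colours on $v$'s neighbours by at most one, the increase occurring only if $u$'s new colour was not already present on another neighbour of~$v$. Hence $s(v)$ can only stay the same or decrease, so the inequality $s(v) < \omega(G)$ is preserved and~$v$ remains safe. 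Combining the cases shows that safety is preserved under every move, which is the assertion.

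I do not expect a real obstacle here; the only point requiring care is the bookkeeping in the adjacent case, namely that a single newly coloured neighbour cannot raise the adjacent-colour count by more than one while it simultaneously lowers the uncoloured-neighbour count by exactly one. This monotonicity of $s(v)$ is essentially the content already recorded by Observation~\ref{observation:saving}, and it immediately yields the absorbing nature of safety claimed here.
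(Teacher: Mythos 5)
Your proof is correct and matches the paper's (implicit) reasoning: the paper states this observation without proof, and the monotonicity bookkeeping you carry out --- colouring a neighbour lowers the uncoloured-neighbour count by exactly one while raising the adjacent-colour count by at most one, non-neighbour moves change nothing, and coloured vertices stay coloured --- is exactly the argument the authors leave to the reader. No gap; nothing further is needed.
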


\begin{observation}\label{observation:winning}
	Alice wins once all vertices are safe.
\end{observation}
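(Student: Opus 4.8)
The plan is to exploit the persistence of safety established in Observation~\ref{observation:terminal}, together with the elementary fact that a safe uncoloured vertex can never be \emph{surrounded}, i.e.\ have neighbours coloured in all $\omega(G)$ colours. Recall that Bob wins precisely when the graph can no longer be fully coloured, which by the rules of the game is exactly the situation in which some uncoloured vertex is surrounded.

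First I would translate the definition of safety into a statement about available colours. Write $c(v)$ for the number of distinct colours appearing among the coloured neighbours of an uncoloured vertex $v$, and $u(v) \ge 0$ for its number of uncoloured neighbours. If $v$ is safe, then by definition $c(v) + u(v) < \omega(G)$, and since $u(v) \ge 0$ this forces $c(v) < \omega(G)$. Hence at most $\omega(G) - 1$ of the $\omega(G)$ colours occur on the neighbourhood of $v$, leaving at least one colour available to colour $v$ legally. In particular, a safe uncoloured vertex is never surrounded.

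Next, starting from a position in which all vertices are safe, Observation~\ref{observation:terminal} guarantees that every vertex remains safe for the entire remainder of the game, no matter how either player moves. Combining this with the previous step, at every subsequent position each uncoloured vertex still admits an available colour, so no uncoloured vertex is ever surrounded and Bob's winning condition can never be triggered. Consequently, whenever it is a player's turn there is always a legal move, and the number of uncoloured vertices strictly decreases with each move. The game therefore terminates in a fully coloured graph, which is a win for Alice.

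I do not expect any substantial obstacle: the conclusion follows almost immediately once Observations~\ref{observation:saving} and~\ref{observation:terminal} are in place. The only point meriting a word of care is confirming that Bob's winning condition is genuinely equivalent to the existence of a surrounded vertex, that is, that any proper partial colouring in which every uncoloured vertex retains an available colour can be extended to a full proper colouring. This is immediate, since one may simply colour the uncoloured vertices one at a time, each with any colour currently available to it, preserving properness at every step.
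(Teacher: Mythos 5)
Your core argument — safe vertices always admit a colour, safety persists by Observation~\ref{observation:terminal}, hence the game runs to a full colouring — is exactly the (unstated) justification the paper has in mind; the paper records this observation without proof, treating it as immediate. However, your final paragraph asserts something false: it is \emph{not} true that every proper partial colouring in which each uncoloured vertex retains an available colour can be extended to a full proper colouring by colouring vertices one at a time. Take $C_5$ with two colours and no vertex yet coloured: every vertex has both colours available, yet no proper $2$-colouring exists, and any greedy extension eventually strands a vertex. The defect in the greedy argument is that colouring one vertex can destroy availability at another; ``every uncoloured vertex has an available colour'' is not an invariant of greedy extension. Consequently, Bob's winning condition as defined in this paper — ``the graph can no longer be fully coloured'' — is genuinely \emph{stronger} than ``some uncoloured vertex is surrounded'', and the equivalence you claim in your closing paragraph fails in general.

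Fortunately, the repair uses only what you already established. In the position at hand every uncoloured vertex is \emph{safe}, i.e.\ $c(v)+u(v)<\omega(G)$, and this condition (unlike mere availability) \emph{is} preserved by every single colouring step: colouring a neighbour of an uncoloured vertex $w$ decreases $u(w)$ by one while increasing $c(w)$ by at most one, so $c(w)+u(w)$ never increases — this is the content of Observations~\ref{observation:saving} and~\ref{observation:terminal}. Hence, starting from an all-safe position, greedy extension does succeed, because every intermediate position is again all-safe and thus every uncoloured vertex retains an available colour at every step. This shows the current partial colouring is extendable at all times, so Bob's condition never triggers, both players always have a legal move, and the game terminates with the graph fully coloured. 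In short: run your greedy argument under the safety invariant rather than under the bare availability hypothesis, and your proof is complete.
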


\begin{observation}\label{observation:fixed}
	Alice wins once all uncoloured vertices in the partially coloured graph only admit a single colour and these colours together with the coloured vertices yield a proper colouring.
\end{observation}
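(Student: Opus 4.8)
The plan is to argue directly that from such a position the game is entirely forced and necessarily completes a proper colouring, so that no uncoloured vertex can ever become surrounded. I would first note that Observation~\ref{observation:winning} does \emph{not} apply here: an uncoloured vertex $v$ admitting a single colour has exactly $\omega(G)-1$ distinct colours on its coloured neighbours, so as soon as $v$ retains an uncoloured neighbour the critical sum in the definition reaches $\omega(G)$ and $v$ is \emph{critical}. Hence the conclusion must instead be drawn from the extra hypothesis that the forced completion is a proper colouring, and the argument is independent of which player is to move.

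Let $U$ denote the current set of uncoloured vertices and, for each $v\in U$, write $c^*(v)$ for its unique admissible colour; extend $c^*$ to the already coloured vertices by their actual colours, so that $c^*$ is by hypothesis a proper colouring of all of $G$. The heart of the proof is the invariant that \emph{at every subsequent position each remaining uncoloured vertex $w$ still admits exactly the single colour $c^*(w)$}. I would establish this by induction on the number of moves. Each move is forced: a player must colour some uncoloured vertex $v$ with an admissible colour, and $v$ admits only $c^*(v)$, so the move agrees with $c^*$ and strictly reduces $|U|$.

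The main step is to check that colouring $v$ with $c^*(v)$ does not reduce the admissible colours of any other uncoloured vertex $w$. If $w$ is not adjacent to $v$, the colours in $w$'s neighbourhood are unchanged. If $w$ is adjacent to $v$, then $c^*(v)\neq c^*(w)$ since $c^*$ is proper; but before the move $w$ admitted only $c^*(w)$, which means that every colour other than $c^*(w)$—in particular $c^*(v)$—already appeared on a coloured neighbour of $w$. Thus colouring $v$ introduces no new colour into $w$'s neighbourhood, and $w$ continues to admit exactly $c^*(w)$. This completes the inductive step and establishes the invariant.

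Finally I would read off the two consequences of the invariant. Every uncoloured vertex always admits a colour, so no vertex is ever surrounded and the breaker can never win; and since each forced move decreases $|U|$, the game terminates in the full proper colouring $c^*$, so the maker wins. The only delicate point—and the step I would write out most carefully—is the adjacent case above, where properness of $c^*$ combined with the single-admissible-colour hypothesis is exactly what guarantees that the colour just placed was already blocked at every uncoloured neighbour, preventing the invariant from degrading.
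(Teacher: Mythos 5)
Your proof is correct: the paper states Observation~\ref{observation:fixed} without any proof, treating it as immediate, and your forced-play argument---every subsequent move must agree with the unique admissible colouring $c^*$, and properness of $c^*$ guarantees that the colour just placed was already blocked at each uncoloured neighbour, so the invariant persists and the game terminates in the full proper colouring---is exactly the intended justification, spelled out carefully. Your side remark that Observation~\ref{observation:winning} does not apply here (such vertices are still \emph{critical} in the paper's terminology, since they see $\omega(G)-1$ colours plus an uncoloured neighbour) is also accurate and correctly explains why the paper states this winning condition separately.
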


Here, we refer to $K_{1,k}$ as \emph{$k$-stars}.
We present simple strategies for $k$-stars, ear graphs, the bull and the dragon graph, shown in Figure~\ref{fig:simple-strategies}. These strategies constitute the building blocks from which we construct the strategies for each $E_i$. In the description of each strategy we refer to the vertex labels provided in the figure.

\begin{figure}[htb] \subfigure[$k$-star ($K_{1,k}$)\label{fig:pre-k-star}]{\includegraphics{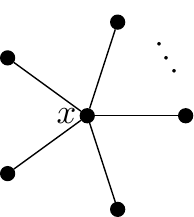}}\hfill
 \subfigure[ear graph ($K_a\vee(K_b\cup K_c)$)\label{fig:pre-ear-graph}]{\includegraphics{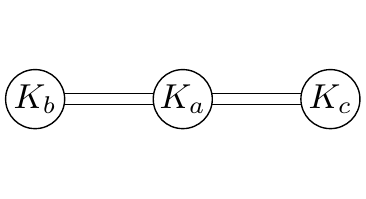}}\hfill
 \subfigure[bull\label{fig:pre-bull}]{\includegraphics{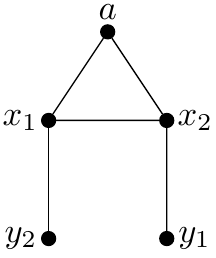}}\hfill
 \subfigure[dragon\label{fig:pre-dragon}]{\includegraphics{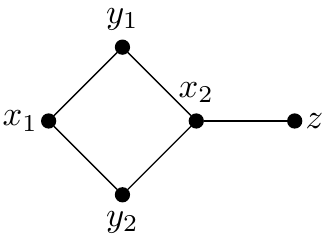}}
	\caption{We provide simple strategies for these four graphs.}
	\label{fig:simple-strategies}
\end{figure}

\begin{lemma}[$k$-star strategy]\label{lemma:k-star-strategy}
	Alice wins the game $g_B$ on a $k$-star with two colours.
\end{lemma}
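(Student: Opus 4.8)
The plan is to give Alice an explicit response strategy built around the observation that once the centre of the star is coloured, the game is effectively decided. Let $c$ be the centre of the $k$-star $K_{1,k}$ and let $v_1, \dots, v_k$ be its leaves; note that $\omega(K_{1,k}) = 2$ for $k \geq 1$, so Alice plays with two colours. Since the leaves are pairwise non-adjacent and each has $c$ as its only neighbour, the only vertex that can ever become surrounded is $c$ itself, and this occurs precisely when two leaves receive distinct colours before $c$ is coloured. Alice's strategy is therefore simply to colour $c$ on her first move whenever it is still uncoloured.

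I would then argue by a case distinction on Bob's opening move. If Bob colours $c$, then every leaf is adjacent to the single coloured vertex $c$ and hence admits exactly the opposite colour; the resulting assignment is a proper colouring, so Alice wins immediately by Observation~\ref{observation:fixed}. If instead Bob colours some leaf $v_i$, then Alice colours $c$ on her first turn with the colour opposite to that of $v_i$. After this move $c$ is coloured and, as before, each remaining uncoloured leaf admits the unique colour opposite to $c$, yielding a proper colouring; Alice again wins by Observation~\ref{observation:fixed}.

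The only point requiring care — and the nearest thing to an obstacle in this otherwise elementary argument — is verifying that Alice can always legally colour $c$ on her first move. This holds because only Bob's single opening move precedes it, so at most one leaf is coloured and hence at most one of the two colours is forbidden at $c$, leaving Alice a legal choice. The degenerate cases are immediate: for $k = 0$ the star is a single vertex which Bob colours on his only move, and for $k \geq 1$ the analysis above applies verbatim. In every case Alice forces a complete proper colouring, so she wins the game $g_B$ on any $k$-star with two colours.
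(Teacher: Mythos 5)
Your proof is correct and follows essentially the same approach as the paper's: the centre is the only vertex that can ever be surrounded, so Alice simply ensures it is coloured after her first move (either Bob colours it himself or she does), after which every leaf admits the unique remaining colour. You merely spell out the feasibility and end-game details that the paper leaves implicit via its critical/safe terminology.
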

\begin{proof}
	Note that $x$ is the only critical vertex. If Bob colours $x$ in his first move, then all vertices are safe and Alice wins. Otherwise Alice colours $x$ in her first move and wins.
\end{proof}

\begin{lemma}[ear graph strategy]\label{lemma:ear-strategy}
	Alice wins the game $g_B$ on an ear graph $G$ with $\omega(G)$ colours.
\end{lemma}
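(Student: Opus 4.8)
The plan is to have Alice play with $n:=\omega(G)=r+\max(p,q)$ colours; after relabelling $K_p$ and $K_q$ if necessary I assume $p\ge q$, so that $n=r+p$. The first observation is that every vertex of $K_p$ has degree $r+p-1<n$ and every vertex of $K_q$ has degree $r+q-1<n$, so these vertices can never become surrounded; only the $r$ vertices of $K_r$ are ever at risk. Writing $C_p$ and $C_q$ for the colour sets currently present on $K_p$ and $K_q$, and noting that the colours on $K_r$ are disjoint from those on $K_p\cup K_q$, the number of colours appearing in the neighbourhood of an uncoloured $u\in K_r$ equals (the number of coloured vertices of $K_r\setminus\{u\}$)$\,+\,|C_p\cup C_q|$, which is at most $(r-1)+|C_p\cup C_q|$. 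Hence it suffices to guarantee $|C_p\cup C_q|\le p$ at \emph{every} point of the game: then this count is at most $(r-1)+p=n-1$, no vertex of $K_r$ is ever surrounded, every uncoloured vertex therefore stays colourable, and since each move colours a new vertex the finite game must end with $G$ fully coloured, i.e.\ with Alice winning.

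The core of the proof is a strategy by which Alice maintains, after each of her moves, the coupled invariant that \textbf{(a)} $C_q\subseteq C_p$ and \textbf{(b)} the number $u_q$ of uncoloured vertices of $K_q$ is at most the number $u_p$ of uncoloured vertices of $K_p$. Both hold initially ($C_p=C_q=\emptyset$ and $u_q=q\le p=u_p$), and (a) gives $|C_p\cup C_q|=|C_p|\le p$ after each of Alice's moves. I would then run through Bob's possible replies from an invariant position. If Bob colours a vertex of $K_r$, or a vertex of $K_q$ with a colour already in $C_p$, the invariant is untouched and Alice colours an uncoloured vertex of $K_r$ (still colourable by the count above); once no uncoloured $K_r$-vertex remains she has already won and merely finishes colouring $K_p,K_q$. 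If Bob colours a vertex of $K_q$ with a fresh colour $c\notin C_p$, then (b) together with $u_q\ge1$ forces $u_p\ge1$, so Alice can \emph{absorb} $c$ by colouring an uncoloured vertex of $K_p$ with it; this restores (a) and preserves (b), since $u_p$ and $u_q$ each drop by one over the exchange. If Bob colours a vertex of $K_p$ (necessarily with a fresh colour, as $K_p$ is a clique), then $u_p$ decreases; this breaks (b) only in the tight case $u_q=u_p$, and Alice then repairs it by colouring an uncoloured $v\in K_q$ with a colour of $C_p$ not yet used on $K_q$.

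The step I expect to need the most care is the existence of such a repairing colour. In the tight case write $t:=u_q=u_p$ for the common value just before Bob's move; afterwards $|C_p|=(p-t)+1$ while, by (a), $C_q\subseteq C_p$ with $|C_q|=q-t$, so the number of colours of $C_p$ available to $v$ is $|C_p|-|C_q|=(p-t+1)-(q-t)=p-q+1\ge1$, which is precisely where the assumption $p\ge q$ enters. One finally checks that $|C_p\cup C_q|\le p$ holds not only after Alice's moves but also in the intermediate positions right after each of Bob's moves: the union only grows when a fresh colour is played on $K_p$ or $K_q$, and in either case $K_p$ is not full just before the move (for a $K_q$-move this uses (b)), so $|C_p|\le p-1$ beforehand and the bound survives. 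Consequently no vertex of $K_r$ is ever surrounded and Alice wins. The reason the invariant must couple (a) with (b), rather than using $C_q\subseteq C_p$ alone, is that Bob could otherwise fill $K_p$ completely while two or more vertices of $K_q$ stay uncoloured and then place the colours reserved for $K_r$ onto $K_q$; condition (b) is exactly what prevents $K_p$ from filling prematurely and always keeps an uncoloured $K_p$-vertex in reserve to absorb the next fresh $K_q$-colour.
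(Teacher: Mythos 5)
Your proof is correct and follows essentially the same route as the paper's: both rest on a pairing strategy that keeps every colour appearing on the smaller side clique matched on the larger one, so that an uncoloured vertex of the central clique sees at most $(r-1)+\max\{p,q\}=\omega(G)-1$ colours and is never surrounded. The paper implements this by having Alice mirror Bob's exact colour into the opposite side clique (answering centre moves by colouring the centre reactively), whereas you carry the one-directional invariants $C_q\subseteq C_p$ and $u_q\le u_p$ and colour $K_r$ proactively --- a mildly different bookkeeping of the same idea, in which your tight-case feasibility count $|C_p\setminus C_q|=p-q+1\ge 1$ makes explicit a detail the paper's shorter argument leaves implicit.
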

\begin{proof}
	Assuming without loss of generality that $b \leq c$, we have $\omega(G) = a+c$, so only $K_a$ is critical initially. By Observation~\ref{observation:saving}, we can make the vertices of $K_a$ safe either by colouring them or by ensuring that every 
vertex in $K_b$ has a vertex in $K_c$ with the same colour, leading to the following simple strategy for Alice. If Bob colours a vertex in $K_b$ or $K_c$, Alice applies the same colour to a vertex in the other clique. If $K_b$ is now fully coloured, Alice wins. If Bob colours a vertex in $K_a$, Alice attempts to colour another vertex in $K_a$. If the clique $K_a$ is now fully coloured, Alice wins.
\end{proof}


\begin{lemma}[bull strategy]\label{lemma:bull-strategy}
Alice wins the game $g_B$ on a bull with three colours.
\end{lemma}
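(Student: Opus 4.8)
The plan is to exploit the rigid structure of the bull. Label its triangle $\{a,b,c\}$ and let $a',b'$ be the two leaves, attached to $a$ and $b$ respectively, so that $c$ is the apex without a leaf (matching Figure~\ref{fig:pre-bull}). Since $\omega = 3$ and each of $c, a', b'$ has at most two uncoloured neighbours and no coloured ones initially, Observation~\ref{observation:saving} shows that the only critical vertices at the start are $a$ and $b$. The key structural remark is that $a$'s neighbourhood is exactly $\{b,c,a'\}$, and $b,c$ lie in a common clique and hence are forced to receive distinct colours; consequently $a$ can only ever be surrounded if the leaf $a'$ acquires a third colour, distinct from both $b$ and $c$. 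The symmetric statement holds for $b$. Alice's guiding principle will therefore be to force each leaf to share its colour with a triangle neighbour (which makes the corresponding critical vertex safe by Observation~\ref{observation:saving}), or else to colour the critical vertex itself before Bob completes a rainbow neighbourhood around it.

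I would organise the proof as a case distinction on Bob's opening move, of which there are three up to the automorphism swapping $(a,a')$ with $(b,b')$. If Bob colours $a$, say red, then $a$ is safe, and Alice answers by colouring the \emph{opposite} leaf $b'$ red as well; now $b$ sees only red among its coloured neighbours (from $a$ and $b'$) together with the single uncoloured neighbour $c$, so $b$ is safe by Observation~\ref{observation:saving}, every vertex is safe, and Alice wins by Observation~\ref{observation:winning}. If instead Bob colours a leaf, say $a'$ red, Alice colours the far triangle vertex $b$ red; then $a$ sees red from both $a'$ and $b$ plus the uncoloured $c$ and becomes safe, $b$ is coloured, and again all vertices are safe after only two moves.

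The remaining and genuinely harder case is when Bob opens on the apex $c$. Here no single Alice move can neutralise both critical vertices at once, because the two leaves are attached to different triangle vertices, so Alice must win reactively. The plan is for her first to secure one side by colouring $a'$ with $c$'s colour, which makes $a$ safe by Observation~\ref{observation:saving} and renders it immune thereafter by Observation~\ref{observation:terminal}, and then to respond to Bob's follow-up threat against $b$. I would verify through a short sub-case analysis that, whatever Bob plays, on her next turn Alice can either colour $b$ directly (possible because at most two colours appear among $b$'s neighbours whenever it is her move) or match $b'$ to a neighbour of $b$, after which the unique remaining uncoloured vertex is trivially colourable; in no line can Bob's three moves complete all three colours around $b$ before Alice acts. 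This apex case is the main obstacle: the one-move neutralisations of the other two cases must here be replaced by a careful argument that the saving of $b$ can always be deferred to Alice's second move, and the bookkeeping with Observations~\ref{observation:saving}--\ref{observation:winning} is where the care is needed. Once each branch terminates with all vertices safe, Observation~\ref{observation:winning} delivers the win.
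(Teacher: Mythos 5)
Your proof is correct and takes essentially the same approach as the paper's: the same three-way case split on Bob's opening move (triangle vertex, leaf, or apex), the same pairing responses (colouring the opposite leaf, respectively the far triangle vertex, with Bob's colour), and in the apex case the same plan of first saving one critical vertex via its leaf and then colouring the other critical vertex on your second move, justified correctly by the observation that at most two colours can appear in its neighbourhood when it is your turn. There is no gap.
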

\begin{proof}
At the start, vertices $x_1, x_2$ are critical. If Bob colours vertex $x_i$ or $y_i$, Alice applies the same colour to the other vertex and wins. If Bob instead colours vertex $a$, Alice applies the same colour to $y_2$, which makes 
$x_1$ safe. If Bob now colours $x_2$, Alice wins, otherwise she colours $x_2$ herself to win.
\end{proof}

\begin{lemma}[dragon strategies]\label{lemma:dragon-strategy}
Alice wins the game $g_B$ on a dragon with two colours.
\end{lemma}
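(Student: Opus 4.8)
The plan is to exploit that $\omega(\text{dragon}) = 2$, so the game $g_B$ with two colours is exactly the proper $2$-colouring game, and a vertex becomes impossible to colour (``surrounded'') precisely when two of its neighbours have already received the two different colours. Since the dragon is connected and bipartite, any proper partial colouring extends in at most one way (up to globally swapping the two colours) to a full proper colouring; hence Alice's task is only to steer the game so that no vertex ever acquires neighbours of both colours before it is itself coloured. First I would record that, initially, the only critical vertices in the sense of Section~\ref{subsec:strategies-preparations} are those of degree at least $2$, because with $\omega = 2$ a vertex $u$ is critical iff the number of adjacent colours plus the number of uncoloured neighbours is at least $2$, which at the start means $\deg(u) \geq 2$; the leaves of the dragon (if any) are therefore safe throughout by Observation~\ref{observation:terminal} and can be set aside.

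The core of Alice's strategy rests on a single locking principle: with two colours, assigning a colour $c$ to a vertex $u$ immediately forbids $c$ on every uncoloured neighbour of $u$, so those neighbours can henceforth receive only the opposite colour $\bar c$. Consequently a well-chosen Alice move does double duty: it colours one vertex and, by forbidding a colour on each of its neighbours, drives the remaining uncoloured vertices towards a state in which each admits only a single colour consistent with what is already coloured. I would present the strategy as a short case analysis on Bob's first move, reduced by the symmetries of the dragon to a few representative cases according to which orbit of vertices Bob colours (this is why the statement speaks of dragon strategies in the plural). In each case Alice responds by colouring one of the two high-degree ``spine'' vertices with the colour dictated by the forced bipartition, or by colouring its partner in the same colour class; after this response every remaining uncoloured vertex admits only one colour, and the resulting assignment is proper, so Alice wins by Observation~\ref{observation:fixed} (and hence Observation~\ref{observation:winning}).

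The hard part will be the situation in which Bob's opening move makes both spine vertices critical at once, so that a response protecting only one of them seems to leave the other exposed. The key observation that resolves this is that the two spine vertices \emph{share} their critical neighbours: when Alice colours one spine vertex, the locking principle forces all of these shared neighbours to the opposite colour, which in turn makes it illegal for Bob ever to create the second colour in the neighbourhood of the other spine vertex. Thus a single Alice move neutralises what appeared to be two independent threats, and the ``poisoning'' of a common neighbour that lets Bob win on denser bipartite graphs such as $K_{3,3}-e$ cannot occur here. I would verify this locking cascade explicitly for each representative opening, checking in every branch that the partial colouring produced is forced and proper, and then conclude via Observation~\ref{observation:fixed} that all vertices are safe and Alice wins.
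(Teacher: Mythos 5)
Your proposal has a genuine gap, and it sits exactly where you locate the ``hard part''. The dragon is the $4$-cycle $x_1y_1x_2y_2$ with a pendant tail $z$ attached to $x_2$ only, and this asymmetry breaks your key claim that the two spine vertices share their critical neighbours, so that colouring \emph{either} spine vertex consistently with the forced bipartition neutralises both threats. Concretely: suppose Bob opens by colouring $y_1$ red. Your recipe permits Alice to answer by colouring $x_1$ green (this is the colour dictated by the bipartition). But then $z$ is adjacent only to the still-uncoloured $x_2$, so $z$ is not locked to any colour; Bob colours $z$ green, and $x_2$ now sees both red (at $y_1$) and green (at $z$) and is surrounded --- Bob wins. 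The only winning reply to $y_1$ is to colour $x_2$ itself, which locks $y_2$ and $z$ to red and $x_1$ to green. Symmetrically, if Bob opens on $z$, Alice must answer on the \emph{far} spine vertex $x_1$ (with the colour opposite to $z$); answering on the adjacent spine vertex $x_2$, though equally consistent with the bipartition, loses to Bob colouring $x_1$ in the colour of $z$, surrounding both $y_1$ and $y_2$. So your stated invariant --- that after Alice's reply every uncoloured vertex admits a single colour, allowing an appeal to Observation~\ref{observation:fixed} --- is false on the losing branches of your own recipe, and the ``poisoning'' you claim cannot occur here does occur, via the tail.

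The paper's proof avoids this precisely by being more specific than your locking principle: it names three one-round winning configurations ($x_1,x_2$ coloured alike; $x_2,y_i$ both coloured; $x_1,z$ coloured differently) and has Alice complete whichever pair Bob opens, with a fixed priority order resolving the ambiguity when a vertex belongs to several pairs. Note also that the plural ``dragon strategies'' does not refer to symmetry cases of Bob's opening, as you guessed, but to two different completion orders (the pairing and bipartite dragon strategies), which are needed separately later as building blocks for the strategies on $E_3$, $E_4$ and $E_9$. Finally, your proposal defers the entire case analysis (``I would verify this locking cascade explicitly\ldots''); since the verification is exactly where the correct choice of reply vertex must be discovered, the proof as written is not complete even setting aside the flawed invariant.
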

\begin{proof}
In the beginning, vertices $x_1, x_2, y_1, y_2$ are critical. Note that Alice can win in one round by ensuring one of three possibilities: $x_1, x_2$ are coloured the same, $x_2, y_i$ are coloured differently or $x_1, z$ are coloured 
differently. In all three cases Bob can no longer win. Hence if Bob colours a vertex in one of these three pairs, Alice's approach is to colour the other vertex in the appropriate colour. As $x_1$ and $x_2$ appear in multiple pairs, any strategy using this approach is well-defined only once we define an order in which Alice tries to complete a pair colouring. In the \emph{pairing dragon strategy}, this order is given by $(x_1, x_2), (y_i, x_2), (z, x_1)$ while the 
order in the \emph{bipartite dragon strategy} is given by $(y_1, x_2), (y_2, x_2), (z, x_1)$. \end{proof}

Using these preparations, Sections \ref{subsec:stratA} -- \ref{subsec:stratB} contain winning strategies for Alice on each graph $G\in E$ with $\omega(G)$ colours.

\subsection{Strategies for Alice on the base structures ($E_1^\cup$ and $E_5$)}\label{subsec:stratA}
We prove that Alice has a winning strategy on each graph $G \in E$ with $\omega(G)$ colours. 
First we discuss the most general structures $E_1^\cup$ and $E_5$, which we call \emph{base structures}, and then the other seven, 
more specialised graph classes. The strategy on $E_1^\cup$ combines the $k$-star and the ear graph strategies. The strategy on $E_5$ follows a pairing argument similar to the pairing argument in the ear graph strategy.

\begin{proposition}[$E_1^\cup$]\label{proposition:E1}
Alice wins on $G \in E_1^\cup$ with $\omega(G)$ colours.
\end{proposition}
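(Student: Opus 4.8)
The plan is to reduce the game on a disconnected $G\in E_1^\cup$ to its components and then, on a single component, to fuse the two building blocks named in the lead-in: the $k$-star strategy (Lemma~\ref{lemma:k-star-strategy}) and the ear graph strategy (Lemma~\ref{lemma:ear-strategy}). The key opening observation is that, since there are no edges between distinct components of an $E_1^\cup$ graph, the neighbourhood of any vertex lies entirely within its own component, so a vertex can be surrounded only by colours played in that component. Hence the components interact solely through the turn order. I would therefore first argue that Alice answers each of Bob's moves inside the very component in which Bob just played; then every component is played out as an independent instance of $g_B$, and the only exceptional situation is a forced ``free'' move (which arises only when Bob has just completed a component, leaving Alice with nowhere to respond). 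Such a free move can always be spent constructively inside an unfinished component without creating any threat, so it suffices to produce a winning strategy with $\omega(G)\ge\omega(C)$ colours on a single component $C\in E_1$.

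Next I would pin down which vertices of $C$ can ever be dangerous. Let $d$ be the dominating vertex, $H_0=K_a\vee(K_b\cup K_c)$ the ear part, and $H_1,\dots,H_k$ the hanging cliques. A vertex of some $H_i$ has degree $|H_i|\le\omega(C)-1$, and a vertex of $K_b\cup K_c$ has degree $a+b-1$ or $a+c-1$, again at most $\omega(C)-1\le\omega(G)-1$. By the definition of a critical vertex, the quantity (adjacent colours)$+$(uncoloured neighbours) is bounded by the degree, so none of these vertices is ever critical; by Observation~\ref{observation:terminal} they are permanently safe and can never be surrounded. Thus the only vertices Alice must actively protect lie in $K_a$, and among them $d$ is distinguished because it is additionally adjacent to every hanging clique.

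Alice's strategy on $C$ then treats $d$ as a vertex of $K_a$ and runs the ear graph strategy on $K_a\cup K_b\cup K_c$, with two refinements. First, following the $k$-star idea, she colours the hub $d$ at her first opportunity; since $d\in K_a$ this is a legitimate ear-strategy move, and colouring $d$ removes the only channel by which the hanging cliques could ever contribute to surrounding a vertex. Second, she treats each of Bob's moves inside a hanging clique as a ``free'' move that she spends on advancing the ear strategy (discharging a pending mirror, or filling $K_a$). Under the ear responses — mirror colours so that (assuming $b\le c$) the colour set of $K_b$ stays contained in that of $K_c$, and answer $K_a$-moves by filling $K_a$ — the colours adjacent to any uncoloured $u\in K_a$ come only from $K_a\setminus u$ (at most $a-1$) and from $K_b\cup K_c$ (at most $\max(b,c)$), a total of at most $\omega(C)-1<\omega(G)$, so $u$ is never surrounded. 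Together with the permanent safety of all other vertices, this yields an all-safe position and Alice wins by Observation~\ref{observation:winning}.

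The step I expect to be the main obstacle is verifying that these two responses compose consistently in the $g_B$ turn order. Spending Alice's first move on $d$ can leave her one mirroring step behind, and I would need a carefully stated invariant — maintained after each of Alice's moves — guaranteeing that no $K_a$ vertex is ever left a single Bob-move away from being surrounded, i.e.\ that a pending mirror is always discharged before it can combine with a fully coloured $K_a\setminus u$ to reach $\omega(G)$ distinct colours. I would also check the degenerate configurations ($a=0$, $b=0$ or $c=0$, or $k=0$), where the ear part or the star part collapses and the argument simplifies, and confirm that the free-move bookkeeping of the component reduction never forces Alice either to abandon a pending mirror or to leave $d$ uncoloured while it is threatened.
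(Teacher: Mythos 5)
Your component reduction and your inventory of the critical vertices essentially match the paper's explicit proof (which, incidentally, also records a one-line non-constructive argument: $G \cup G \in E_1^\cup$ is disconnected, hence $g_B$-perfect by Theorem~\ref{thm:disconnected-gB-perfect-characterisation}, and $G$ is an induced subgraph of it). A small slip first: a vertex of $K_b$ has degree $a+b$, not $a+b-1$ --- you forgot the dominating vertex --- though your bound $\le \omega(C)-1$ survives. The genuine gap is your first refinement, ``she colours the hub $d$ at her first opportunity.'' Colouring a vertex of $K_a \cup \{d\}$ is \emph{not} a legitimate ear-strategy response when Bob has just played in $K_b$ or $K_c$: the ear strategy demands the mirror there, and deferring a mirror in favour of $d$ is a losing move, not merely an unverified one. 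Concretely, take $a=b=c=1$, $k=0$, so $\omega=3$. Bob colours the $K_b$-vertex $p$ with colour $1$; if Alice now colours $d$ (necessarily with a colour $\ne 1$), Bob colours the $K_c$-vertex $q$ with the remaining third colour, and the $K_a$-vertex --- adjacent to $p$, $q$ and $d$ in three distinct colours --- is surrounded. So the invariant you ask for at the end (no $K_a$-vertex ever a single Bob-move from surrounded) is already violated after Alice's very first move under your priority order; the ``main obstacle'' you flag is a real counterexample, not a bookkeeping burden that a cleverer invariant could absorb.

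The paper resolves the tension by inverting your priority. The $K_b/K_c$ mirror is unconditional, and Alice colours the centre $x$ only \emph{reactively}: when Bob plays in $K_a$ or in some $H_i$ while $x$ is uncoloured, or once $K_a$ or $K_b$ is completed. What makes this sound is the paper's second invariant --- $x$ is uncoloured only while $K_a$ is not fully coloured and every $H_i$ is completely uncoloured --- so that, with the mirror invariant, the distinct colours adjacent to $x$ number at most $(a-1)+c \le \omega-2$, and $x$ stays colourable even after one further Bob move; meanwhile no mirror is ever left pending, so $K_a$ never sees more than $a+c$ distinct adjacent colours. The same inversion settles your cross-component bookkeeping, which you left unspecified: the free move after Bob finishes a component must be spent on an uncoloured centre of another component if one exists, and otherwise on a $K_a$-vertex --- never on $K_b$ or $K_c$, which would itself create a pending mirror in a component where Bob holds the tempo.
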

\begin{proof}
We can prove this proposition non-constructively as follows. As the graph $G \cup G$ is an instance of $E_1^\cup$ with at least two connected components, it is $g_B$-perfect by Theorem~\ref{thm:disconnected-gB-perfect-characterisation}. This implies that Alice can win on its induced subgraph $G$ with $\omega(G)$ colours.

Alternatively, we provide explicit strategies for Alice to win on graphs in $E_1$ and $E_1^\cup$. Suppose first that $G \in E_1$ and assume without loss of generality that $b \leq c$. Then $\omega(G) = 1 + \max \{ a+c, |H_1|, \ldots, |H_k| \}$. If $a + c < |H_i|$ for some $H_i$ or $b=0$, then only $x$ is critical and Alice can follow the same strategy as on the $k$-star (see Lemma~\ref{lemma:k-star-strategy}). If $a+c \geq |H_i|$ for all $H_i$ and $b\ge1$, then the vertices of $K_a$ and $x$ are critical. In this case we combine the $k$-star and ear graph strategies (cf.~Lemmas~\ref{lemma:k-star-strategy} and~\ref{lemma:ear-strategy}) as follows.

\pagebreak[3]

\begin{enumerate}
	\item While neither $K_a$ nor $K_b$ is fully coloured:
	\begin{enumerate}[(i)]
		\item If Bob colours $x$, Alice colours a vertex in $K_a$.
		\item If Bob colours a vertex in $K_b$ or $K_c$, Alice applies the same colour to a vertex in the other clique.
        \item Suppose Bob colours a vertex in $K_a$ or in some $H_i$. If $x$ and $K_a$ are now fully coloured, Alice wins by Observation~\ref{observation:winning}. If $x$ is uncoloured, Alice colours it. If $x$ is coloured but there is an uncoloured vertex in $K_a$, Alice colours such a vertex.
	\end{enumerate}	
	\item Suppose $K_a$ or $K_b$ is fully coloured. If $x$ is coloured, Alice wins. Otherwise she ensures on her next move that $x$ is coloured and wins.
\end{enumerate}


Now suppose $G \in E_1^\cup$ with at least two connected components. Alice always responds to Bob's move by colouring a vertex in the same component as Bob according to the above strategy, unless Bob just coloured the last vertex in a component. We consider the latter case in more detail. If $x$ and $K_a$ are safe in every other component of $G$, Alice wins. Otherwise there exists a component with an uncoloured $x$ or an uncoloured vertex in $K_a$. If there is an uncoloured~$x$, she colours~$x$. Otherwise she colours an uncoloured vertex in~$K_a$.

Observe that the strategies for $E_1$ and $E_1^\cup$ are correct since, after every move by Alice, the following invariants hold.
\begin{enumerate}
\item Every vertex in $K_b$ has the same colour as a vertex in $K_c$, as in the ear graph strategy. 
\item The central vertex is uncoloured only if $K_a$ is not fully coloured and the $H_i$ are completely uncoloured. Together with 1.~, this guarantees that $x$ can be coloured later, as in the star strategy.
\end{enumerate}
\end{proof}

\begin{proposition}[$E_5$]\label{proposition:E5}
Alice wins on an expanded cocobi $G \in E_5$ with
\[
	\max\{|A_R|+\sum_{i=1}^k|A_i|,|Z_R|+\sum_{i=1}^k|Z_i|,\max\limits_{i=1,\ldots,k}\{|A_i|+|Z_i|\}\}
\]
colours.
\end{proposition}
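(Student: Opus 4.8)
The plan is to first verify that the displayed quantity is exactly $\omega(G)$, and then to win with $\omega(G)$ colours by a pairing strategy generalising the ear graph strategy of Lemma~\ref{lemma:ear-strategy}.

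By Lemma~\ref{lemma:Kn-to-Kn}, $G_1$ and $G_2$ are cliques with the stated cross-structure. Any clique $Q$ of $G$ meets them in sets $S=Q\cap G_1$ and $T=Q\cap G_2$ that are completely cross-connected. Since the only $G_1$-neighbours of a vertex of $Z_i$ lie in $A_i$, if both $S$ and $T$ are non-empty then $S\subseteq A_i$ and $T\subseteq Z_i$ for a single $i$; otherwise $Q$ lies inside $G_1$ or inside $G_2$. Hence $\omega(G)=\max\{|G_1|,|G_2|,\max_i(|A_i|+|Z_i|)\}$, which is precisely the quantity in the statement, so the bound is best possible. Moreover, vertices of $A_R$ and $Z_R$ are adjacent only within their own clique and so have degree at most $\omega(G)-1$; they are never critical, and all danger is confined to the matched parts $A_i,Z_i$.

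The strategy. Alice maintains a monochromatic matching $M$ between $G_1$ and $G_2$ of \emph{non-adjacent} pairs, extending it in response to Bob; formally $M$ lives in the bipartite graph $B$ on $G_1\cup G_2$ whose edges are the non-adjacent cross pairs. Because $G_1$ and $G_2$ are cliques, a legal pair is exactly $(a,z)$ with $a\in A_p\cup A_R$ and $z\in Z_q\cup Z_R$ where $p\neq q$ (or one side residual). The crucial feature, the analogue of pairing $K_b$ with $K_c$ in the ear graph, is that each such pair does double duty: its shared colour lies in $G_1\setminus A_q$ and hence is not a `new' colour for the vertices of $A_q$, while simultaneously lying in $G_2\setminus Z_p$ and thus protecting the vertices of $Z_p$. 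If for every $i$ the colours on $Z_i$ are covered by colours of $G_1\setminus A_i$ up to the unavoidable surplus, then each $a\in A_i$ sees at most $|G_1|-1$ (respectively $|A_i|+|Z_i|-1$) colours, i.e.\ at most $\omega(G)-1$, and is safe by Observations~\ref{observation:saving} and~\ref{observation:winning}; the roles of $A$ and $Z$ are symmetric.

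Concretely, I would have Alice preserve the invariant that the current partial colouring extends to a proper $\omega(G)$-colouring; equivalently, that the monochromatic cross-pairs form a matching of $B$ extendable to one of size $|G_1|+|G_2|-\omega(G)$ (such a matching exists because a proper $\omega(G)$-colouring does). When Bob colours a vertex $v$, Alice replies by colouring a suitable uncoloured non-adjacent partner on the opposite side with Bob's colour, choosing it along an augmenting path so that extendability is retained; when $v$ is to remain unmatched or its intended partner is already used, she instead colours an endangered vertex of a matched part or an unsaturated residual vertex. Since Bob's colour occurs at most once in each clique, colouring the partner never creates a conflict. The main obstacle will be proving that Alice can always restore extendability with a single move, i.e.\ that a legal partner or defensive move is available after every move of Bob. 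This reduces to a Hall-type, augmenting-path feasibility statement for $B$ that must be kept true at every stage, and its verification is harder than for a single ear graph for two reasons: the $k$ pairs $(A_i,Z_i)$ must be protected simultaneously through one global matching, and the residual parts $A_R,Z_R$ — non-adjacent to the whole opposite side, hence matchable to anything — must be deployed exactly where Hall's condition would otherwise fail. Organising these responses, and the order in which unmatched and residual vertices are coloured, is the technical heart of the argument.
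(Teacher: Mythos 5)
Your computation of $\omega(G)$, the observation that $A_R$ and $Z_R$ are never critical, and the general pairing idea are all correct and in the spirit of the paper's proof. However, your proposal has a genuine gap, which you yourself flag: the claim that Alice can always restore extendability of the monochromatic matching with a single move is precisely the content of the proposition, and you give no argument for it. A Hall-type, augmenting-path invariant over the bipartite non-adjacency graph $B$ is not obviously maintainable move-by-move: Bob can colour a vertex with a colour already present on the opposite side, or systematically exhaust the vertices eligible as partners for a given part $A_i$, and nothing in your outline shows that a legal reply exists in every such configuration. Declaring this ``the technical heart'' and leaving it open means the proposal does not constitute a proof.

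The paper closes exactly this gap by avoiding global matching machinery altogether, via a case split on whether the largest matched pair dominates. Assuming $|A|\ge|Z|$ and $(A_1,Z_1)$ maximal: if $|A_1|+|Z_1|>|A|$, then $\omega(G)=|A_1|+|Z_1|$ and Alice uses the \emph{fixed} pairing of $Z_1$ against $A\setminus A_1$ and of $A_1$ against $Z\setminus Z_1$; the inequalities $|A_1|>|Z|-|Z_1|$ and $|Z_1|>|A|-|A_1|$ guarantee that a same-coloured partner is always available, with explicit fallback rules once one side is exhausted. If instead $|A_1|+|Z_1|\le|A|$, then $\omega(G)=|A|$, and the paper fixes \emph{one} proper $\omega(G)$-colouring in advance --- it exists because $G$ is the complement of a bipartite graph, hence perfect --- in which every vertex of $Z$ is paired and every colour class has at most two vertices. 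The only dynamic ingredient is a single local re-pairing: when Bob colours an unpaired $a\in A_i$ (necessarily with a fresh colour), Alice colours any uncoloured $v\in Z\setminus Z_i$ the same, stealing $v$ from its old partner, which becomes unpaired. Two simple invariants (each pair is both coloured or both uncoloured; unpaired vertices are coloured only once the relevant paired vertices are) then yield correctness by direct counting, with no Hall condition or augmenting paths, and the residual parts need no special deployment since all of $Z$ is paired from the start. To salvage your approach you would have to prove your maintainability claim outright, and the paper's case split together with the precomputed colouring is, in effect, the worked-out form of that missing argument.
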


\begin{proof}
Denote
\[
	A:=A_R\cup\bigcup_{i=1}^kA_i,\quad Z:=Z_R\cup\bigcup_{i=1}^kZ_i,
\]
and assume without loss of generality that $|A| \geq |Z|$ and $|A_1|+|Z_1| \geq |A_i|+|Z_i|$ for all $1 \leq i \leq k$. 
We show that Alice can win on $G$ with $\omega(G)$ colours. For this, we distinguish between two cases.

\pagebreak[3]

\begin{caseEE} 
$|A_1| + |Z_1| > |A|$.
\end{caseEE}

In this case, the clique number of $G$ is $\omega(G) = |A_1| + |Z_1|$. 
Observe that
\begin{align}
	|A_1| &> |A|-|Z_1| \geq |Z| - |Z_1|, \label{eq:parta}\\
	|Z_1| &> |A| - |A_1|.\label{eq:partb}
\end{align}


The following instructions suffice for Alice to win on $G$ with $\omega(G)$ colours. We give Alice's moves when Bob colours a vertex in $Z_1$ or $A\setminus A_1$. By symmetry, analogous instructions hold for $A_1$ and $Z \setminus Z_1$. 


\begin{enumerate}
\item Suppose Bob colours a vertex in $Z_1$.
\begin{enumerate}[(i)]
\item If $A \setminus A_1$ is not fully coloured, Alice applies the same colour to a vertex in $A \setminus A_1$.
\item If $A \setminus A_1$ is fully coloured and $Z_1$ is not, Alice colours another vertex in $Z_1$, as $A_1$ and $Z_2, \ldots, Z_k$ are safe. 
\item If $A\setminus A_1$ and $Z_1$ are fully coloured, every vertex is safe and Alice wins.\nopagebreak
\end{enumerate}
\item If Bob colours a vertex in $A\setminus A_1$, Alice applies the same colour to a vertex in $Z_1$. Such a vertex exists due to~\eqref{eq:partb} (or~\eqref{eq:parta} for the symmetric case).
\end{enumerate}


\begin{caseEE} 
$|A_1| + |Z_1| \leq |A|$. 
\end{caseEE}

In this case, $\omega(G) = |A|$ and we have exactly~$|A|$ colours 
available in the game. Since $G$ is the complement of a bipartite graph with bipartition $(A,Z)$, the graph $G$ is perfect (cf.~\citet{lovasz}). In particular, G has a proper vertex colouring with $|A| = \omega(G)$ colours. Moreover, every 
colour class consists of at most two vertices. If a vertex~$a$ in~$A$ has a vertex $v$ with the same colour in~$Z$, we say that~$a$ and~$v$ are \emph{paired}. Otherwise we call the vertex \emph{unpaired}. Since every colour appears in $A$, every vertex of $Z$ is paired. The following modification of the vertex colouring results in another valid vertex colouring: Let~$a$ be an unpaired vertex in~$A_i$ (or in~$A_R$) and let~$v$ be a vertex in~$Z \setminus Z_i$ (or in~$Z$) that is paired with~$b \in A$, respectively. We can pair~$a$ and~$v$ by assigning~$v$ the colour of~$a$. This means~$v$ and~$b$ are no longer coloured the same, and~$b$ is now an unpaired vertex. Alice wins using the following strategy.
\begin{enumerate}
\item If Bob colours a paired vertex, Alice applies the same colour to the other vertex of the pair. If $Z$ is then fully coloured, Alice wins.
\item Suppose Bob colours an unpaired vertex~$a$ in~$A_i, 1 \leq i \leq k$ [or $A_R$]. By rule 1, he uses a new colour.
	\begin{enumerate}[(i)]
	\item If $Z \setminus Z_i$ [or $Z$] is not fully coloured, Alice applies the same colour to any other vertex~$v$ in~$Z \setminus Z_i$ [or $Z$]. This pairs up vertices~$a$ and~$v$ and unpairs the vertex that~$v$ was originally paired with. 
	\item If $Z \setminus Z_i$ is already fully coloured, both $Z \setminus Z_i$ and $A \setminus A_i$ are safe and any uncoloured vertex in $A_i$ is an unpaired vertex. 
		\begin{enumerate}[(a)]
		\item If there is an uncoloured vertex in $A_i$, Alice colours another vertex in $A_i$. 
		\item If all vertices in $A_i$ are coloured, the only uncoloured vertices are in $A\setminus A_i$ and $Z_i$. Thus every vertex is safe and Alice wins. 
		\end{enumerate}
	\end{enumerate}
\end{enumerate}

Note that Bob can only win if a pair is coloured in two different colours and there is no possibility to unpair it. However, Alice's strategy avoids this situation, since the following invariants hold after every move by Alice.
\begin{enumerate}
\item The two vertices in each pair are either both coloured or both uncoloured. This can be seen as a generalisation of the pairing argument in the ear graph strategy given in Lemma~\ref{lemma:ear-strategy}.
\item Unpaired vertices are only coloured once all paired vertices are coloured.
\end{enumerate}
This proves the correctness of the strategy described above.
\end{proof}

\subsection{Strategies for Alice on the bipartite structures ($E_9$, $E_7$ and $E_8$)}\label{subsec:stratD}
It is known \citep[Corollary 20]{andres4} that Alice wins on $G \in E_9$ with $\omega(G)$ colours. In our proof of Proposition~\ref{proposition:E9}, we give an explicit strategy, the \emph{bipartite strategy}, that also underlies 
the strategies for $E_7$ and $E_8$ below. 

\begin{proposition}[$E_9$]
\label{proposition:E9}
	Alice wins on an (almost) complete bipartite graph $G \in E_9$ with $\omega(G)$ colours.
\end{proposition}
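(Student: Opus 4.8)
The plan is to treat $G$ as a complete bipartite graph $K_{m,n}$ from which at most one edge has been deleted, with bipartition $(A,B)$ into two independent sets and a (possibly non-existent) non-adjacent cross pair $a^\ast\in A$, $b^\ast\in B$. If $\omega(G)\le 1$ then $G$ has no edges, every colouring is proper, and Alice wins trivially; hence I would assume $\omega(G)=2$ and give Alice the two colours $1$ and $2$. I would also dispose of the degenerate situations in which one side is empty or the deletion of $a^\ast b^\ast$ leaves an isolated vertex: isolated vertices are always safe and may be coloured at leisure, while an empty side reduces $G$ to an edgeless graph.

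The conceptual heart of the argument is the following observation. With only two colours, an uncoloured vertex $v$ is lost precisely when it has neighbours of both colours. Since $A$ and $B$ are independent and almost all cross edges are present, this can happen only once the side opposite to $v$ already carries both colours. Consequently, if Alice can reach a position in which $A$ is monochromatic in colour $1$, $B$ is monochromatic in colour $2$, and every uncoloured vertex is forced, then she wins by Observation~\ref{observation:fixed}. I therefore aim to show that she can force such a fully determined position already with her very first move.

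The explicit \emph{bipartite strategy} responds to Bob's opening move by colouring a single vertex of the opposite side so as to block both sides simultaneously. If Bob colours a \emph{normal} vertex $a_1\in A\setminus\{a^\ast\}$ with colour $1$, then $a_1$ is adjacent to all of $B$ and forces every $B$-vertex to admit only colour $2$; Alice replies with some $b_1\in B\setminus\{b^\ast\}$ coloured $2$, which is adjacent to all of $A$ and forces every remaining $A$-vertex to colour $1$. If instead Bob colours the special vertex $a^\ast$ with colour $1$, then all of $B\setminus\{b^\ast\}$ is forced to colour $2$, but $b^\ast$ is as yet unconstrained; here Alice colours $b^\ast$ itself with colour $2$ (legal, as the neighbours of $b^\ast$ lie in $A\setminus\{a^\ast\}$ and are uncoloured), which forces every vertex of $A\setminus\{a^\ast\}$ to colour $1$. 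The two remaining openings, where Bob starts in $B$, are handled symmetrically by swapping the roles of $A$ and $B$. In each case one checks that after Alice's reply every uncoloured vertex admits a single colour and that colouring $A$ with $1$ and $B$ with $2$ is proper, so that Observation~\ref{observation:fixed} applies.

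The main obstacle is exactly the deleted edge $a^\ast b^\ast$: were Alice to let both sides commit to the same colour, the non-adjacent pair $a^\ast,b^\ast$ could both receive that colour while every other vertex is forced into the opposite colour, producing a monochromatic adjacent pair and hence a loss. The case distinction above is designed precisely to prevent this, keeping the two committed side-colours distinct. I expect the write-up to consist mainly of verifying, in each of these few openings, that the reached position is fully forced and consistent, together with the routine handling of the degenerate configurations noted above. This explicit strategy recovers the known fact \citep[Corollary~20]{andres4}, and, being phrased in terms of blocking the opposite side, serves as the prototype for the strategies on $E_7$ and $E_8$.
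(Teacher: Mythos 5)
Your proof is correct and follows essentially the same approach as the paper: the identical one-move ``bipartite strategy'' (reply with the second colour on the opposite side, treating the endpoints of the missing edge as a special pair) concluded via Observation~\ref{observation:fixed}. The only cosmetic differences are that you handle the degenerate cases $\min\{m,n\}\le 1$ directly rather than deferring them to $E_1^\cup$, and you make explicit that Alice's reply to a non-special opening is chosen in $B\setminus\{b^\ast\}$ --- a detail the paper leaves implicit.
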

\begin{proof}
Let $G \in E_9$ be a complete bipartite graph $K_{m,n}$ or almost complete bipartite graph $K_{m,n}-uv$, for some edge $uv$. Further, let $(U,V)$ be the vertex bipartition of $G$ with $|U|=m$ and $|V|=n$. We assume without loss of generality that $\min\{m,n\} \geq 2$, or else the graph $G$ is in $E_1^\cup$. Hence $\omega(G)=2$ and $G$ is connected. The following strategy generalises the bipartite dragon strategy (see Lemma~\ref{lemma:dragon-strategy}). 

Suppose first that $G$ is complete bipartite. Then if Bob colours a vertex in $U$ or $V$, Alice applies the second colour to a vertex in the other vertex set and wins by Observation~\ref{observation:fixed}. Now suppose that $G$ is almost complete bipartite and $uv$ is the missing edge in $G$. If Bob colours $u$ or $v$, Alice applies the second colour to the other vertex. If Bob colours a vertex in $U-u$ or $V-v$, Alice applies the second colour to a vertex in the other vertex set. Again Alice wins by Observation~\ref{observation:fixed}.
\end{proof}

In order to give strategies for Alice to win on $E_7$ and $E_8$, we effectively follow the bipartite strategy and the pairing dragon strategy at the same time. 

\begin{proposition}[$E_7$]\label{beweisEsieben}
Alice wins on $G \in E_7$ with $\max\{2,n+1\}$ colours.
\end{proposition}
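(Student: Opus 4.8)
The plan is to give Alice an explicit strategy that runs the bipartite strategy of Proposition~\ref{proposition:E9} and the pairing dragon strategy of Lemma~\ref{lemma:dragon-strategy} in parallel on the two overlapping parts of $G$. First I would dispose of the degenerate case: if $n \le 1$ then $G$ contains no triangle, is an instance of~$E_9$ (or $E_1^\cup$), and Alice wins with $\max\{2,n+1\}=2$ colours by Proposition~\ref{proposition:E9} (resp.\ Proposition~\ref{proposition:E1}). Hence assume $n \ge 2$. Writing $b$ for the distinguished vertex of Figure~\ref{fig:E7} that is completely connected to $K_n$, the clique $Q := K_n \vee b$ has size $n+1$; since the lower vertices form an independent set attached to $b$ and the optional vertices $a,c$ each miss at least one vertex of $Q$, no clique is larger and $\omega(G)=n+1=\max\{2,n+1\}$. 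Alice is thus given exactly $n+1$ colours, and by Table~\ref{table:hereditary} deleting $K_n$ leaves an (almost) complete bipartite skeleton on $b$, $c$ and the lower vertices.

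Next I would identify the critical vertices in the sense of Section~\ref{subsec:strategies-preparations}. The only vertices that can be surrounded are those adjacent to many vertices of $Q$, namely $b$, the vertices of $K_n$, and $a$ and $c$; once $n \ge 2$ the lower vertices have too few neighbours to become critical. The essential threat is that, because $Q$ is a clique on $n+1$ vertices, it must eventually receive all $n+1$ colours, so a critical vertex such as $b$ already sees the $n$ colours of $K_n$ and would be surrounded the instant one of its lower neighbours received the single remaining colour.

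The core of the strategy is therefore to \emph{reserve} one colour for $b$ by a pairing argument, generalising simultaneously the ear graph pairing of Lemma~\ref{lemma:ear-strategy} and the pairing dragon strategy of Lemma~\ref{lemma:dragon-strategy}, where the triples (vertex of $K_n$, $b$, lower vertex) play the role of the dragon's pairs. I would have Alice maintain, after each of her moves, the invariant that every colour appearing on a lower vertex (and on $a$ and $c$) already appears on some vertex of $K_n$. Concretely: when Bob colours a lower vertex in a colour not yet present in $K_n$, Alice answers by colouring an uncoloured vertex of $K_n$ in that colour; when Bob plays inside the $b$-versus-lower bipartite skeleton, she replies by the bipartite strategy of Proposition~\ref{proposition:E9}; and when Bob colours inside $K_n$ or colours $b$, she makes the corresponding safe move. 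Under this invariant at most the $n$ colours of $K_n$ ever surround $b$, so the $(n+1)$-st colour stays free for~$b$, and the same accounting shows that $a$ and $c$, each missing a vertex of $Q$ in its neighbourhood, always retain an available colour. Hence no critical vertex is ever surrounded and Alice wins by Observation~\ref{observation:winning}.

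The step I expect to be the main obstacle is verifying that the pairing never gets stuck, i.e.\ that whenever Bob introduces a fresh colour on the lower side there is still an uncoloured vertex of $K_n$ to absorb it, and dually that Bob cannot so colour $K_n$ as to force the reserved colour onto a lower neighbour of $b$ before $b$ itself is coloured. This requires a careful turn-by-turn accounting that keeps the number of colours committed to $K_n$ and to the lower side in balance, and it must be carried out separately for the optional-vertex and missing-edge ($K_{2,m}-e$) subcases of Figure~\ref{fig:E7}, using the constraint that $c$ is completely connected to $K_n$ or to $b$. Each of these is handled by checking, for every type of move available to Bob, that both invariants above are restored after Alice's reply; the case distinctions (d) and~(e) of Lemma~\ref{lemma:caseanalysis} guide exactly which configurations must be examined.
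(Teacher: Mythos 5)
Your proposal rests on a misreading of the structure of $E_7$, and this breaks the argument at its core. In Figure~\ref{fig:E7}, $b$ is a vertex of the \emph{bottom row}, not an apex of the clique: the bottom-row vertices (including $b$) are adjacent only to $a$ and $c$, while it is $a$ (always) and $c$ (unless the missing edge of the base graph $K_{2,m}-e$ is the one between $c$ and $K_n$) that are completely connected to $K_n$. Hence your clique $Q := K_n \vee b$ does not exist in general, there is no ``$b$-versus-lower bipartite skeleton'' on which to run Proposition~\ref{proposition:E9}, and the true witness for $\omega(G)=n+1$ is $K_n \vee a$. Consequently you protect the wrong vertex. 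The genuine threat is to the vertices of $K_n$ itself: each $v \in K_n$ is adjacent to all of $K_n - v$ and to both $a$ and $c$, so if Bob gets $a$ and $c$ coloured with two \emph{different} colours outside the palette of $K_n$, some vertex of $K_n$ is surrounded. The key idea of the paper's proof --- forcing $a$ and $c$ to receive the \emph{same} reserved colour $n+1$, guaranteed feasible by keeping the bottom row's colours a subset of those in $K_n$ --- is absent from your proposal. Worse, your stated invariant is unmaintainable: you require that the colours of $a$ and $c$ ``already appear on some vertex of $K_n$'', but $a$ is adjacent to every vertex of $K_n$, so in a proper colouring its colour can never appear there.

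Even setting the structural error aside, the step you yourself flag as ``the main obstacle'' --- the turn-by-turn accounting showing the pairing never gets stuck --- is precisely the substance of the paper's proof and cannot be deferred. The paper first disposes of the case where $c$ and $K_n$ are disconnected (then only $a$ and $c$ are critical and Alice wins in at most two moves), and otherwise runs a two-phase strategy: Phase~1 (up to $\min\{n-1,m\}$ rounds) pairs each of Bob's moves in $K_n$ with a move in the bottom row and vice versa, copying a colour onto the partner of $a$ or $c$ if Bob touches either; Phase~2 consists of three distinct end-games for $m<n$, $m=n$ and $m>n$, the last of which needs the special role of $b$ (e.g.\ if Bob colours $b$ with colour $n$, Alice answers by colouring $c$ with colour $n+1$). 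None of this case analysis is recoverable from your sketch, so the proposal has a genuine gap both in the structural setup and in the unproved core of the strategy.
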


\begin{proof}
Let $m$ be the number of vertices in the bottom row. Without loss of generality, we assume that $\min\{m,n\} \geq 2$, or else $G$ is an instance of $E_5$ or $E_9$. 
Note that the vertices of the bottom row are safe, since $\omega(G) = n+1 \geq 
3$. Recall that $c$ is completely connected to $b$ or $K_n$ (cf.~Section~\ref{section:graph-classes}).

First assume that $c$ and $K_n$ are disconnected. Then only $a$ and $c$ are critical and Alice wins in at most two moves, as follows. If Bob colours $a$ or $c$, Alice applies the same colour to the other vertex and wins by Observation~\ref{observation:winning}. Otherwise, Alice colours $a$ in her first move and ensures that $c$ is coloured after her second move. Since at most two colours have been used for neighbours of $c$ before Alice's second move and $\omega(G) \geq 3$, she can use the third colour for $c$ and win by Observation~\ref{observation:winning}.

Now suppose that $K_n$ and $c$ are completely connected. Then the vertices $a$ and $c$ as well as the clique $K_n$ are critical. Note that every vertex is safe once $a$ and $c$ have the same colour and Alice wins by Observation~\ref{observation:winning}. On the other hand, if they are coloured differently, Alice loses. Alice's strategy consists in forcing $a$ and $c$ to receive the same colour. We divide Alice's strategy into two phases. Throughout, she makes sure that the colours of the bottom row form a subset of the colours in $K_n$. 

\paragraph{Phase 1:} The first phase consists of up to $\min\{n-1,m \}$ rounds with the following rules. 
\begin{enumerate}
	\item If Bob colours $a$ or $c$, Alice wins by applying the same colour to the other vertex.
	\item If Bob colours a vertex in $K_n$, Alice colours a vertex of the bottom row, and vice versa. While doing so, she makes sure to choose a colour so that the bottom row only contains colours also found in $K_n$.
\end{enumerate}

If $a$ and $c$ are not yet coloured at the end of Phase 1, Alice proceeds to Phase 2.

\paragraph{Phase 2:} As in Phase 1, Alice responds to Bob colouring $a$ or $c$ by applying the same colour to the other of the two vertices and wins by Observation~\ref{observation:winning}. For her other responses to Bob's moves, we distinguish between three cases.

\begin{enumerate}[{Case }1:]
\item Suppose $m < n$. Then the bottom row is fully coloured and $K_n$ contains all its colours. If Bob colours a vertex in $K_n$, Alice colours another vertex in $K_n$ unless the clique is fully coloured, in which case $a$ and $c$ are the only uncoloured vertices left and only admit a single colour~$n+1$, allowing Alice to win by Observation~\ref{observation:fixed}.
\end{enumerate}

For the remaining two cases, we note that colours $1$ to $n-1$ have been used so far to colour $K_n$ and the bottom row. Clearly, the single remaining uncoloured vertex in $K_n$ only admits colour $n$.

\begin{enumerate}[{Case }1:]\stepcounter{enumi} 
\item Suppose $m = n$. Then $K_n$ and the bottom row each contain one uncoloured vertex. If Bob colours the vertex in $K_n$ in colour $n$, Alice applies the same colour to the uncoloured vertex in the bottom row. Conversely, if Bob colours the vertex in the bottom row, Alice colours the vertex in $K_n$ in colour $n$. As in Case 1, Alice wins by Observation~\ref{observation:fixed}.

\item Suppose $m > n$. Then to begin with, $K_n$ has one uncoloured vertex and the bottom row has at least two. We can assume that the colours used so far are $1, \ldots, n-1$, and $a$ and $b$ are still uncoloured. 

	\begin{enumerate}[1.]
	\item If Bob colours the uncoloured vertex in $K_n$, Alice colours $a$ with colour~$n+1$. Hence, as colour $n+1$ is not feasible for the bottom row any more and $c$ is guaranteed to admit colour~$n+1$, Alice wins.

	\item If Bob colours $b$ in colour $n$, Alice applies colour~$n+1$ to $c$ and wins.

	\item If Bob colours any other uncoloured vertex in the bottom row in colour $n$, Alice applies colour~$n+1$ to $a$ and wins. 

	\item Suppose Bob colours a vertex of the bottom row with a colour from $\{1,\ldots,n-1\}$. If the bottom row is fully coloured, Alice wins by colouring the last vertex in $K_n$. If the bottom row has an uncoloured vertex, Alice applies the same colour to it. 
	\end{enumerate}
\end{enumerate}

In all three cases, Alice maintains the invariant that the bottom row only contains colours also found in~$K_n$. This implies that the bottom row only ever contains colours $1$ to $n$, so that $a$ and $c$ admit \mbox{colour~$n+1$}. Hence 
once all the vertices are coloured, $a$ and $c$ are guaranteed to be safe.
\end{proof}

The following proposition is proved in a similar way.

\begin{proposition}[$E_8$]\label{beweisEacht}
Alice wins on $G\in E_8$ with $\max\{2,n+1\}$ colours.
\end{proposition}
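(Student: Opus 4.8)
The plan is to reuse, essentially verbatim, the two-phase strategy of Proposition~\ref{beweisEsieben} for $E_7$, since a graph $G \in E_8$ is an expanded $K_{3,m}$ that differs from the $E_7$ picture only by one extra vertex on the small side. Concretely, I would write $\{a,c,d\}$ for the three pairwise non-adjacent vertices of the $K_{3,m}$, let the bottom row consist of the $m$ vertices each completely connected to $a$, $c$ and $d$, and recall that the clique $K_n$ is completely connected to $a$ and $c$ but disconnected from $d$ and from the whole bottom row. First I would dispose of degenerate cases exactly as in $E_7$: assuming without loss of generality that $\min\{m,n\}\geq 2$, or else $K_n$ is trivial and $G$ collapses to an instance of $E_7$ or $E_9$. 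This gives $\omega(G) = n+1 = \max\{2,n+1\}$, so exactly $n+1$ colours are available.

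Next I would isolate the single genuine threat. Because $a$ and $c$ are completely connected to $K_n$, a vertex of $K_n$ is surrounded precisely if $a$ and $c$ end up with two different colours while $K_n$ is being filled (then $K_n$ would need $n$ distinct colours among the $n-1$ that remain); conversely, the instant $a$ and $c$ share a colour $j$, the clique can be completed with the remaining $n$ colours and $G$ is won. The roles of $d$ and the bottom row are then shown to be harmless: a bottom row vertex has only the three neighbours $a,c,d$, so it is automatically safe when $n \geq 3$, and when $n = 2$ it stays safe because Alice keeps $\{a,c,d\}$ within at most two colours; the vertex $d$ sees only bottom row colours, and the $E_7$ invariant that the bottom row uses only colours already present in $K_n$ caps $d$'s neighbourhood at $n < n+1$ colours and lets $d$ take the colour $K_n$ omits. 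Thus, as for $E_7$, the whole task reduces to forcing $a$ and $c$ to a common colour while reserving for them the colour that $K_n$ finally omits.

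With this in hand, Alice's strategy is the $E_7$ strategy applied to $\{a,c,K_n,\text{bottom row}\}$ --- matching $c$ to $a$ (or vice versa) the moment Bob colours one of the pair, pairing each of Bob's $K_n$-moves against a bottom row move and each bottom row move against a $K_n$-move so as to maintain the invariant that every bottom row colour already occurs in $K_n$, and running the same Phase~2 endgame split into the cases $m<n$, $m=n$ and $m>n$ --- overlaid with a bipartite response (in the spirit of Proposition~\ref{proposition:E9}) that absorbs $d$: whenever Bob colours $d$, or threatens to force a third colour onto $\{a,c,d\}$, Alice replies within the $K_{3,m}$ part, colouring $d$ or a bottom row vertex with a colour preserving both invariants. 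I would then check that these invariants survive every Alice move, which by the reduction above yields a win by Observations~\ref{observation:winning} and~\ref{observation:fixed}.

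The step I expect to be the real work is the tempo bookkeeping once $d$ is present. In $E_7$ Alice's pairing of $K_n$-moves against the $m$ bottom row moves relied on a clean count between the clique and the bottom row, and inserting $d$ both hands Bob an extra move to spend and adds a neighbour to every bottom row vertex. The delicate points are (i) guaranteeing that Alice is never forced to place the reserved colour on $d$ or on a bottom row vertex, which would destroy the common colour earmarked for $a$ and $c$, and (ii) the tight case $n=2$, where $\omega(G)=3$ makes each bottom row vertex critical, so Alice must actively keep $a$, $c$ and $d$ from taking three distinct colours. I expect both to be resolved by re-running the three $E_7$ endgame cases with $d$ explicitly tracked as an additional member of the bipartite side, rather than by any genuinely new idea.
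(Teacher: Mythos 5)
Your structural setup agrees with the paper's: the proof there is indeed the $E_7$ two-phase strategy of Proposition~\ref{beweisEsieben} rerun with the invariant that the bottom row only uses colours already present in $K_n$, with the single genuine threat being $a$ and $c$ receiving distinct colours, and with $d$ kept safe because its neighbourhood (the bottom row) is then capped at $n$ colours. (A small slip in your reduction: the paper disposes of the degenerate cases via $n\leq 1 \Rightarrow G\in E_9$ and $m=0,1 \Rightarrow G\in E_1^\cup, E_5$; the case $m\leq 1$, $n\geq 2$ does \emph{not} collapse to $E_7$, since the clique vertex of the base graph then sits on the small side of the bipartition.) The real problem is that your proposal stops exactly where the proof starts. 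You yourself flag ``the tempo bookkeeping once $d$ is present'' and the reworked endgame as ``the real work'' and only conjecture they go through; those two items \emph{are} the proof of this proposition, and the one concrete rule you do offer for Bob's $d$-moves --- that Alice ``replies within the $K_{3,m}$ part, colouring $d$ or a bottom row vertex'' --- is both ill-defined and losing. It is ill-defined at Bob's very first move: if Bob opens with $d$, every bottom-row reply breaks your own subset invariant, since $K_n$ has no colours yet. And playing it anyway loses: take $n=m=3$ (four colours). Bob plays $d\leftarrow 1$; Alice answers in the row, say $w_1\leftarrow 2$; Bob $w_2\leftarrow 3$, Alice repairs $z_1\leftarrow 2$ in $K_n$; Bob $w_3\leftarrow 4$, Alice $z_2\leftarrow 3$ (or $4$); Bob now colours the last clique vertex $z_3$ with the colour missing from $\{$row$\}\cup K_n$, and the uncoloured vertex $a$ sees all four colours and is surrounded.

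The paper's rule is the opposite of yours: when Bob colours $d$ with $\alpha$, Alice answers \emph{inside $K_n$}, using $\alpha$ if it is new to $K_n$ and a fresh colour otherwise. This keeps the number of holes in $K_n$ in lockstep with the number of rounds, which is precisely what guarantees the endgame precondition (exactly one uncoloured vertex in $K_n$ and only colours $1,\ldots,n-1$ in play after at most $\min\{n-1,m\}$ rounds). Relatedly, your plan to rerun the $E_7$ trichotomy $m<n$, $m=n$, $m>n$ in Phase~2 does not survive the insertion of $d$: since Bob can spend moves on $d$, the round count no longer determines how much of the row is coloured, and the paper instead splits Phase~2 by whether the bottom row is already full. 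Its trump moves there are also specific: in the tight case (row not full, one hole in $K_n$), if Bob colours $d$, or plays colour $n$ anywhere, Alice immediately colours $a$ with colour $n+1$; this reserves $n+1$ for $c$ forever (every row vertex is adjacent to $a$), forces the last clique vertex to colour $n$, and makes $d$ and the row safe. In the example above this yields $z_1\leftarrow 1$, $z_2\leftarrow 2$, then $a\leftarrow 4$ in response to Bob's first use of colour $3$, and Alice wins. So the approach is the right one, but the deferred bookkeeping is not routine, and the shortcut you propose for it fails.
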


\begin{proof}
Without loss of generality, we assume that $n \geq 2$, or else $G \in E_9$. Moreover, we assume that $m \geq 2$, as $G \in E_1^\cup$ for $m=0$ and $G \in E_5$ for $m=1$. Note that for $n=2$, every vertex is critical and for $n > 2$, vertices $a$, $c$, $d$ and the clique $K_n$ are critical. If $a$ and $c$ have the same colour, every vertex apart from $d$ is safe. On the other hand, if $a$ and $c$ have different colours, Alice loses. Therefore Alice's strategy focuses on forcing $a$ and $c$ to have the same colour and making $d$ safe. Hence we follow the same two-phase strategy as for $E_7$ with minor changes to accommodate for vertex $d$.

\paragraph{Phase 1:} The first phase consists of up to $\min\{n-1,m\}$ rounds with the following rules. 

\begin{enumerate}
\item If Bob colours $a$ or $c$, Alice applies the same colour to the other vertex. As the only remaining critical vertex is $d$ and the bottom row contains at most $n-1$ colours so far, Alice wins on her next move by colouring $d$ with colour $n+1$. 

\item If Bob colours a vertex in $K_n$, then Alice colours a vertex of the bottom row, and vice versa. While doing so, she makes sure to choose a colour so that the bottom row only contains colours also found in $K_n$.

\item Suppose Bob colours $d$ with some colour $\alpha$. Note that there are now at least two uncoloured vertices in $K_n$, prior to Alice’s response, as Phase 1 has at most $n-1$ rounds and at most one vertex in $K_n$ is coloured in each round. Hence, if $\alpha$ does not already appear as a colour in $K_n$, Alice colours a vertex in $K_n$ with $\alpha$, otherwise she colours a vertex in $K_n$ with a new colour.
\end{enumerate}

\paragraph{Phase 2:} Suppose Alice has not won in Phase 1 and vertices $a$ and $c$ remain uncoloured. There are two cases which we consider separately for the second phase. We can assume that the colours used so far are $1, \ldots, \min\{n-1,m\}$.

\begin{enumerate}[{Case} 1:]
\item Suppose the bottom row is fully coloured. Then $K_n$ has at least one uncoloured vertex and $d$ is safe. If Bob colours $a$ or $c$, Alice wins by applying the same colour to the other. If Bob colours any other vertex, Alice wins by colouring a vertex in $K_n$ until the clique is fully coloured.

\item Suppose the bottom row is not fully coloured. In this case we have $\min\{n-1,m \} = n-1$ and the strategy of Phase~1 implies that $K_n$ has exactly one uncoloured vertex.

	\begin{enumerate}[1.]
	\item If Bob colours $d$, Alice colours $a$ with colour $n+1$, guaranteeing the feasibility of colour~$n+1$ for vertex $c$ forever, and winning in the process. 

	\item If Bob colours $a$ or $c$, Alice applies the same colour to the other vertices. If, at this point, $d$ is already coloured, she wins immediately, otherwise she wins on her next move by colouring $d$ with the same colour as $a$ and $c$. 

	\item If Bob colours the vertex in $K_n$ or a vertex in the bottom row with colour~$n$, Alice colours $a$ with colour $n+1$, ensuring that no vertex in the bottom row admits $n+1$ any more. This makes vertices $a,c$ and $d$ safe and Alice wins. 

	\pagebreak[3]
	\item Suppose Bob colours a vertex in the bottom row using an existing colour from the row. We can assume that $K_n$ is still not completely coloured, otherwise Alice would have won already. If Bob fills the bottom row with his move, Alice colours the last uncoloured vertex in $K_n$ and wins as $a,c$ and $d$ are now safe. If he does not fill the row, Alice applies the same colour to another vertex in the row. If the row is now fully coloured, Case 1 applies.
	\end{enumerate}
\end{enumerate}
\end{proof}

\subsection{Strategies for Alice on the bull structures ($E_2$ and $E_6$)}

We call $E_2$ and $E_6$ \emph{bull structures}, as the strategies described below are (partially) based on the bull strategy (Lemma~\ref{lemma:bull-strategy}).

\begin{proposition}[$E_2$]\label{proposition:E2}
Alice wins on $G \in E_2$ with $\omega(G) = 2+a+\max\{b,c\}$ colours.
\end{proposition}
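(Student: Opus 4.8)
The plan is to recall the structure of $G$ from the case analysis, reduce the problem to two essentially independent protection mechanisms, and then combine the ear graph strategy (Lemma~\ref{lemma:ear-strategy}) with the bull strategy (Lemma~\ref{lemma:bull-strategy}). By Lemma~\ref{lemma:caseanalysis}~(b), a graph $G\in E_2$ consists of the dominating edge $x_1x_2$, a pendant vertex $y_1$ adjacent only to $x_1$ and a pendant $y_2$ adjacent only to $x_2$, together with an ear graph $G_3=K_a\vee(K_b\cup K_c)$ whose vertices are all adjacent to both $x_1$ and $x_2$ but to neither $y_1$ nor $y_2$. Assuming without loss of generality that $b\le c$, the clique $\{x_1,x_2\}\cup K_a=K_{a+2}$ is completely joined to $K_b\cup K_c$, so a maximum clique is $\{x_1,x_2\}\cup K_a\cup K_c$ and $\omega(G)=2+a+c=2+a+\max\{b,c\}$. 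Computing degrees, the only initially critical vertices are $x_1$, $x_2$ and, if $b\ge1$, the vertices of $K_a$; the pendants and the ear cliques $K_b,K_c$ are safe.

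The key structural observation is that $G$ is an ear graph with critical clique $K_{a+2}$ decorated by two pendants. If Alice maintains the ear-pairing invariant that the colours occurring in $K_b$ form a subset of those occurring in $K_c$ (as in Lemma~\ref{lemma:ear-strategy}), then every vertex of $K_{a+2}$ sees at most $(a+1)+c=\omega-1$ distinct colours among its neighbours in $K_{a+2}\cup K_b\cup K_c$. Consequently a vertex of $K_{a+2}$ can only be surrounded through a pendant, and only $x_1,x_2$ carry pendants. More precisely, $x_i$ can become surrounded only once $x_{3-i}$ and all of $K_a$ are coloured, the ears already exhibit $c$ colours, and the pendant $y_i$ receives the one remaining colour; in particular $x_i$ is automatically safe as long as $x_{3-i}$ is uncoloured. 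This reduces Alice's task to (1)~maintaining ear-pairing and (2)~neutralising each pendant.

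For (2) I use the bull pairing of Lemma~\ref{lemma:bull-strategy}: to protect $x_i$, Alice ensures that its pendant eventually carries the colour of $x_{3-i}$ (these two vertices are non-adjacent, so this is a legal identification, and since $x_{3-i}\in N(x_i)$ the pendant then contributes no new colour to $x_i$). Alice's strategy therefore responds to Bob as follows: a move in $K_b\cup K_c$ is answered by the mirroring move that preserves ear-pairing; a move in $K_a$ is answered by colouring another vertex of $K_a$; and a move on $x_i$, $x_{3-i}$ or a pendant is answered by the appropriate bull response, setting the pendant of $x_i$ to the colour of $x_{3-i}$ (or, when Bob colours a pendant while its stem is still safe, by colouring the stem itself). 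Alice wins by Observation~\ref{observation:winning}: once $K_b$ is fully coloured the pairing makes $K_a$ safe exactly as in the ear strategy, once both pendants are neutralised $x_1$ and $x_2$ are safe, and all remaining vertices are safe by construction.

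The hard part will be showing that these three response rules can always be executed by a single legal move while simultaneously preserving all of the invariants (ear-pairing and the two pendant protections), since the two pendants are coupled through $x_1$ and $x_2$. The crucial sub-observation that unlocks the case analysis is that a coloured pendant endangers its stem only when its colour is \emph{new} to that stem: when the colour already appears among the stem's other neighbours the pendant is harmless and no response is needed, and when it is new the required pairing move (colouring $x_i$, or setting a pendant to $x_{3-i}$'s colour) is guaranteed to have a free colour available, precisely because the stem is not yet surrounded. I would organise the proof as a case distinction on Bob's move, using Observation~\ref{observation:saving} to certify each response, and would additionally treat the degenerate cases $b=0$ (where $K_a$ is not critical and the graph reduces essentially to a bull with an enlarged apex clique) and $a=0$ separately. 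Since the pendant responses act only on $\{x_1,x_2,y_1,y_2\}$ while ear-pairing acts only on $K_b\cup K_c$, the two mechanisms never interfere, which is ultimately what makes the combined strategy consistent.
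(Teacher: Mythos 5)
Your plan coincides with the paper's proof in all its main ingredients: the same structural reading of $E_2$, the same value of $\omega(G)$, the same initial criticality analysis, and the same two invariants --- ear-pairing (the colours of $K_b$ form a subset of those of $K_c$; the paper's condition (A)) and pendant neutralisation by colour duplication (the paper's condition (B)) --- obtained by combining Lemma~\ref{lemma:ear-strategy} with Lemma~\ref{lemma:bull-strategy}. However, the case analysis you defer is where the entire difficulty of the proof sits, and the two assertions you use to dismiss it fail. First, your response to Bob colouring a pendant is wrong in the decisive subcase. Suppose Bob colours the pendant of $x_i$ with a colour $\gamma$ that is new to the graph; ``colouring the stem itself'' (i.e.\ $x_i$, necessarily with some other colour $\delta$) leaves the threat against $x_{3-i}$ alive. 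Concretely, take $a=0$, $b=c=1$, so $\omega(G)=3$: Bob colours the pendant of $x_1$ with colour $1$, you colour $x_1$ with colour $2$, Bob colours one ear vertex with colour $3$, your ear-pairing rule forces the other ear vertex to also get colour $3$, and Bob now colours the pendant of $x_2$ with colour $1$, surrounding $x_2$ with $\{1,2,3\}$. The correct move (the paper's rule 2) is to colour the \emph{other}, non-adjacent stem $x_{3-i}$ with the \emph{same} colour $\gamma$ (or with a new colour when $\gamma$ already occurs in $K_a\cup K_b\cup K_c$): this neutralises the threatened stem by duplication and makes the other stem safe by colouring it, settling both pendant threats in a single move.

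Second, your closing claim that the pendant mechanism (acting on $\{x_1,x_2,y_1,y_2\}$) and the ear mechanism (acting on $K_b\cup K_c$) ``never interfere'' is false --- and note that $K_a$ belongs to neither of your two sets. The paper's rules 5 and 6 exist precisely because Bob can make moves admitting no same-mechanism response: he can colour the \emph{last} uncoloured vertex of $K_a$ (your rule ``answer inside $K_a$'' is then unexecutable), or colour a vertex of $K_c$ after $K_b$ is already fully coloured (no mirroring move exists). In both situations Alice must pivot across mechanisms: she colours a pendant with Bob's colour $\alpha$ --- legal, and neutralising, because $\alpha$ then duplicates a colour of $K_a$, respectively of $K_c$ --- and ensures the corresponding stem is coloured in the following round. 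The feasibility of that second move is not automatic; it needs the explicit count that at most $1+a+c<\omega(G)$ colours are in use at that point, which the paper verifies. Without these cross-over rules and the colour count, your strategy leaves Alice with no defined safe move exactly at the endgame, so the case distinction you postpone cannot be completed as sketched.
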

\begin{proof}
	Without loss of generality, assume $b \leq c$. At the start, vertices $x_1, x_2$ and the clique $K_a$ are critical. The clique $K_a$ is safe once all its vertices are coloured or the following condition holds: 
\begin{itemize}
\item[(A)] $K_b$ is completely coloured and its colours form a subset of the colours used for $K_c$. 
\end{itemize}
Vertex $x_i$ is safe once it is coloured or conditions (A) and (B) hold. \nopagebreak
\begin{itemize}
\item[(B)] Vertex $y_i$ is coloured and has the same colour as $x_i$ or some vertex in $K_a, K_b$ or $K_c$. 
\end{itemize}


We give the following strategy. Note that the set of rules 1, 2, 5 and 6 below mirrors the bull strategy (Lemma~\ref{lemma:bull-strategy}) while rules 3 and 4 follow the ear graph strategy 
(Lemma~\ref{lemma:ear-strategy}).


\begin{enumerate}
\item If Bob colours $x_i$, Alice applies the same colour to $y_i$. 
\item If Bob colours $y_i$ with a colour from $K_a, K_b$ or $K_c$, Alice applies a new colour to $x_i$, while if Bob colours $y_i$ with a new colour, Alice applies the same colour to $x_i$.
\item If $K_b$ is not fully coloured and Bob colours a vertex in $K_b$ or $K_c$, Alice applies the same colour to a vertex in the other clique.
\item If $K_a$ has at least two uncoloured vertices and Bob colours one, Alice colours another.
\item Suppose $K_a$ has exactly one uncoloured vertex and Bob colours it with~$\alpha$. If vertices $x_i, y_i$ are coloured, Alice colours $x_{3-i}$ to win and if $x_i, y_i$ are uncoloured for both $i=1,2$, she first colours~$y_2$ with $\alpha$ and then $x_2$ with any feasible colour in the next round to win.
\item Suppose $K_b$ is fully coloured and Bob colours a vertex in $K_c$. Alice wins immediately if $x_i, y_i$ are coloured for some $i$. Otherwise she applies the same colour to $y_2$ and wins by making sure that $x_2$ is coloured in 
the next round.
\end{enumerate}

Observe that Alice's second move (colouring~$x_2$) in the last two rules is feasible, since at most $1+a+c$ colours have been used at that point. The first three rules ensure conditions (A) and (B) for the safety of $x_i$~and~$K_a$. Alice follows the first two rules at most twice and the third rule at most $|K_b|$ many times, while rules 5 and 6 immediately lead to winning moves. As long as Alice follows rules 1 to 4, vertices~$x_i, y_i$ are either both uncoloured or both coloured. 
Furthermore, in the latter case the vertex $y_i$ has the same colour as $x_i$ or some vertex in $K_a, K_b$ or $K_c$. Hence, as $K_a$ is safe when rule 5 or 6 is invoked and vertices $x_1$ and $x_2$ are already safe or made safe by the winning move, Alice wins.
\end{proof}

\begin{proposition}[$E_6$]\label{beweisEsechs}
Alice wins on $G\in E_6$ with $\max\{b+c+d,a+b\}$ colours.
\end{proposition}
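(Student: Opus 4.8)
The plan is to treat a graph $G \in E_6$ as an \emph{expanded bull} (or \emph{house}) and to combine the ear graph strategy (Lemma~\ref{lemma:ear-strategy}) with the bull strategy (Lemma~\ref{lemma:bull-strategy}). Recall from Section~\ref{section:graph-classes} that $K_b, K_c, K_d$ together form a single clique $K$ on $b+c+d$ vertices, that the horn $K_a$ is completely connected to $K_b$ and disconnected from $K_c \cup K_d$, and that the horn $e$ is a single vertex completely connected to $K_d$ and disconnected from $K_b \cup K_c$; the two horns $K_a$ and $e$ are either disconnected (\emph{bull}) or completely connected (\emph{house}). Since $b,d \geq 1$, the maximal cliques are $K_a \cup K_b$, $K = K_b \cup K_c \cup K_d$, $\{e\} \cup K_d$ and, in the house case, $K_a \cup \{e\}$, so that $\omega(G) = \max\{a+b, b+c+d\}$. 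If $K_a$ is null or $e$ is absent, then $G$ is a possibly degenerate ear graph and Lemma~\ref{lemma:ear-strategy} already applies, so we may assume $a \geq 1$ and that $e$ is present.

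First I would determine which vertices can ever be surrounded. Every vertex of $K_c$ has exactly $b+c+d-1 < \omega(G)$ neighbours and is thus permanently safe. An uncoloured vertex of $K_b$ sees only the colours on $K_b$, $K_a$ and $K_c \cup K_d$; since $K_a$ and $K_c \cup K_d$ are mutually non-adjacent, it sees at most $(b-1) + \max\{a, c+d\} = \omega(G)-1$ distinct colours as soon as the colours used on the smaller of $K_a$ and $K_c \cup K_d$ are contained among those on the larger. An uncoloured vertex of $K_d$ sees the colours on $K$ together with the colour of $e$, so it is safe once $e$ carries a colour also present in $K_b \cup K_c$; and because $e \not\sim K_b$, the stronger condition that $e$'s colour lies in $K_b$ keeps every vertex of $K_a$ (and $e$ itself) safe in the house case. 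This motivates the invariants Alice maintains after each of her moves: (A) the colours on the smaller of $K_a$, $K_c \cup K_d$ form a subset of those on the larger (the ear pairing for $K_b$), and (B) whenever $e$ is coloured its colour occurs on $K_b \cup K_c$, and in the house case on $K_b$ (the bull pairing for $K_d$, respectively $K_a$).

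The strategy then runs the ear graph strategy on the ear $K_b \vee (K_a \cup (K_c \cup K_d))$ to preserve (A): whenever Bob colours a vertex of $K_a$ or of $K_c \cup K_d$, Alice replies with the same colour on the opposite ear, always colouring the smaller ear so that its colours stay covered. Superimposed on this is the bull pairing for (B): the first time Bob colours a vertex of $K_b$, Alice colours $e$ with the same colour (legal, as $e \not\sim K_b$), and if Bob instead colours $e$ first, Alice places $e$'s colour on a $K_b$-vertex whenever that colour is still available to $K_b$. A key point I would verify by colour-counting, exactly as in Lemma~\ref{lemma:ear-strategy}, is that in the tight case $\omega(G) = b+c+d$ Bob can never introduce a fresh colour on the smaller ear, and that the analogous bound holds when $\omega(G) = a+b$; hence (A) and (B) cannot be broken by a single Bob move before Alice restores them, and no vertex of $K_b$, $K_c$ or $K_d$ is ever surrounded. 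When Bob plays a move irrelevant to both invariants, Alice advances $K_b$ or $K_d$, which is always possible since these cliques are never surrounded. Once the invariants guarantee that every vertex is safe, Alice wins by Observation~\ref{observation:winning}.

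\textbf{The main obstacle} is the interaction between the two pairings in the house case. Here $e \sim K_a$, so $e$'s colour threatens $K_a$ unless it lies in $K_b$; but Bob may colour $e$ with a colour drawn from $K_c$ (permitted, as $e \not\sim K_c$), which Alice cannot then copy into $K_b$ because $K_b \sim K_c$. I would resolve this by showing that such a move endangers a vertex $u \in K_a$ only once $K_b$ and $K_a \setminus u$ are fully coloured, and then arguing by cases: if $a \leq c+d$ then $K_a$ is the smaller ear and Alice's ear pairing colours it completely, making every $K_a$-vertex safe before the threat can arise, while if $a > c+d$ we have $\omega(G) = a+b$ and a colour-count shows Alice always retains a feasible colour for $u$ (deviating, if necessary, to colour $K_a$-vertices directly). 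Establishing that these invariants persist after every Alice move and carrying out the case analysis over all of Bob's move types, rather than the individual winning counts, is where the real work lies.
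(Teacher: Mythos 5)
Your proposal takes essentially the same approach as the paper: its strategy for $E_6$ is precisely the superposition you describe of the ear-graph pairing between $K_a$ and $K_c \vee K_d$ (Lemma~\ref{lemma:ear-strategy}) and the bull pairing between $e$ and $K_b$ (Lemma~\ref{lemma:bull-strategy}), maintaining the same ``colours of the smaller clique form a subset of the larger'' invariants and the same safety analysis for $K_b$, $K_c$ and $K_d$. The house-case interaction you single out as the main obstacle is exactly what the paper's rule for Bob colouring $e$ first resolves via its fallback (Alice plays a \emph{new} colour on $K_b$ when $e$'s colour already appears on $K_a$ or $K_c$), and its correctness rests on the same case split you sketch --- such a move can only threaten a vertex when $a > c+d$ (so the threatened vertices are safe by degree, $\omega(G)=a+b$) or when $K_a$ is already fully coloured --- so your resolution is the right one.
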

\begin{proof}
	We have $\omega(G) = b+ \max\{ a, c+d \}$, hence $K_b$ and $K_d$ are critical. The vertex $e$ and clique $K_a$ may also be critical if they are completely connected. The following strategy again combines elements of the ear graph and the bull strategies (cf.~Lemmas~\ref{lemma:ear-strategy} and~\ref{lemma:bull-strategy}).

\begin{enumerate}
	\item If neither $K_a$ nor $K_c \vee K_d$ is fully coloured and Bob colours a vertex in one of these cliques, then Alice applies the same colour to the other clique.
	\item Suppose $K_a$ or $K_c \vee K_d$ is fully coloured. If Bob colours a vertex in the larger clique and fills it, Alice wins (as $K_d$ is fully coloured and $K_b$ is safe by assumption). Otherwise Alice colours another vertex in the larger clique (and wins by the same argument if she fills it).
	\item If $e$ is uncoloured and Bob colours a vertex in $K_b$, Alice applies the same colour to $e$.
	\item Suppose Bob colours $e$ with colour $\alpha$. Then rule 3 implies that $K_b$ is uncoloured and Alice can colour one of its vertices $v$. Hence if $\alpha$ is a colour that has not been used in $K_a$ or $K_c$, Alice applies $\alpha$ to $v$, otherwise she colours $v$ in a new colour.	
	\item Now suppose $e$ is coloured (with a colour from $K_b$ or $K_c$). If Bob colours a vertex in $K_b$ and fills it, Alice wins. Otherwise she colours another vertex in $K_b$ (and wins if she fills it).
\end{enumerate}

Observe that the strategy is correct because of the pairing $(K_b,e)$ and $(K_a,K_c\vee K_d)$ of the cliques of~$G$: for each pair, the colours of the smaller clique are a subset of those of the larger clique in the pair after each of Alice's move. This guarantees that there is a feasible colour for the chosen vertex at any given time in the game.
\end{proof}

\subsection{Strategies for Alice on the dragon structures ($E_4$ and $E_3$)}\label{subsec:stratB}
Here we present strategies for $E_3$ and $E_4$ that combine the two dragon strategies (cf.~Lemma~\ref{lemma:dragon-strategy}).

\begin{proposition}[$E_4$]\label{beweisEvier}
Alice wins on $G \in E_4$ with $\max\{m+1,n+1\}$ colours.
\end{proposition}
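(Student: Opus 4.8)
The plan is to exploit the rigid ``blob-path'' structure of $E_4$. Reconstructing the class from Lemma~\ref{lemma:caseanalysis}(f), a graph $G \in E_4$ is built from two disjoint cliques $C = K_n$ and $W = K_m$ together with (at most) two non-adjacent pairs of vertices: a pair $P$ completely joined to $C$ and a pair $Q$ completely joined to $W$, where $P$ and $Q$ are completely joined to each other while $C$ and $W$ are disconnected. The decisive observation is that the two vertices of $P$ have the same neighbourhood (they are \emph{false twins}), and likewise for $Q$; hence any colour that is legal for one twin is legal for its partner. This already gives $\omega(G) = \max\{n+1, m+1\}$, so the proposition asserts exactly that $\chi_{g_B}(G)=\omega(G)$. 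Assume without loss of generality that $n \ge m$. I would first dispose of the degenerate configurations: if the pair $P$ attached to the larger clique collapses to a single vertex, then the $C$-side is merely the clique $K_{n+1}$ (coloured for free), and the remainder is an instance of $E_5$, $E_9$ or $E_1^\cup$, so Alice's win follows from Propositions~\ref{proposition:E5}, \ref{proposition:E9} and~\ref{proposition:E1}. It then remains to treat the case where $P$ is a genuine false-twin pair attached to a maximum clique $C$, so that $\omega(G)=n+1$.

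The heart of the argument is that in any proper $(n+1)$-colouring the clique $C$ uses exactly $n$ colours, leaving one colour $\gamma$ unused; since both twins of $P$ are adjacent to all of $C$, each must take $\gamma$, so $P$ is \emph{forced to be monochromatic}. Symmetrically, when $m=n$ the pair $Q$ is forced onto the colour $\delta$ missing from $W$, and the complete join between $P$ and $Q$ forces $\gamma \ne \delta$. Alice's strategy is therefore to run two pairing arguments concurrently. On the $C$-side she applies the pairing dragon strategy (Lemma~\ref{lemma:dragon-strategy}): whenever Bob colours one twin of $P$ she immediately colours the other in the same colour (always legal, as the twins are false twins), and whenever Bob plays inside $C$ she answers inside $C$, keeping $P$ uncoloured as long as possible. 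This is precisely the ear graph mechanism of Lemma~\ref{lemma:ear-strategy} applied to the ear $C \vee P$. On the $W$-side she follows the bipartite dragon strategy, pairing the twins of $Q$ and answering moves in $W$ so that the colour set appearing on $W$ always leaves a colour $\ne \gamma$ available as its missing colour $\delta$.

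To establish correctness I would state the invariants maintained after each of Alice's moves, in the spirit of the ear graph and dragon strategies: the twins of $P$ (and, when $m=n$, of $Q$) are always either both uncoloured or identically coloured; the colour classes of $C$ and $W$ never overlap so completely as to force $\gamma = \delta$; and every still-critical vertex retains a feasible colour. From these invariants, Observations~\ref{observation:winning} and~\ref{observation:fixed} give Alice the win as soon as $C$, $W$ and the four special vertices are safe, since all remaining vertices lie in cliques of size at most $\omega(G)$ or on the slack side and can never be surrounded.

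The main obstacle I anticipate is the fully tight case $n = m$ with both $P$ and $Q$ genuine pairs. Here \emph{both} ears are critical and the $K_{2,2}$ junction between $P$ and $Q$ couples them, so Alice must simultaneously force $P$ monochromatic, force $Q$ monochromatic, and keep the two resulting colours distinct, even though a single Bob move inside $P$ (or inside $C$) is adjacent to vertices of both the $C$-side and the junction. The delicate point is to verify that the pairing-dragon response on the $C$-side and the bipartite-dragon response on the $W$-side never both demand Alice's single move on the same turn and never conflict at the junction, i.e.\ that both invariants are always jointly restorable by the one move Alice is allowed. Carrying out this non-interference check, by cases on which block Bob plays in, is where the real work lies; when $n > m$ the $W$-side carries a spare colour, the coupling disappears, and that case becomes routine.
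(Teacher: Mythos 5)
Your structural analysis of $E_4$ is right (the twin pairs, $\omega(G)=\max\{m+1,n+1\}$, and the forced-monochromatic observation all match the logic underlying the paper's proof), and you correctly locate the danger in the tight case $m=n$. But the strategy you specify fails exactly there, so the ``non-interference check'' you defer is not merely unfinished work --- it cannot succeed for your rules. The fatal rule is answering Bob's clique moves \emph{inside the same clique}. Take $m=n=3$ with both pairs present (so $\omega=4$): Bob colours $c_1$ with colour $1$; your Alice answers inside $C$ with some colour $\alpha$ on $c_2$; Bob now colours a twin $q_1$ with any $\beta\notin\{1,\alpha\}$ (legal, since all of $q_1$'s neighbours are uncoloured); Alice must mirror $q_2\leftarrow\beta$; Bob then colours $c_3$ with the unique colour outside $\{1,\alpha,\beta\}$. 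Now the colours on $C$ are exactly the palette minus $\beta$, both vertices of $P$ see all four colours ($C$ plus $Q$), and Bob wins. No choice of $\alpha$ escapes this, because at the moment Alice commits her colour in $C$ the constraint ``the colour of $Q$ must appear in $C$'' does not yet exist, and afterwards Bob controls the last $C$-vertex. The paper's strategy avoids precisely this trap by doing the opposite of your rule: in Phase 1, a Bob move in one clique is answered in the \emph{other} clique \emph{with the same colour}, which both preserves tempo (Bob cannot finish a clique while Alice burns moves in it) and keeps the two colour sets synchronised; once a pair is coloured (Phase 2, Case 3), Alice's first clique move explicitly \emph{injects that pair's colour} into the clique adjacent to the other pair, and in the endgame (Phase 2, Cases 1--2) the last vertices of $K_n$ and $K_m$ are deliberately given the distinct colours $n$ and $n+1$, forcing the two cliques to have \emph{different} missing colours so that $\gamma\neq\delta$. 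Your invariant ``the colour set on $W$ leaves a missing colour $\neq\gamma$'' names the right goal, but without the cross-clique mirroring and the injection move Alice has no mechanism to enforce it.

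A second, smaller gap: your disposal of the degenerate case (pair $P$ collapsing to one vertex, ``colour the clique $K_{n+1}$ for free and reduce the remainder to $E_5$, $E_9$ or $E_1^\cup$'') is not a valid reduction. The lone pair vertex is adjacent both to its clique and to the opposite pair, so its colour couples the two sides, and the game does not factor over vertex-deleted or ``pre-coloured'' pieces --- Bob plays on the whole graph and can interleave his moves. The paper instead keeps the $b$-absent case (the expanded dragon) inside the unified two-phase strategy, with the special rule that when Bob colours the lone vertex $a$, Alice mirrors $a$'s colour into the clique \emph{not} adjacent to $a$ (the pairing-dragon move of Lemma~\ref{lemma:dragon-strategy}); only the trivial case $m=n=1$ is delegated to the dragon and $E_9$ strategies.
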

\begin{proof} Without loss of generality, we can assume $m \leq n$. Indeed, if vertex $b$ is missing and $m > n$, vertex $a$ is the only critical vertex and Alice wins by ensuring that it is coloured on her first move. If vertex $b$ is present, we can assume $m \leq n$ by symmetry. Note that every vertex may be critical at the beginning. Consider first the case $m=n=1$. Then $G$ without vertex $b$ is a dragon and Alice wins by Lemma~\ref{lemma:dragon-strategy}. If $b$ is present, then $G$ is an almost complete bipartite graph instance of $E_9$ and Alice wins. Hence we assume from now on that $n \geq 2$ and we have at least three colours at our disposal.
	
We give a strategy for Alice in two phases. In Phase 1, her basic strategy is to make sure that $K_m$ contains the same colours as $K_n$ until there is only one uncoloured vertex left in $K_n$. For Phase 2, we distinguish three different cases that represent different end games depending on the game state she finds herself in. Alice transitions from Phase 1 to Phase 2 if explicitly instructed in the rules below or once $K_n$ contains exactly one uncoloured vertex. Note that rule~1 in Phase 1 mirrors the pairing dragon strategy while rule 1 in Case 1 of Phase 2 is inspired by the bipartite dragon strategy. 

\paragraph{Phase 1:} As long as there are at least two uncoloured vertices in $K_n$ before Bob's move, Alice uses the following rule-based strategy.
\begin{enumerate}
\item If vertex $b$ does not exist and Bob colours vertex $a$, Alice applies the same colour to a vertex in $K_n$. Otherwise, if $b$ exists, and Bob colours a vertex in pair $(a,b)$ or $(c,d)$, Alice applies the same colour to the other vertex in the pair. In either case she proceeds to Case 3 of Phase 2.

\item If $K_n$ and $K_m$ both contain uncoloured vertices and Bob colours a vertex in one of them, Alice applies the same colour to a vertex in the other clique.

\item Suppose $K_m$ is fully coloured and Bob colours a vertex in $K_n$ with colour $\alpha$. Alice's response depends on the number of uncoloured vertices left in $K_n$. 
\begin{enumerate}[(i)]
\item If $K_n$ has at least three uncoloured vertices, Alice colours one of them. 
\item If $K_n$ has exactly two uncoloured vertices, Alice applies colour $\alpha$ to vertex $a$ and proceeds to Case 2 of Phase 2.
\item If $K_n$ has exactly one uncoloured vertex, Alice colours $a$ with $\alpha$. Now the remaining uncoloured vertices $(b,)c,d$ and the last vertex of $K_n$ induce a $C_4$ or $P_3$, depending on the presence of vertex $b$. Each of these vertices admits at least colours $n$ and $n+1$ and it is Bob's turn. Since $C_4$ is obviously $g_B$-perfect, Alice wins in the next round. 
\end{enumerate}
\end{enumerate}

\paragraph{Phase 2:}
\begin{enumerate}[{Case }1:]
\item In this case, vertices $a,b,c,d$ are still uncoloured and Alice entered Phase 2 because $K_n$ has exactly one uncoloured vertex. In fact, observe that we have $m \in \{n-1,n\}$, so $K_m$ has at most one uncoloured vertex. We assume without loss of generality that the colours of $K_m$ and $K_n$ are $1,\ldots,n-1$. Alice proceeds by the following rules.

\begin{enumerate}[1.]
\item If Bob colours $a$ or $b$ with colour $n$, Alice colours $c$ with colour $n+1$. Similarly, if Bob colours $c$ or $d$ with colour $n$, Alice colours $a$ with colour $n+1$ and wins.
\item If Bob colours the last uncoloured vertex of $K_n$ or $K_m$ with colour $n$, Alice wins by colouring the last vertex of the other clique with colour $n+1$. If $K_m$ is already fully coloured, she instead colours $c$ with colour $n+1$ to win.
\end{enumerate}

\item In this case, Alice transitioned into Phase 2 from rule 2 in Phase 1. Hence $K_m$ is fully coloured with a subset of colours $1,\ldots,n-3$, the clique $K_n$ is coloured with colours $\{1,\ldots,n-2\}$ and has exactly two uncoloured  vertices, vertex $a$ is coloured with colour $n-2$, and $b$ (if present), $c$~and $d$ are uncoloured. Alice wins in a single round, as follows.
\begin{enumerate}[1.]
	\item If Bob colours $c$ or $d$, Alice wins by applying the same colour to other vertex.
	\item If Bob colours $b$, Alice wins by colouring an uncoloured vertex of $K_n$ such that $b$ has the same colour as a vertex in $K_n$.
	\item If Bob colours an uncoloured vertex of $K_n$, Alice colours $b$ with colour $n-2$ if $b$ is present, otherwise the last uncoloured vertex of $K_n$ with any feasible colour. In any case, she wins.
\end{enumerate}

\item In this case, Alice entered Phase 2 because Bob coloured vertex $a,b,c$ or $d$. If $b$ is present, then either pair $(a,b)$ or pair $(c,d)$ is coloured and $K_n$ has at least two uncoloured vertices. If $b$ is not present, then  either $(c,d)$ is coloured, $a$ is uncoloured and $K_n$ has at least two uncoloured vertices, or $a$ is coloured with a colour from $K_n$, $(c,d)$ is uncoloured and $K_n$ has at least one uncoloured vertex. This guarantees the feasibility of Alice's following moves.

Note that if $b$ does not exist and pair $(c,d)$ is coloured, then only vertex $a$ is critical and Alice wins by making sure that $a$ is coloured on her next move. If $b$ does exist, $(c,d)$ is coloured and $m < n$, then all vertices are safe and Alice wins immediately. Otherwise, if $m = n$, we can relabel $(a,b)$ to $(c,d)$ and $K_m$ to $K_n$ (and vice versa). Hence we assume without loss of generality that $a$ is coloured and $(c,d)$ is not. If $b$ exists, then it has the same colour as $a$, and if~$b$ does not exist, then some vertex in $K_n$ has the same colour as $a$. Alice proceeds by the following rules. If Bob colours $c$ or $d$, Alice wins by colouring the other vertex the same. If Bob instead colours a vertex in $K_m$ or $K_n$, Alice colours a vertex in $K_n$. The first time she does this, she makes $c$ and $d$ safe by ensuring that $K_n$ contains the colour of $a$ (and $b$). Once $K_n$ is fully coloured, she wins.
\end{enumerate}
\end{proof}

\begin{proposition}[$E_3$]\label{beweisEdrei}
Alice wins on $G \in E_3$ with $\omega(G)$ colours.
\end{proposition}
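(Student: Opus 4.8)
The plan is to reuse the two-phase template of Proposition~\ref{beweisEvier} for $E_4$, adapting it to the crucial structural difference that in $E_3$ the two expanded cliques $K_m$ and $K_n$ are \emph{completely} connected, both to each other and to the vertex $c$, rather than disconnected as in $E_4$. I would first record that $\{c\}\cup K_m\cup K_n$ is a maximum clique, so that (when both cliques are present) $\omega(G)=1+m+n$, and then read off the critical vertices from their degrees: every vertex of $K_m\cup K_n$ together with $c$ is always critical, whereas the satellite vertices $a$, $b$ and the optional tail $d$ become critical only in the degenerate regimes $m\le 1$ and $n\le 1$. The upshot is that, generically, Alice's sole task is to colour the maximum clique $\{c\}\cup K_m\cup K_n$ with a rainbow of all $\omega(G)$ colours while preventing $c$ from being surrounded before she can colour it.

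The invariant I would maintain after each of Alice's moves, in the spirit of the subset condition of the bull strategy (Lemma~\ref{lemma:bull-strategy}) and the pairing of the ear strategy (Lemma~\ref{lemma:ear-strategy}), is that every colour appearing on $b$ or $d$ already appears on $K_m\cup K_n$. Since $K_m\cup K_n$ carries at most $m+n=\omega(G)-1$ colours, this guarantees that the neighbourhood of $c$ never exhibits all $\omega(G)$ colours, so $c$ retains a feasible colour until Alice plays it. In Phase~1 Alice colours $K_m$ and $K_n$ in lockstep, as in the bipartite dragon strategy (Lemma~\ref{lemma:dragon-strategy}), keeping their colour sets aligned so that they extend to a rainbow of $\{c\}\cup K_m\cup K_n$, and continuing until one clique has a single uncoloured vertex. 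Whenever Bob instead touches a satellite vertex, Alice answers as in the pairing dragon strategy by completing the appropriate pair with the colour that both restores the invariant and makes the affected vertices safe (Observation~\ref{observation:saving}).

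Phase~2, the endgame, is where I expect the real work to lie. Once a clique has exactly one uncoloured vertex, the unique remaining colour must be reserved for $c$, and Alice finishes by a case split on Bob's next move, showing in each branch that she can either complete a winning pair (Observation~\ref{observation:winning}) or reach a position in which every uncoloured vertex admits exactly one forced colour (Observation~\ref{observation:fixed}). The delicate points are, first, verifying at each step that Alice's intended colour is actually feasible, which is precisely the bookkeeping the invariant is designed to supply, and second, handling the degenerate cases $m=1$ or $n=1$, in which a satellite vertex ($a$ or $b$) joins the critical set and the pure bipartite response no longer suffices, forcing Alice to genuinely interleave the pairing dragon and bipartite dragon responses. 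These low-$m$, low-$n$ regimes, rather than the generic one, are the crux of the proof and the place where the two dragon strategies must be combined.
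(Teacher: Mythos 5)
Your plan has the right flavour (a pairing invariant that reserves a colour for $c$), but the invariant you commit to is too weak in one concrete place: it constrains the colours of $b$ and $d$ only, and says nothing about $a$ --- yet $a$ is exactly as dangerous to $K_n$ as $b$ is to $K_m$. A vertex $v \in K_n$ has the $\omega(G) = m+n+1$ neighbours $(K_n - v) \cup K_m \cup \{a,c\}$, and since $\{c\} \cup K_m \cup K_n$ is a clique, all colours on $(K_n-v) \cup K_m \cup \{c\}$ are pairwise distinct by force. So if Bob ever colours $a$ with a colour not already on $c$ or on $K_m$, the last uncoloured vertex of $K_n$ can be surrounded, and your invariant (which you explicitly designate as the bookkeeping that certifies feasibility) is powerless to prevent it; your vague remark that Alice ``completes the appropriate pair'' after a satellite move does not repair this because no pairing for $a$ is ever specified. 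The paper closes precisely this hole with a third condition --- $a$ must be coloured the same as $c$ or as a vertex of $K_m$ --- implemented by copying colours between the pair $(a,c)$ (feasible because a colour on $a$ avoiding $K_m$ also avoids $K_n$, $b$ and $d$, all adjacent to $a$) and, when Bob plays $a$ with a colour already on $K_m$, by spending the reply on colouring $c$ with a fresh colour.

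Beyond this gap, the transplant of the two-phase template of Proposition~\ref{beweisEvier} misfires structurally: in $E_4$ the cliques are disconnected and Alice mirrors colours between them, whereas in $E_3$ the colour sets of $K_m$ and $K_n$ are disjoint perforce, so ``colouring $K_m$ and $K_n$ in lockstep, keeping their colour sets aligned'' is vacuous and there is no endgame colour-counting phase to set up. The paper's actual proof is a single-phase pairing strategy read off a proper colouring of the base graph (Figure~\ref{bildbasegraphfourwheel}), with pairs $(a,c)$, $(b,K_n)$ and $(d,K_m)$, concluding by Observation~\ref{observation:winning}. Your anticipated crux also dissolves under that strategy: with $c$ present the definition forces $m,n \geq 1$ and the same pairings save $a$ and $b$ automatically --- matching $b$ to $K_n$ plants a repeated colour in $N(a)$, and matching $a$ to $c$ or $K_m$ plants one in $N(b)$ --- so no special interleaving in the regimes $m=1$ or $n=1$ is needed; the one genuinely degenerate case, $n=1$ without $c$, is simply delegated to $E_4$, a reduction your plan omits.
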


\begin{proof}
First, let $G$ be a graph in $E_3$ without vertex $c$. Without loss of generality, assume $m \geq 1, n \geq 2$, as $G \in E_4$ for $n=1$. The clique number of $G$ is $\omega(G) = m+n \geq m+2$. The cliques $K_m$ and $K_n$ and possibly vertex $a$ are critical. Note that every critical vertex can be saved by colouring two neighbours with the same colour. Thus in her strategy, Alice simply makes sure that $a$ is coloured the same as a vertex in $K_m$ and $b$ is coloured the same as a vertex in $K_n$. This procedure follows the pairing dragon strategy.

\begin{enumerate}
\item Suppose $a$ is uncoloured. If Bob colours $a$ or a vertex in $K_m$, Alice applies the same colour to the other option. This saves $b$ and $K_n$. 
\item Suppose $a$ is coloured. If Bob colours a vertex in $K_m$, Alice colours another vertex in $K_m$. If $K_m$ is completely coloured at that point or before her move, then every vertex is safe and Alice wins by Observation~\ref{observation:winning}.
\item If Bob colours $b$ or $K_n$, Alice follows rules 1 and 2 with $(a,K_m)$ substituted by $(b,K_n)$.
\item Lastly, if Bob colours $d$, Alice colours a vertex in $K_n$. If possible, she uses the same colour as Bob, otherwise a new colour. If $K_n$ is fully coloured before or after her move, then every vertex is safe and Alice wins by Observation~\ref{observation:winning}.
\end{enumerate}


Now let $G$ be an instance of $E_3$ with vertex $c$. By definition, $m,n \geq 1$ and the only non-critical vertex is~$d$. For a colouring with~$\omega(G) = m+n+1$ colours, 
\begin{itemize}
\item[(1)] vertex~$b$ must be coloured the same as a vertex in~$K_n$, 
\item[(2)] vertex~$a$ must be coloured the same as~$c$ or a vertex in~$K_m$, 
and 
\item[(3)] vertex~$d$ must be coloured the same as a vertex in~$K_m$ or~$K_n$. 
\end{itemize}
Therefore, we propose the following strategy for Alice, which is inspired by the proper colouring of the base graph of $G$ shown Figure~\ref{bildbasegraphfourwheel}.

\begin{figure}
\begin{center}
\includegraphics[scale=0.5]{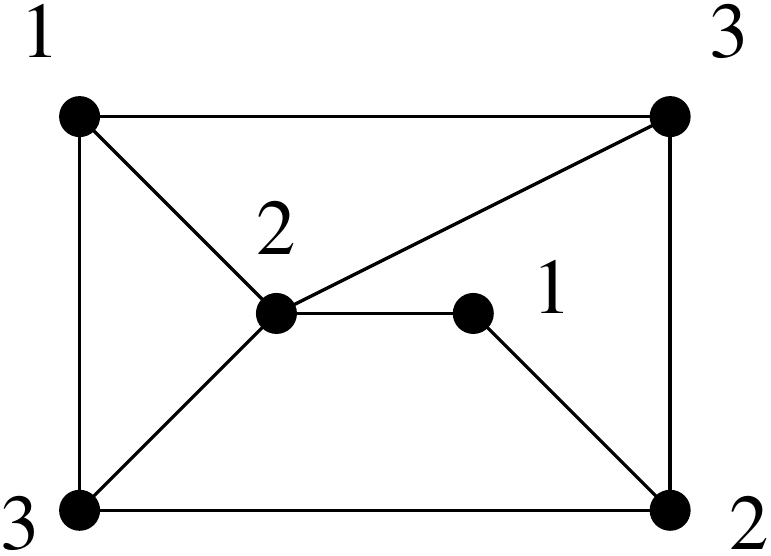}
\end{center}
\caption{\label{bildbasegraphfourwheel}A proper colouring of a 4-wheel-subdivision, the base graph of an $E_3$ instance with vertex $c$.}
\end{figure}

\begin{enumerate}
\item If Bob colours $a$ with a colour from $K_m$, Alice colours $c$ with a new colour. Otherwise, if Bob colours $a$ or $c$, Alice applies the same colour to the other vertex.
\item Suppose $K_n$ is completely uncoloured and Bob colours $b$ or a vertex in $K_n$. Then Alice applies the same colour to the other option.
\item Suppose Bob colours a vertex in $K_n$ that already has coloured vertices. If $K_n$ and $d$ are now fully coloured, Alice wins. Otherwise, if $K_n$ is fully coloured and $d$ is uncoloured, she colours $d$ with the colour Bob used last and wins. Finally, if $K_n$ has uncoloured vertices, Alice colours one of these.
\item Suppose $K_m$ is completely uncoloured and Bob colours $d$ or a vertex in $K_m$. Then Alice applies the same colour to the other option.
\item Suppose Bob colours a vertex in $K_m$ that already has coloured vertices. If $K_m$ and $a$ are now fully coloured, Alice wins. Otherwise, if $K_m$ is fully coloured and $a$ is uncoloured, she colours $a$ with the same colour and wins. Finally, if $K_m$ has uncoloured vertices, Alice colours one of these.
\end{enumerate}

This strategy guarantees that every move is feasible and conditions (1), (2) and (3) are satisfied at the end. Hence Alice wins by Observation~\ref{observation:winning}.
\end{proof}

\section{Complexity Results}\label{section:complexity-results}
\subsection{The clique module decomposition}\label{subsec:clique-module-decomposition}
Let $G$ be a graph with $n$ vertices. Without loss of generality, we assume that the vertices are numbered from 1 to $n$. Throughout this section we use the adjacency matrix of the graph, which can be constructed in $O(n^2)$ time and enables us to check adjacency of two vertices in $O(1)$ time. 

A subset $S \subseteq V(G)$ is called a \emph{module} in $G$ if every $v \in V(G) \setminus S$ is either adjacent to all or none of $S$. If $S$ is also a clique, we call it a \emph{clique module}. A clique module $C$ is \emph{maximal} if no other clique module $C'$ with $C \subset C'$ exists. Finally, a \emph{clique module decomposition} $\mathcal{F}_G$ of $G$ is a partition of $V(G)$ into maximal clique modules. The \emph{base graph} $B_G$ of $G$ is obtained by contracting each maximal clique module in $\mathcal{F}_G$ to a single vertex while respecting the adjacencies of the original graph $G$. Theorem~\ref{thm:clique-module-decomposition} shows that this notion is well-defined. We call a vertex in the base graph a \emph{clique vertex} if it is obtained by contracting two or more vertices, and a \emph{singleton vertex} otherwise.

Algorithm~\ref{algo:clique-module-decomposition} computes a clique module decomposition $\mathcal{F}_G$ of $G$ in $O(n^2)$ time. In order to construct the base graph $B_G$, we identify its vertices with the sets in the clique module decomposition $\mathcal{F}_G$. Next we query for every pair $C, T \in \mathcal{F}_G$ whether the first vertex in $C$ and $T$ are adjacent and add the edge $(C,T)$ in $B_G$ if so. As there are $|\mathcal{F}_G|^2$ pairs and 
$|\mathcal{F}_G| \leq n$, this takes $O(n^2)$ time in total. Recall that $N[v] := N(v) \cup \{v\}$ denotes the set of neighbours of $v$ together with $v$ itself.

\begin{algorithm}
	\caption{A simple clique module decomposition algorithm.}
	\label{algo:clique-module-decomposition}
	\begin{algorithmic}
		\State Set $\mathcal{F} = \{ \{1, \ldots, n \} \}$
		\For {$v=1, \ldots, n$}
			\ForAll {$S$ in $\mathcal{F}$}
				\State Remove $S$ and add $S \cap N[v]$ and $S \setminus N[v]$ to $\mathcal{F}$ unless the respective set is empty.
			\EndFor
		\EndFor
		\State Return $\mathcal{F}$.
	\end{algorithmic}
\end{algorithm}


\begin{lemma}\label{lemma:clique-module-algo-correctness}
	Given a graph $G$, Algorithm~\ref{algo:clique-module-decomposition} returns a clique module decomposition in $O(n^2)$ time.
\end{lemma}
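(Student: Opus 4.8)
The plan is to show that the final partition $\mathcal{F}$ returned by the algorithm groups together precisely those vertices sharing the same closed neighbourhood, and then to identify these groups with the maximal clique modules. First I would establish the invariant that, after the algorithm has processed vertices $1,\ldots,n$, two vertices $u,w$ lie in the same set of $\mathcal{F}$ if and only if, for every vertex $v$, we have $u \in N[v] \iff w \in N[v]$. This follows by induction on the outer loop: initially all vertices share one block, and processing $v$ separates exactly those pairs that differ on membership in $N[v]$ while never re-merging anything, since the algorithm only ever splits sets. As $u \in N[v]$ is equivalent to $v \in N[u]$, the invariant says that $u$ and $w$ end up in the same block exactly when $N[u]=N[w]$.

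Next I would translate this characterisation into the statement that the blocks of $\mathcal{F}$ are exactly the maximal clique modules. For the easy direction, fix a block $B$ and pick $u,w \in B$, so $N[u]=N[w]$. Any $x \in B$ satisfies $x \in N[u]$, and since distinct vertices with equal closed neighbourhoods are adjacent (each lies in the other's neighbourhood), $B$ is a clique. For any $x \notin B$ we have $x \neq u,w$, so $x$ is adjacent to $u$ iff $x \in N[u] = N[w]$ iff $x$ is adjacent to $w$; hence $B$ is a module, and thus a clique module. For maximality I would prove conversely that every clique module $C$ lies inside a single block: for $u,w \in C$ and $x \notin C$ the module property gives $x \in N[u] \iff x \in N[w]$, while every $x \in C$ lies in both $N[u]$ and $N[w]$ by the clique property, so $C \subseteq N[u] \cap N[w]$ and therefore $N[u]=N[w]$. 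Consequently each block is a clique module containing every clique module that meets it, that is, a maximal one, and the blocks partition $V(G)$. This simultaneously confirms that the maximal clique modules partition $V(G)$ (the well-definedness claimed for the decomposition) and that Algorithm~\ref{algo:clique-module-decomposition} outputs exactly them.

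For the running time, I would argue that one iteration of the outer loop costs $O(n)$. At the start of the iteration for $v$, the sets in $\mathcal{F}$ form a partition of $V(G)$, so their sizes sum to $n$ and there are at most $n$ of them. Splitting a set $S$ into $S \cap N[v]$ and $S \setminus N[v]$ requires a single scan of $S$ with an $O(1)$ adjacency query per element via the adjacency matrix, costing $O(|S|)$; summing over all current sets gives $O(n)$ per iteration. Over the $n$ iterations this is $O(n^2)$, which together with the $O(n^2)$ construction of the adjacency matrix and the $O(n)$ initialisation yields the claimed bound.

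The main obstacle is the maximality argument, specifically the step showing that an arbitrary clique module is confined to a single block. The subtlety is that one must invoke both defining properties of a clique module at once — the module property to handle vertices outside $C$ and the clique property (giving $C \subseteq N[u]$ for each $u \in C$) to handle vertices inside $C$ — in order to conclude that all members of $C$ share a common closed neighbourhood. Everything else reduces to the straightforward split-only induction or a direct counting argument.
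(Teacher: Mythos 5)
Your proposal is correct, and it takes a recognisably different route from the paper's proof. The paper argues directly on the algorithm's execution, establishing the three required properties separately: each final set is a clique via the invariant that every already-processed vertex is adjacent to all vertices remaining in its block; each final set is a module by contradiction (a violating triple $v,w \in C$, $z \notin C$ would have been separated at the $z$-th iteration); and maximality by observing that a maximal clique module $D$ is never split by any refinement step, so $D$ ends up inside a single final block. You instead first prove an exact characterisation of the output --- the final blocks are precisely the equivalence classes of the relation $N[u] = N[w]$ (closed-twin classes), using the split-only dynamics and the symmetry $u \in N[v] \Leftrightarrow v \in N[u]$ --- and then show by purely graph-theoretic reasoning that these classes coincide with the maximal clique modules. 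The two arguments rest on the same underlying facts (splits never re-merge; a clique module is never cut by a refinement, which is your $N[u]=N[w]$ computation for $u,w$ in a clique module), but your framing buys something extra: since every clique module is confined to a single twin class and the twin classes are themselves clique modules, the maximal clique modules are \emph{exactly} the twin classes, so the uniqueness of the decomposition (the paper's Theorem~\ref{thm:clique-module-decomposition}) falls out as an immediate corollary, without needing the paper's separate Lemma~\ref{lemma:clique-module-union} on unions of intersecting clique modules. The running-time analysis is essentially identical in both proofs: $O(n)$ per outer iteration by summing $O(|S|)$ over the current partition, hence $O(n^2)$ overall.
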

\begin{proof}
	Note that $\mathcal{F}$ is a partition of $V(G)$ throughout the running time of the algorithm. Let $\mathcal{F}^*$ be the final partition that is returned on running the algorithm on $G$. First we show that every $C \in \mathcal{F}^*$ is a clique module. Note that after $k$ executions of the outer loop we know that for any vertex $1 \leq v \leq k$, all vertices in the unique set $C$ containing $v$ are adjacent to $v$. In particular, for $k = n$ this implies that every $C \in \mathcal{F}^*$ is a clique. Now fix $C \in \mathcal{F}^*$. Suppose $C$ is not a module, i.e.~there exist vertices $v,w \in C$ and $z \in V(G) \setminus C$ with $vz \in E(G)$ and $wz \not \in E(G)$. But this is impossible, as $v$ and $w$ would have been separated on the $z$-th execution of the outer loop and can no longer be in the same set. Finally, we note that the clique modules in $\mathcal{F}^*$ are maximal. For this, let $D$ be a maximal clique module in $G$. Every time a superset $S \supseteq D$ is replaced by $S \cap N[v]$ and $S \setminus N[v]$, $D$ is either a subset of the former or the latter. This implies $D \subseteq C$ for some $C \in \mathcal{F}^*$.
	
	Clearly, the outer loop runs $n$ times. On the other hand, determining $S \cap N[v]$ and $S \setminus N[v]$ can be done in $O(|S|)$ time for each $S \in \mathcal{F}$. Hence for each execution of the outer loop, the inner loop takes $\sum_{S \in \mathcal{F}}O(|S|) = O(n)$ time. In total, we get a running time of $O(n^2)$. 
\end{proof}

\begin{lemma}\label{lemma:clique-module-union}
	If $C$ and $D$ are clique modules in $G$ that share a vertex $v$, then their union $C \cup D$ is also a clique module.
\end{lemma}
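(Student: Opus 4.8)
The plan is to verify separately that $C \cup D$ is a module and that it is a clique; together these two facts establish that $C \cup D$ is a clique module. Throughout, the key leverage comes from the shared vertex $v \in C \cap D$, which links the behaviour of outside vertices with respect to $C$ to their behaviour with respect to $D$.

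First I would show that $C \cup D$ is a module. Fix an arbitrary $z \in V(G) \setminus (C \cup D)$; then in particular $z \notin C$ and $z \notin D$, so the module property of $C$ tells us $z$ is adjacent either to all or to none of $C$, and likewise the module property of $D$ gives the same dichotomy for $D$. The vertex $v$ synchronises these two alternatives. If $z$ is adjacent to $v$, then since $v \in C$ and $z \notin C$, the module property forces $z$ to be adjacent to \emph{all} of $C$, and by the identical argument with $v \in D$, to all of $D$; hence $z$ is adjacent to all of $C \cup D$. If instead $z$ is non-adjacent to $v$, the same reasoning shows $z$ is adjacent to none of $C$ and to none of $D$, hence to none of $C \cup D$. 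So every vertex outside $C \cup D$ sees all or nothing of it, which is exactly the module condition.

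Next I would show that $C \cup D$ is a clique. Any pair of vertices lying entirely within $C$, or entirely within $D$, is adjacent by the clique hypothesis, so the only case needing work is a pair $a \in C \setminus D$ and $b \in D \setminus C$. Here I again use $v$: since $b$ and $v$ both lie in the clique $D$, they are adjacent, so $b$ is adjacent to at least one vertex of $C$ (namely $v$); because $b \notin C$ and $C$ is a module, this forces $b$ to be adjacent to \emph{every} vertex of $C$, in particular to $a$. Thus all pairs in $C \cup D$ are adjacent, and $C \cup D$ is a clique.

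Combining the two parts, $C \cup D$ is simultaneously a module and a clique, i.e.\ a clique module, as claimed. I do not expect a serious obstacle; the only step requiring mild care is the clique argument, where one must combine the module property of $C$ with the clique property of $D$ through the common vertex $v$, rather than attempting to establish adjacency of $a$ and $b$ directly.
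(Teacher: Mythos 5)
Your proposal is correct and follows essentially the same route as the paper: both proofs use the shared vertex $v$ to synchronise the all-or-nothing dichotomy for outside vertices (establishing the module property), and both establish the clique property for a pair $a \in C$, $b \in D$ by noting $b$ is adjacent to $v$ within the clique $D$ and then invoking the module property of $C$ to force adjacency of $b$ to all of $C$. Your write-up is merely a slightly more explicit case analysis of the paper's two-line argument.
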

\begin{proof}
	To see that $C \cup D$ is a clique, first note that $C$ and $D$ are cliques. Then, for any $x \in C$ and $y \in D$, the existence of edge $xy$ is implied by $vy \in E$, as $x$ and $v$ are in the same clique module $C$. Now let $z \in V \setminus (C \cup D)$. Then $z$ is adjacent to $v$ if and only if it is adjacent to all vertices of $C$ and $D$, as the two clique modules share the vertex $v$.
\end{proof}

\begin{theorem}\label{thm:clique-module-decomposition}
	Every graph $G$ has a unique clique module decomposition.
\end{theorem}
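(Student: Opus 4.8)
The plan is to establish both existence and uniqueness by leveraging the two preceding lemmas. Existence is essentially already in hand: Lemma~\ref{lemma:clique-module-algo-correctness} shows that Algorithm~\ref{algo:clique-module-decomposition} returns a partition of $V(G)$ into maximal clique modules, so at least one clique module decomposition exists. The remaining work is therefore to show that this partition is forced, i.e.\ that the maximal clique modules of $G$ are uniquely determined. The natural strategy is to prove that each vertex lies in exactly one maximal clique module; once this is known, the collection of maximal clique modules is automatically a partition, and it is the only clique module decomposition of $G$.

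First I would observe that every vertex $v$ is contained in \emph{some} maximal clique module. The singleton $\{v\}$ is trivially a clique module (it is a clique, and any other vertex is vacuously adjacent to all or none of it), and since $G$ is finite, any chain of clique modules containing $\{v\}$ terminates in a maximal one. Hence the maximal clique modules cover $V(G)$.

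The key step is uniqueness, and here I would invoke Lemma~\ref{lemma:clique-module-union}. Suppose $C$ and $D$ are both maximal clique modules containing a common vertex $v$. Since they share $v$, Lemma~\ref{lemma:clique-module-union} guarantees that $C \cup D$ is again a clique module. As $C \cup D \supseteq C$ and $C$ is maximal, we must have $C \cup D = C$, so $D \subseteq C$; by the symmetric argument $C \subseteq D$, whence $C = D$. Thus each vertex lies in a \emph{unique} maximal clique module.

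Finally, combining these two observations, the maximal clique modules are pairwise disjoint (two distinct ones cannot share a vertex, else they would coincide) and cover $V(G)$, so they form a partition, which is precisely a clique module decomposition; and since the maximal clique module through each vertex is uniquely determined, this partition is unique. I expect no serious obstacle here: the entire argument reduces to the closure-under-union property supplied by Lemma~\ref{lemma:clique-module-union}, and the mildest care is only needed in noting that singletons are clique modules so that the covering property holds.
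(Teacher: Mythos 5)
Your proposal is correct and follows essentially the same route as the paper: existence via Lemma~\ref{lemma:clique-module-algo-correctness} and uniqueness by applying Lemma~\ref{lemma:clique-module-union} to two maximal clique modules sharing a vertex, whose union then forces them to coincide by maximality. Your additional remark that singletons are clique modules (so maximal ones cover $V(G)$) is a harmless elaboration the paper leaves implicit.
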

\begin{proof}
	By Lemma~\ref{lemma:clique-module-algo-correctness}, we know that every graph $G$ has a clique module decomposition. Now suppose $\mathcal{F}$ and $\mathcal{F}'$ are two such decompositions. Fix a vertex $v \in V$ and 
$C \in \mathcal{F}$ and $D \in \mathcal{F}'$ with $v \in C \cap D$. By Lemma~\ref{lemma:clique-module-union}, we know that $C \cup D$ is also a clique module. This implies that, in order for $C$ and $D$ to both be maximal clique modules, 
we must have $C = D$. It follows that $\mathcal{F} = \mathcal{F'}$.
\end{proof}

\subsection{Complexity results}\label{subsec:complexity-results}
Deciding whether a given graph $G$ is $g_B$-perfect is in P. This follows immediately from our forbidden subgraph characterisation, which implies the $\Theta(n^7)$-time Algorithm~\ref{algo:naive-perfectness}. We can significantly improve on 
this 
by utilising our explicit structural characterisation. Exploiting the clique module decomposition technique, we can determine in quadratic time whether $G$ is in $E_i$ for any $e \in \{1, \ldots, 15 \}$. This implies our complexity results, restated below for convenience.

	\begin{algorithm}
		\caption{A naive algorithm for checking $g_B$-perfectness}
		\label{algo:naive-perfectness}
		\begin{algorithmic}
			\ForAll {$5$-subsets $S$ of $V(G)$}
				\State Return \texttt{false} if the subgraph of $G$ induced by $S$ matches one of $F_1, \ldots, F_8$.
			\EndFor
			\ForAll {$7$-subsets $S$ of $V(G)$}
				\State Return \texttt{false} if the subgraph of $G$ induced by $S$ matches one of $F_9, \ldots, F_{15}$.
			\EndFor
			\State Return \texttt{true}.
		\end{algorithmic}
	\end{algorithm}

\medskip
\noindent \textbf{Theorem~\ref{thm:recognition}.} \emph{There is an $O(n^2)$ time algorithm deciding whether a graph $G$ with $n$ vertices is $g_B$-perfect (or $g_A$-perfect).}

\begin{proof}
We show that deciding whether a graph $G$ with $n$ vertices is an instance of one of the graph classes $E_1^\cup, E_2, \ldots, E_9$ takes time $O(n^2)$. Hence, 
by Theorem~\ref{thm:gB-perfect-characterisation},
we can recognise $g_B$-perfect graphs in $O(n^2)$ time.
Consider the following simple subroutines to determine membership in each graph class $E_1^\cup,E_2,\ldots,E_9$. 
	
\begin{description}
	\item[Subroutine for $E_1$.] Compute the base graph $B_G$ of $G$ and store it as $H$. If $H$ has a dominating vertex~$x$, then remove it, else return \texttt{false}. Remove all isolated vertices in $H$. If $H$ is null or a $P_3$, then 
return \texttt{true}, else \texttt{false}.

	\item[Subroutine for $E_1^\cup$.] Compute the components of~$G$ and run the above subroutine for $E_1$ on each. Return \texttt{true} if and only if each component is in $E_1$.

	\item[Subroutines for $E_2, E_3, E_4$ and $E_6$.] Compute $B_G$ and its order $n$. If $n \not = |B_{E_i}|$, then return \texttt{false}. Return \texttt{true} if one of the permutations $\pi$ of $V(B_G)$ is an isomorphism from $B_G$ to $B_{E_i}$ mapping every clique vertex in $B_G$ to a clique vertex in $B_{E_i}$.

	\item[Subroutine for $E_5$.] Compute the complement $\overline{B_G}$ of $B_G$ and the bipartition $(A,Z)$ of $\overline{B_G}$ or return \texttt{false} if the graph is not bipartite. Remove any vertex in $A$ or $Z$ that is adjacent to every vertex in the other set. Return \texttt{true} if $|A| = |Z|$ and the graph is $(|A|-1)$-regular, otherwise return \texttt{false}.
	
	\item[Subroutine for $E_7$.] Compute the number $m$ of edges in $B_G$ and the bipartition $(U,V)$ of $B_G$ or return \texttt{false} if the graph is not bipartite. Return \texttt{false} if $|U| \not = 2 \not = |V|$, else without loss of generality we have $|U| = 2$. Return \texttt{true} if $m \in \{ 2|V|-1, 2|V| \}$, $U$ has no clique vertices and $V$ has at most one clique vertex, otherwise return \texttt{false}.
	
	\item[Subroutine for $E_8$.] Compute the number $m$ of edges in $B_G$ and the bipartition $(U,V)$ of $B_G$ or return \texttt{false} if the graph is not bipartite. Return \texttt{false} if $|U| \not = 3 \not = |V|$, else without loss of generality we have $|U| = 3$. Return \texttt{true} if $m  = 3|V|$, $U$ has no clique vertices and $V$ has at most one clique vertex, otherwise return \texttt{false}.
	
	\item[Subroutine for $E_9$.] Compute the bipartition $(U,V)$ of $G$ (not $B_G$) or return \texttt{false} if $G$ is not bipartite. If $G$ has $|U||V|-1$ or $|U||V|$ edges, then return \texttt{true}, else \texttt{false}.
\end{description}

By Lemma~\ref{lemma:clique-module-algo-correctness}, computing the base graph takes $O(n^2)$ time. In the subroutines for $E_2, E_3, E_4$ and $E_6$, note that $|B_{E_i}| \leq 7$ for $i \in \{2,3,4,6 \}$, so that checking for isomorphisms 
takes constant time. Finally, removing dominating or isolated vertices as well as computing the complement and the bipartition of a graph also uses quadratic time, so each subroutine is quadratic.
As remarked above this proves that we can recognise $g_B$-perfect graphs in $O(n^2)$ time.
%

The following subroutine tests whether a given graph $G$ is $g_A$-perfect and runs in quadratic time by the same arguments as in the proof above.
\begin{description}
	\item[Subroutine for $g_A$-perfect graphs.] Compute the components of $G$ and run the subroutine for $E_1$ on each of them. Return \texttt{true} if all components are in $E_1$ and \texttt{false} if 
at least two components are not in~$E_1$. 
Suppose exactly one component $C_1$ is not in $E_1$. If $C_1$ has no dominating vertex, then return \texttt{false}. Else run the subroutine for $E_1$ on each component of $C_1 - x$, where $x$ is a dominating vertex of $C_1$, and return \texttt{true} if and only if 
each component is in~$E_1$.\nopagebreak 
\end{description}\nopagebreak
This proves that we can also recognise $g_A$-perfect graphs in $O(n^2)$ time.
\end{proof}


We now discuss some consequences of Theorem~\ref{thm:recognition}. First we consider Corollary~\ref{corollary:alice-winning-time}, restated below for convenience.

\medskip
\noindent \textbf{Corollary~\ref{corollary:alice-winning-time}.} \emph{Alice can win on any $g_A$- or $g_B$-perfect graph $G$ with $\omega(G)$ colours using only $O(n^2)$ computational time.}

\medskip

\begin{proof}
By Theorem~\ref{thm:recognition},
Alice can check in quadratic time whether the graph $G$ is game-perfect. If it is not, Alice is guaranteed to lose. If it is, Alice can identify, using the 
subroutines from the proof of Theorem~\ref{thm:recognition}, to which class 
$E_1^\cup,E_2,\ldots,E_9$ the graph belongs and uses the strategy for this class from Section~\ref{section:strategies} for game~$g_B$ and the instructions from the simple strategy given by \citet{andres1} for game~$g_A$. It is easy to check 
that following instructions from each strategy requires at most quadratic time. Thus Alice can win on any $g_A$- or $g_B$-perfect graph in quadratic time.
\end{proof}

Next we turn to Corollary~\ref{corollary:hamilton}, restated for convenience.

\medskip
\noindent \textbf{Corollary~\ref{corollary:hamilton}.} \emph{\textsc{Hamilton Cycle} is in P for $g_A$- and $g_B$-perfect graphs.}

\medskip

A result by \citet{babeletal} shows that \textsc{Hamilton Cycle} on graphs with ``few'' $P_4$s takes linear time ($O(n+m)$ for a such graphs with $n$ vertices and $m$ edges). As this class of graphs contains the $g_A$-perfect graphs, 
Corollary~\ref{corollary:hamilton} holds for the game~$g_A$.

\begin{theorem}[\citet{babeletal}]\label{theorem:babel}
	 A \emph{$(q,q-4)$-graph} is a graph such that every set of at most $q$ vertices contains at most $q-4$ distinct induced $P_4$s. For every integer $q \geq 4$ there exists a linear time algorithm that decides whether a $(q,q-4)$-graph is Hamiltonian.
\end{theorem}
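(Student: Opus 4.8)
The plan is to combine the primeval decomposition of Jamison and Olariu with the structure theorem of Babel and Olariu for $(q,q-4)$-graphs, and then run a bottom-up dynamic program along the resulting decomposition tree that maintains a single Hamiltonicity-relevant invariant. Recall that the primeval (equivalently, modular) decomposition assigns to every graph $G$ on at least two vertices exactly one of three situations: $G$ is disconnected ($G = G_1 \cup \cdots \cup G_r$), its complement is disconnected ($G = G_1 \vee \cdots \vee G_r$), or $G$ is $p$-connected. The crucial consequence of the $(q,q-4)$-property, which is the engine of the whole argument, is that every $p$-connected $(q,q-4)$-graph is either a spider $(S,K,R)$---with $S$ an independent set of legs, $K$ a clique body joined to each leg in the thin or thick pattern, and $R$ an arbitrary module joined completely to $K$---or a graph whose characteristic representative has fewer than $q$ vertices. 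Since $q$ is a fixed constant, the prime pieces have bounded size, and the entire decomposition tree is computable in $O(n+m)$ time.

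First I would fix the invariant. For a graph $H$ let $p(H)$ denote its \emph{path cover number}, the minimum number of vertex-disjoint paths whose union is $V(H)$ (isolated vertices counting as trivial paths). The key combinatorial observation is that Hamiltonicity after a join is governed entirely by path cover numbers and orders: writing a Hamilton cycle of $G_1 \vee G_2$ as an alternation of maximal runs shows that the runs inside $G_i$ form a path partition of $V(G_i)$ into a common number $r$ of paths, and conversely any $r$ with $\max\{p(G_1),p(G_2)\} \le r \le \min\{n_1,n_2\}$ yields a cycle. Hence $G_1 \vee G_2$ is Hamiltonian iff $p(G_1) \le n_2$ and $p(G_2) \le n_1$, and along the same lines one obtains the combining formulas
\[
p(G_1 \cup G_2) = p(G_1)+p(G_2), \qquad p(G_1 \vee G_2) = \max\{1,\ p(G_1)-n_2,\ p(G_2)-n_1\},
\]
which extend routinely to $r$ parts. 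I would verify these and then show that $p$, together with the order $n$ (trivially maintained), is all the program needs: it suffices both to propagate up the tree and, at the root, to decide Hamiltonicity, since a disconnected root is never Hamiltonian and a join root uses the criterion above.

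Next I would treat the two remaining node types. For a spider, every leg of a \emph{thin} spider has degree one, so such a node is immediately non-Hamiltonian and its path cover number is a direct function of the number of legs and $p(R)$; a \emph{thick} spider is handled by an analogous explicit computation, the only freedom being how the head's paths thread through the body and legs. For a prime node the representative graph has fewer than $q$ vertices but its children are modules of unbounded size; however, the only data about a child that a Hamilton cycle can exploit are its path cover number and its order, each ranging over a bounded set of relevant values, so the optimal routing is found by a bounded enumeration costing $O(1)$ per prime node. Processing union and join nodes costs time linear in their number of children, so the total work is $O(n+m)$, yielding the claimed linear-time algorithm.

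The main obstacle I anticipate is not the linear-time bookkeeping but the correctness of the combining rules, and in particular confirming that the single invariant $p(\cdot)$ is genuinely sufficient at every node type. The join case rests on the run-alternation lemma, which must be proved carefully including the degenerate cases $r=1$ and empty parts; the spider and prime cases require showing that no Hamilton cycle can use finer structure of a child module than its path cover number and order---essentially an interchange argument proving that two path partitions of a module with the same count are interchangeable for the purpose of forming the global cycle. Establishing these interchange lemmas, together with the explicit spider formulas, is where the real work lies; once they are in place, the dynamic program and its linear running time follow directly from the linear-time computability of the Babel and Olariu decomposition.
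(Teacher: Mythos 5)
You should first be aware that the paper contains no proof of this statement at all: Theorem~\ref{theorem:babel} is imported verbatim from \citet{babeletal} as a black box (the paper only uses it, via $P_4$-freeness of $g_A$-perfect graphs, to dispatch the $g_A$ half of Corollary~\ref{corollary:hamilton}), so there is no in-paper argument to compare yours against. Your reconstruction does follow the strategy of the cited literature: primeval decomposition, the Babel--Olariu structure theorem reducing $p$-connected $(q,q-4)$-graphs to spiders or bounded-size representatives, and a bottom-up computation of the path cover number. Your combining rules are essentially right: the run-alternation argument gives $p(G_1 \vee G_2) = \max\{1,\, p(G_1)-n_2,\, p(G_2)-n_1\}$ and the Hamiltonicity criterion $p(G_1) \le n_2$ and $p(G_2) \le n_1$ (modulo the degenerate case $K_1 \vee K_1 = K_2$, which your criterion wrongly declares Hamiltonian --- a cycle needs at least three vertices), and your anticipated interchange lemma is correct and easy: since attachment to a module is uniform, only the number of cycle segments inside it matters, and the feasible segment counts form exactly the interval $[p(M), n(M)]$ by repeated path splitting.

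The one step that does not work as written is the prime-node case. You assert that the segment counts range over ``a bounded set of relevant values, so the optimal routing is found by a bounded enumeration costing $O(1)$.'' That is false as stated: the number of times $r_v$ a Hamilton cycle crosses a child module $M_v$ satisfies $p(M_v) \le r_v \le n_v$ and is genuinely unbounded in $n$ --- substituting two large independent sets into $K_2$ yields $K_{n,n}$, whose Hamilton cycle crosses each module $n$ times. What is true, and what you need instead, is that feasibility reduces to the existence of nonnegative integer edge multiplicities $x_e$ on the representative graph $H$ (with $|V(H)| < q$) such that $\sum_{e \ni v} x_e = 2r_v$ with $r_v \in [p_v, n_v]$ for all $v$ and the support of $x$ is spanning and connected. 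This is an integer feasibility problem in constantly many variables for fixed $q$, decidable in $O(1)$ time by a fixed-dimension ILP or flow-type argument --- but not by enumerating the $r_v$. Note that the balance constraints really bite: for $H = K_2$ one is forced into $r_1 = r_2$, which is exactly why $K_{1,n}$ is non-Hamiltonian for $n \ge 2$; a naive ``bounded enumeration'' over small $r_v$ would miss both the unboundedness and this coupling. The same degree-constrained closed-walk formulation is also what you need to make the thick-spider computation and the spider path-cover formulas precise (the thin-spider case is fine: pendant legs force non-Hamiltonicity and give $p$ explicitly). With that step repaired, the rest of your plan --- linear-time decomposition, $O(1)$ work per spider and prime node, linear aggregation at union and join nodes --- goes through and yields the claimed $O(n+m)$ bound.
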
 

\begin{corollary}
\textsc{Hamilton Cycle} is in P for $g_A$-perfect graphs.
\end{corollary}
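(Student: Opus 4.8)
The plan is to realise the class of $g_A$-perfect graphs as a subclass of the $(q,q-4)$-graphs for a fixed value of $q$ and then invoke the linear-time algorithm of Babel et al.\ (Theorem~\ref{theorem:babel}). The only genuine content is to pin down the right value of $q$; once that is done, the conclusion is immediate.

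First I would recall the forbidden subgraph characterisation of $g_A$-perfect graphs in Theorem~\ref{thm:gA-gAB-perfect-characterisation}(iii). Since $P_4$ itself appears in the list of forbidden induced subgraphs, every $g_A$-perfect graph is $P_4$-free. (This is consistent with the remark in the introduction that the $g_A$-perfect graphs are trivially perfect, i.e.\ $\{P_4,C_4\}$-free.) Thus the class of $g_A$-perfect graphs is contained in the class of $P_4$-free graphs.

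Next I would identify $P_4$-free graphs with $(4,0)$-graphs in the terminology of Theorem~\ref{theorem:babel}. Taking $q=4$, the defining condition ``every set of at most $q$ vertices contains at most $q-4$ distinct induced $P_4$s'' reads ``every set of at most $4$ vertices contains at most $0$ induced $P_4$s'', which, because an induced $P_4$ uses exactly four vertices, is precisely the statement that $G$ has no induced $P_4$. Hence a graph is a $(4,4-4)$-graph if and only if it is $P_4$-free, and in particular every $g_A$-perfect graph is a $(4,0)$-graph.

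Finally, since $q=4\ge 4$ meets the hypothesis of Theorem~\ref{theorem:babel}, that theorem supplies a linear-time algorithm deciding Hamiltonicity on $(4,0)$-graphs, and therefore on all $g_A$-perfect graphs; in particular \textsc{Hamilton Cycle} is in P for this class. The main point to verify carefully is the off-by-one in the $(q,q-4)$ definition, ensuring that the value $q=4$ yields genuine $P_4$-freeness rather than a weaker bounded-$P_4$ condition; beyond this bookkeeping, the argument is a direct composition of Theorems~\ref{thm:gA-gAB-perfect-characterisation} and~\ref{theorem:babel}.
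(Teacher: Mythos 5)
Your proposal is correct and is essentially identical to the paper's own proof: both deduce $P_4$-freeness of $g_A$-perfect graphs from Theorem~\ref{thm:gA-gAB-perfect-characterisation}, observe that $P_4$-free graphs are exactly the $(4,0)$-graphs, and invoke the linear-time algorithm of Theorem~\ref{theorem:babel}. Your version merely spells out the off-by-one check in the $(q,q-4)$ definition, which the paper leaves implicit.
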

\begin{proof}
By Theorem~\ref{thm:gA-gAB-perfect-characterisation}, $g_A$-perfect graphs are $P_4$-free and thus $(4,0)$-graphs.
\end{proof}

When considering the game $g_B$, we note that graphs in $E_1^\cup$, $E_2$ and $E_7$ are $(q,q-4)$-graphs but this approach breaks down for the remaining classes. Instead, our proof argues about the structure of graphs using the clique module decomposition and the following three observations.

\begin{observation}\label{observation:sufficient-hamiltonian}
	A graph is Hamiltonian if its base graph is (but the converse need not be true).
\end{observation}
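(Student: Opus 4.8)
The plan is to prove the forward implication by \emph{expanding} a Hamilton cycle of the base graph into a Hamilton cycle of $G$, exploiting two structural facts about the clique module decomposition $\mathcal{F}_G$. First, each maximal clique module $C \in \mathcal{F}_G$ induces a complete subgraph of $G$, so its vertices can be traversed by a Hamilton path with \emph{any} prescribed pair of endpoints (or, if $|C| = 1$, by the single vertex itself). Second, by the definition of the base graph, two modules $C$ and $T$ are adjacent in $B_G$ precisely when they are completely connected in $G$; in particular, the endpoint of a path through $C$ may be joined to the startpoint of a path through any module adjacent to $C$ in $B_G$.

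Concretely, suppose $B_G$ has a Hamilton cycle and write the modules it visits in cyclic order as $C_1, C_2, \ldots, C_k$, so that $k \geq 3$ and consecutive modules (indices taken modulo $k$) are adjacent in $B_G$. For each $i$ I would fix a Hamilton path $P_i$ through the clique $C_i$ with startpoint $s_i$ and endpoint $t_i$. Concatenating $P_1, P_2, \ldots, P_k$ and adding the edges $t_i s_{i+1}$ (indices modulo $k$) produces a closed walk in $G$, and each such joining edge $t_i s_{i+1}$ exists because $C_i$ and $C_{i+1}$ are completely connected. Since the modules partition $V(G)$ and each $P_i$ covers exactly the vertices of $C_i$, this walk visits every vertex of $G$ exactly once and is therefore a Hamilton cycle.

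There is no serious obstacle here: the only point requiring care is that the joining edges are guaranteed to exist, and this is immediate from the \emph{all-or-nothing} adjacency enjoyed by modules together with the way $B_G$ encodes complete connection (cf.~Theorem~\ref{thm:clique-module-decomposition}). To see that the converse fails, it suffices to take $G = K_3$. Every subset of $K_3$ is a clique module, so $V(G)$ itself is the unique maximal clique module and $B_G$ is a single vertex, which is not Hamiltonian; yet $G = C_3$ is plainly Hamiltonian. Hence the implication holds in one direction only.
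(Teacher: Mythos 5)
Your proof is correct and is essentially the argument the paper leaves implicit (Observation~\ref{observation:sufficient-hamiltonian} is stated without proof): expand each maximal clique module into a Hamilton path through that clique and join consecutive paths, which is legitimate because adjacency in $B_G$ means the two modules are completely connected in $G$. Your counterexample $G = K_3$, whose base graph is a single vertex, also matches the paper's own implicit use of this failure in the $E_1$ case of the proof of Corollary~\ref{corollary:hamilton}.
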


\begin{observation}\label{observation:cut-condition}
	A graph with a cut vertex is non-Hamiltonian.
\end{observation}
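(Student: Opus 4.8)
The plan is to argue by contraposition: I would show that any Hamiltonian graph has no cut vertex. Suppose $G$ is Hamiltonian, and fix a Hamiltonian cycle $C$ of $G$, that is, a cycle passing through every vertex of $G$ exactly once. I would then examine what happens to $C$ when an arbitrary vertex $v$ is deleted, and show that $G - v$ must remain connected.

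The key observation is that deleting a single vertex from a cycle leaves a path. Concretely, $C - v$ is a path $P$ on the remaining $|V(G)| - 1$ vertices, obtained from $C$ by discarding the two edges of $C$ incident to $v$. Every edge of $P$ is therefore an edge of $G - v$, so $P$ is a spanning subgraph of $G - v$. Since a path is connected and $P$ spans $G - v$, the graph $G - v$ is connected. As $v$ was arbitrary, no vertex of $G$ is a cut vertex, which is exactly the contrapositive of the statement.

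The only point requiring any care is the precise meaning of \emph{cut vertex}: I would take $v$ to be a cut vertex if $G - v$ has strictly more connected components than $G$. Under this definition the argument above directly yields a contradiction for connected $G$, since it exhibits a connected spanning subgraph of $G - v$. The degenerate cases (graphs on at most two vertices, or disconnected graphs) are handled trivially, as such graphs are not Hamiltonian in the first place, so the implication holds vacuously. There is no substantive obstacle here; the observation is a standard consequence of the fact that removing a vertex from a cycle yields a connected spanning path of the remainder, and I expect the proof to be a single short paragraph.
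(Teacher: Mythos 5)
Your proof is correct: the paper states this as an observation without proof, treating it as a standard fact, and your argument (a Hamiltonian cycle minus a vertex is a spanning path of $G-v$, so $G-v$ is connected and $v$ is not a cut vertex) is exactly the canonical justification the authors have in mind. Your handling of the degenerate cases and the precise definition of cut vertex is careful and sound, so there is nothing to add.
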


\begin{observation}\label{observation:bipartite-hamiltonian}
	A non-empty complete bipartite graph $K_{n,m}$ is Hamiltonian if and only if $m = n \geq 2$. An almost complete bipartite graph $K_{m,n}-e$ is Hamiltonian if and only if $m=n \geq 3$.
\end{observation}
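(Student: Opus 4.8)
The plan is to exploit the fact that both $K_{m,n}$ and $K_{m,n}-e$ are bipartite, so that any Hamilton cycle must alternate strictly between the two sides of the bipartition. I would first record this structural constraint: in a bipartite graph with parts $U$ and $V$, every cycle has even length and visits $U$ and $V$ alternately. Consequently a Hamilton cycle, which uses each of the $m+n$ vertices exactly once, must contain equally many vertices from $U$ and from $V$, forcing $m=n$. Since the shortest cycle in a bipartite graph has length $4$, we additionally need $m=n\geq 2$ in the complete case (and $m=n\geq 3$ will turn out to be necessary in the almost-complete case).

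For the complete bipartite graph $K_{m,m}$ with $m\geq 2$, I would simply exhibit the Hamilton cycle $u_1 v_1 u_2 v_2 \cdots u_m v_m u_1$, where $U=\{u_1,\dots,u_m\}$ and $V=\{v_1,\dots,v_m\}$; every consecutive pair lies on opposite sides and is therefore joined by an edge. Together with the necessity of $m=n\geq 2$ established above, this settles the first claim.

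For the almost-complete bipartite graph $K_{m,n}-e$, write $e=u_1 v_1$. The alternation argument again forces $m=n$. When $m=n=2$ the graph is a path $P_4$ on four vertices, which has no cycle at all, so it is non-Hamiltonian; this shows the necessity of $m=n\geq 3$. For the sufficiency, given $m=n\geq 3$ I would present the explicit cycle
\[
u_1,\, v_2,\, u_2,\, v_1,\, u_3,\, v_3,\, u_4,\, v_4,\, \dots,\, u_m,\, v_m,\, u_1,
\]
which visits every vertex exactly once and uses only existing edges; in particular it avoids the deleted edge $u_1 v_1$, since the only edges of the cycle incident to $u_1$ are $u_1 v_2$ and $v_m u_1$, and for $m\geq 3$ neither endpoint equals $v_1$. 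This completes the characterisation.

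The reasoning is entirely elementary; the only point requiring a moment's care is verifying that the exhibited cycle in the almost-complete case indeed covers all $2m$ vertices and steers clear of the missing edge. I expect this bookkeeping---checking that $v_1$ and $v_2$ are correctly reinserted into the sequence and that $u_1 v_1$ is never traversed---to be the sole (minor) obstacle.
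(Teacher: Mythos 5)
Your proof is correct: the paper states this result as an Observation without any proof, and your argument---parity of the bipartition forcing $m=n$, together with explicitly exhibited Hamilton cycles (the alternating cycle for $K_{m,m}$ and the rerouted cycle $u_1,v_2,u_2,v_1,u_3,v_3,\dots,u_m,v_m,u_1$ avoiding the deleted edge)---is precisely the standard elementary reasoning the paper implicitly relies on. The exhibited cycle does check out for all $m\geq 3$, and your treatment of the degenerate case $m=n=2$ (where $K_{2,2}-e$ is a $P_4$) correctly accounts for the stronger bound in the almost-complete case.
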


\begin{proof}[of Corollary~\ref{corollary:hamilton}]
Let $G=(V,E)$ be a $g_B$-perfect graph. By Theorem~\ref{thm:recognition}, we can determine in quadratic time which class(es) $G$ belongs to. We give Hamiltonicity criteria for each graph class that can be efficiently checked, which implies our result. 

Graphs in $E_1$ are Hamiltonian only if $a,b,c \geq 1$ and $k=0$ or $a=b=c=0$, $k=1$ and $|V| \geq 3$. Hence the graph is Hamiltonian if and only if $|V| \geq 3$ and its base graph is either a single clique vertex or a $P_3$ with a clique vertex as its middle vertex.

Observations~\ref{observation:cut-condition} and~\ref{observation:sufficient-hamiltonian} imply that graphs in $E_2$ are non-Hamiltonian, and graphs in $E_3$ and $E_4$ are Hamiltonian if and only if the optional vertex is present.

As the base graph of graphs in $E_5$ with $k \geq 2$ is Hamiltonian, the graph itself is also Hamiltonian. Additionally, if $k=1$, the graph is Hamiltonian if and only if $A_1$ and $Z_1$ both contain at least two vertices, or $A_R$ ($Z_R$) is empty and $Z_1$ ($A_1$) contains at least two vertices.

Extended house graphs ($E_6$) are Hamiltonian, by Observation~\ref{observation:sufficient-hamiltonian}. 
Extended bull graphs ($E_6$) are \mbox{Hamiltonian} if and only if $b, d \geq 2$, by Observation~\ref{observation:cut-condition}. 
The same observation also implies that a graph in $E_7$ is Hamiltonian only if $c$ is connected to both $K_n$ 
and $b$. Hence by Observation~\ref{observation:bipartite-hamiltonian}, such a graph is Hamiltonian if and only if its base graph is a $K_{2,2}$. Similarly, a graph in $E_8$ is Hamiltonian if and only if its base graph is a $K_{3,3}$ 
and  a graph in $E_9$ is Hamiltonian if it satisfies the condition in Observation~\ref{observation:bipartite-hamiltonian} together with the observation that the null graph, which is a member of~$E_9$, is Hamiltonian. 
\end{proof}

\section{Further work}\label{section:further-work}
Following the characterisation of $g_B$-perfect graphs by means of forbidden induced subgraphs and explicit structural descriptions, we ask whether such characterisations can be obtained for the remaining uncharacterised games $g_{B,A}$ and $g_{A,A}$.

\begin{problem}\label{problem:BA-and-AA-characterisation}
	Characterise the $g_{B,A}$ and $g_{A,A}$-perfect graphs by a set of forbidden induced subgraphs and/or explicit structural descriptions.
\end{problem}

\begin{figure}[htb]
\begin{center}
\includegraphics[scale=0.36]{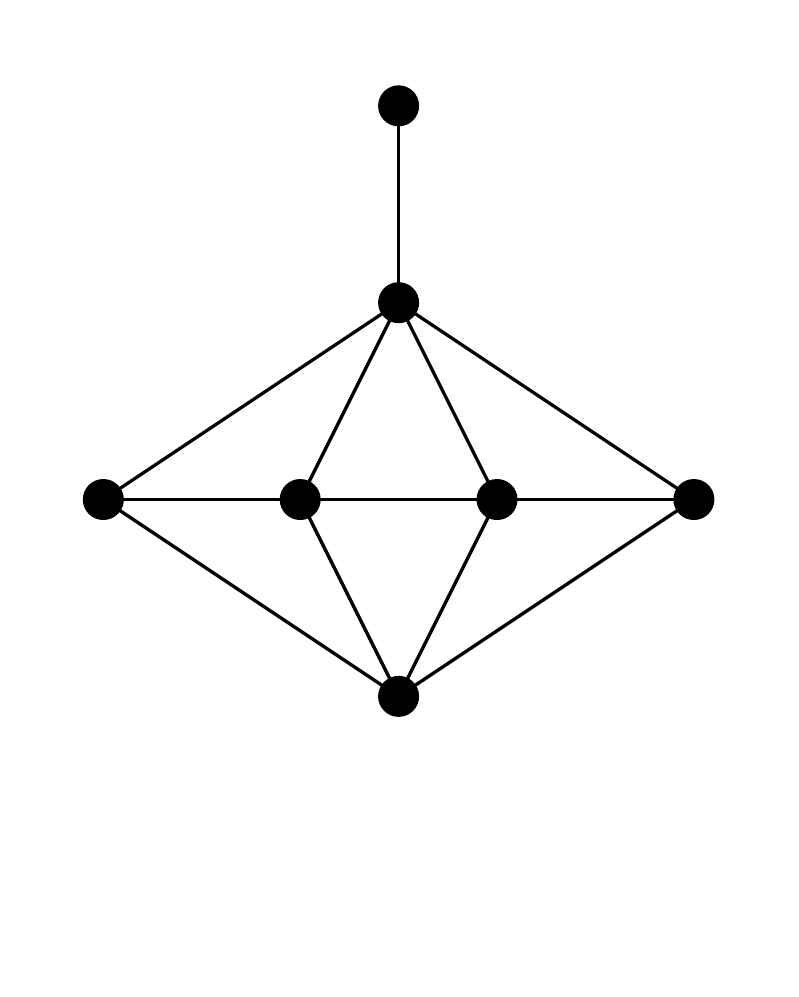}
\includegraphics[scale=0.36]{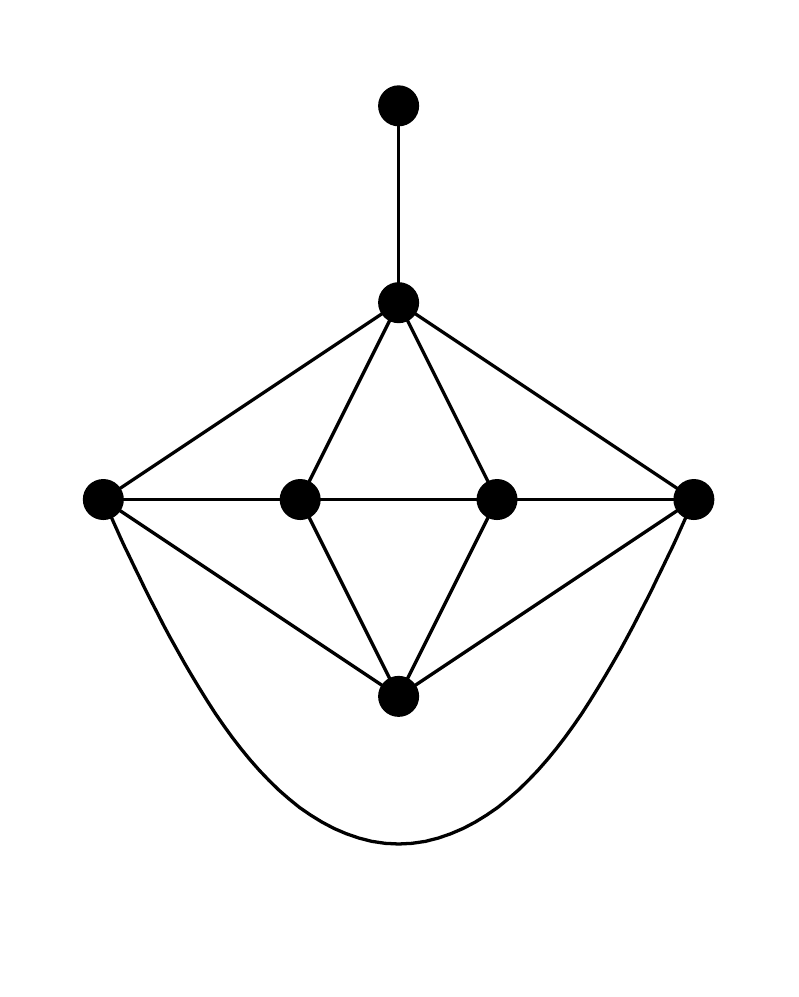}
\includegraphics[scale=0.36]{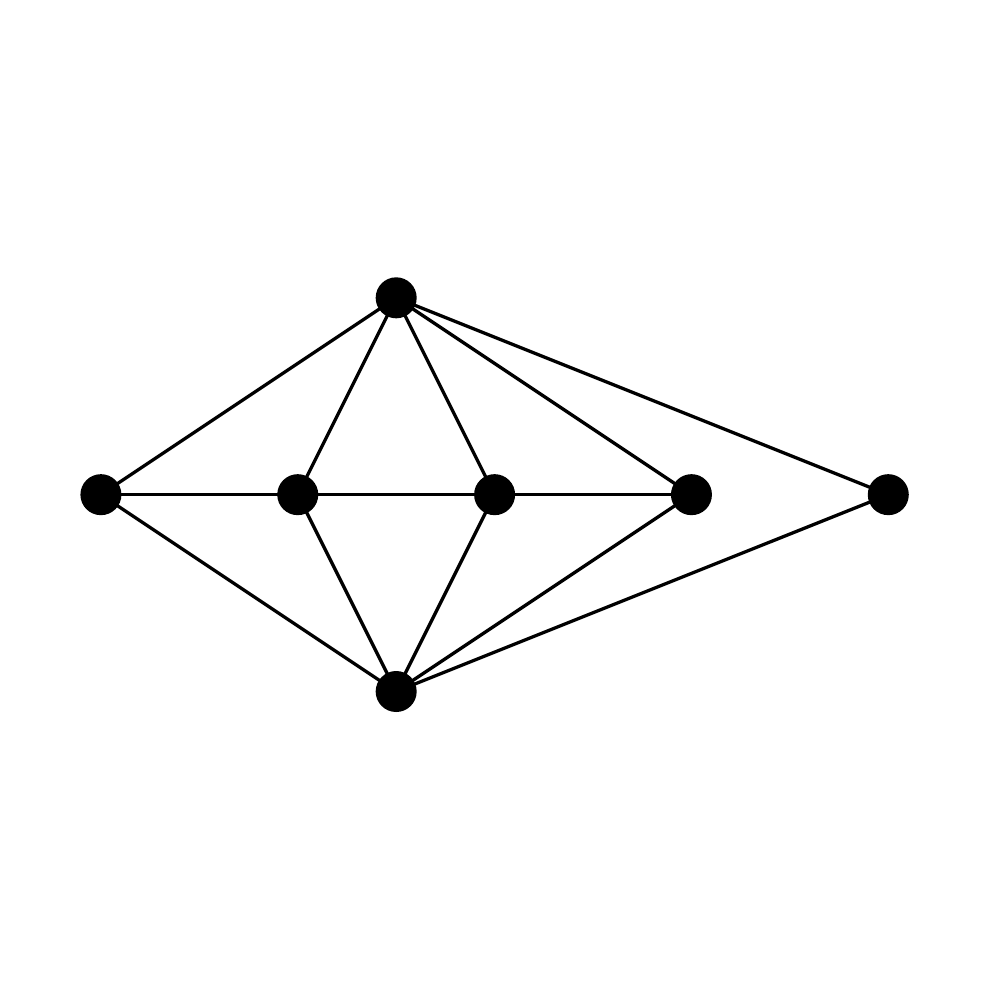}
\includegraphics[scale=0.36]{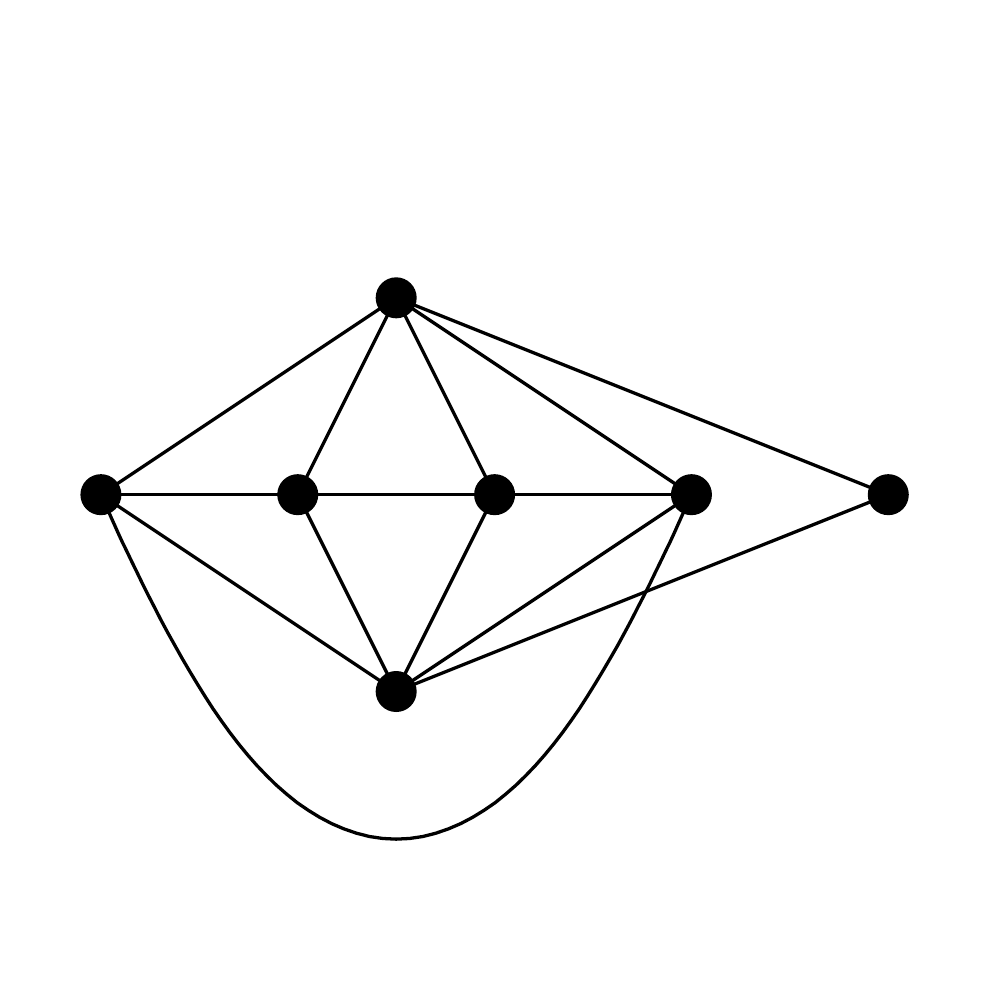}

\includegraphics[scale=0.36]{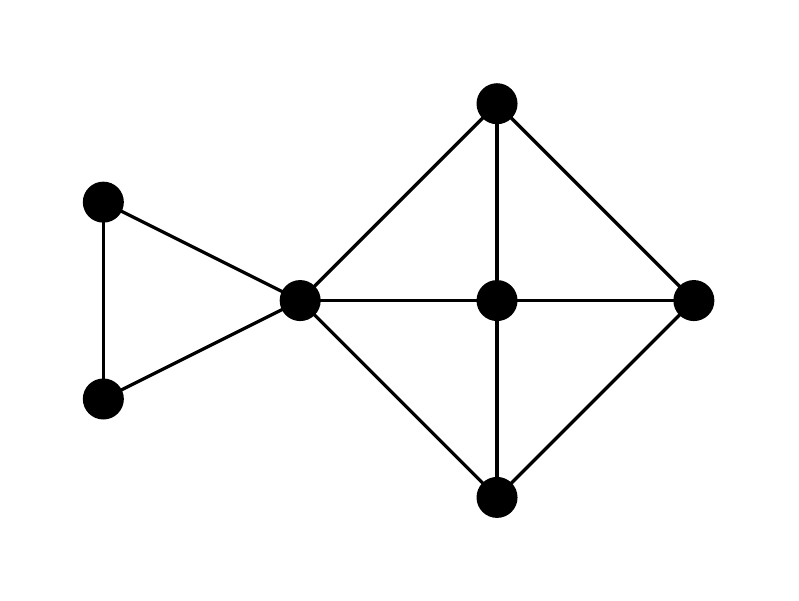}
\includegraphics[scale=0.36]{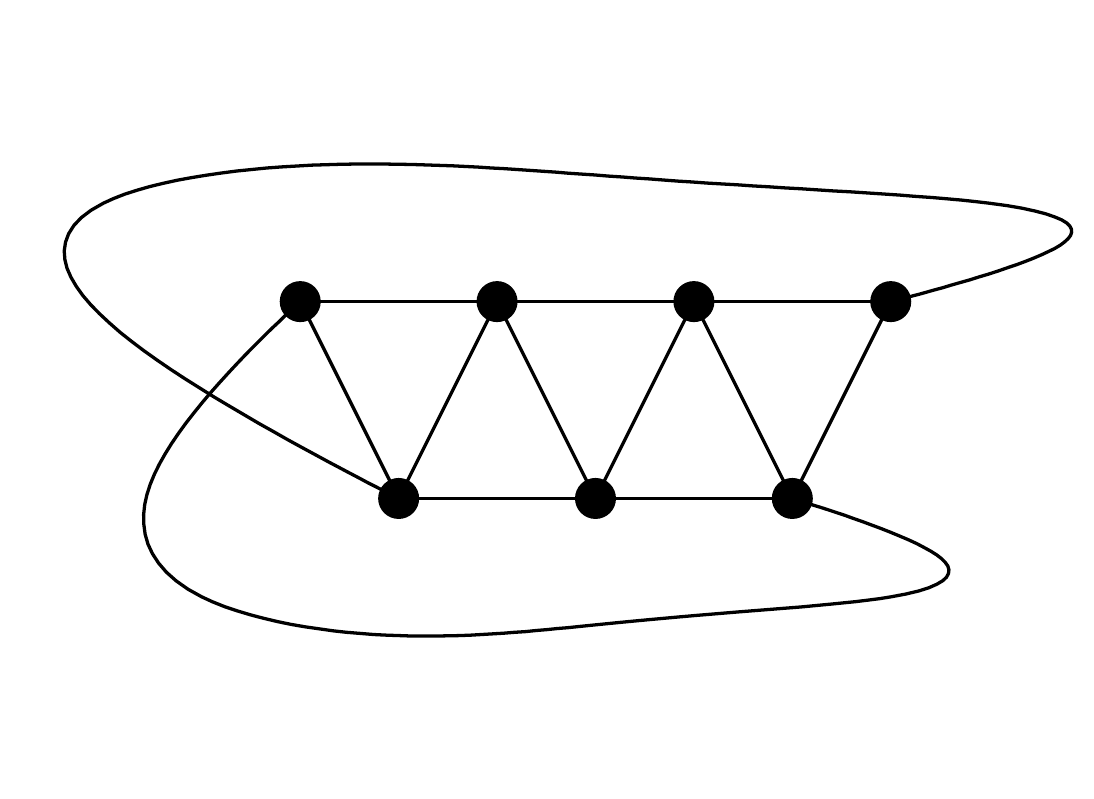}
\includegraphics[scale=0.36]{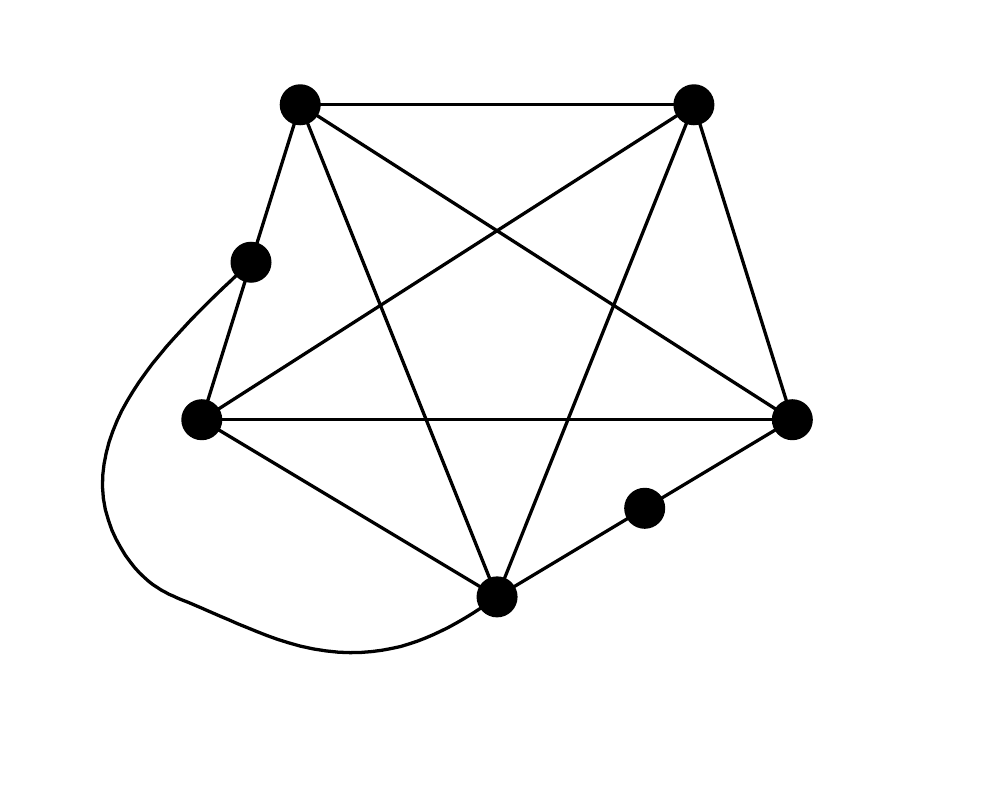}
\end{center}
\caption{7 of the 16 minimal forbidden configurations with up to 10 vertices for $g_{B,A}$-perfect graphs.}
\label{knownBA}
\end{figure}

 Partial progress on this has been made. \citet{lockbachelor} performed an exhaustive computer search to determine all minimal forbidden configurations with at most 10 vertices for the games $g_{B,A}$ and $g_{A,A}$. For the game 
$g_{B,A}$, this has yielded the minimal induced forbidden subgraphs $F_1$, $F_2$, $F_5$, $F_6$, $F_9$, $F_{10}$, $F_{11}$ (see Figure~\ref{fig:gB-forbidden-graphs}), the seven graphs depicted in Figure~\ref{knownBA}, and the odd antiholes $\overline{C_7}$ and $\overline{C_9}$. For the game $g_{A,A}$, 73 minimal forbidden induced subgraphs were found. In addition, we note that the disjoint union of two double fans is minimally forbidden for $g_{A,A}$, demonstrating that minimal forbidden subgraphs with more than 10 vertices exist for this game. These results suggests that the classes of $g_{A,A}$-perfect and $g_{B,A}$-perfect graphs are substantially richer than the $g_B$-perfect graphs and new ideas are required to characterise these.

The following known results imply that the set of minimal induced forbidden subgraphs is infinite. In particular, Corollary~\ref{cor:antiholes} states that all odd antiholes ($\overline{C_k}$ with odd $k \geq 5$) are minimal 
 forbidden induced subgraphs for the games $g_{B,A}$ and $g_{A,A}$. Note that in the context of game $g_B$, the odd antiholes of order $k\ge7$ are also forbidden induced subgraphs but are not minimal forbidden since they contain a 
forbidden induced 4-fan ($F_4$ in Figure~\ref{fig:gB-forbidden-graphs}).

\begin{theorem}[\citet{andres4}]\label{theorem:AA-complements-bipartite}
Complements of bipartite graphs are $g_{A,A}$-perfect.
\end{theorem}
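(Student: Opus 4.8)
The plan is to exploit the extremely rigid structure of complements of bipartite graphs and to let Alice play a reactive pairing strategy, using her right in $g_{A,A}$ to skip moves so that she effectively always responds to Bob. First I would reduce the statement to a single winning claim. The class of complements of bipartite graphs is hereditary, since an induced subgraph of $\overline{B}$ is $\overline{B[S]}$, again a complement of a bipartite graph; moreover such graphs are perfect (cf.~\citet{lovasz}), so $\chi_{g_{A,A}}(H) \geq \chi(H) = \omega(H)$ for every induced subgraph $H$. Hence it suffices to prove the reverse inequality, that is, to show that for every complement $G = \overline{B}$ of a bipartite graph $B$, Alice wins $g_{A,A}$ on $G$ with $\omega(G)$ colours.

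Next I would set up the matching correspondence driving the argument. Since $B$ is bipartite it is triangle-free, so every clique of $B$, hence every independent set of $G$, has at most two vertices. Consequently \emph{every} colour class of \emph{every} proper colouring of $G$ has size at most two, and the two-element classes form a matching of $B$. A proper colouring therefore uses exactly $|V(G)| - \nu$ colours, where $\nu$ is the number of monochromatic pairs, and $\omega(G) = \alpha(B) = |V(G)| - \nu(B)$, with $\nu(B)$ the maximum matching number. Thus colouring $G$ with $\omega(G)$ colours is the same as realising a maximum matching of $B$ as monochromatic pairs, with every remaining vertex receiving its own colour. I call a partial colouring \emph{good} when it extends to such a colouring, equivalently when the monochromatic pairs already created extend to a maximum matching $M^*$ of $B$ that respects the placed colours (it contains every monochromatic pair and never matches two distinctly coloured vertices).

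Alice's strategy is then to maintain goodness. She skips her first move, which is precisely where the second ``$A$'' in $g_{A,A}$ is needed, so that thereafter she always answers a single (necessarily proper) Bob move. After each such move she either skips, if the position remains good, or restores goodness by colouring a single uncoloured vertex, namely the partner prescribed by an updated maximum matching. The empty colouring is good, a fully coloured good position is a proper $\omega(G)$-colouring, and the game terminates once the graph is full, so this strategy wins. The mechanism generalises the pairing-and-repairing argument in the proof of Proposition~\ref{proposition:E5}: if Bob colours a vertex $v$ with a fresh colour, Alice colours its intended $M^*$-partner; if Bob's move merges two singletons into a new monochromatic pair $\{s,v\}$ (forced to be a $B$-edge by legality), Alice re-routes along an $M^*$-alternating path, whose symmetric difference with $M^*$ yields a new maximum matching containing $sv$, and colours the one newly matched uncoloured endpoint, whose required colour is available because it is blocked only by a non-neighbour of the same colour.

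The main obstacle, encapsulated in this repair lemma, is to guarantee that \emph{a single Bob move can never destroy goodness irreparably}. The genuine threat is that Bob forces a monochromatic pair $\{s,v\}$ lying in \emph{no} maximum matching of $B$, which would provably require $\omega(G)+1$ colours; such a pair occurs exactly when the alternating path above fails to exist, i.e.\ when $\nu(B - s - v) = \nu(B) - 2$. The crux of the proof is therefore to show that Alice can organise her play so that no coloured singleton $s$ is ever left exposed to a ``bad'' uncoloured neighbour $v$ of this kind: she must promptly pair off any singleton that is matched in every maximum matching, leaving uncoloured singletons only among structurally inessential vertices. I expect the delicate verification here to rely on the alternating-path / Dulmage–Mendelsohn structure of maximum matchings in the bipartite graph $B$, in particular the fact that the vertices missed by some maximum matching form an independent set, so that Bob can never connect two of Alice's exposed singletons by a forced pair. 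Establishing this invariant, together with the routine checks of colour availability and maximality after each re-routing, is where the argument is hardest and where König's theorem and the absence of augmenting paths in a maximum matching are used.
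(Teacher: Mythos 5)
First, a point of comparison: the paper does not actually prove this statement---it is imported from \citet{andres4}, and the paper's only related text is the one-line proof of Theorem~\ref{theorem:BA-complements-bipartite} (``identical to the proof of Theorem~\ref{theorem:AA-complements-bipartite}''). The nearest in-paper analogue is the pairing strategy in Case~2 of Proposition~\ref{proposition:E5}, which you correctly identify as the mechanism to generalise. Your scaffolding is sound: heredity of the class, perfection giving $\chi_{g_{A,A}}(H)\ge\omega(H)$, colour classes of size at most two because $B$ is triangle-free, $\omega(G)=|V(G)|-\nu(B)$ via König, the translation ``win with $\omega(G)$ colours $\Leftrightarrow$ monochromatic pairs extend to a maximum matching,'' and the use of Alice's right to skip (in particular her first move) so that she plays as a pure responder. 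The characterisation of a ``bad'' pair by $\nu(B-s-v)=\nu(B)-2$ is also correct.

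However, there is a genuine gap at exactly the point you flag yourself: the \emph{repair lemma} is never proved. You state that the crux is to show no coloured singleton is ever exposed to a bad uncoloured neighbour, and you ``expect'' this to follow from the Dulmage--Mendelsohn structure, but you neither formulate the invariant precisely nor verify that Alice can maintain it move by move---so the heart of the proof is a conjecture. The gap is real but closable, and more cheaply than your sketch suggests. Maintain a single maximum matching $M$ of $B$ with the invariant that every $M$-edge is either monochromatic or has both endpoints uncoloured; equivalently, every coloured singleton is $M$-exposed. Then Bob's legal moves split into four cases. If he gives an $M$-matched vertex $v$ a fresh colour, Alice colours its partner $u$ identically (legal: only $v$ carries that colour, and $uv\in E(B)$ means $u,v$ are non-adjacent in $G$). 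If he gives an exposed vertex a fresh colour, Alice skips. If he gives a matched vertex $v$ the colour of a singleton $\{s\}$, then $s$ is exposed by the invariant, so $M-uv+sv$ is again maximum; Alice updates $M$ and skips. If he gives an exposed vertex the colour of a singleton, both endpoints of the $B$-edge $sv$ would be $M$-exposed, an augmenting edge---impossible at a maximum matching. (A class of size three would force a triangle in $B$.) Under the invariant, the partial colouring always extends to a proper colouring whose classes are the $M$-edges plus the exposed singletons, i.e.\ exactly $|V(G)|-\nu(B)=\omega(G)$ classes, so no vertex is ever stuck and Alice wins. In particular, no alternating path longer than a single edge, and no Gallai--Edmonds or Dulmage--Mendelsohn machinery, is needed: the only matching-theoretic input is the absence of augmenting edges at a maximum matching, which is precisely the generalisation of the pairing bookkeeping in Proposition~\ref{proposition:E5}.
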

\begin{theorem}\label{theorem:BA-complements-bipartite}
Complements of bipartite graphs are $g_{B,A}$-perfect.
\end{theorem}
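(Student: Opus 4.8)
The plan is to reduce the theorem to an upper bound on a single graph and then exhibit an explicit winning strategy for Alice that generalises the pairing strategy of Proposition~\ref{proposition:E5}.

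First I would observe that the class of complements of bipartite graphs is hereditary: an induced subgraph of $G=\overline H$ on a vertex set $S$ equals $\overline{H[S]}$, and $H[S]$ is again bipartite. Moreover, by \citet{lovasz} every such graph is perfect, so $\omega(H')=\chi(H')\le\chi_{g_{B,A}}(H')$ for every induced subgraph $H'$. Hence it suffices to prove that \emph{Alice wins the game $g_{B,A}$ on an arbitrary complement $G=\overline H$ of a bipartite graph $H$ using only $\omega(G)$ colours}; applying this to every induced subgraph $H'$ (itself a complement of a bipartite graph) then yields $\chi_{g_{B,A}}(H')=\omega(H')$, which is the definition of $g_{B,A}$-perfectness.

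Next comes the structural core. Write $H$ as a bipartite graph on parts $X,Y$, so that in $G$ both $X$ and $Y$ are cliques and a set is independent in $G$ if and only if it is a clique of $H$, i.e.\ an edge or a single vertex. Consequently every colour class of a proper colouring of $G$ has at most two vertices, and an optimal colouring with $k:=\omega(G)=\alpha(H)=|V|-\nu(H)$ colours is obtained from a maximum matching $M$ of $H$ by giving each matched edge a private shared colour and each unmatched vertex its own colour. I would fix such a target colouring $c^*$ and let Alice maintain the invariant that the current partial colouring is proper, agrees with $c^*$ on every coloured vertex, and contains no \emph{half-coloured pair}: each colour class of $c^*$ of size two is either fully coloured (both vertices, same colour) or fully uncoloured. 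The point of this invariant is that it forbids surrounding: an uncoloured vertex $v$ always has $c^*(v)$ available, because the only other vertex that carries $c^*(v)$ in $c^*$, if any, is $v$'s matching partner, which is a non-neighbour of $v$ in $G$ and is uncoloured by the invariant. Thus no vertex is ever surrounded, Bob (who may not skip) always has a legal move while uncoloured vertices remain, and the game therefore terminates with a full proper colouring, an Alice win.

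It remains to specify Alice's moves and show the invariant can always be restored after Bob colours an uncoloured vertex $u$ with a colour $\beta$. If $\beta=c^*(u)$, Alice skips when $u$ is a singleton of $c^*$ and otherwise colours $u$'s partner with the same colour; here the skip option of $g_{B,A}$ is exactly what lets Alice avoid unforced moves that would destroy the matching structure (and is the reason the statement fails for $g_B$, under which complements of bipartite graphs need not even be perfect). The difficult case is $\beta\neq c^*(u)$: Alice must replace $c^*$ by a new optimal target $c^{**}$ that agrees with all coloured vertices, including $u:\beta$, and is again a maximum matching plus singletons with at most one half-coloured pair, which she can then repair in a single move (or a skip). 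I expect this re-pairing to be the main obstacle. I would handle it by an exchange/alternating-path argument in $H$: since $c^*$ corresponds to a maximum matching and every colour class has size at most two, the vertices whose colour must change lie on a short $\{\alpha,\beta\}$-alternating path (with $\alpha:=c^*(u)$), and the triangle-freeness of $H$ together with König's theorem guarantees that this path can be rerouted without recolouring any already-coloured vertex. This is precisely the single swap of Proposition~\ref{proposition:E5} carried out in the more general setting, and it is where the bipartite structure does the real work. Finally, I would remark that combining the analogous $g_{A,A}$ statement (Theorem~\ref{theorem:AA-complements-bipartite}) with a passing argument is suggestive but not conclusive, since a $g_{A,A}$-strategy does not obviously transfer to $g_{B,A}$ once Bob is given the first move; hence the direct strategy above.
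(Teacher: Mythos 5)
Your overall architecture is sound and close in spirit to what the paper relies on: the paper itself gives no new argument for this theorem, stating only that the proof is identical to that of Theorem~\ref{theorem:AA-complements-bipartite} for $g_{A,A}$ from \citet{andres4}, which is a pairing strategy of essentially the kind you describe (hereditary reduction, an optimal target colouring built from a maximum matching $M$ of $H$ so that every colour class has size at most two, the no-half-coloured-pair invariant, and the use of Alice's skips to avoid unforced moves). However, your proposal has a genuine gap at exactly the point you flag yourself: the case where Bob colours an uncoloured vertex $u$ with $\beta\neq c^*(u)$ is not proved but deferred to an unverified ``exchange/alternating-path'' rerouting justified by an appeal to triangle-freeness and König's theorem. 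As written this is a hope, not an argument: you show neither that a suitable reroute exists, nor that it avoids recolouring already-coloured vertices, nor that the invariant can be restored within Alice's single move or skip. (König's theorem is genuinely needed only for the colour count $\omega(G)=\alpha(H)=|V|-\nu(H)$, which you already use; it plays no role in the repair.)

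The gap is fillable, and more simply than you suggest: no alternating paths are needed, just a three-case analysis driven by your invariant. Write $\alpha:=c^*(u)$ and consider the $c^*$-class carrying $\beta$. First, that class cannot be a fully coloured pair: its two vertices lie in different sides of $H$, so $u$ is $H$-adjacent to at most one of them, hence $G$-adjacent to the other, and Bob's move would have been illegal. Second, if the $\beta$-class is wholly uncoloured, Alice transposes the target colours of $u$'s class and the $\beta$-class (both classes were uncoloured before Bob's move, so no coloured vertex is touched) and then completes $u$'s pair with $\beta$ if $u$ is matched --- legal, since the only $\beta$-coloured vertex is the $G$-non-neighbour $u$ --- or skips if $u$ is a singleton. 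Third, if $\beta$ sits on a coloured singleton $s$, legality forces $us\in E(H)$; maximality of $M$ then forces $u$ to be matched, say to $w$ (otherwise $M+us$ would be a larger matching), and Alice replaces $M$ by $M-uw+us$, a maximum matching of the same size, which makes $\{u,s\}$ a fully coloured class and $\{w\}$ an uncoloured singleton, and she skips. With this case analysis your invariant survives every round and your proof closes. One further slip to correct: complements of bipartite graphs are \emph{always} perfect (classical, and you invoke it via \citet{lovasz}); what fails under $g_B$ is game-perfectness, e.g.\ the $4$-fan $F_4=\overline{P_4\cup K_1}$ is a complement of a bipartite graph yet is forbidden for $g_B$.
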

\begin{proof}
	Identical to the proof of Theorem~\ref{theorem:AA-complements-bipartite}.
\end{proof}


\begin{cor}\label{cor:antiholes}
Odd antiholes are minimal forbidden configurations in $g_{B,A}$- and $g_{A,A}$-perfect graphs.
\end{cor}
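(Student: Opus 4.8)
The plan is to verify directly the two conditions characterising a minimal forbidden configuration for a hereditary class of $g$-perfect graphs: first, that each odd antihole $\overline{C_k}$ (with $k\ge 5$ odd) fails to be $g$-perfect for $g\in\{g_{A,A},g_{B,A}\}$, and second, that every \emph{proper} induced subgraph of $\overline{C_k}$ is $g$-perfect.

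For the first condition I would argue that $\overline{C_k}$ is imperfect and invoke the chain $\omega(H)\le\chi(H)\le\chi_g(H)$ noted in Section~\ref{section:thegames}. Concretely, a clique in $\overline{C_k}$ is an independent set of $C_k$, so $\omega(\overline{C_k})=\frac{k-1}{2}$, while a proper colouring of $\overline{C_k}$ corresponds to a covering of $C_k$ by its cliques (vertices and edges); using a maximum matching of size $\nu(C_k)=\frac{k-1}{2}$ gives $\chi(\overline{C_k})=k-\nu(C_k)=\frac{k+1}{2}$. Hence $\chi(\overline{C_k})>\omega(\overline{C_k})$, and therefore $\chi_g(\overline{C_k})\ge\chi(\overline{C_k})>\omega(\overline{C_k})$ for both games, so $\overline{C_k}$ is not $g$-perfect.

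For the second condition I would use that taking induced subgraphs commutes with complementation: any proper induced subgraph $G'$ of $\overline{C_k}$ has the form $G'=\overline{C_k[S]}$ for some $S\subsetneq V(C_k)$. Since $C_k[S]$ is obtained by deleting at least one vertex from a cycle, it is a disjoint union of paths, and in particular bipartite. Thus $G'$ is the complement of a bipartite graph, which is both $g_{A,A}$- and $g_{B,A}$-perfect by Theorems~\ref{theorem:AA-complements-bipartite} and~\ref{theorem:BA-complements-bipartite}. Combining the two conditions shows that each odd antihole is a minimal forbidden configuration for both games.

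I do not anticipate a serious obstacle: the argument is essentially structural. The one point that warrants care is that minimality requires \emph{all} proper induced subgraphs to be $g$-perfect, not merely single-vertex deletions; this is handled uniformly by the observation that deleting any nonempty vertex set from a cycle yields a linear forest, which is bipartite, so every such subgraph falls under Theorems~\ref{theorem:AA-complements-bipartite} and~\ref{theorem:BA-complements-bipartite}.
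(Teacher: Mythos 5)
Your proposal is correct and follows essentially the same route as the paper: the paper also argues that odd antiholes are forbidden because they are not perfect (you merely make the computation $\omega(\overline{C_k})=\frac{k-1}{2}<\frac{k+1}{2}=\chi(\overline{C_k})$ explicit) and establishes minimality by observing that every proper induced subgraph is the complement of a linear forest, hence of a bipartite graph, and then invoking Theorems~\ref{theorem:AA-complements-bipartite} and~\ref{theorem:BA-complements-bipartite}.
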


\begin{proof}
Odd antiholes are forbidden, since they are not even perfect. Every proper induced subgraph of an odd antihole is a complement of a forest of paths and thus the complement of a bipartite graph. Hence we can apply Theorem~\ref{theorem:AA-complements-bipartite} and Theorem~\ref{theorem:BA-complements-bipartite}, respectively.
\end{proof}

These results motivate us to conjecture the following.
\begin{conj}\label{conjBA}
	A graph is $g_{B,A}$-perfect if and only if it contains no odd antihole of order $k\geq 7$ or any of the 14 configurations listed above as an induced subgraph.
\end{conj}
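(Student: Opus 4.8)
The plan is to follow the two-directional template of Theorem~\ref{thm:gB-perfect-characterisation}, splitting the stated biconditional into a routine \emph{forbidden} direction and a hard \emph{completeness} direction. For the forbidden direction I would argue the contrapositive: each listed configuration is itself not $g_{B,A}$-perfect, so heredity of $g_{B,A}$-perfectness forbids it as an induced subgraph of any $g_{B,A}$-perfect graph. For the odd antiholes this is precisely Corollary~\ref{cor:antiholes}. For the fourteen finite configurations I would, exactly as in Theorem~\ref{thm:forbidden-graphs}, exhibit an explicit winning strategy for Bob with $\omega$ colours; the only additional subtlety compared with the $g_B$ setting is that Bob's strategy must also defeat Alice when she exercises her option to skip a move. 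Since all fourteen graphs have at most ten vertices, these are finite game-tree checks, matching the exhaustive search of \citet{lockbachelor}.

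The content of the conjecture lies in the converse, that a configuration-free graph $G$ is $g_{B,A}$-perfect. Two containments are available at the outset. First, every $g_B$-perfect graph is $g_{B,A}$-perfect, since Alice may decline to use her skip option and replay her $g_B$-strategy; thus $\omega(H) \le \chi_{g_{B,A}}(H) \le \chi_{g_B}(H)$ for every induced subgraph $H$, forcing equality with $\omega$ on any hereditary $g_B$-perfect graph. Second, every complement of a bipartite graph is $g_{B,A}$-perfect by Theorem~\ref{theorem:BA-complements-bipartite}. A useful preliminary step is to observe that configuration-freeness already forces perfectness: the exclusion of $F_2 = P_5$ and $F_6 = C_5$ removes every odd hole (each $C_k$ with $k \ge 7$ contains an induced $P_5$), while the odd antiholes are excluded by hypothesis, so the Strong Perfect Graph Theorem \citep{strongperfectgraphtheorem} applies. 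The target would then be a structural theorem expressing an arbitrary configuration-free $G$ as built, via joins and the clique-module decomposition of Section~\ref{subsec:clique-module-decomposition}, from these two families, after which a winning strategy for Alice with $\omega(G)$ colours can be assembled by combining the pairing, ear and dragon strategies of Section~\ref{section:strategies} with the matching-based colouring underlying the complement-of-bipartite case.

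The main obstacle, and the reason the statement is a conjecture rather than a theorem, is completeness: ruling out minimal obstructions on more than ten vertices other than the odd antiholes $\overline{C_k}$ with $k \ge 11$. In the $g_B$ analysis the structure was controlled by a single dominating edge (Corollary~\ref{general:dominating_edge}), which forced the bounded decomposition into $G_1, G_2, G_3$ of Section~\ref{section:edge-decomposition}. This lever is unavailable here, as the $g_{B,A}$-perfect graphs already contain all complements of bipartite graphs, which possess no dominating edge in general and can be arbitrarily large and intricate. I would therefore expect to need a genuinely new decomposition that simultaneously tames the perfect-graph structure and the game-theoretic constraints, the most delicate part being to prove that the base graph arising from the clique-module decomposition is so restricted that only finitely many base structures remain. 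It is exactly here that the excluded ten-vertex configurations must do the heavy lifting, and designing an argument that certifies their \emph{sufficiency} — rather than merely their necessity — is where I expect the real difficulty to lie.
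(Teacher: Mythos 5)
This statement is Conjecture~\ref{conjBA}: the paper contains no proof of it, so there is nothing for your attempt to diverge from. What the paper does establish is exactly the evidence you assemble: the necessity direction follows from heredity of $g_{B,A}$-perfectness, with the odd antiholes handled by Corollary~\ref{cor:antiholes} (via Theorems~\ref{theorem:AA-complements-bipartite} and~\ref{theorem:BA-complements-bipartite}) and the fourteen small configurations certified by the exhaustive finite search of \citet{lockbachelor}. Your additional observations are sound and correctly placed: $\chi_{g_{B,A}}(H)\le\chi_{g_B}(H)$ since Alice may decline her skip option, so $g_B$-perfect graphs are $g_{B,A}$-perfect; and configuration-freeness forces perfectness, since excluding $F_2=P_5$ and $F_6=C_5$ excludes all odd holes while odd antiholes are excluded by hypothesis. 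Your refusal to claim the sufficiency direction is the correct call — it is precisely the open content of the conjecture, and your diagnosis that the crux is ruling out minimal obstructions on more than ten vertices other than odd antiholes matches the authors' framing (they even exhibit, for $g_{A,A}$, a minimal obstruction on more than ten vertices, the union of two double fans, as a cautionary example).

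One concrete error in your motivating discussion deserves correction: connected complements of bipartite graphs \emph{do} have dominating edges. In the complement, each side of the bipartition induces a clique, so any edge $uv$ joining the two cliques satisfies $N[u]\cup N[v]=V$; if no such edge exists the graph is a disjoint union of two cliques and hence disconnected (or a single clique, where any edge dominates). So the failure of the Section~\ref{section:edge-decomposition} programme for $g_{B,A}$ is not the unavailability of a dominating edge. The real obstruction is downstream: the structural lemmas of Sections~\ref{subsec:internal}--\ref{subsec:G3} repeatedly invoke the forbidden graphs $F_3$, $F_4$, $F_7$ and $F_8$, and none of these is forbidden for $g_{B,A}$. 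Indeed, the 4-fan $F_4=K_1\vee P_4$ and the 4-wheel $F_8=K_1\vee C_4$ have bipartite complements ($K_1\cup P_4$ and $K_1\cup 2K_2$), so they are $g_{B,A}$-perfect by Theorem~\ref{theorem:BA-complements-bipartite}; and Bob's winning strategies on $F_3=P_4\cup K_1$ and $F_7=C_4\cup K_1$ in the $g_B$ setting are zugzwang arguments that evaporate once Alice may skip a move. Consequently, even with a dominating edge in hand, the classification of $G_1$, $G_2$, $G_3$ collapses, and — as you rightly conclude — a genuinely new structural idea is needed to certify sufficiency of the excluded configurations.
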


Recall that \citet{strongperfectgraphtheorem} characterised the class of perfect graphs as the graphs without induced odd holes and odd antiholes, while a characterisation in terms of explicit structural descriptions remains elusive. Such a result might be of major algorithmic interest in computer science, and could also lead to a new proof of the Strong Perfect Graph Theorem using a triple equivalence as in the formulation of Theorem~\ref{thm:gB-perfect-characterisation}. Since characterising game-perfectness for the games $g_{B,A}$ and $g_{A,A}$ involves odd antiholes by Corollary~\ref{cor:antiholes}, methods developed towards a solution of Problem~\ref{problem:BA-and-AA-characterisation} might provide some insights into Problem~\ref{prob:perfect}.

\begin{prob}\label{prob:perfect}
	Characterise the class of perfect graphs by means of explicit structural descriptions.
\end{prob}

\acknowledgements
\label{sec:ack}
We thank Vaidy Sivaraman for valuable comments and an anonymous reviewer for their helpful suggestions on the presentation of the paper.

\bibliographystyle{abbrvnat}
\bibliography{literature}

\begin{thebibliography}{32}
\providecommand{\natexlab}[1]{#1}
\providecommand{\url}[1]{\texttt{#1}}
\expandafter\ifx\csname urlstyle\endcsname\relax
  \providecommand{\doi}[1]{doi: #1}\else
  \providecommand{\doi}{doi: \begingroup \urlstyle{rm}\Url}\fi

\bibitem[Andres(2009)]{andres4}
S.~D. Andres.
\newblock Game-perfect graphs.
\newblock \emph{Math. Methods Oper. Res.}, 69\penalty0 (2):\penalty0 235--250,
  2009.

\bibitem[Andres(2012)]{andres1}
S.~D. Andres.
\newblock On characterizing game-perfect graphs by forbidden induced subgraphs.
\newblock \emph{Contrib. Discrete Math.}, 7\penalty0 (1):\penalty0 21--34,
  2012.

\bibitem[Babel et~al.(2001)Babel, Kloks, Kratochv{\'{i}}l, Kratsch,
  M{\"{u}}ller, and Olariu]{babeletal}
L.~Babel, T.~Kloks, J.~Kratochv{\'{i}}l, D.~Kratsch, H.~M{\"{u}}ller, and
  S.~Olariu.
\newblock Efficient algorithms for graphs with few {$P_4$}'s.
\newblock \emph{Discrete Math.}, 235\penalty0 (1-3):\penalty0 29--51, 2001.

\bibitem[Bodlaender(1991)]{bodlaender}
H.~L. Bodlaender.
\newblock On the complexity of some coloring games.
\newblock \emph{Internat. J. Found. Comput. Sci.}, 2\penalty0 (2):\penalty0
  133--147, 1991.

\bibitem[Bohman et~al.(2008)Bohman, Frieze, and Sudakov]{bohman}
T.~Bohman, A.~Frieze, and B.~Sudakov.
\newblock The game chromatic number of random graphs.
\newblock \emph{Random Structures Algorithms}, 32\penalty0 (2):\penalty0
  223--235, 2008.

\bibitem[Cai and Zhu(2001)]{caizhu}
L.~Cai and X.~Zhu.
\newblock Game chromatic index of $k$-degenerate graphs.
\newblock \emph{Journal of Graph Theory}, 36\penalty0 (3):\penalty0 144--155,
  2001.

\bibitem[Charpentier and Sopena(2013)]{charpentiersopena}
C.~Charpentier and {\'{E}}.~Sopena.
\newblock Incidence coloring game and arboricity of graphs.
\newblock In T.~Lecroq and L.~Mouchard, editors, \emph{Combinatorial
  Algorithms. IWOCA 2013. Lecture Notes in Computer Science, vol. 8288}, pages
  106--114, Berlin, Heidelberg, 2013. Springer.

\bibitem[Chudnovsky et~al.(2005)Chudnovsky, Cornu{\'{e}}jols, Liu, Seymour, and
  Vu{\v{s}}kovi{\'{c}}]{chudnovsky2}
M.~Chudnovsky, G.~Cornu{\'{e}}jols, X.~Liu, P.~Seymour, and
  K.~Vu{\v{s}}kovi{\'{c}}.
\newblock Recognizing {Berge} graphs.
\newblock \emph{Combinatorica}, 25\penalty0 (2):\penalty0 143--186, 2005.

\bibitem[Chudnovsky et~al.(2006)Chudnovsky, Robertson, Seymour, and
  Thomas]{strongperfectgraphtheorem}
M.~Chudnovsky, N.~Robertson, P.~Seymour, and R.~Thomas.
\newblock The strong perfect graph theorem.
\newblock \emph{Ann. of Math. (2)}, 164\penalty0 (1):\penalty0 51--229, 2006.

\bibitem[Cozzens and Kelleher(1990)]{cozzens}
M.~B. Cozzens and L.~L. Kelleher.
\newblock Dominating cliques in graphs.
\newblock \emph{Discrete Math.}, 86\penalty0 (1-3):\penalty0 101--116, 1990.

\bibitem[Dunn et~al.(2017)Dunn, Larsen, and Nordstrom]{dunn2017}
C.~Dunn, V.~Larsen, and J.~F. Nordstrom.
\newblock Introduction to competitive graph coloring.
\newblock In A.~Wootton, V.~Peterson, and C.~Lee, editors, \emph{A Primer for
  Undergraduate Research}, pages 99--126. Springer International Publishing,
  Cham, 2017.

\bibitem[{Erd\"{o}s} et~al.(2004){Erd\"{o}s}, Faigle, {Hochst\"{a}ttler}, and
  Kern]{erdoesetal}
P.~{Erd\"{o}s}, U.~Faigle, W.~{Hochst\"{a}ttler}, and W.~Kern.
\newblock Note on the game chromatic index of trees.
\newblock \emph{Theoret. Comput. Sci.}, 313:\penalty0 371--376, 2004.

\bibitem[Faigle et~al.(1993)Faigle, Kern, Kierstead, and Trotter]{faigle1}
U.~Faigle, U.~Kern, H.~Kierstead, and W.~T. Trotter.
\newblock On the game chromatic number of some classes of graphs.
\newblock \emph{Ars Combin.}, 35:\penalty0 143--150, 1993.

\bibitem[Gardner(1981)]{gardner}
M.~Gardner.
\newblock Mathematical games.
\newblock \emph{Scientific American}, 244\penalty0 (4):\penalty0 23--26, 1981.

\bibitem[Golumbic(1978)]{golumbictrivially}
M.~C. Golumbic.
\newblock Trivially perfect graphs.
\newblock \emph{Discrete Math.}, 24\penalty0 (1):\penalty0 105--107, 1978.

\bibitem[Golumbic(2004)]{golumbic}
M.~C. Golumbic.
\newblock \emph{Algorithmic graph theory and perfect graphs}.
\newblock Elsevier, Amsterdam, 2nd edition, 2004.

\bibitem[Gr{\"{o}}tschel et~al.(1981)Gr{\"{o}}tschel, Lov{\'{a}}sz, and
  Schrijver]{gls}
M.~Gr{\"{o}}tschel, L.~Lov{\'{a}}sz, and A.~Schrijver.
\newblock The ellipsoid method and its consequences in combinatorial
  optimization.
\newblock \emph{Combinatorica}, 1\penalty0 (2):\penalty0 169--197, 1981.

\bibitem[Guan and Zhu(1999)]{guanzhu}
D.~J. Guan and X.~Zhu.
\newblock Game chromatic number of outerplanar graphs.
\newblock \emph{J. Graph Theory}, 30:\penalty0 67--70, 1999.

\bibitem[Havet and Zhu(2013)]{havetzhu}
F.~Havet and X.~Zhu.
\newblock The game grundy number of graphs.
\newblock \emph{J. Combin. Optim.}, 25:\penalty0 752--765, 2013.

\bibitem[Hochst{\"{a}}ttler and Tinhofer(1995)]{hochstaettler1995}
W.~Hochst{\"{a}}ttler and G.~Tinhofer.
\newblock Hamiltonicity in graphs with few {$P_4$}'s.
\newblock \emph{Computing}, 54\penalty0 (3):\penalty0 213--225, 1995.

\bibitem[Karp(1972)]{karp}
R.~M. Karp.
\newblock Reducibility among combinatorial problems.
\newblock In R.~E. Miller and J.~W. Thatcher, editors, \emph{Complexity of
  computer computations}, pages 85--103. Plenum Press New York, 1972.

\bibitem[Krishnamoorthy(1975)]{krishnamoorthy}
M.~S. Krishnamoorthy.
\newblock An {NP}-hard problem in bipartite graphs.
\newblock \emph{SIGACT News}, 7\penalty0 (1):\penalty0 26, 1975.

\bibitem[Lock(2016)]{lockbachelor}
E.~Lock.
\newblock \emph{The structure of {$g_B$}-perfect graphs}.
\newblock Bachelor's thesis, FernUniversit{\"{a}}t in Hagen, 06 2016.

\bibitem[Lov{\'{a}}sz(1972)]{lovasz}
L.~Lov{\'{a}}sz.
\newblock Normal hypergraphs and the perfect graph conjecture.
\newblock \emph{Discrete Math.}, 2:\penalty0 253--267, 1972.

\bibitem[Sidorowicz(2007)]{cactuses}
E.~Sidorowicz.
\newblock The game chromatic number and the game colouring number of cactuses.
\newblock \emph{Information Processing Letters}, 102\penalty0 (4):\penalty0
  147--151, 2007.

\bibitem[Sidorowicz(2010)]{sidorowiczhusimi}
E.~Sidorowicz.
\newblock Colouring game and generalized colouring game on graphs with
  cut-vertices.
\newblock \emph{Discuss. Math. Graph Theory}, 30:\penalty0 499--533, 2010.

\bibitem[Tuza and Zhu(2015)]{tuzazhu}
Z.~Tuza and X.~Zhu.
\newblock Colouring games.
\newblock In L.~W. Beineke and R.~J. Wilson, editors, \emph{Topics in chromatic
  graph theory}, chapter~14, pages 304--326. Cambridge Univ. Press, Cambridge,
  2015.

\bibitem[Wolk(1965)]{wolk}
E.~S. Wolk.
\newblock A note on ``the comparability graph of a tree''.
\newblock \emph{Proc. Amer. Math. Soc.}, 16:\penalty0 17--20, 1965.

\bibitem[Zhu(1999)]{zhuplanar}
X.~Zhu.
\newblock The game coloring number of planar graphs.
\newblock \emph{J. Combin. Theory, Ser. B}, 75:\penalty0 245--258, 1999.

\bibitem[Zhu(2000)]{zhupseudopartial}
X.~Zhu.
\newblock The game coloring number of pseudo partial {$k$}-trees.
\newblock \emph{Discrete Math.}, 215:\penalty0 245--262, 2000.

\bibitem[Zhu(2008{\natexlab{a}})]{zhucartesian}
X.~Zhu.
\newblock Game coloring the cartesian product of graphs.
\newblock \emph{Journal of Graph Theory}, 59:\penalty0 261--278,
  2008{\natexlab{a}}.

\bibitem[Zhu(2008{\natexlab{b}})]{zhurefined}
X.~Zhu.
\newblock Refined activation strategy for the marking game.
\newblock \emph{J. Combin. Theory, Ser. B}, 98\penalty0 (1):\penalty0 1--18,
  2008{\natexlab{b}}.

\end{thebibliography}
\label{sec:biblio}

\end{document}